\newtheorem{theorem}{Theorem}[section]
\newtheorem
{cor}[theorem]{Corollary}
\newtheorem
{lem}[theorem]{Lemma}
\newtheorem
{prop}[theorem]{Proposition}
\theoremstyle{definition}
\newtheorem
{defn}[theorem]{Definition}
\newtheorem
{ex}[theorem]{Example}
\newtheorem
{rem}[theorem]{Remark}
\numberwithin{equation}{section}
\def\hypothesisname{Hypothesis}
\newtheorem{hypo}[theorem]{\hypothesisname}
\newenvironment{hyp}{\begin{hypo}\rm}{\end{hypo}}
\def\problemname{Problem}
\newtheorem{pro}[theorem]{\problemname}
\newenvironment{prob}{\begin{pro}\rm}{\end{pro}}
\renewcommand{\:}{\colon}
\newcommand{\1}{\mathbf{1}}
\newcommand{\ad}{\mathop{{\rm ad}}\nolimits}
\newcommand{\Ad}{\mathop{{\rm Ad}}\nolimits}
\newcommand{\aff}{\mathop{{\mathfrak{aff}}}\nolimits}
\newcommand{\Aff}{\mathop{{\rm Aff}}\nolimits}
\newcommand{\dd}{{\tt d}} 
\newcommand{\eps}{\varepsilon}
\newcommand{\Fix}{\mathop{{\rm Fix}}\nolimits}
\newcommand{\GL}{\mathop{{\rm GL}}\nolimits}
\newcommand{\id}{\mathop{{\rm id}}\nolimits}
\renewcommand{\Im}{\mathop{{\rm Im}}\nolimits}
\newcommand{\Ker}{\mathop{{\rm Ker}}\nolimits}
\newcommand{\la}{\langle}
\newcommand{\nin}{\noindent} 
\newcommand{\oline}{\overline}
\newcommand{\ra}{\rangle}
\renewcommand{\Re}{\mathop{{\rm Re}}\nolimits}
\newcommand{\res}{\vert}
\newcommand{\Spec}{{\rm Spec}}
\newcommand{\subeq}{\subseteq}
\newcommand{\de}{\mathrm{d}}
\newcommand{\ee}{\mathrm{e}}
\newcommand{\ie}{\mathrm{i}}
\newcommand{\cA}{\mathcal{A}}
\newcommand{\cC}{\mathcal{C}}
\newcommand{\cD}{\mathcal{D}}
\newcommand{\cH}{\mathcal{H}}
\newcommand{\cI}{\mathcal{I}}
\newcommand{\cK}{\mathcal{K}}
\newcommand{\cL}{\mathcal{L}}
\newcommand{\cO}{\mathcal{O}}
\newcommand{\cP}{\mathcal{P}}
\newcommand{\cS}{\mathcal{S}}
\newcommand{\cX}{\mathcal X} 
\newcommand{\cY}{\mathcal Y} 
\newcommand{\fa}{{\mathfrak a}}
\newcommand{\fg}{{\mathfrak g}}
\newcommand{\g}{{\mathfrak g}}
\newcommand{\fk}{{\mathfrak k}}
\newcommand{\fp}{{\mathfrak p}}
\newcommand{\sV}{{\tt V}}
\newcommand{\C}{{\mathbb C}}
\newcommand{\N}{{\mathbb N}}
\newcommand{\R}{{\mathbb R}}
\newcommand{\Z}{{\mathbb Z}}
\newcommand\bx{{\bf{x}}}
\title[Holomorphic extension of one-parameter operator groups]{Holomorphic extension of one-parameter operator groups}
\author[D. Belti\c t\u a]{Daniel Belti\c t\u a}
\address[D. Belti\c t\u a]{Institute of Mathematics ``Simion Stoilow'' of the Romanian Academy,
		P.O. Box 1-764, Bucharest, Romania} 
\email{\tt Daniel.Beltita@imar.ro, beltita@gmail.com}
\author[K.-H. Neeb]{Karl-Hermann Neeb}
\address[K.-H. Neeb]{Department Mathematik, Friedrich-Alexander-Universit\"at \break Erlangen-N\"urnberg,
		Cauerstrasse 11, 91058 Erlangen, Germany} 
\email{\tt neeb@math.fau.de}
\keywords{analytic extension, one-parameter group of operators, KMS condition,
	standard subspace, distribution vector,
	analytic vector}
\subjclass[2010]{Primary 22E45; Secondary 46A20, 47D06}
\begin{document}

\begin{abstract}
We study holomorphic extensions of one-parameter groups on locally convex spaces with a view to applications to KMS boundary conditions. In the first part we deal with analytic extensions of one-parameter groups of operators on locally convex spaces and in the second part we apply our results to spaces of distribution vectors of unitary representations of Lie groups. This leads to new tools that can be used to construct, from unitary Lie group
representations, nets of standard subspaces, as they appear in Algebraic Quantum Field Theory. We also show that these methods fail for spaces of analytic vectors, and this in turn leads to new maximality results for domains of analytic extensions of orbit maps for
unitary representations.
\end{abstract}

\maketitle

\tableofcontents


\section{Introduction}
\label{Sect1}

One-parameter groups of linear operators are an important tool in a wide variety of
mathematical areas such as Functional Analysis, Differential Geometry,
Probability Theory, or Mathematical Physics. 
In the present paper we study holomorphic extensions of one-parameter groups on locally convex spaces with a view to applications to Kubo--Martin--Schwinger (KMS) boundary conditions that are involved in the construction of standard subspaces in the framework of unitary representations of Lie groups and their homogeneous spaces (cf.\ \cite{FNO23}). Here the KMS conditions arise
from a one-parameter group $(U_t)_{t \in \R}$ on a complex topological vector space
$\cY$ and an antilinear operator $J$ on $\cY$ commuting with each operator $U_t$.
Then the subspace $\cY_{\rm KMS}$ of interest consists of all elements
$y \in \cY$ whose orbit map $U^v \colon \R \to \cY$, $t \mapsto U_t v$
extends to a continuous map on  
$\oline{\cS}_{0,\pi}:=\R+\ie[0,\pi]$, weakly
holomorphic on the interior $\cS_{0,\pi}$,  such that
\begin{equation}
	\label{eq:kms}
	U^v(\pi\ie + t) = J U^v(t) = J U_t v\quad \mbox{ for }\quad t \in \R.
\end{equation}

Along with Quantum Statistical Mechanics, 
the standard forms of von Neumann algebras and the Tomita--Takesaki modular theory have motivated the interest in KMS boundary conditions
(\cite{BR87, Ta03}). 
Specifically, it early turned out that the modular automorphism group associated to a faithful normal semifinite weight of a $W^*$-algebra
is determined by the KMS condition (\cite[Cor.~4.9]{Co71}). 
Analytic vectors with respect to the modular groups were then used in
\cite[Sect.~3]{PT73}, in order to establish noncommutative Radon--Nikodym
theorems, and the holomorphic extension operators 
in the framework provided by a duality pairing of Banach spaces were later systematically studied in \cite{CZ76} and \cite{Zs77},
where it was proved for instance that the extension operators are closed and densely defined.  
However, the applications of these results to one-parameter groups on spaces of smooth vectors and distribution vectors associated to unitary representations of Lie groups are quite limited, due to the fact that these spaces are not Banach spaces in general. 
This motivates the development of a general theory of 
holomorphic extensions of one-parameter groups 
on locally convex spaces. 

This paper consists of two parts. The first abstract part,
consisting of Sections~\ref{Sect2}-\ref{Sect4},  
deals with analytic extensions of one-parameter groups of operators on
locally convex spaces and the second part,
Sections~\ref{Sect5}-\ref{Sect7}, 
applies some key results of this theory to spaces of distribution vectors
of unitary representations. 

The content of this paper is as follows:
Section~\ref{Sect2} contains the definition of the notion of holomorphic extension of a one-parameter group $(U_t)_{t \in \R}$
of continuous linear operators on a Hausdorff locally convex space $\cY$ 
(Definition~\ref{holomext_def}). In Section~\ref{Sect3},  
we develop an operator calculus for one-parameter groups
on 
sequentially complete locally convex spaces,  
satisfying suitable continuity and growth conditions.
For simplicity, we subsume all these conditions under the term
{\it exponential growth} (Definition~\ref{def:expgro}).
We derive some important facts, notably density properties of the corresponding space of entire vectors  (Corollary~\ref{agrowth_cor}).  
Section~\ref{Sect4} contains our main results on holomorphic extensions of
one-parameter groups of exponential growth on
a suitable class of locally convex spaces, including all Fr\'echet spaces. 
Specifically, we prove that the holomorphic extensions of a one-parameter
group $(U_t)_{t \in \R}$ and of its antidual group, given by
$V_t := U_t^\sharp$ on the antidual space $\cY^\sharp$, are antidual to each other
in the sense that we have the
Duality Theorem~\ref{UtoV}: 
\[ (U_z)^\sharp = V_{\oline z} \quad \mbox{ for } \quad z \in \C.\]
Here $U_z^\sharp$ is the antidual of the operator $U_z$ obtained by
analytic continuation from $U$, and $V_{\oline z}$ is likewise obtained
from~$V$. Proposition~\ref{VtoU} further connects the antidual
of $V_{\oline z}$ with the double antidual of $U_z$.
This does not follow from the Duality Theorem because
$\cY^\sharp$, endowed with the weak-$*$-topology, need not 
belong to the class of locally convex spaces we are working with. 
As an important consequence, 
the operators obtained by holomorphic extension are closed (cf.~also
Theorem~\ref{F-tech}). 
These results extend  \cite[Th.~1.1]{Zs77} and
\cite[Th.~2.4]{CZ76} 
beyond the setting of Banach spaces. 

In Section~\ref{Sect5} we then turn to the applications to
unitary representations $(U,\cH)$ of a Lie group~$G$ with
Lie algebra~$\g$. The short Section~\ref{Sect5} 
has a rather technical character and its aim is to
prepare the ground for applications of the abstract results of
Sections~\ref{Sect3} and \ref{Sect4}.
Concretely, we fix a Lie algebra element $h \in \g$ 
and show that the action of the one-parameter group $U_{h,t} := U(\exp th)$
on the Fr\'echet space~$\cH^\infty$ of smooth vectors
is of exponential growth. We are particularly 
interested in its antidual one-parameter group 
on the space $\cH^{-\infty} := (\cH^\infty)^\sharp$
of {\it distribution vectors}.
In Section~\ref{Sect6}, we study 
the space $\cH^{-\infty}_{\rm KMS} = (\cH^{-\infty})_{\rm KMS}$ specified
in terms of analytic extension by the KMS condition~\eqref{eq:kms}. 
Here the boundary condition is defined in terms of an antilinear involutive isometry $J\colon \cH\to\cH$ satisfying  $J(\cH^\infty)\subseteq\cH^\infty$ and
commuting with the unitary 
one-parameter group
\[ U_{h,t} = U(\exp th) = \ee^{t \partial U(h)}.\] 
We write
\[ \sV:=\{\xi\in \cD(\Delta_\sV^{1/2}) \colon \Delta^{1/2}\xi=J\xi\}
\quad \mbox{ with } \quad
\Delta_\sV := \ee^{2\pi \ie \partial U(h)}, \]
for the standard subspace of $\cH$ corresponding to the pair of
modular objects $(\Delta_\sV,J)$.
Recall that a closed real subspace $\sV \subeq \cH$ is said to
be {\it standard} if $\sV \cap i \sV = \{0\}$ and
$\sV + i \sV$ is dense in $\cH$ (cf.\ \cite{Lo08}). 
In this setting, our main results are: 
\begin{itemize}
\item  $\cH^{-\infty}_{\rm KMS}$  is a weak-$*$-closed subspace of $\cH^{-\infty}$ (Theorem~\ref{extcl}); \phantom{\Big)}
\item  $\cH^{-\infty}_{\rm KMS}\cap\cH=\sV$ (Theorem~\ref{thm:KMSV});\phantom{\Big)}
\item $\sV$ is weak-$*$ dense in $\cH^{-\infty}_{\rm KMS}$ (Theorem~\ref{thm:vdense}). 
\end{itemize}  

These results on analytic continuation of $U_h$-orbit maps
for distribution vectors are complemented by the discussion
in Section~\ref{Sect7}, where we focus on the dense subspace
$\cH^\omega$ of analytic vectors of a unitary representation $(U,\cH)$ of a
Lie group~$G$. The space $\cH^\omega$ carries a natural 
topology as a  locally convex direct limit, and its antidual
$\cH^{-\omega}$ is a Fr\'echet space
(see \cite{FNO23} and \cite{GKS11} for more details on these topologies).
Again, every $h \in \g$ defines one-parameter groups
$(U^{\pm \omega}_{h,t})_{t \in \R}$ on the spaces $\cH^{\pm \omega}$, but
our discussion reveals that these one-parameter groups 
are not of exponential growth.
We start in Section~\ref{Sect7.1} by
recalling some well-known facts on the Maximum Modulus Theorem
on strips. This is used in Section~\ref{Sect7.2} to show that,
for the affine group $G = \Aff(\R)_e$  of the real line, 
the orbit maps of the dilation group in $\cH^\omega$ never extend to
strips of width exceeding~$\pi$ (Theorem~\ref{thm:ax+b-gen}).
As non-compact semisimple Lie groups 
contain many copies of the group $\Aff(\R)_e$,
restriction to such subgroups yields  natural ``upper bounds''
on the domains to which orbit maps of analytic vectors
could extend analytically. The corresponding conclusions
are formulated in Theorem~\ref{thm:7.9}, which
improves Goodman's results stated in \cite{Go69}.
That our  results are optimal
follows from existence results on
orbit maps by Kr\"otz and Stanton for semisimple Lie
groups (\cite{KSt04}).
We conclude this paper with a brief discussion of
maximal analytic extensions of orbit maps in $\cH^\omega$
for general Lie groups (Section~\ref{Sect7.4}).

This article was motivated by
the connections between causal structures
on homogeneous spaces, Algebraic Quantum Field Theory (AQFT), 
modular theory of operator algebras 
and unitary representations of Lie groups
(cf.~\cite{BGL02, NO21, MN21, MNO23, NO23}).
In this context one has to specify certain 
standard subspaces $\sV\subeq \cH$ associated to
unitary representations $(U,\cH)$ of Lie groups.
Here $\sV$ is 
$\cH_{\rm KMS}$ with respect to a unitary
one-parameter group $U_{h,t} = U(\exp th)$ for some $h \in \g$,
and some conjugation $J$ commuting with this one-parameter group.
Typically $h$ is a so-called {\it Euler element}, i.e.,
$\ad h$ is diagonalizable with $\Spec(\ad h) \subeq \{ 1,0,-1\}$
(cf.~\cite{MN23}).
Our Theorem~\ref{thm:7.12} implies that, typically,
$(\cH^\omega)_{\rm KMS} = \{0\}$, so that one should not expect
elements of $\sV$ to be analytic vectors.
They  behave rather badly from the algebraic perspective,
but this makes them interesting for localization
purposes (which is incompatible with analyticity of orbit maps).
It turns out that natural ``generators'' of $\sV$ can be found
in the space $\cH^{-\infty}$ of disctribution vectors.
The results in Section~\ref{Sect6} provide very effective
tools to identify such generating sets.
They have already been used in \cite{FNO23} to construct
interesting nets of standard subspaces for representations
of semisimple Lie groups. To complement these results on the class of 
semisimple Lie groups, we plan to explore in 
\cite{BN23} possible analogs for solvable Lie groups.\\

\nin{\bf Acknowledgment:}
We thank Jonas Schober and Tobias Simon for abundant comments
on a first draft of this paper.
We also thank the referee for an extremely constructive report
that led to various improvements of the exposition and 
to simplifications of the assumptions of some of our results.
In particular the material in the appendices has been suggested
  by the referee.

The research of D.~Belti\c{t}\u{a} was supported by a grant of the Ministry of Research, Innovation and Digitization, CNCS/CCCDI –UEFISCDI, project number PN-III-P4-ID-PCE-2020-0878, within PNCDI III. 
The research of K.-H. Neeb was partially supported
by DFG-grant NE 413/10-2.

\section{Holomorphic extension of one-parameter groups}
\label{Sect2}

In this section we introduce holomorphic extensions of a
one-parameter group $(U_t)_{t\in \R}$ of operators on a locally convex space
$\cY$ (Definition~\ref{holomext_def}). Our extensions will be
based on analytic extension of orbit maps to strips and also maximal
in a suitable sense. This will lead to a family $(U_z)_{z \in \C}$
of operators on $\cY$ that are, under suitable assumptions, densely
defined. However, if a one-paramter group
is not of exponential growth, 
we shall see in Section~\ref{Sect7} that it may happen that
the operator~$U_z$ becomes trivial (in the sense that its domain
is $\{0\}$) when $\vert\Im z\vert$ is sufficiently large
(Theorem~\ref{thm:ax+b-gen}). 
We also establish a couple of basic properties that only require the ambient locally convex space to be a Hausdorff space.

For every locally convex space $\cY$ over $\C$ we introduce the
corresponding spaces and structures:
\begin{itemize}
	\item $\cY^\sharp$  is the space of
	continuous antilinear functionals, endowed with the
	weak-$*$-topology, with the corresponding sesquilinear
	antiduality pairing 
	$\langle\cdot,\cdot\rangle\colon\cY\times\cY^\sharp\to\C$. 
	\item $\cY^\sharp_c$ is the space $\cY^\sharp$,
	endowed with the topology of uniform convergence on compact subsets of~$\cY$.
	\item  $\cY_w$ is the space $\cY$, endowed its weak topology.  
\end{itemize}
In particular, $\cY^\sharp=(\cY^\sharp)_w$ as topological vector spaces. 

The algebra of continuous linear operators on $\cY$ is denoted
by~$\cL(\cY)$, and $\GL(\cY)$ is its group of invertible elements. 
If $T\colon\cD(T)\subseteq\cY\to \cY$ is a linear operator whose domain $\cD(T)$ is dense in~$\cY$, 
then we define $\cD(T^\sharp)$ as the set of all functionals $\eta\in\cY^\sharp$ for which 
there exists $T^\sharp\eta\in\cY^\sharp$ with  $T^\sharp\eta\vert_{\cD(T)}=\eta\circ T$. 
Since  $\cD(T)$ is dense in $\cY$, it follows that $T^\sharp\eta$ is uniquely determined by $\eta$, and we thus obtain a new linear operator $T^\sharp\colon\cD(T^\sharp)\to\cY^\sharp$. 
For every $T\in\cL(\cY)$ we  have $T^\sharp\in\cL(\cY^\sharp)$.

\begin{rem}
	\label{adjgraph}
	For any densely-defined linear operator
	$T\colon \cD(T)\subseteq \cY\to\cY$, we denote
	its graph by $\Gamma(T)=\{(y,Ty): y\in\cD(T)\}\subseteq\cY\times\cY$.
	We consider the map 
	\[ S\colon \cY\times\cY\to \cY\times\cY,\qquad S(y,v):=(-v,y). \]
	For every $(\eta,\zeta)\in\cY^\sharp\times\cY^\sharp$ we have 
	\[(\eta,\zeta)\in\Gamma(T^\sharp)\iff (\forall y\in\cD(T))\ \langle Ty,\eta\rangle =\langle y,\zeta\rangle 
	\iff (\eta,\zeta)\in S(\Gamma(T))^\perp, 
	\]
	hence 
	\begin{equation}
		\label{adjgraph_eq1}
		\Gamma(T^\sharp)=S(\Gamma(T))^\perp\subseteq \cY^\sharp\times\cY^\sharp
	\end{equation}
	where the annihilator refers to the antiduality pairing 
	\[ \langle\cdot,\cdot\rangle\colon (\cY\times\cY)\times(\cY^\sharp\times\cY^\sharp)\to\C,\quad
	\langle(y,v),(\eta,\zeta)\rangle
	:=\langle y,\eta\rangle+\langle v,\zeta\rangle.\] 
	It follows by \eqref{adjgraph_eq1} that $\Gamma(T^\sharp)$ is closed in $\cY^\sharp\times\cY^\sharp$, hence also in $\cY^\sharp_c\times\cY^\sharp_c$.
\end{rem}

\begin{rem}
	\label{lintop}
	We recall a few facts on general Functional Analysis for later use: 
	\begin{enumerate}[{\rm(i)}]
		\item\label{lintop_item1}
		The topology of a Hausdorff locally convex space $\cY$ is
		identical with the topology of uniform convergence on every equicontinuous subset of $\cY^\sharp$ (\cite[Prop.~36.1]{Tr67}).
		\item\label{lintop_item2} For any locally convex spaces $\cX$ and $\cY$, the topology of pointwise convergence and compact convergence
		coincide on equicontinuous subsets of the space of continuous linear operators from $\cX$ to $\cY$
		(\cite[Prop.~32.5]{Tr67}). 
		\item\label{lintop_item3}
		If $\cY$ is a Fr\'echet space then $\cL(\cY_w)\subseteq\cL(\cY)$ by the Closed Graph Theorem.  
	\end{enumerate}
\end{rem}

\begin{defn}
  Let $\cX$ be a Hausdorff locally convex space and $\Omega\subseteq\C$
an open subset with its closure denoted by~$\overline{\Omega}$.
	We write $\cO^w_{\partial}(\oline\Omega, \cX)$
        for the space of $\cX$-valued
	continuous functions $f \: \oline\Omega \to \cX$ 
        whose restriction to the interior
	of $\Omega$ is weakly holomorphic, i.e,
        for each continuous linear functional $\alpha$ on $\cX$, the function
        $\alpha \circ f$ is holomorphic on~$\Omega$.
       	We write $\cO_{\partial}(\oline\Omega, \cX)$ for the subspace
        of those functions functions on $\oline\Omega$ that are
        holomorphic on $\Omega$. 
        We refer to Remark~\ref{rem:a.1} for a discussion of this concept
        and comparison with holomorphy.
\end{defn}

\begin{rem}
	\label{duality}
	Let $\Omega \subeq \C$ be an open subset.
	As $\cY^\sharp$ carries the weak-$*$-topology
	with respect to $\cY$,
	a function $f \colon  \Omega \to \cY^\sharp$
	is weakly holomorphic if and only if
	all compositions $z \mapsto f(z)(\xi)$, $\xi \in \cY$, 
	with continuous linear functionals are holomorphic. 
       If, in addition, $\cY$ is barreled, 
        then $\cY^\sharp$ is sequentially complete
	by Remark~\ref{rem:a.3}, hence Remark~\ref{rem:a.1}   
	shows that weakly holomorphic maps into $\cY^\sharp$
	are actually holomorphic.  
	
        If $\cY$ is Fr\'echet, then
        $\cY^\sharp$ and $\cY^\sharp_c$ have the same dual space
        (Remark~\ref{surjev}\eqref{surjev_item2}), given by point evaluations,
     hence  any $\cY^\sharp$-valued holomorphic map is weakly holomorphic as
      a $\cY^\sharp_c$-valued map. 
	Using the fact that $\cY_c^\sharp$ is sequentially complete 
	(Remark~\ref{rem:a.2}), 
	it follows from 
	Remark~\ref{rem:a.1} again
	that $f$ is also holomorphic as a
	$\cY^\sharp_c$-valued map.
\end{rem}

For $\varepsilon_1\le0\le\varepsilon_2$, we write 
$$\cS_{\varepsilon_1,\varepsilon_2}:=\{z\in\C\colon 
\varepsilon_1<\Im z< \varepsilon_2\}$$
and denote the closure of this strip by
$\overline{\cS}_{\varepsilon_1,\varepsilon_2}$.
We also abbreviate $\cS_{\pm r} := \cS_{-r,r}$ for $r > 0$. 

\begin{defn} {\rm(Analytic extension of one-parameter groups)}
	\label{holomext_def}
	Let $\cY$ be a Hausdorff locally convex space 
	and   $(U_t)_{t\in\R}$  a one-parameter group in $\GL(\cY)$.  
	For  $\varepsilon_1\le0\le\varepsilon_2$, we consider the following
	linear subspace of $\cY$
	\[	\cY_{\varepsilon_1,\varepsilon_2}:=
	\{y\in\cY:(\exists F_y \in
	\cO^w_{\partial}(\overline{\cS}_{\varepsilon_1,\varepsilon_2}, \cY)) (\forall t\in\R)\quad F_y(t)=U_ty\}\] 
	and 
	\[	\cY^\cO
	:=
	\{y\in\cY:(\exists F_y \in
	\cO^w(\C, \cY))(\forall t\in\R)\quad F_y(t)=U_ty\}.\] 
	For every $z\in\C$ we also define $\cD(U_z)\subseteq\cY$ and $U_z\colon\cD(U_z)\to\cY$ by 
	\[
	\cD(U_z)
	=\begin{cases}
		\cY_{0,\Im z}&\text{ if }\Im z> 0,\\
		\cY&\text{ if }\Im z=0, \\
		\cY_{\Im z,0}&\text{ if }\Im z<0,
	\end{cases}
	\mbox{ and }
	U_z y:=F_y(z) \mbox{  for all }  y\in\cD(U_z).\]
	Note that, for $z=s\in\R$, this definition reproduces the operator~$U_s$. 
	The operator family $(U_z\colon\cD(U_z)\to\cY)_{z\in\C}$ is called the \emph{holomorphic extension of the one-parameter group}~$(U_t)_{t\in\R}$ and the elements of the linear subspace 
	$\cY^\cO$ 
	are called its \emph{weakly entire vectors}.  
\end{defn}

\begin{rem} (a) 
	In the above definition, the uniqueness of the extension  $F_x$
	follows by a straightforward application of the Riemann--Schwarz
	symmetry principle (cf.~\cite[no.~42, Lemme]{Ch90}).
	
	\nin (b) Theorem~\ref{thm:ax+b-ex} below shows that there exist natural
	examples of one-parameter groups for which the domain
	of $U_z$ is trivial for $|\Im z| > \pi/2$.         
\end{rem}

\begin{lem}
	\label{holomext_fact}
	If $z = a + \ie  b\in \overline{\cS}_{\varepsilon_1,\varepsilon_2}$, and
	$y\in\cY_{\varepsilon_1,\varepsilon_2}$, then 
	\[ U_z y\in\cY_{\varepsilon_1-b,\varepsilon_2-b} \quad \mbox{ and } \quad
	U_w U_z y=U_{w+z}y \quad \mbox{ for } \quad
	w\in\cS_{\varepsilon_1-b,\varepsilon_2-b}.\]
\end{lem}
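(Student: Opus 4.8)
The plan is to build the extension of the orbit map of $U_z y$ out of the extension $F_y \in \cO^w_\partial(\oline{\cS}_{\varepsilon_1,\varepsilon_2},\cY)$ witnessing $y \in \cY_{\varepsilon_1,\varepsilon_2}$ (unique by the Riemann--Schwarz symmetry principle recalled above), simply by translating $F_y$ by the complex number $z$. The core step, which I would prove first, is an \emph{equivariance} property of $F_y$:
\[ F_y(\zeta+s) = U_s F_y(\zeta), \qquad s \in \R,\ \zeta \in \oline{\cS}_{\varepsilon_1,\varepsilon_2}. \]
To obtain this I would compare the two maps $\zeta \mapsto F_y(\zeta+s)$ and $\zeta \mapsto U_s F_y(\zeta)$ on $\oline{\cS}_{\varepsilon_1,\varepsilon_2}$. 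The first lies in $\cO^w_\partial(\oline{\cS}_{\varepsilon_1,\varepsilon_2},\cY)$ because a horizontal translation preserves the strip; the second does too, since $U_s \in \cL(\cY)$ implies that $\alpha \circ U_s$ is a continuous linear functional for every continuous linear functional $\alpha$ on $\cY$, so $\zeta \mapsto \alpha(U_s F_y(\zeta))$ is holomorphic on $\cS_{\varepsilon_1,\varepsilon_2}$ and continuous on its closure. On $\R$ the two maps agree because $F_y(t+s) = U_{t+s}y = U_s U_t y = U_s F_y(t)$, hence they agree on all of $\oline{\cS}_{\varepsilon_1,\varepsilon_2}$ by the uniqueness statement (two elements of $\cO^w_\partial(\oline{\cS}_{\varepsilon_1,\varepsilon_2},\cY)$ coinciding on $\R$ are equal).

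Next, write $b := \Im z$. Since $z \in \oline{\cS}_{\varepsilon_1,\varepsilon_2}$ we have $\varepsilon_1 \le b \le \varepsilon_2$, so $\varepsilon_1 - b \le 0 \le \varepsilon_2 - b$ and the shifted strip is again admissible in the sense of the notation. I would then define $G(w) := F_y(w+z)$ for $w \in \oline{\cS}_{\varepsilon_1-b,\varepsilon_2-b}$; this is well defined because then $w+z \in \oline{\cS}_{\varepsilon_1,\varepsilon_2}$, and it belongs to $\cO^w_\partial(\oline{\cS}_{\varepsilon_1-b,\varepsilon_2-b},\cY)$ as the composition of $F_y$ with the affine (hence holomorphic) map $w \mapsto w+z$. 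By the equivariance identity of the first step, $G(t) = F_y(t+z) = U_t F_y(z) = U_t(U_z y)$ for all $t \in \R$, so $U_z y \in \cY_{\varepsilon_1-b,\varepsilon_2-b}$ with associated extension $G$. This proves the first assertion.

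For the composition law I would fix $w \in \cS_{\varepsilon_1-b,\varepsilon_2-b}$ and use the evident monotonicity of the family $(\cY_{\delta_1,\delta_2})$ — one restricts an extension to a sub-strip — together with uniqueness of extensions. On one side, $U_z y \in \cD(U_w)$ and $U_w(U_z y) = G(w) = F_y(w+z)$; on the other side, $y \in \cY_{\varepsilon_1,\varepsilon_2}$ already makes $w+z$ a point of its domain of holomorphy, so $U_{w+z}y = F_y(w+z)$. Comparing the two gives $U_w U_z y = U_{w+z} y$. The only bookkeeping is to check, in each of the cases $\Im(w+z)>0$, $=0$, $<0$, that the sub-strip prescribed by the definitions of $\cD(U_w)$ resp.\ $\cD(U_{w+z})$ indeed sits inside $\oline{\cS}_{\varepsilon_1-b,\varepsilon_2-b}$ resp.\ $\oline{\cS}_{\varepsilon_1,\varepsilon_2}$, which is immediate from $\varepsilon_1-b < \Im w < \varepsilon_2-b$.

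I do not expect a serious obstacle here: the whole argument is a translation of $F_y$ plus the matching of domains. The one place where I would spell out the details carefully is the equivariance identity $F_y(\zeta+s) = U_s F_y(\zeta)$, since it is the single point that uses genuine complex analysis (via the uniqueness/symmetry principle) and where one must be sure that both compared functions lie in the weakly-holomorphic-on-the-interior, continuous-on-the-closure class; once that is in place, the remaining steps are purely formal.
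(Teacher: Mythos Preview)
Your proof is correct and follows essentially the same approach as the paper: both establish the equivariance $F_y(\zeta+s)=U_sF_y(\zeta)$ by comparing two $\cO^w_\partial$-functions that agree on $\R$, and then take $G(w):=F_y(w+z)$ as the extension witnessing $U_zy\in\cY_{\varepsilon_1-b,\varepsilon_2-b}$. The only difference is cosmetic: the paper leaves the monotonicity/domain bookkeeping implicit, while you spell it out.
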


\begin{proof}
	Since $y\in \cY_{\varepsilon_1,\varepsilon_2}$, we have
	$F_y \in \cO_{\partial}(\overline{\cS}_{\varepsilon_1,\varepsilon_2},\cY)$
	with $F_y(s)=U_sx$ for all $s\in\R$.  
	
	For each $t\in\R$ both functions $U_tF_y(\cdot)$ and $F_y(t+\cdot)$ are
	continuous on $\overline{\cS}_{\varepsilon_1,\varepsilon_2}$,
	weakly holomorphic on the interior, 
	and their restrictions to $\R$ are equal to the function
	$s\mapsto U_tU_sy=U_{t+s}y$.  
	Hence, for every $z\in\overline{\cS}_{\varepsilon_1,\varepsilon_2}$ and $t\in\R$, we have 
	$U_tF_y(z)=F_y(t+z)$, that is, 
	\begin{equation}\label{holomext_fact_prf_eq1}
		U_tU_zy=F_y(t+z). 
	\end{equation} 
	This shows that, for $z\in\overline{\cS}_{\varepsilon_1,\varepsilon_2}$, 
	the function $\cS_{\varepsilon_1-b,\varepsilon_2-b}\to\cY$, 
	$w\mapsto F_y(w+z)$ is an $\cO^w_\partial$-extension of  
	the orbit map $\R\to\cX$, $t\mapsto U_tU_zy$. 
	Consequently $U_z x\in\cY_{\varepsilon_1-b,\varepsilon_2-b}$ 
	and $U_w U_zy=U_{w+z}y$ for all 	$w\in\cS_{\varepsilon_1-b,\varepsilon_2-b}$.
\end{proof}

\begin{prop}
	\label{holomext_prop}
	The following assertions hold: 
	\begin{enumerate}[{\rm(i)}]
		\item\label{holomext_prop_item1}
		For every $z\in\C$, the linear operator $U_z\colon\cD(U_z)\to\cY$ is injective, 
		its image equals $\cD(U_{-z})$, 
		and we have $U_z^{-1}=U_{-z}\colon \cD(U_{-z})\to\cY$. 
		\item\label{holomext_prop_item2} 
		If $z,w\in\C$ and $\Im z$ and $\Im w$ have the same sign,
		then $U_wU_z=U_{w+z}$. 
	\end{enumerate}
\end{prop}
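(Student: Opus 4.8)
The plan is to prove both parts by exploiting the uniqueness of holomorphic extensions (Riemann–Schwarz) together with Lemma~\ref{holomext_fact}, which already contains the essential semigroup-type information.

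\emph{Part (i).} First I would treat the case $\Im z > 0$; the case $\Im z < 0$ is symmetric (or follows by applying the $\Im z > 0$ case to $-z$), and $\Im z = 0$ is trivial since $U_z = U_s \in \GL(\cY)$ with inverse $U_{-s}$. So suppose $\Im z = b > 0$ and $y \in \cD(U_z) = \cY_{0,b}$. Writing $z = a + \ie b$, Lemma~\ref{holomext_fact} (applied with $\eps_1 = 0$, $\eps_2 = b$) gives $U_z y \in \cY_{-b,0} = \cD(U_{-z})$ (note $-z = -a - \ie b$ has $\Im(-z) = -b < 0$), so the image of $U_z$ is contained in $\cD(U_{-z})$. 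Moreover, the same lemma gives $U_w U_z y = U_{w+z} y$ for $w \in \cS_{-b,0}$; since $-z \in \overline{\cS}_{-b,0}$, I would like to put $w = -z$ and conclude $U_{-z} U_z y = U_0 y = y$. The point $-z$ lies on the boundary of $\cS_{-b,0}$, not the interior, so I cannot directly invoke the formula; instead I would argue as in the proof of Lemma~\ref{holomext_fact}: the two maps $w \mapsto F_{U_z y}(w)$ and $w \mapsto F_y(w + z)$ are both in $\cO^w_\partial(\overline{\cS}_{-b,0}, \cY)$, they agree on $\R$ (both equal $t \mapsto U_{t+z} y$ by \eqref{holomext_fact_prf_eq1}), hence they agree on all of $\overline{\cS}_{-b,0}$ by uniqueness of the extension; evaluating at $w = -z$ gives $U_{-z} U_z y = F_y(0) = y$. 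By symmetry (interchanging the roles of $z$ and $-z$, using that $U_{-z} y' \in \cD(U_z)$ for $y' \in \cD(U_{-z})$) we get $U_z U_{-z} = \id_{\cD(U_{-z})}$, so $\cD(U_{-z})$ is exactly the image of $U_z$, the map $U_z$ is injective, and $U_z^{-1} = U_{-z}$.

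\emph{Part (ii).} Assume $\Im z$ and $\Im w$ have the same sign; say both are $\ge 0$ (the case of nonpositive imaginary parts is symmetric). Write $z = a + \ie b$ with $b \ge 0$, and let $y \in \cD(U_{w+z})$. First I need to see that the composition $U_w U_z$ makes sense: if $y \in \cD(U_z) \cap \cD(U_{w+z})$ — and in fact I should set up the domains carefully — Lemma~\ref{holomext_fact} tells me $U_z y \in \cY_{-b, \Im z - b}$ wait, more precisely with $\eps_1 = 0, \eps_2 = \Im(w+z)$ one gets for $y \in \cY_{0,\Im(w+z)}$ that $U_z y \in \cY_{-b, \Im(w+z) - b}$, and since $\Im w = \Im(w+z) - b \ge 0$ while $-b \le 0$, the point $w$ (which has $\Im w \ge 0$, $\Im w \le \Im(w+z) - b$) lies in $\overline{\cS}_{-b, \Im(w+z)-b}$. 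Then Lemma~\ref{holomext_fact} gives $U_w U_z y = U_{w+z} y$ whenever $w$ is in the \emph{open} strip; for the boundary case I again use the uniqueness argument: $w \mapsto F_{U_z y}(w)$ and $w \mapsto F_y(w+z)$ agree on $\R$ by \eqref{holomext_fact_prf_eq1}, hence on the closed strip, and evaluating at $w$ yields $U_w U_z y = F_y(w+z) = U_{w+z} y$. Thus $\cD(U_{w+z}) \subseteq \cD(U_w U_z)$ and the two operators agree there; I should also check the reverse inclusion $\cD(U_w U_z) \subseteq \cD(U_{w+z})$, i.e. that if $y \in \cD(U_z)$ and $U_z y \in \cD(U_w)$ then $y \in \cD(U_{w+z})$, which follows by concatenating the extension $F_y$ on $\overline{\cS}_{0,b}$ with the translated extension of the orbit map of $U_z y$ on $\overline{\cS}_{0, \Im w}$ — they match up on the common boundary line $\Im = b$ by the uniqueness/edge-of-the-wedge type gluing, producing $F_y \in \cO^w_\partial(\overline{\cS}_{0, \Im(w+z)}, \cY)$.

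\emph{Main obstacle.} The routine semigroup manipulations are not the difficulty; the delicate point is the \emph{gluing step} in part (ii) — showing that an $\cO^w_\partial$-extension on $\overline{\cS}_{0,b}$ and one on the shifted strip $\overline{\cS}_{0,\Im w}$ (coming from $U_z y$) fit together to give a single weakly-holomorphic-in-the-interior extension on the union $\overline{\cS}_{0,\Im(w+z)}$. Continuity at the seam $\{\Im = b\}$ is immediate, but one must verify weak holomorphy \emph{across} the seam; this is where the Riemann–Schwarz / Morera-type argument (already invoked in the remark after Definition~\ref{holomext_def} and in the proof of Lemma~\ref{holomext_fact}) does the work, applied to each scalar function $\alpha \circ F$. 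I expect this to be stated as a short appeal to uniqueness of analytic continuation together with the fact that a continuous function on a strip that is holomorphic off a line is holomorphic everywhere (by Morera). Everything else reduces to bookkeeping about signs of imaginary parts and which closed strip contains which point.
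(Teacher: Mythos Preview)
Your proposal is correct and follows essentially the same route as the paper: Lemma~\ref{holomext_fact} for the forward semigroup identities, and a gluing argument across the seam $\{\Im = b\}$ (which the paper dispatches by citing \cite[no.~42, Lemme]{Ch90}) for the reverse domain inclusion in~(ii). Two small differences worth noting: the paper proves injectivity in~(i) directly, by observing that $U_zy=0$ forces $F_y$ to vanish on the line $z+\R$ and hence identically, whereas you deduce it from the two-sided inverse relation $U_{-z}U_z=\id$; and you are more explicit than the paper about the boundary issue in applying Lemma~\ref{holomext_fact} at $w=-z$ or $w=w_0$ (the lemma is stated for the open strip, but its proof in fact identifies $F_{U_zy}(w)=F_y(w+z)$ on the closed strip, which is what both you and the paper are really using).
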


\begin{proof}
	\eqref{holomext_prop_item1}
	Let $y\in\cD(U_z)$ with $U_zy=0$. 
	We apply Lemma~\ref{holomext_fact} for the pair $(\varepsilon_1,\varepsilon_2)=(0,\Im z)$ if $\Im z\ge 0$, and $(\varepsilon_1,\varepsilon_2)=(\Im z,0)$  if $\Im z\le 0$. 
	For $w:=t\in\R$ we then obtain $U_{t+z}y=U_tU_zy=0$. 
	Thus the function
	$F_y\in \cO_\partial^w(\overline{\cS}_{\varepsilon_1,\varepsilon_2},\cY)$ 
	satisfies $F_y(z+t)=0$ for every $t\in\R$, hence $F_y=0$. 
	Therefore $y=F_y(0)=0$. 
	
	If we apply Lemma~\ref{holomext_fact} for the pair $(\varepsilon_1,\varepsilon_2)$ as above and $w:=-z$, for every $y\in\cD(U_z)$ we obtain $U_zy\in\cD(U_{-z})$ and $U_{-z}U_zy=y$. 
	That is, we have the inclusion of unbounded operators $U_z\subseteq U_{-z}^{-1}$, 
	which is equivalent to $U_z^{-1}\subseteq U_{-z}$. 
	Replacing $z$ by $-z$, we obtain the assertion. 
	
	\nin \eqref{holomext_prop_item2}
	For all $z_0,w_0\in\C$ with $\Im z_0$ and $\Im w_0$ having the same sign, 
	we must prove that 
	\begin{equation}
		\label{holomext_prop_proof_eq1}
		\cD(U_{w_0+z_0})=\{y\in\cD(U_{z_0}): U_{z_0}y\in\cD(U_{w_0})\}
	\end{equation} and $U_{w_0}U_{z_0}y=U_{w_0+z_0}y$ for every $y\in\cD(U_{w_0+z_0})$. 
	By~\eqref{holomext_prop_item1}
	we may w.l.o.g.\ assume  that $0\le \Im z_0$ and $0\le \Im w_0$. 
	
	An application of  Lemma~\ref{holomext_fact} for the pair
	$(\varepsilon_1,\varepsilon_2)=(0,\Im w_0+\Im z_0)$ and $y\in\cD(U_{w_0+z_0})$ 
	shows that $U_{z_0}y\in \cY_{-\Im z_0,\Im w_0}\subseteq\cY_{0,\Im w_0}=\cD(U_{w_0})$ and $U_{w_0}U_{z_0}y=U_{w_0+z_0}y$. 
	This proves $\subseteq$ in \eqref{holomext_prop_proof_eq1}.
	
	Conversely, let $y\in\cD(U_{z_0})$ with $U_{z_0}y\in\cD(U_{w_0})$. 
	By the inclusion $\subseteq$ in \eqref{holomext_prop_proof_eq1}, we have $y\in\cD(U_{z_0})\subseteq\cD(U_{z})$ if $0\le\Im z\le\Im z_0$ 
	and 
	$U_{z_0}y\in\cD(U_{w_0})\subseteq\cD(U_w)$ if $0\le \Im w\le \Im w_0$. 
	Hence we may define 
	\begin{align*}
		& F  \colon
	\overline{\cS}_{0,\Im(w_0+z_0)}
	\to\cY, \\
	\quad 
	& F(z) :=\begin{cases}
		F_{U_{z_0}y}(z-z_0)=U_{z-z_0}U_{z_0}y& \text{ if }\Im z_0\le \Im z\le \Im(w_0+z_0),\\
		F_y(z)=U_zy  & \text{ if }0\le \Im z\le \Im z_0, 
	\end{cases}
\end{align*}
	This function is well-defined, because, for
	$\Im z=\Im z_0$ and ${s:=z-z_0}$, the relation
	$U_sU_{z_0}y=U_zy$ by Lemma~\ref{holomext_fact}. 
	Since $F_{U_{z_0}y}\in \cO^w_\partial(\overline{\cS}_{0,\Im w_0},\cY)$ and 
	$F_y\in\cO^w_\partial(\overline{\cS}_{0,\Im z_0},\cY)$, 
	the function $F$ is continuous. 
	Moreover, $F$ is weakly holomorphic on the open strip $\cS_{0,\Im (w_0+z_0)}$ by an application of \cite[no.~42, Lemme]{Ch90}. 
	Thus $F\in\cO_\partial^w(\overline{\cS}_{0,\Im (w_0+z_0)},\cY)$  
	and, taking into account that $F(t)=U_ty$ for every $t\in\R$, we obtain $y\in\cD(U_{w_0+z_0})$. 
	This completes the proof of \eqref{holomext_prop_proof_eq1}. 
\end{proof}

\section{Operator calculus 
  with functions of Gaussian growth}
\label{Sect3}

Assuming, in addition, that the ambient Hausdorff
locally convex space $\cY$ is \emph{sequentially} complete,
i.e., every 
Cauchy sequence 
is convergent, the exponential
growth condition~\eqref{agrowth_eq1} in Definition~\ref{def:expgro} below 
becomes strong enough to allow operator calculus with
Gaussian functions on $\R$. This allows us in particular
to approximate general vectors by entire ones
(Corollary~\ref{agrowth_cor}) and to
establish some approximation properties of holomorphic extensions
of one-parameter groups (Corollary~\ref{F-tech_cor2}).
Here the role of sequential completeness is to ensure the existence
of weak integrals 
of functions of one real variable 
in order to develop a functional calculus for one-parameter groups satisfying suitable continuity and growth conditions
(Proposition~\ref{agrowth}). 

\begin{lem}
	\label{impr}
	{\rm(Vector-valued improper integrals)}
	Let $\cY$ be a sequentially complete,
	Hausdorff, locally convex space. 
	We write $L^1_c(\R,\cY)$ for the space of continuous functions
	$f \colon \R\to \cY$ such that,
	for every continuous seminorm $p$ on $\cY$
	we have
	$\int_\R p(f(t))\de t<\infty$.
	Then the following assertions hold for $f \in L^1_c(\R,\cY)$: 
	\begin{enumerate}[{\rm(i)}]
		\item\label{impr_item1} 
		The weak improper integral $\int_\R f(t)\de t$
		exists in~$\cY$.
		It is uniquely determined by
		$\int_\R \alpha(f(t))\de t = \alpha\big(\int_\R f(t)\de t\big)$
		for all $\alpha \in \cY^\sharp$ and coincides with
		$\lim_{n\to\infty}\int_{[-n,n]} f(t)\de t$ 
		\item\label{impr_item2} 
		We have
		$p\big(\int_\R f(t)\de t\big)\le\int_\R p(f(t))\de t$ for every continuous seminorm~$p$ on $\cY$. 
	\end{enumerate}	
\end{lem}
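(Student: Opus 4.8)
The plan is to realize $\int_\R f(t)\,\de t$ as the limit of Riemann integrals over the intervals $[-n,n]$, so the first step is to record that, for any continuous map $g\colon[a,b]\to\cY$, the Riemann integral $\int_{[a,b]}g(t)\,\de t$ exists in $\cY$ under mere sequential completeness. Here $g$ is uniformly continuous on the compact interval $[a,b]$, so for every continuous seminorm $p$ and every $\eps>0$ there is $\delta>0$ with $p(g(s)-g(t))<\eps$ whenever $|s-t|<\delta$; comparing two tagged partitions of mesh $<\delta$ with a common refinement then shows that the Riemann sums along any sequence of partitions whose mesh tends to $0$ form a Cauchy sequence in $\cY$, and interleaving two such sequences shows the limit is independent of the chosen partitions. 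Sequential completeness yields the limit, which we denote $\int_{[a,b]}g(t)\,\de t$; passing to the limit in the obvious identities for Riemann sums gives, for all $\alpha\in\cY^\sharp$ (recall that the weights of the Riemann sums are real) and all continuous seminorms $p$,
\[
\alpha\Bigl(\int_{[a,b]}g(t)\,\de t\Bigr)=\int_{[a,b]}\alpha(g(t))\,\de t,
\qquad
p\Bigl(\int_{[a,b]}g(t)\,\de t\Bigr)\le\int_{[a,b]}p(g(t))\,\de t,
\]
together with finite additivity over adjacent intervals. (Alternatively this existence statement can simply be quoted from the literature.)

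Next I would fix $f\in L^1_c(\R,\cY)$, put $I_n:=\int_{[-n,n]}f(t)\,\de t$, and show that $(I_n)_{n\in\N}$ is a Cauchy sequence: for $m>n$, additivity gives $I_m-I_n=\int_{[-m,-n]}f(t)\,\de t+\int_{[n,m]}f(t)\,\de t$, so for every continuous seminorm $p$
\[
p(I_m-I_n)\le\int_{n\le|t|\le m}p(f(t))\,\de t\le\int_{|t|\ge n}p(f(t))\,\de t,
\]
and the right-hand side tends to $0$ as $n\to\infty$ because $\int_\R p(f(t))\,\de t<\infty$ by the definition of $L^1_c(\R,\cY)$. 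Sequential completeness then produces $I:=\lim_{n\to\infty}I_n\in\cY$, which is precisely the limit appearing in the last clause of~(i).

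It remains to identify $I$ with the weak improper integral and to prove (ii). Given $\alpha\in\cY^\sharp$, continuity furnishes a continuous seminorm $p$ with $|\alpha|\le p$, so $\alpha\circ f$ is absolutely integrable on $\R$, and continuity of $\alpha$ together with the compact-interval identity yield $\alpha(I)=\lim_n\alpha(I_n)=\lim_n\int_{[-n,n]}\alpha(f(t))\,\de t=\int_\R\alpha(f(t))\,\de t$. Since $\cY$ is Hausdorff, $\cY^\sharp$ separates its points (Hahn--Banach, followed by complex conjugation to pass from linear to antilinear functionals), so $I$ is the unique vector of $\cY$ with this property; this gives~(i). For~(ii), the compact-interval estimate gives $p(I_n)\le\int_\R p(f(t))\,\de t$ for all $n$, and letting $n\to\infty$ with $p$ continuous yields $p(I)\le\int_\R p(f(t))\,\de t$. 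The only genuinely delicate point is the very first step --- existence of the compact-interval integral assuming sequential rather than quasi-completeness --- which is why the Cauchy-sequence argument above is spelled out; everything after that is routine manipulation of seminorms.
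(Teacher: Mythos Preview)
Your proof is correct and follows essentially the same route as the paper's: construct the compact-interval integrals $I_n=\int_{[-n,n]}f(t)\,\de t$, show $(I_n)$ is Cauchy via the tail estimate $p(I_m-I_n)\le\int_{|t|\ge n}p(f(t))\,\de t$, invoke sequential completeness for the limit, and pass to the limit in the seminorm inequality and in the pairing with $\alpha\in\cY^\sharp$. The only difference is that you spell out the existence of the Riemann integral on $[a,b]$ under sequential completeness, whereas the paper simply cites \cite[Prop.~1.2.3]{He89} for this step.
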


\begin{proof}
	\eqref{impr_item1} 
	For every $n \in \N$, the integral
	$I_n := \int_{-n}^n \ f(t)\de t = \int_{[-n,n]} f(t)\, \de t$
	exists in $\cY$ because $\cY$ is sequentially complete and the
	integral can be obtained as a limit of Riemann sum
	(cf.~\cite[Prop.~1.2.3]{He89}).
	We have to show that the sequence $(I_n)_{n \in \N}$
	converges in~$\cY$.
	
	Let $\varepsilon>0$ be
	arbitrary and $p$ be any continuous seminorm on $\cY$. 
	Since $\int_\R p(f(t))\de t<\infty$, there exists $N\in\N$ 
	such that 
\[ p\Big(\int_{\R \setminus [-N,N]} f(t)\de t\Big)\le\int_{\R \setminus [-N,N]}
	p(f(t))\de t<\varepsilon.\] 
	(See for instance \cite[Lemma 1.1.9]{GN} for the first of these inequalities.)
	Then, for any $N < n \leq m$, 
	we have 
	\[p(I_m - I_n) 
	=p\Bigl(\int_{[-m,-m] \setminus [-n,n]} f(t)\de t\Bigr)\le \varepsilon\]
	hence $(I_n)_{n \in \N}$  is a Cauchy net, hence convergent.
	It follow that 
        \[ \int_\R f(t)\de t:=\lim_{n \to \infty} I_n \]
        exists in~$\cY$. 
	
	
	\nin \eqref{impr_item2}
	For every continuous seminorm $p$ on $\cY$ and every 
	$n \in \N$, we have 
	\[ p\Bigl(\int_{[-n,n]} f(t)\de t\Bigr)\le\int_{[-n,n]} p(f(t))\de t
	\le\int_\R p(f(t))\de t.\]  
	Passing to limit in the above inequality, we get 
	$p\big(\int_\R f(t)\de t\big)\le\int_\R p(f(t))\de t$
	for every continuous seminorm~$p$ on $\cY$.
	Finally, the universal property of the weak integral
	is obtained by a passage to the limit.
\end{proof}

The operator calculus with Gaussian functions, prepared by the following
lemma, will be used below to obtain a rich supply of entire vectors. 
In the following, we write $\cC(X,Y)$ for the set of continuous
functions $f : X \to Y$.

\begin{lem}
	\label{lem:gaussians}
	For every function $\phi \colon \R \to \C$, we define
	\[ p_a(\phi) :=  \sup \{ |\varphi(t)| \ee^{t^2/a} \colon t \in \R\}\in [0,\infty] \]
	and
	\[\cA = \bigcup_{a > 0} \cA_a \subeq \cC(\R,\C), \quad \mbox{ where } \quad
	\cA_a := \{ \phi  \in \cC(\R,\C)\colon p_a(\phi) < \infty\}.\]
	We consider the Gaussians 
	\[ \gamma_{a,z}\colon \R\to\C, \quad
	\gamma_{a,z}(t):=\frac{1}{\sqrt{\pi a}}\ee^{-(t-z)^2/a}
	\quad \mbox{ for }  \quad a > 0, z\in\C.\] 
	Then the following assertions hold: 
	\begin{enumerate}[\rm(i)] 
		\item
		\label{lem:gaussians_item1}
		$\gamma_{a,z} * \gamma_{b,w} = \gamma_{a+b,z + w}$ for
		$a,b \in \R, z,w \in \C$. 
		
		\item
		\label{lem:gaussians_item2}
		$p_b \leq p_a$ for $a \leq b$ and
		\[ p_{a_1 + a_2}(\phi_1 * \phi_2)
		\leq \frac{\sqrt{\pi} \sqrt{a_1 a_2}}{\sqrt{a_1 + a_2}}
		p_{a_1}(\phi_1) p_{a_2} (\phi_2).\] 
		\item
		\label{lem:gaussians_item3}
		$\cA$ is a $*$-subalgebra of the convolution
		Banach $*$-algebra $L^1(\R)$
		and the inclusions of the normed spaces
		$(\cA_a,p_a) \to L^1(\R)$ are continuous. 
		\item
		\label{lem:gaussians_item4}
		$\gamma_{a,z}\in\cA_b$ for $0 < a < b, z \in\C$,  
		and the maps $\C \to (\cA_b, p_b), z \mapsto \gamma_{a,z}$,
		are holomorphic. 
	\end{enumerate}
\end{lem}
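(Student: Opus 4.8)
The plan is to establish the four assertions essentially by explicit Gaussian computations, using the classical convolution identity for Gaussians as the backbone and then reading off the norm estimates from it. First, for (i), I would compute the convolution $\gamma_{a,z}*\gamma_{b,w}$ directly: completing the square in the integral $\frac{1}{\pi\sqrt{ab}}\int_\R \ee^{-(s-z)^2/a}\ee^{-(t-s-w)^2/b}\,\de s$ leads, after the substitution isolating the Gaussian in $s$, to $\frac{1}{\sqrt{\pi(a+b)}}\ee^{-(t-z-w)^2/(a+b)}=\gamma_{a+b,z+w}(t)$. For real parameters this is the standard heat-kernel semigroup identity; the only subtlety is that $z,w$ are complex, but the Gaussian integral $\int_\R \ee^{-\alpha s^2 + \beta s}\,\de s = \sqrt{\pi/\alpha}\,\ee^{\beta^2/4\alpha}$ remains valid for $\beta \in \C$ (and $\alpha>0$) by a contour shift, so the computation goes through verbatim.

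For (ii): the monotonicity $p_b\le p_a$ when $a\le b$ is immediate since $\ee^{t^2/b}\le \ee^{t^2/a}$ for all $t$. For the convolution estimate I would bound, for $t\in\R$,
\[
|\phi_1*\phi_2(t)|\ee^{t^2/(a_1+a_2)}
\le \int_\R |\phi_1(s)|\,|\phi_2(t-s)|\,\ee^{t^2/(a_1+a_2)}\,\de s,
\]
then insert $|\phi_1(s)|\le p_{a_1}(\phi_1)\ee^{-s^2/a_1}$ and $|\phi_2(t-s)|\le p_{a_2}(\phi_2)\ee^{-(t-s)^2/a_2}$, so the integrand is controlled by $p_{a_1}(\phi_1)p_{a_2}(\phi_2)$ times $\ee^{-s^2/a_1-(t-s)^2/a_2+t^2/(a_1+a_2)}$. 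The exponent, by completing the square in $s$, equals $-\frac{a_1+a_2}{a_1a_2}\bigl(s-\tfrac{a_1 t}{a_1+a_2}\bigr)^2$ — the $t$-dependence cancels exactly, which is the whole point of the normalization $t^2/(a_1+a_2)$ — and integrating the residual Gaussian in $s$ gives $\sqrt{\pi a_1 a_2/(a_1+a_2)}$, yielding the claimed bound. This same cancellation is, incidentally, just (i) with $\phi_j=\gamma_{a_j,0}$ up to constants, so one could alternatively phrase (ii) as a consequence of (i), but the direct estimate is cleanest.

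For (iii): each $\phi\in\cA_a$ satisfies $|\phi(t)|\le p_a(\phi)\ee^{-t^2/a}$, so $\|\phi\|_{L^1}\le p_a(\phi)\int_\R \ee^{-t^2/a}\,\de t = p_a(\phi)\sqrt{\pi a}$, giving continuity of $(\cA_a,p_a)\hookrightarrow L^1(\R)$; combined with (i)/(ii) this shows $\cA$ is closed under convolution, and it is closed under $\phi\mapsto\overline{\phi(-\cdot)}$ since $p_a$ is invariant under both conjugation and reflection, so $\cA$ is a $*$-subalgebra of $L^1(\R)$. Finally, for (iv): $\gamma_{a,z}\in\cA_b$ for $0<a<b$ because $|\gamma_{a,z}(t)|\ee^{t^2/b}$ has a Gaussian factor $\ee^{-\Re[(t-z)^2]/a+t^2/b}$ whose leading exponent is $-(1/a-1/b)t^2<0$, hence bounded. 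For holomorphy of $z\mapsto\gamma_{a,z}$ as a $(\cA_b,p_b)$-valued map I would verify complex differentiability: the difference quotient $\frac{1}{w}(\gamma_{a,z+w}-\gamma_{a,z})$ converges in $p_b$-norm to $\partial_z\gamma_{a,z}=\frac{2(t-z)}{a}\gamma_{a,z}$, using the elementary estimate $|\ee^{u+h}-\ee^u-h\ee^u|\le \tfrac12|h|^2\ee^{|u|+|h|}$ applied pointwise with $u=-(t-z)^2/a$ and absorbing the polynomial and exponential surplus into the $b>a$ slack. The main obstacle, and where one must be a little careful, is exactly this last point: keeping the Gaussian tails under control uniformly in $t$ while differentiating, i.e.\ making sure the error terms are dominated by a fixed $\ee^{-t^2/b}$-type bound — this is routine but requires choosing an intermediate radius, say $b'$ with $a<b'<b$, to split off a convergent Gaussian and leave a bounded remainder.
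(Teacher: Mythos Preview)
Your proof plan is correct and for (i)--(iii) essentially matches the paper's argument; in particular, the paper carries out exactly the alternative you mention for (ii), bounding $|\phi_j|\le p_{a_j}(\phi_j)\sqrt{\pi a_j}\,\gamma_{a_j,0}$ and then invoking (i). One small point you pass over in (iii): since $\cA_a$ is by definition a subspace of $\cC(\R,\C)$, closure under convolution requires checking that $\phi_1*\phi_2$ is continuous, which the paper does via the $L^1$-continuity of translations; this is routine but worth a sentence.

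The one genuine difference is in (iv). You propose to verify complex differentiability of $z\mapsto\gamma_{a,z}$ in the $p_b$-norm by a direct remainder estimate, using the $b>a$ slack to absorb the polynomial and sub-Gaussian error factors (your estimate should read $|e^u|$ rather than $e^{|u|}$, but the argument is sound). The paper instead observes that the map is locally bounded in $(\cA_b,p_b)$ and that each evaluation $z\mapsto\gamma_{a,z}(t)$ is holomorphic, and then invokes a weak-to-strong holomorphy criterion for Banach-space-valued functions (\cite[Cor.~A.III.3]{Ne99}). The paper's route is shorter and avoids any bookkeeping with intermediate radii; your hands-on approach is self-contained and makes the mechanism (Gaussian slack absorbing polynomial growth from differentiation) fully explicit.
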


\begin{proof} 
	\eqref{lem:gaussians_item1}
	This relation easily reduces to
	$\gamma_{a,0} * \gamma_{b,0} = \gamma_{a+ b,0}$, which is well-known.
	
	\nin 
	\eqref{lem:gaussians_item2}
	Clearly, $p_b \leq p_a$ for $a \leq b$.
	If $c_j := p_{a_j}(\phi_j) < \infty$, then
	$\phi_j \leq c_j \sqrt{\pi a_j} \gamma_{a_j,0}$ implies
	\[ |(\phi_1 * \phi_2)(x)| \leq c_1 c_2 \sqrt{\pi a_1} \sqrt{\pi a_2}
	(\gamma_{a_1,0} * \gamma_{a_2,0})(x) 
	=c_1 c_2 \sqrt{\pi a_1} \sqrt{\pi a_2} \gamma_{a_1+a_2,0}(x),\]
	so that
	\[ p_{a_1+a_2}(\phi_1 * \phi_2) \leq p_{a_1}(\phi_1) p_{a_2}(\phi_2)
	\frac{\sqrt{\pi a_1}     \sqrt{\pi a_2}}{\sqrt{\pi (a_1 + a_2)}}.\]
	
	\nin 
	\eqref{lem:gaussians_item3}
	It is clear that $\cA$ is a linear subspace of $L^1(\R)$ and for every $\varphi\in\cA$
	the function defined by
	$\varphi^*(x) := \oline{\phi(-x)}$ is also contained 
	in $\cA$. 
	If $\varphi_1,\varphi_2\in\cA$, then 
	\[\vert(\varphi_1\ast\varphi_2)(t)-(\varphi_1\ast\varphi_2)(t')\vert
	\le\int_\R\vert\varphi_1(t-s)-\varphi_1(t'-s)\vert\cdot\vert\varphi_2(s)\vert\de s.\]
	Using the fact that $\varphi_2$ is bounded and the mapping $\R\to L^1(\R)$, $t\mapsto \varphi_1(t-\,\cdot)$ is continuous,
	we thus obtain $\varphi_1\ast\varphi_2\in\cC(\R,\C)$.
	From (ii) we further derive that
	$\cA_{a_1} * \cA_{a_2} \subeq \cA_{a_1 + a_2}$, so that 
	$\cA$ is a $*$-subalgebra of $L^1(\R)$.
	We also note that, for $\phi \in \cA_a$, 
	\[ \Vert\phi\Vert_1 \leq p_a(\phi) \int_\R \ee^{-t^2/a}\, dt
	= p_a(\phi) \sqrt{\pi a},\]
	so that the inclusions $(\cA_a, p_a) \to L^1$ are continuous. 
	
	\nin 
	\eqref{lem:gaussians_item4}
	For $b > a$, we have $p_b(\gamma_{a,z}) < \infty$
	for all $z \in \C$. It follows in particular that
	$\gamma_{a,z} \in \cA_b \subeq \cA$.
	That the map $F \colon \C \to \cA_b, F(z) = \gamma_{a,z}$ is holomorphic
	with respect to the norm $p_b$ follows by its local boundedness
	and the holomorphy of all functions
	$z \mapsto F(z)(x)$, $x \in \R$
	(\cite[Cor.~A.III.3]{Ne99}).
\end{proof}

\begin{defn}
	\label{def:expgro}
	Let $\cY$ be a 
	sequentially complete, 
	Hausdorff, locally convex space 
	and   $(U_t)_{t\in\R}$  a one-parameter group
	in $\GL(\cY)$ satisfying the following conditions: 
	\begin{enumerate}
		\item\label{agrowth_hyp1} 
		For every $y\in\cY$, the orbit map $U^y\colon \R\to\cY$, $U^y(t):=U_ty$, is continuous. 
		\item\label{agrowth_hyp2}  
		There exists a set
		of  seminorms $\cP$ defining the topology of $\cY$ such that 
		\begin{equation}\label{agrowth_eq1}
(\forall p\in\cP)\, (\exists q\in\cP,  B,C > 0)\, (\forall t\in\R)\, (\forall y
 \in \cY)\quad p(U_ty)\le C\ee^{B\vert t\vert}q(y). 
		\end{equation}
	\end{enumerate}
	We then say that $(U,\cY)$ has {\it exponential growth}
	or is a {\it one-parameter group of exponential growth}. 
\end{defn}

\begin{lem} \label{lem:comp-cont}
	\begin{footnote}
		{We thank the referee for this useful observation that simplified
			our assumptions considerably.}
	\end{footnote}
	If $(U,\cY)$ has exponential growth,
	then $\lim_{t \to 0} U_t = \1$ holds uniformly on compact subsets of $\cY$.
\end{lem}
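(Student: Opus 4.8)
The plan is to derive the claim from the pointwise continuity of the orbit maps together with a Banach--Steinhaus type equicontinuity argument, the relevant instance being recorded in Remark~\ref{lintop}\eqref{lintop_item2}: on an equicontinuous set of operators the topology of pointwise convergence and that of compact convergence coincide.

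First I would check that $E := \{U_t \colon |t| \le 1\} \subeq \cL(\cY)$ is equicontinuous. Given $p \in \cP$, pick $q \in \cP$ and $B, C > 0$ as in \eqref{agrowth_eq1}; then $p(U_t y) \le C\ee^{B} q(y)$ for all $y \in \cY$ and all $|t| \le 1$, and since $\cP$ generates the topology of $\cY$, this is precisely equicontinuity of $E$. In particular the family $\{U_t - \1 \colon |t| \le 1\}$ is equicontinuous as well, since $p((U_t - \1)y) \le C\ee^{B} q(y) + p(y)$. Next, by condition~\eqref{agrowth_hyp1} of Definition~\ref{def:expgro}, for every $y \in \cY$ the orbit map $t \mapsto U_t y$ is continuous at $0$ with value $U_0 y = y$; equivalently, the map $[-1,1] \to E$, $t \mapsto U_t$, is continuous at $0$ when $E$ carries the topology of pointwise convergence, and its value there is $\1 = U_0 \in E$. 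By Remark~\ref{lintop}\eqref{lintop_item2} the topology of pointwise convergence and that of compact convergence agree on the equicontinuous set $E$, so the same map is continuous at $0$ for the topology of compact convergence. This is exactly the assertion that $\lim_{t \to 0} U_t = \1$ uniformly on compact subsets of $\cY$.

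If one prefers to avoid invoking Remark~\ref{lintop}\eqref{lintop_item2} and argue directly: fix a compact set $K \subeq \cY$, a continuous seminorm $p$ on $\cY$, and $\eps > 0$. The equicontinuity of $\{U_t - \1 \colon |t| \le 1\}$ yields a continuous seminorm $q$ with $p((U_t - \1)z) \le q(z)$ for all $z \in \cY$ and all $|t| \le 1$. Cover $K$ by finitely many translates $y_1 + W, \dots, y_n + W$ with $W := \{z \in \cY \colon q(z) < \eps/2\}$, and use the continuity at $0$ of the $n$ orbit maps $t \mapsto U_t y_i$ to find $\delta \in (0,1]$ such that $p(U_t y_i - y_i) < \eps/2$ for all $|t| < \delta$ and all $i$. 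Then, for $y \in K$ chosen with $y \in y_i + W$ and for $|t| < \delta$,
\[ p(U_t y - y) \le p\big((U_t - \1)(y - y_i)\big) + p(U_t y_i - y_i) \le q(y - y_i) + \eps/2 < \eps. \]
There is no genuine obstacle here; the only substantive point is upgrading the pointwise convergence from \eqref{agrowth_hyp1} to uniform convergence on compacta, and the growth bound \eqref{agrowth_eq1} supplies exactly the equicontinuity that makes this upgrade automatic.
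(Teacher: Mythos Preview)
Your proof is correct and follows the same approach as the paper: derive equicontinuity of $\{U_t : |t|\le r\}$ from the growth estimate \eqref{agrowth_eq1}, then invoke Remark~\ref{lintop}\eqref{lintop_item2} to upgrade pointwise convergence (from condition~\eqref{agrowth_hyp1}) to uniform convergence on compacta. The paper's proof is the terse two-line version of your first paragraph; your additional direct argument is a welcome elaboration but not needed.
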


\begin{proof} From \eqref{agrowth_eq1} we derive that, for each $r > 0$, the
  subset 
\[ \Gamma_r := \{ U_t \colon |t| \leq r \}\subeq \cL(\cY)\] is equicontinuous. 
	Hence the assertion follows from  
	the fact that the topology of pointwise convergence and compact convergence coincide on equicontinuous subsets of $\cL(\cY)$ (Remark~\ref{lintop}\eqref{lintop_item2}). 
\end{proof}

\begin{prop}\label{agrowth} {\rm(Gaussian operator calculus)}
	If $(U,\cY)$ has exponential growth, then the following assertions hold: 
	\begin{enumerate}[{\rm(i)}]
		\item\label{agrowth_item1} 
		For every $\varphi\in\cA$ there exists an
		operator $U(\varphi)\in\cL(\cY)$
		satisfying                 
\[(\forall\eta\in\cY^\sharp)(\forall y\in\cY)\quad 
\langle U(\varphi)y,\eta\rangle
=\int_\R\overline{\varphi(t)}\langle U_ty,\eta\rangle\de t 
\] 
		and the mapping
		$U\colon \cA\to\cL(\cY)$, $\varphi\mapsto  U(\varphi)$, 
		is  an algebra homomorphism. 
		\item\label{agrowth_item2} 
		For every $y\in \cY$ we have 
\[ 			\lim_{a\to 0}U(\gamma_{a,0})y=y \quad
			\mbox{ in} \quad \cY, \] 
		and, for $\eta\in \cY^\sharp$, we have 
\[ 			\lim_{a\to 0}U(\gamma_{a,0})^\sharp\eta=\eta
  \quad \mbox{ in } \quad \cY^\sharp.\]
Here the first relation holds uniformly on every compact subset of~$\cY$, while the second relation holds in~$\cY^\sharp_c$. 
		\item\label{agrowth_item3} 
		For $a > 0$ and $y\in\cY$,
		the function 
		$F_{a,y}\colon\C\to\cY, F_{a,y}(z):=U(\gamma_{a,z})y$
		is  holomorphic and 
		\[ F_{a,y}(s)=U_sU(\gamma_{a,0})y
		\quad \mbox{for all } s\in\R.\]
		Similarly, for $\eta\in\cY^\sharp$
		the function $F^\sharp_{a,\eta}\colon\C\to\cY^\sharp$, 
		$F^\sharp_{a,\eta}(z):=U(\gamma_{a,\oline{z}})^\sharp\eta$
		is weakly holomorphic and 
		\[ F^\sharp_{a,\eta}(s)=U^\sharp_s U(\gamma_{a,0})^\sharp\eta
		\quad \text{ for all }\quad s\in\R.\]
	\end{enumerate}
\end{prop}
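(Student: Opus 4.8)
My plan is to derive all three assertions from the vector-valued weak integrals furnished by Lemma~\ref{impr}, fed by the growth bound~\eqref{agrowth_eq1} and the Gaussian estimates of Lemma~\ref{lem:gaussians}.

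\emph{Part (i).} For $\varphi\in\cA$, say $\varphi\in\cA_a$, I would \emph{define} $U(\varphi)y:=\int_\R\varphi(t)\,U_ty\,\de t$. This weak integral exists because the integrand is continuous by Definition~\ref{def:expgro}\eqref{agrowth_hyp1}, and for every continuous seminorm $p$, choosing $q,B,C$ as in~\eqref{agrowth_eq1} gives $p(\varphi(t)U_ty)\le C\,p_a(\varphi)\,\ee^{-t^2/a+B|t|}q(y)$, which is integrable in $t$; thus $t\mapsto\varphi(t)U_ty\in L^1_c(\R,\cY)$ and Lemma~\ref{impr} applies. Lemma~\ref{impr}\eqref{impr_item2} then yields $p(U(\varphi)y)\le C\big(\int_\R\ee^{-t^2/a+B|t|}\,\de t\big)p_a(\varphi)\,q(y)$, which at once shows $U(\varphi)\in\cL(\cY)$ and, with $a$ replaced by an arbitrary $b>0$, that $\varphi\mapsto U(\varphi)y$ is a continuous linear map $(\cA_b,p_b)\to\cY$; I will reuse this in part (iii). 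Testing against $\eta\in\cY^\sharp$ through the universal property of the weak integral, and using conjugate-linearity of the antiduality pairing in its first argument, recovers the stated formula, and linearity of $\varphi\mapsto U(\varphi)$ is immediate. For multiplicativity I would first move the continuous operator $U(\varphi_1)$ through the weak integral defining $U(\varphi_2)y$ --- legitimate because $p\circ U(\varphi_1)$ is again a continuous seminorm, so $s\mapsto\varphi_2(s)U(\varphi_1)U_sy\in L^1_c$, and both sides pair identically with each $\eta$ --- then use the group law and translation invariance of the integral to get $U(\varphi_1)U_sy=\int_\R\varphi_1(t-s)U_ty\,\de t$, and finally apply Fubini to the scalar double integral $\int_\R\!\int_\R\oline{\varphi_2(s)\varphi_1(t-s)}\,\la U_ty,\eta\ra\,\de t\,\de s$; absolute convergence follows from $|\la U_ty,\eta\ra|\le C\ee^{B|t|}q(y)$ together with $\int_\R|\varphi_1(u)|\ee^{B|u+s|}\,\de u\le\ee^{B|s|}\int_\R|\varphi_1(u)|\ee^{B|u|}\,\de u$, both $\varphi$-integrals being finite. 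Since $\varphi_1*\varphi_2\in\cA$ by Lemma~\ref{lem:gaussians}\eqref{lem:gaussians_item3}, the outcome is $\int_\R(\varphi_1*\varphi_2)(t)U_ty\,\de t=U(\varphi_1*\varphi_2)y$, because $\cY^\sharp$ separates the points of $\cY$.

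\emph{Part (ii).} Since $\gamma_{a,0}$ is real with integral $1$, $U(\gamma_{a,0})y-y=\int_\R\gamma_{a,0}(t)(U_ty-y)\,\de t$, hence $p(U(\gamma_{a,0})y-y)\le\int_\R\gamma_{a,0}(t)\,p(U_ty-y)\,\de t$ for each continuous seminorm $p$. Given a compact $K\subeq\cY$ and $\eps>0$, Lemma~\ref{lem:comp-cont} provides $\delta>0$ with $\sup_{y\in K}p(U_ty-y)<\eps$ for $|t|\le\delta$, which bounds the portion of the integral over $|t|\le\delta$ by $\eps$; on $|t|>\delta$ I would estimate $p(U_ty-y)\le M\ee^{B|t|}$ with $M:=C\sup_{y\in K}q(y)+\sup_{y\in K}p(y)<\infty$, and then the substitution $t=\sqrt a\,u$ turns the tail into $\tfrac{2M}{\sqrt\pi}\int_{\delta/\sqrt a}^\infty\ee^{-u^2+B\sqrt a\,u}\,\de u$, which for small $a$ is dominated by $\tfrac{2M}{\sqrt\pi}\int_{\delta/\sqrt a}^\infty\ee^{-u^2/2}\,\de u\to0$. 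Letting $\eps\to0$ gives $\lim_{a\to0}U(\gamma_{a,0})y=y$ uniformly on $K$. The second limit is then formal: $\la y,U(\gamma_{a,0})^\sharp\eta-\eta\ra=\la U(\gamma_{a,0})y-y,\eta\ra$ has modulus $\le p_\eta(U(\gamma_{a,0})y-y)$ for a suitable continuous seminorm $p_\eta$, and this tends to $0$ uniformly on compact subsets of $\cY$, i.e.\ in $\cY^\sharp_c$.

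\emph{Part (iii).} Fixing $a>0$ and any $b>a$, $F_{a,y}$ is the composition of the holomorphic map $\C\to(\cA_b,p_b)$, $z\mapsto\gamma_{a,z}$ (Lemma~\ref{lem:gaussians}\eqref{lem:gaussians_item4}), with the continuous linear map $\varphi\mapsto U(\varphi)y$ from part (i), hence holomorphic --- alternatively, one differentiates under the integral sign in $\int_\R\gamma_{a,z}(t)U_ty\,\de t$, the Gaussian factor dominating the growth locally uniformly in $z$. For $s\in\R$, the identity $\gamma_{a,s}(t)=\gamma_{a,0}(t-s)$ and the group law give $F_{a,y}(s)=\int_\R\gamma_{a,0}(u)U_{u+s}y\,\de u$, which equals $U(\gamma_{a,0})U_sy$ and also $U_sU(\gamma_{a,0})y$ after pulling the continuous operator $U_s$ out of the weak integral. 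For the $\sharp$-version, the identity $\oline{\gamma_{a,\oline z}(t)}=\gamma_{a,z}(t)$ yields $\la\xi,F^\sharp_{a,\eta}(z)\ra=\la U(\gamma_{a,\oline z})\xi,\eta\ra=\int_\R\gamma_{a,z}(t)\,\la U_t\xi,\eta\ra\,\de t$ for all $\xi\in\cY$, which is holomorphic in $z$, so $F^\sharp_{a,\eta}$ is weakly holomorphic; and since $U(\gamma_{a,s})=U(\gamma_{a,0})U_s$ for real $s$, taking antiduals and using $(AB)^\sharp=B^\sharp A^\sharp$ gives $F^\sharp_{a,\eta}(s)=U(\gamma_{a,s})^\sharp\eta=U^\sharp_sU(\gamma_{a,0})^\sharp\eta$.

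The only step I expect to require genuine effort is the tension between the Gaussian decay of the elements of $\cA$ and the exponential growth~\eqref{agrowth_eq1}: it underlies the $L^1_c$-membership in (i), the absolute convergence needed for Fubini in the multiplicativity argument, and --- most delicately --- the vanishing of the tail as $a\to0$ in (ii), where the approximate-identity estimate must be combined with the uniform-on-compacta continuity from Lemma~\ref{lem:comp-cont}.
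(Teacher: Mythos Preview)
Your proof is correct and follows essentially the same route as the paper: weak integrals via Lemma~\ref{impr} fed by the growth bound, Fubini for multiplicativity, the split $|t|\le\delta$ / $|t|>\delta$ for the approximate identity, and the translation identity $\gamma_{a,s}(t)=\gamma_{a,0}(t-s)$ for the orbit formula. The one pleasant variation is in~(iii), where you obtain holomorphy of $F_{a,y}$ by composing the holomorphic map $z\mapsto\gamma_{a,z}\in(\cA_b,p_b)$ from Lemma~\ref{lem:gaussians}\eqref{lem:gaussians_item4} with the continuous linear map $\varphi\mapsto U(\varphi)y$ you established in~(i); the paper instead verifies weak holomorphy of $z\mapsto\langle F_{a,y}(z),\eta\rangle$ directly and then invokes sequential completeness (Remark~\ref{rem:a.1}) to upgrade to holomorphy.
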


\begin{proof} (i) The existence of the integral defining $U(\varphi)y\in\cY$
	and satisfying the asserted relation 
	follows from Lemma~\ref{impr}  since $\cY$ is sequentially complete.
	For $\varphi_1,\varphi_2\in\cA$, $y\in\cY$, and $\eta\in\cY^\sharp$ we have 
	\allowdisplaybreaks
	\begin{align*}
&		\langle U(\varphi_1)U(\varphi_2)y,\eta\rangle	
=\int_\R\overline{\varphi_1(t)}\langle
U_tU(\varphi_2)y,\eta\rangle\de t  =\int_\R\overline{\varphi_1(t)}\langle U(\varphi_2)y,U_t^\sharp\eta\rangle\de t \\
&=\int_\R\overline{\varphi_1(t)}\int_\R\overline{\varphi_2(s)}
                                                                                                                        \langle U_sy,U_t^\sharp\eta\rangle\de s\de t
                                                                                                                        =\iint_{\R^2}\overline{\varphi_1(t)\varphi_2(s)}\langle U_{t+s}y,\eta\rangle\de s\de t \\
  &=\int_\R\overline{(\varphi_1\ast\varphi_2)(t)}\langle U_ty,\eta\rangle\de t
 =	\langle U(\varphi_1\ast\varphi_2)y,\eta\rangle, 
	\end{align*}
	hence $U(\varphi_1)U(\varphi_2)=U(\varphi_1\ast\varphi_2)$. 
	The other properties are straightforward.
	
	To see that $U(\phi)$ defines a continuous operator on $\cY$,
	let $p$ be a continuous seminorm on $\cY$.
	Then there exist $B, C > 0$ and another continuous seminorm $q$ on
	$\cY$ such that $p(U_t v)\leq C \ee^{B|t|} q(v)$ for all $t \in \R$.
	This leads to 
	\[ p(U(\phi)v) \leq
	\int_\R |\phi(t)| p(U_t v)\, \de t 
	\leq    C \underbrace{\int_\R \ee^{B|t|} |\phi(t)| \, \de t}_{< \infty} \cdot q(v).\]
	We conclude that $U(\phi)$ is a continuous operator on $\cY$.
	
	\nin \eqref{agrowth_item2}
	Let $y\in\cY$. 
	For arbitrary $p\in\cP$ we select its corresponding $q,C,B$ as in~\eqref{agrowth_eq1}. 
	Using the equality 
	$1=\Vert\gamma_{a,0}\Vert_1 = \frac{1}{\sqrt{\pi a}}\int_\R  \ee^{-t^2/a}\de t$,
	we obtain  
	\begin{align*}
		U(\gamma_{a,0})y-y
		&=\frac{1}{\sqrt{\pi a}}\int_\R  \ee^{-t^2/a}(U_ty-y)\de t
		=\frac{1}{\sqrt\pi}
		\int_\R  \ee^{-t^2} (U_{\sqrt{a}t}y-y)\de t.
	\end{align*}
	Hence, for every $\delta>0$, we have by the estimate in Lemma~\ref{impr}(ii)
	\begin{align*}
		p(U(\gamma_{a,0})y-y)
		\le
		&
		\frac{1}{\sqrt\pi}
		\int_{[-\delta,\delta]}  \ee^{-t^2}p(U_{\sqrt{a}t}y-y)\de t \\
		&+
		\frac{2Cq(y)}{\sqrt\pi}
		\int_{\R\setminus[-\delta,\delta] }  \ee^{-t^2+B \vert t\vert}\de t.
	\end{align*}
	For arbitrary $\varepsilon>0$ let $\delta>0$ with 
	$\frac{2Cq(y)}{\sqrt\pi}
	\int_{\R\setminus[-\delta,\delta] } \ee^{-t^2+B \vert t\vert}\de t<\frac{\varepsilon}{2}$.
	Since
	\[ \lim_{s\to0}p(U_sy-y)=0,\]
	there exists $a_\varepsilon>0$ such that $\frac{1}{\sqrt\pi}
	\int_{[-\delta,\delta]}  \ee^{-t^2}p(U_{\sqrt{a}t}y-y)\de t<\frac{\varepsilon}{2}$   for all ${a\in(0,a_\varepsilon)}$. 
	Thus
	\[ p(U(\gamma_{a,0})y-y)<\varepsilon \quad \mbox{ if } \quad
	a\in(0,a_\varepsilon),\] 
	and this proves that $\lim_{a\to 0}U(\gamma_{a,0})y=y$. 
	
	For every $\eta\in\cY^\sharp$ and $y\in\cY$ we further obtain 
\[ \lim_{a\to 0}\langle y,U(\gamma_{a,0})^\sharp\eta\rangle
	=\lim_{a\to 0}\langle U(\gamma_{a,0})y,\eta\rangle
	=\langle y,\eta\rangle \]  
	hence $\lim_{a\to 0}U(\gamma_{a,0})^\sharp\eta=\eta$ in~$\cY^\sharp$. 
	
	The second part of the assertion follows along the same lines as above. 
	
	\nin	\eqref{agrowth_item3}
	For every $z\in\C$ and $\eta\in\cY^\sharp$ we have 
	\begin{align}\label{aFscalar}
		\langle F_{a,y}(z),\eta\rangle
		& =\int_\R\overline{\gamma_{a,z}(t)}
		\langle U_ty,\eta\rangle\de t
		=\int_\R\gamma_{a,\oline{z}}(t)
		\langle U_ty,\eta\rangle\de t \\
		& =\frac{1}{\sqrt{\pi a}}\int_\R\ee^{-(t-\oline{z})^2/a}
		\langle U_ty,\eta\rangle\de t 
		\nonumber
	\end{align}
	and it is straightforward to check that $\overline{\langle F_{a,y}(\cdot),\eta\rangle}\in\cO(\C)$. 
	Therefore $F_{a,y}\in\cO^w(\C,\cY)
        = \cO(\C,\cY)$ (Remark~\ref{rem:a.1}). 
	Moreover, if $s\in\R$, we have by the above equalities
 \allowdisplaybreaks
	\begin{align}
		\langle F_{a,y}(s),\eta\rangle 
		&=\frac{1}{\sqrt{\pi a}}\int_\R  \ee^{-(t-s)^2/a}
		\langle U_ty,\eta\rangle\de t 
=\frac{1}{\sqrt{\pi a}}\int_\R  \ee^{-t^2/a}
		\langle U_{t+s}y,\eta\rangle\de t  		\nonumber \\
		&\label{aUsUt}
           =\frac{1}{\sqrt{\pi a}}\int_\R  \ee^{-t^2/a}\langle U_sU_ty,\eta\rangle\de t \\
          &=\frac{1}{\sqrt{\pi a}}\int_\R  \ee^{-t^2/a}\langle U_ty,U^\sharp_s\eta\rangle\de t =\langle F_{a,y}(0),U^\sharp_s\eta\rangle 
			=\langle U_sF_{a,y}(0),\eta\rangle. \nonumber
	\end{align}
	Thus $U_sF_{a,y}(0)=F_{a,y}(s)$ for all $s\in\R$. 
	This shows that the weakly holomorphic $\cY$-valued function $F_{a,y}$ 
	is an extension of the orbit map $s\mapsto U_sF_{a,y}(0)$. 
	
	In order to obtain the antidual assertion, we note that,
	for every $\eta\in\cY^\sharp$, $y\in\cY$, and $z\in\C$,    
	\begin{equation}\label{aFs}
		\langle y,F^\sharp_{a,\eta}(z)\rangle
		=\langle y,U(\gamma_{a,\oline{z}})^\sharp\eta\rangle
		=\langle U(\gamma_{a,\oline{z}})y,\eta\rangle
		=\langle F_{a,y}(\oline{z}),\eta\rangle,
	\end{equation}
	hence $\langle y, F^\sharp_{a,\eta}(\cdot)\rangle  
	\in\cO(\C)$ 
	by~\eqref{aFscalar}. 
	Taking into account the identification
	$(\cY^\sharp)^\sharp=\cY$ via the antiduality pairing
	$\langle\cdot,\cdot\rangle$, it then follows that
	$F^\sharp_{a,\eta}\in \cO^w(\C,\cY^\sharp)$.  
	
	Finally, for every $s\in\R$ and $y\in\cY$, we have 
	\begin{align*}
		\langle y,U^\sharp_s (F^\sharp_{a,\eta}(0))\rangle
		&=\langle U_sy,F^\sharp_{a,\eta}(0)\rangle \mathop{=}^{\eqref{aFs}}
		\langle F_{a,U_sy}(0),\eta\rangle \\
		&\mathop{=}^{\eqref{aFscalar}}
		\frac{1}{\sqrt{\pi a}}\int_\R  \ee^{-t^2/a}\langle U_tU_sy,\eta\rangle\de t \mathop{=}^{\eqref{aUsUt}}	
		\langle F_{a,y}(s),\eta\rangle \\
&		\mathop{=}^{\eqref{aFs}}
		\langle y, F^\sharp_{a,\eta}(s)\rangle, 
	\end{align*}
	hence $F^\sharp_{a,\eta}(s)=U^\sharp_s (F^\sharp_{a,\eta}(0))$, 
	and the proof is complete. 
\end{proof}

As Proposition~\ref{agrowth}\eqref{agrowth_item3} 
ensures that the range of the operator $U(\gamma_{a,0})$ consists
of analytic vectors, we obtain from \eqref{agrowth_item2}
the  following corollary. 
Here we call a subset of a topological space  
$X\subeq Y$ {\it sequentially dense} if each point in
$Y$ is the limit of a sequence in $X$.

\begin{cor}
	\label{agrowth_cor}
	If $(U,\cY)$ has exponential growth, then 
	the spaces of weakly
        entire vectors for the one-parameter groups $(U_t)_{t\in\R}$ and $(U^\sharp_t)_{t\in\R}$ are sequentially dense in the spaces $\cY$ and $\cY^\sharp$, respectively. 
	Further, the space of weakly entire vectors for 
	$(U^\sharp_t)_{t\in\R}$ is
	sequentially dense in the space  $\cY^\sharp_c$. 
\end{cor}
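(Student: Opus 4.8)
The plan is to read this off directly from the Gaussian operator calculus of Proposition~\ref{agrowth} by exhibiting explicit approximating sequences built from the operators $U(\gamma_{a,0})$. Note first that $\gamma_{a,0} \in \cA$ for every $a > 0$ by Lemma~\ref{lem:gaussians}\eqref{lem:gaussians_item4}, so the operators $U(\gamma_{a,0}) \in \cL(\cY)$ and their antiduals $U(\gamma_{a,0})^\sharp \in \cL(\cY^\sharp)$ are all available.

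For the first assertion, fix $y \in \cY$ and set $y_n := U(\gamma_{1/n,0})y$ for $n \in \N$. By Proposition~\ref{agrowth}\eqref{agrowth_item3}, the function $F_{1/n,y}\colon \C \to \cY$, $z \mapsto U(\gamma_{1/n,z})y$, lies in $\cO^w(\C,\cY)$ and satisfies $F_{1/n,y}(s) = U_s U(\gamma_{1/n,0})y = U_s y_n$ for all $s \in \R$; hence it is an entire extension of the orbit map of $y_n$, so $y_n \in \cY^\cO$. On the other hand, Proposition~\ref{agrowth}\eqref{agrowth_item2} gives $y_n = U(\gamma_{1/n,0})y \to y$ in $\cY$ as $n \to \infty$ (in fact uniformly on compact subsets of $\cY$). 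Thus $\cY^\cO$ is sequentially dense in $\cY$.

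The statements for $(U^\sharp_t)_{t\in\R}$ follow in exactly the same way from the antidual halves of Proposition~\ref{agrowth}. Given $\eta \in \cY^\sharp$, put $\eta_n := U(\gamma_{1/n,0})^\sharp\eta$. By Proposition~\ref{agrowth}\eqref{agrowth_item3}, the map $F^\sharp_{1/n,\eta}\colon \C \to \cY^\sharp$, $z \mapsto U(\gamma_{1/n,\oline z})^\sharp\eta$, is weakly holomorphic and restricts on $\R$ to $s \mapsto U^\sharp_s U(\gamma_{1/n,0})^\sharp\eta = U^\sharp_s \eta_n$, so $\eta_n$ is a weakly entire vector for $(U^\sharp_t)_{t\in\R}$; and by Proposition~\ref{agrowth}\eqref{agrowth_item2} we have $\eta_n \to \eta$ both in $\cY^\sharp$ and in $\cY^\sharp_c$. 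This yields sequential density of the weakly entire vectors for $(U^\sharp_t)_{t\in\R}$ in $\cY^\sharp$ and in $\cY^\sharp_c$ simultaneously.

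There is essentially no obstacle here: the real work was already carried out in Proposition~\ref{agrowth}, and the corollary only repackages it. The sole point worth flagging is that passing to the explicit sequence $a = 1/n$ (rather than merely invoking convergence of the net $(U(\gamma_{a,0}))_{a>0}$) is what upgrades plain density to \emph{sequential} density, which matters since $\cY$ need not be metrizable.
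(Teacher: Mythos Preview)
Your proof is correct and follows exactly the approach the paper indicates: the paper's ``proof'' is the single sentence preceding the corollary, noting that Proposition~\ref{agrowth}\eqref{agrowth_item3} puts the range of $U(\gamma_{a,0})$ (and of its antidual) inside the weakly entire vectors, while \eqref{agrowth_item2} gives the approximation. Your only addition is to make explicit the passage to the sequence $a = 1/n$ to obtain \emph{sequential} density, which is precisely the detail the paper leaves implicit.
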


\begin{ex}
	\label{ex1}
	We consider the Fr\'echet space $\cY := C(\R,\C)$
	of continuous functions on $\R$ and the one-parameter group defined by
	\[ (U_t f)(x) := e^{tx} f(x) \quad \mbox{ for } \quad t,x \in \R.\]
	The topology on $\cY$ is defined by the seminorms
	\[ p_n(f) := \sup \{ |f(x)| \colon |x| \leq n \}, \quad n \in \N.  \]
	They satisfy
	\[ p_n(U_t f) \leq e^{n|t|} p_n(f) \quad \mbox{ for } \quad t \in \R, n \in \N, f \in \cY\]
	and the constant $e^{n|t|}$ is sharp. 
	Therefore the constant $B$ in \eqref{agrowth_eq1} depends in
	general on $n$. 
\end{ex}

\begin{rem}\label{F-ex}
	For every $z\in\C$, $y\in\cY$, 
	and $a\in(0,\infty)$, 
	we have $U(\gamma_{a,0})y\in\cD(U_z)$ and $U_zU(\gamma_{a,0})y=U(\gamma_{a,z})y$ by Proposition~\ref{agrowth}\eqref{agrowth_item3}. 
	In particular $U_z$ is densely defined.
\end{rem}

\begin{cor}
	\label{F-tech_cor1}
	Suppose that $(U,\cY)$ has exponential growth. 
	Then we have 
	\[U(\gamma_{a,0})U_zy=U_zU(\gamma_{a,0})y=U(\gamma_{a,z})y
	\quad \mbox{ for } \quad a > 0, z \in \C, y\in\cD(U_z).
	\] 
\end{cor}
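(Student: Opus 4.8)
The plan is to reduce the statement to two facts, one already available and one requiring a short argument. The second equality, $U_z U(\gamma_{a,0})y = U(\gamma_{a,z})y$, is precisely the content of Remark~\ref{F-ex}; it holds for \emph{every} $y \in \cY$ (not just $y \in \cD(U_z)$) and follows from Proposition~\ref{agrowth}\eqref{agrowth_item3}. So I only need to establish the first equality $U(\gamma_{a,0})U_z y = U_z U(\gamma_{a,0})y$ for $y \in \cD(U_z)$.

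The heart of the matter is that $B := U(\gamma_{a,0}) \in \cL(\cY)$ commutes with each $U_s$, $s \in \R$. This is immediate from the integral formula in Proposition~\ref{agrowth}\eqref{agrowth_item1}: using $\overline{\gamma_{a,0}(t)} = \gamma_{a,0}(t)$ and substituting $u = t+s$ in $\int_\R \overline{\gamma_{a,0}(t)}\langle U_t U_s y, \eta\rangle\,\de t$ gives $U(\gamma_{a,0})U_s = U(\gamma_{a,s})$ for $s \in \R$, and the right-hand side equals $U_s U(\gamma_{a,0})$ by Proposition~\ref{agrowth}\eqref{agrowth_item3} (which says $F_{a,y}(s) = U_s U(\gamma_{a,0})y$, while $F_{a,y}(s) = U(\gamma_{a,s})y$ by definition).

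Next I would record the general principle that any $B \in \cL(\cY)$ commuting with all $U_t$ leaves each domain $\cD(U_z)$ invariant and commutes with $U_z$ there. To see this, fix $y \in \cD(U_z)$; we may assume $b := \Im z \ge 0$, the case $b < 0$ being analogous. Let $F_y \in \cO^w_\partial(\overline{\cS}_{0,b},\cY)$ be the extension with $F_y(t) = U_t y$ and $U_z y = F_y(z)$. Then $B \circ F_y \colon \overline{\cS}_{0,b} \to \cY$ is continuous, and weakly holomorphic on $\cS_{0,b}$ because for every continuous linear functional $\alpha$ on $\cY$ the composite $\alpha \circ B$ is again a continuous linear functional, so $\alpha \circ (B \circ F_y) = (\alpha \circ B) \circ F_y$ is holomorphic on $\cS_{0,b}$. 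On $\R$ we have $(B \circ F_y)(t) = B U_t y = U_t(By)$, so $B \circ F_y$ is an $\cO^w_\partial$-extension of the orbit map of $By$; hence $By \in \cY_{0,b} = \cD(U_z)$ and $U_z(By) = (B \circ F_y)(z) = B(F_y(z)) = B(U_z y)$.

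Applying this with $B = U(\gamma_{a,0})$ yields $U(\gamma_{a,0})U_z y = U_z U(\gamma_{a,0})y$ for $y \in \cD(U_z)$, and combining with Remark~\ref{F-ex} finishes the proof. I do not expect a serious obstacle; the only points needing care are the commutation $U(\gamma_{a,0})U_s = U_s U(\gamma_{a,0})$ and the (routine) stability of weak holomorphy and of the defining boundary condition of $\cY_{0,b}$ under post-composition with a continuous linear operator. A more computational route — writing both sides against a linear functional $\alpha$, using $U_t U_z y = F_y(t+z)$ from \eqref{holomext_fact_prf_eq1}, and shifting the Gaussian integration contour across the strip (legitimate by the exponential bound \eqref{agrowth_eq1}) — is also available, but the commutation argument above is cleaner.
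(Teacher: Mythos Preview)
Your proposal is correct and follows essentially the same route as the paper. Both arguments post-compose the orbit extension $F_y$ with the continuous operator $U(\gamma_{a,0})$ and use uniqueness of analytic continuation from $\R$; the only cosmetic difference is that the paper compares $w\mapsto U(\gamma_{a,0})U_w y$ directly with the entire function $w\mapsto U(\gamma_{a,w})y$ (yielding leftmost $=$ rightmost), whereas you identify it as the orbit extension $F_{By}$ and conclude leftmost $=$ middle via your general ``commuting $B$ preserves $\cD(U_z)$'' lemma.
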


\begin{proof}
	The second equality in the statement follows for every $y\in\cY$
	by Remark~\ref{F-ex}.
	
	In order to prove that the leftmost and the rightmost expressions are equal, 
	we fix $y\in \cD(U_z)$ and 
	we recall that, if $w\in \oline{\cS}_{0,\Im z}$,
	then we have $\cD(U_z)\subseteq \cD(U_w)$, 
	hence it makes sense to define 
	$$G_{a,y}\colon \oline{\cS}_{0,\Im z}\to\cY,\quad G_{a,y}(w):=U(\gamma_{a,0})U_wy.$$
	As $U(\gamma_{a,0})$ is a
	continuous linear operator on $\cY$ (Proposition~\ref{agrowth}(i)) 
	and the function $w\mapsto U_wy$ belongs to $\cO_\partial(\oline{\cS}_{0,\Im z},\cY)$, 
	we have $G_{a,y}\in\cO_\partial(\oline{\cS}_{0,\Im z},\cY)$. 
	On the other hand, the function 
	$F_{a,y}\colon \C\to\cY$,  $F_{a,y}(w):=U(\gamma_{a,w})y$, 
	is holomorphic (Proposition~\ref{agrowth}\eqref{agrowth_item3}) 
	and satisfies $G_{a,y}=F_{a,y}$ on $\R$, hence $G_{a,y}=F_{a,y}$ on $\oline{\cS}_{0,\Im z}$. 
	In particular $G_{a,y}(z)=F_{a,y}(z)$, and this completes the proof. 
\end{proof}

\begin{cor}
	\label{F-tech_cor2}
	Assume that $(U,\cY)$ has exponential growth
	and write 
	\[\cD_0:=\{U(\gamma_{a,0})y:a > 0,  y\in\cY\}.\]
	If $z\in\C$ and the operator $U_z$ is closed, then $U_z$ is the closure of its restriction $U_z\vert_{\cD_0}\colon\cD_0\to\cY$. 
\end{cor}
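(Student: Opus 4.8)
The plan is to identify the closure of $U_z\vert_{\cD_0}$ with $U_z$ by showing that the graph $\Gamma(U_z\vert_{\cD_0})$ is dense in $\Gamma(U_z)$ inside $\cY\times\cY$. Since $U_z$ is assumed to be closed, this density is exactly the assertion that $U_z$ is the closure of $U_z\vert_{\cD_0}$.

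First I would note that $\cD_0\subseteq\cD(U_z)$ by Remark~\ref{F-ex}, so the restriction $U_z\vert_{\cD_0}$ is meaningful, and that $U_z\vert_{\cD_0}\subseteq U_z$ together with the closedness of $U_z$ forces $\overline{\Gamma(U_z\vert_{\cD_0})}\subseteq\Gamma(U_z)$; in particular $U_z\vert_{\cD_0}$ is closable with closure contained in $U_z$. It therefore remains to produce, for each $y\in\cD(U_z)$, a sequence in $\cD_0$ along which both the points and their images under $U_z$ converge to $y$ and to $U_z y$, respectively.

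The natural choice is the Gaussian regularization $y_n:=U(\gamma_{1/n,0})y\in\cD_0$. By Proposition~\ref{agrowth}\eqref{agrowth_item2} we have $y_n\to y$ in $\cY$. The decisive input is Corollary~\ref{F-tech_cor1}, which gives $U_z y_n=U(\gamma_{1/n,0})U_z y$; applying Proposition~\ref{agrowth}\eqref{agrowth_item2} once more, this time to the vector $U_z y\in\cY$, yields $U_z y_n\to U_z y$ in $\cY$. Hence $(y,U_z y)=\lim_n(y_n,U_z y_n)\in\overline{\Gamma(U_z\vert_{\cD_0})}$, so $\Gamma(U_z)\subseteq\overline{\Gamma(U_z\vert_{\cD_0})}$ and equality holds.

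There is no serious obstacle: the whole argument rests on the intertwining relation of Corollary~\ref{F-tech_cor1}, which lets the \emph{same} regularizing operators $U(\gamma_{1/n,0})$ approximate $y$ and $U_z y$ simultaneously, and on the density statement of Proposition~\ref{agrowth}\eqref{agrowth_item2}. The only point requiring a little care is the order of steps: one should invoke closedness of $U_z$ first, to know that the graph closure of $U_z\vert_{\cD_0}$ is the graph of an operator contained in $U_z$, so that the density computation is all that is left to do.
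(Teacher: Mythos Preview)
Your argument is correct and is essentially the paper's own proof: both use Corollary~\ref{F-tech_cor1} to write $U_z U(\gamma_{a,0})y = U(\gamma_{a,0}) U_z y$ and then invoke Proposition~\ref{agrowth}\eqref{agrowth_item2} twice to conclude that $(U(\gamma_{a,0})y, U_z U(\gamma_{a,0})y) \to (y, U_z y)$. The only cosmetic differences are your explicit mention of $\cD_0\subseteq\cD(U_z)$ via Remark~\ref{F-ex} and the framing in terms of graph closures.
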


\begin{proof}
	For all $y\in\cD(U_z)$ we have
	$y=\lim_{a\to 0}U(\gamma_{a,0})y$ and
	\[ U_zy=\lim_{a\to 0}U(\gamma_{a,0})U_zy
	=\lim_{a\to 0}U_zU(\gamma_{a,0})y \]
	by Proposition~\ref{agrowth}\eqref{agrowth_item2} 
	and Corollary~\ref{F-tech_cor1}. 
	Thus the operator $U_z$ is contained in the closure of its restriction $U_z\vert_{\cD_0}\colon\cD_0\to\cY$. 
	Since $U_z$ is a closed operator by hypothesis, 
	the assertion follows. 
\end{proof}

\begin{rem}
	\label{moderate}
	We recall from \cite[\S~2.1]{BK14} that a \emph{scale} on a Lie group $G$ is a function ${s\colon G\to(0,\infty)}$ such that both $s(\cdot)$ and $s(\cdot)^{-1}$ are locally bounded functions and moreover $s(gh)\le s(g)s(h)$ holds
	for every $g,h\in G$. 
	A representation $(\pi,E)$ of $G$ on a Fr\'echet space $E$ is a group homomorphism $\pi\colon G\to\GL(E)$ whose corresponding action mapping $G\times E\to E$ is jointly continuous. 
	The representation $(\pi,E)$ is called an \emph{F-representation} if there exists a sequence of seminorms $(p_n)_{n\in\N}$ defining the topology of $E$, such that for every $n\in \N$ the seminorm $p_n$ satisfies the following conditions: 
	\begin{enumerate}[{\rm(a)}]
		\item\label{moderate_a} 
		$p_n$ is \emph{$G$-continuous}, i.e, the action mapping $G\times (E,p_n)\to (E,p_n)$ is continuous; 
		\item\label{moderate_b} 
		$p_n$ is \emph{$s$-bounded}, i.e., there exist $C_n\in(0,\infty)$ and $N_n\in\N$ with 
\[ p_n \circ \pi(g)\le C_n s(g)^{N_n} p_n \quad \mbox{ for all } \quad g\in G.\]
	\end{enumerate}
	On the other hand, a representation $(\pi,E)$ is
	said to have \emph{moderate growth} if, for every continuous seminorm
	$p$ on $E$, there exist an integer $N\ge 1$ and a continuous seminorm $q$ on $E$ satisfying $p(\pi(g)v)\le s(g)^Nq(v)$ for every $v\in E$. 
	
	With this terminology, a representation
	$(\pi,E)$ of the Lie group $G$ (with a fixed scale $s$)  on the Fr\'echet
	space~$E$ is an F-representation if and only if $(\pi,E)$ has moderate growth, by \cite[Lemma~2.10]{BK14}. 
	
	In the special case of the Lie group $G=(\R,+)$ with its maximal
	scale, defined by $s(t):=\ee^t$ for every $t\in\R$ 
	(cf.~\cite[2.1.1]{BK14}), 
	it is easily seen that the moderate growth condition in the above sense for a one-parameter group $(U_t)_{t\in\R}$ on a
	Fr\'echet space $\cY$ is equivalent to the growth condition 
	in Definition~\ref{def:expgro}. 
	In this setting, Hypothesis~\ref{agrowth_hyp2} in
	Definition~\ref{def:expgro} implies
	that the action $\R\times \cY\to\cY$, $(t,y)\mapsto U_ty$, is continuous (\cite[Prop. 5.1]{Ne10}). 
	Consequently, if $\cY$ is a Fr\'echet space
        in Proposition~\ref{agrowth}, 
	then \cite[Lemma 2.10]{BK14} shows that the
	one-parameter group $(U_t)_{t\in\R}$ is an {$F$-representation}. 
	In particular, there exists a sequence of seminorms $(p_n)_{n\in\N}$ defining the topology of~$\cY$ for which the above condition~\eqref{moderate_a} is satisfied. 
\end{rem}

\begin{rem}
	\label{top}
	If $\cY$ is a locally convex space and $(U_t)_{t\in\R}$ is a one-parameter group in $\GL(\cY)$ satisfying the condition \eqref{agrowth_eq1} in
	Definition~\ref{def:expgro} 
	for some family of seminorms~$\cP$ defining the topology of~$\cY$, then
	\eqref{agrowth_eq1} is satisfied with~$\cP$ replaced by any other family of seminorms $\cP$ defining the topology of~$\cY$. 
	Thus the growth condition~\eqref{agrowth_eq1} depends only on the topology of $\cY$ rather than on the particular family of seminorms~$\cP$ involved in its statement. 
\end{rem}

\begin{rem}
	\label{loc}
	Let $\cY$ be a locally convex space  with a one-parameter group $(U_t)_{t\in\R}$, 
	in $\GL(\cY)$ and fix a family of seminorms $\cP$ defining the topology of~$\cY$. 
	We denote by $E(\cY)$ the set of all vectors $y\in\cY$ satisfying the 
	following conditions: 
	\begin{enumerate}
\item[\rm(E1)]\label{loc_item1} The orbit map $U^y\colon \R\to\cY$, $U^y(t):=U_ty$, is continuous. 
\item[\rm(E2)]\label{loc_item2} For every $p\in\cP$ we have 
\[ 
M_p(y):=\limsup_{\vert t\vert\to\infty}\frac{\log p(U_ty)}{\vert t\vert}<\infty.
\] 
\end{enumerate}
	Just as in Remark~\ref{top}, it is easily seen that the validity of 
	the above condition~(E2) does not depend on the particular choice of the 
	family of seminorms~$\cP$ defining the topology of $\cY$. 
	(More precisely, if $p_1,p_2$ are continuous seminorms on $\cY$ and $p_1\le cp_2$ for some constant $c>0$, then $M_{p_1}(y)\le M_{p_2}(y)$.)
	
	For every $y\in \cY$ and $p\in\cP$ we have 
	\begin{align*}
		M_p(y)<\infty 
		&\iff(\exists B>0)\ M_p(y)<B \\
	& \iff (\exists \delta>0)(\forall t\in\R,\ \vert t\vert>\delta)\ p(U_ty)\le \ee^{B\vert t\vert}.
\end{align*}
	If moreover the orbit map $U^y\colon \R\to\cY$ is continuous, then the function $p\circ U^y$ is continuous, hence bounded on the interval $[-\delta,\delta]$. 
	This shows that $E(\cY)$ is the set of all vectors $y\in\cY$ satisfying the above codition~(E1) and the condition 
	\begin{equation}
		\label{loc_item2prime}
		(\exists B,C>0)(\forall t\in\R)\ p(U_ty)\le C\ee^{B\vert t\vert}.
	\end{equation}
	Moreover, 
	\begin{equation}
		\label{Mp}
		M_p(y)=\inf\{B>0:(\exists C>0)(\forall t\in\R) \ p(U_ty)\le C\ee^{B\vert t\vert}\}.
	\end{equation}
	
	If $y_j\in E(\cY)$ and $B_j,C_j>0$ with $p(U_ty_j)\le C_j\ee^{B_j\vert t\vert}$ for all $t\in\R$, then 
	\[(\forall t\in\R)\ 
	p(U_t(y_1+y_2))\le C_1\ee^{B_1\vert t\vert}+C_2\ee^{B_2\vert t\vert}
	\le (C_1+C_2)\ee^{(B_1+B_2)\vert t\vert}.\]
	Therefore $y_1+y_2\in E(\cY)$ and moreover $M_p(y_1+y_2)\le B_1+B_2$ by \eqref{Mp}. 
	Taking the infimum with respect to $B_1$ and $B_2$ in this last inequality, we obtain  
	\[M_p(y_1+y_2)\le M_p(y_1)+M_p(y_2).\]
	In addition, it is easily checked that 
	\[(\forall w\in\C)\ M_p(wy)=M_p(y).\]
	In particular, $E(\cY)$ is a linear subspace of $\cY$. 
	
	As a by-product of the proof of Proposition~\ref{agrowth}, 
	if $\cY$ is Hausdorff and sequentially complete, 
	one obtains the following facts for every $y\in E(\cY)$: 
	\begin{itemize}
        \item[\rm(OC1)]
          There exists a linear mapping $U^y\colon\cA\to\cY$ satisfying 
\[ 			(\forall\varphi\in\cA)(\forall\eta\in\cY^\sharp)\quad 
			\langle U^y(\varphi),\eta\rangle
			=\int_\R\overline{\varphi(t)}\langle U_ty,\eta\rangle\de t.\] 		\item[\rm(OC2)] We have 
\[ 			\lim\limits_{a\to 0}U^y(\gamma_{a,0})=y \quad
			\mbox{ in} \quad \cY. \] 
		\item[\rm(OC3)] 	For every $a > 0$, 
		the function 
		$F_{a,y}\colon\C\to\cY, F_{a,y}(z):=U^y(\gamma_{a,z})$
		is weakly holomorphic and 
		\[ F_{a,y}(s)=U_s(U^y(\gamma_{a,0}))
		\quad \mbox{for all } s\in\R.\]
	\end{itemize}
	Thus $E(\cY)$ is contained in the closure of the space $\cY^\cO$
        of weakly entire vectors. 
	In particular, if $\cY^\cO=\{0\}$ then $E(\cY)=\{0\}$.  
\end{rem}

\begin{ex}(Banach spaces)
	\label{ex_B}
	In the notation of Remark~\ref{loc}, let us assume that $\cY$ is a Banach space and that $p(\cdot)=\Vert\cdot\Vert$ is a norm defining the topology of $\cY$. 
	If $(U_t)_{t\in\R}$ satisfies the condition~(E1) for every $y\in\cY$, 
	then $\limsup\limits_{\vert t\vert\to\infty}\frac{\log \Vert U_t\Vert}{\vert t\vert}<\infty$ by \cite[Eq. (10.2.2)]{HP57}, 
	hence $E(\cY)=\cY$ and moreover $(U,\cY)$
	is of exponential growth. 
\end{ex}

\begin{ex}
	\label{ex2}
	We consider the Fr\'echet space $\cY := C(\R,\C)$ (as in Example~\ref{ex1})
	and we define the one-parameter group 
	\[ (U_t f)(x) :=  f(x+t) \quad \mbox{ for } \quad t,x \in \R.\]
	It is clear that, for every $f\in\cY$, its orbit map $U^f\colon\R\to\cY$, $U^f(t):=f(\cdot+t)$ is continuous. 
	We now show by an example that $E(\cY)\subsetneqq\cY$. 
	
	In fact, let $f_0\colon\R\to\C$ satisfy 
	\begin{equation}
		\label{ex2_eq1}
		\limsup\limits_{\vert t\vert\to\infty}\frac{\log\vert f_0(t)\vert}{\vert t\vert}=\infty
	\end{equation}
	(for example $f_0(t)=\ee^{t^2}$ for every $t\in\R$). 
	We show that $f_0$ does not satisfy condition~(E2), 
	hence $f_0\not\in E(\cY)$. 
	We have already noted that condition~(E2) does not depend on the particular family of continuous seminorms~$\cP$, hence it suffices to show that there exists a continuous seminorm $p_0$ with ${M_{p_0}(f)=\infty}$. 
	To this end we consider the continuous seminorm 
	\[p_0\colon\cY\to[0,\infty),\quad p_0(f):=\vert f(0)\vert.\] 
	Then, for every $t\in\R$ and $f\in\cY$, we have $p_0(U_tf)=\vert f(t)\vert$, and therefore 
        $M_{p_0}(f_0)=\infty$ by \eqref{ex2_eq1}.  
	
	We now show that the space of
        entire vectors with respect to  $(U_t)_{t\in\R}$ is given by 
	\begin{equation}
		\label{ex2_eq2}
		\cY^\cO=\{v\in\cY : (\exists \widetilde{v}\in\cO(\C))\quad \widetilde{v}\vert_\R=v\}.
	\end{equation}
        The weakly holomorphic $\cY$-valued functions are
        actually holomorphic by  Remark~\ref{rem:a.1}
        since $\cY$ is a Fr\'echet space. 
To prove \eqref{ex2_eq2} we first note that 
	if $v\in\cY$, then $v\in\cY^\cO$ if and only if the orbit map $U^v\colon \R\to\cY$ has a holomorphic extension  $U^v\colon\C\to\cY$,  
	hence for every continuous linear functional $\xi\colon \cY\to\C$ the function $\xi\circ U^v\colon\C\to\C$ is holomorphic. 
	On the other hand, the evaluation functional $\xi_0\colon\cY\to\C$, $\xi_0(f):=f(0)$, 
	satisfies 
	\begin{equation}
		\label{ex2_eq3}
		(\forall f\in\cY)\quad \xi_0\circ U^f=f
	\end{equation}
	hence the function $\widetilde{v}:=\xi_0\circ U^v\colon\C\to\C$ satisfies 
	$\widetilde{v}\in\cO(\C)$ and $\widetilde{v}\vert_\R=v$. 
	
	Conversely, let $v$ belong to the right-hand side of~\eqref{ex2_eq2} 
	and define the function
        \[\widetilde{\varphi}\colon \C\to\cO(\C), \quad
          \varphi(z):=\widetilde{v}(z+\cdot),\] 
	and $\varphi\colon\C\to\cY$, $\varphi(z):=\widetilde{\varphi}(z)\vert_\R$, 
	that is, $\varphi(z)\colon\R\to\C$, $\varphi(z)(s):=\widetilde{v}(z+s)$. 
	Since $\widetilde{v}\vert_\R=v$, we have $\varphi(t)=U^v(t)$ for every $t\in\R$. 
	Moreover, for every $z,z_0\in\C$ we have 
	\[\lim_{z\to0}\frac{\widetilde{\varphi}(z)-\widetilde{\varphi}(z_0)}{z}=\widetilde{v}'(z_0+\cdot)\text{ in }\cO(\C)\]
where $\cO(\C)$ is considered as a Fr\'echet space with the
topology of uniform convergence on compact subsets of~$\C$. 
Since the mapping
\[ \C\to\cO(\C), \quad w\mapsto \widetilde{v}'(w+\cdot) \] 
is continuous, 	it follows that $\widetilde{\varphi}\colon \C\to\cO(\C)$ is a $C^1_\C$-map, hence holomorphic 
	(cf.~\cite[Lemma 2.1.1 and Prop. 2.1.6]{GN}). 
	Since the restriction map $R\colon \cO(\C)\to C(\R,\C)=\cY$ is linear and continuous, $R\circ \widetilde{\varphi}\colon \C\to\cY$ is a holomorphic mapping. 
	But $R\circ \widetilde{\varphi}=\varphi$, hence the extension $\varphi$ of the orbit map $U^v$ is holomorphic, and then $v\in\cY^\cO$. 
\end{ex}

\section{Duality theory for holomorphic extensions}
\label{Sect4}

This section contains our main results on holomorphic extensions of one-parameter groups of exponential growth 
on a suitable class of locally convex spaces, including all Fr\'echet 
spaces 
(the Duality Theorem \ref{UtoV} and Proposition~\ref{VtoU}). 
Our Duality Theorem asserts that, if $(U_t)_{t \in \R}$ is a one-parameter group
of exponential growth on 
a suitable space $\cY$ and 
$(V_t=U_t^\sharp)_{t\in\R}$ in $\GL(\cY^\sharp_c)$ is its antidual
one-parameter group,   then their respective holomorphic extensions satisfy
$V_{\oline{z}}= U_z^\sharp$ for every $z\in\C$. 
In order to address this result, we first prove a preliminary
closedness property of the holomorphic extension operators (Theorem~\ref{F-tech}) that
is of independent interest.

In the following, unless otherwise specified, $\cY$ stands for a Hausdorff locally convex space over~$\C$. 
We will need both the topological linear isomorphism 
\begin{equation}
	\label{biantid}
	\gamma\colon\cY_w\to(\cY^\sharp)^\sharp=\cY^{\sharp\sharp},\quad  
	\gamma(y):=\overline{\langle y,\cdot\rangle}
\end{equation}
and the continuous injective linear map given by the same expression, but taking values in a larger space
\begin{equation}
	\label{biantidcomp}
	\gamma_c\colon\cY\to(\cY^\sharp_c)^\sharp,\quad  
	\gamma_c(y):=\overline{\langle y,\cdot\rangle}.
\end{equation}
These maps are related by the equality 
\[\gamma_c=\id_c^\sharp\circ\gamma\circ \id_w\] 
in which the  identity maps
\[ \id_w\colon\cY\to\cY_w \quad \mbox{ and } \quad
  \id_c\colon \cY^\sharp_c\to \cY^\sharp \]
are continuous and bijective, but not homeomorphisms in general, 
hence the injective antidual map
$\id_c^\sharp\colon (\cY^\sharp)^\sharp\to (\cY^\sharp_c)^\sharp$ may
not be surjective. 

\begin{rem}
	\label{surjev}
	The mappings $\id_w$ and $\gamma$ are bijective, hence 
	$\gamma_c$ is surjective if and only if $\id_c^\sharp$ is surjective. 
	For later use, we note the following facts on surjectivity of $\gamma_c$. 
	\begin{enumerate}[{\rm(i)}]
		\item\label{surjev_item1} 
		The map $\gamma_c$ is surjective if and only if the closed convex hull of every compact subset of~$\cY$ is weakly compact. 
		In fact, using the well-known relation between $\cY^\sharp$ and the space of $\R$-linear continuous functionals on $\cY$ with values in~$\R$
		(cf.~Remark~\ref{annih}), 
		this can be derived from the analogous assertion for real locally convex spaces proved in \cite[Prop.~(15.1)]{Ba91}
		
		It then follows  
		that, if $\cY$ is quasi-complete
		(i.e., every bounded Cauchy net is convergent),
		then $\gamma_c\colon\cY\to(\cY^\sharp_c)^\sharp$ is surjective, hence bijective. 
		This result is stated in \cite[Cor. 8.17]{Au99} for complete spaces, 
		however, the method of proof is directly applicable for all quasi-complete locally convex  spaces since the closed convex hull of every compact subset is again compact in every quasi-complete space
		(see \cite[20.6.3, p.~241]{Ko69}).

		Thus, if $\cY$ is a Fr\'echet space, then $\gamma_c$ is bijective, 
		and it is actually a homeomorphism by \cite[Prop. (15.2)]{Ba91}. 
		\item\label{surjev_item2} 
		If $\gamma_c$ is surjective, then 
		any weakly $\cY^\sharp$-valued holomorphic map is weakly holomorphic
		as a $\cY^\sharp_c$-valued map. 
		More specifically, for every complex manifold $Z$ and every function $f\colon Z\to\cY^\sharp$ the following conditions are equivalent: 
		\begin{itemize}
			\item  $f\colon Z\to\cY^\sharp$ is weakly holomorphic on $Z$; 
			\item for every $y\in\cY$ the scalar function $\langle y,f(\cdot)\rangle$ is holomorphic on $Z$; 
			\item  $f\colon Z\to\cY^\sharp_c$ is weakly holomorphic on $Z$. 
		\end{itemize}
	\end{enumerate}
	
\end{rem}

The following lemma also holds without
the   exponential growth requirement.

\begin{lem}
	\label{easy}
	Assume that $\cY$ is a Hausdorff locally convex space over~$\C$ and $(U_t)_{t\in\R}$ is a one-parameter group in $\GL(\cY)$ with its corresponding antidual one-parameter group $(V_t=U_t^\sharp)_{t\in\R}$ in $\GL(\cY^\sharp_c)$. 
	Then for every $z\in\C$ we have 
	$V_{\oline{z}}\subseteq U_z^\sharp$ and $\gamma\circ U_z\circ\gamma^{-1}\vert_{\gamma(\cD(U_z))}\subseteq V_{\oline{z}}^\sharp$. 
\end{lem}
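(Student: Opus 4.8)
The plan is to unwind both inclusions directly from the definitions of the operators $U_z, V_{\oline z}$ and the antidual operation $T \mapsto T^\sharp$. For the first inclusion $V_{\oline z} \subseteq U_z^\sharp$, I would take $\eta \in \cD(V_{\oline z})$, so that the orbit map $t \mapsto V_t\eta = U_t^\sharp\eta$ of $\eta$ in $\cY^\sharp_c$ extends to a function $G_\eta \in \cO^w_\partial(\oline\cS_{0,\Im \oline z},\cY^\sharp_c)$ (or the lower strip, according to the sign of $\Im z$), with $V_{\oline z}\eta = G_\eta(\oline z)$. The goal is to show $\eta \in \cD(U_z^\sharp)$ with $U_z^\sharp \eta = V_{\oline z}\eta$, i.e. that $\langle U_z y, \eta\rangle = \langle y, V_{\oline z}\eta\rangle$ for all $y \in \cD(U_z)$. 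Fix such a $y$; then $t \mapsto U_t y$ extends to $F_y \in \cO^w_\partial(\oline\cS_{0,\Im z},\cY)$ with $U_z y = F_y(z)$. The idea is to pair the two extensions: consider the scalar function $h(z) := \langle F_y(z), G_\eta(\oline z)\rangle$ on the closed strip. Wait — one has to be careful since $F_y$ lives over the upper strip in the $z$-variable and $G_\eta$ over the upper strip in the $\oline z$-variable, i.e. the lower strip in $z$. A cleaner route: directly verify the defining pairing on $\R$ and propagate by analyticity. Namely, build a single scalar-valued function $H$ on $\oline\cS_{0,\Im z}$ by $H(w) := \langle F_y(w), \oline{G_\eta(\oline w)}\,\rangle$, using $\gamma_c$-type identifications — but the robust version is: for $w = t \in \R$, $\langle F_y(t), G_\eta(t)\rangle^{\sim}$, meaning $\langle U_t y, U_t^\sharp\eta\rangle$... hmm, that is not the quantity we want either.

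Let me reorganize. The correct pairing to consider is the following: for $w$ in the closed strip $\oline\cS_{0,\Im z}$, set $\Phi(w) := \langle F_y(w), G_\eta(\oline z - \oline w + \oline w)\rangle$... this is getting tangled. The clean statement is that $w \mapsto \langle F_y(w), G_\eta(\oline w)\rangle$ is not well-defined because $\oline w$ ranges over the lower strip while $G_\eta$ is defined on the upper strip in its argument. Instead I would use: $G_\eta$ is defined on $\oline\cS_{0,\Im \oline z}$ — if $\Im z \geq 0$ then $\Im \oline z \leq 0$, so $G_\eta$ is on the lower strip $\oline\cS_{\Im \oline z, 0}$, i.e. $\oline w$ with $w$ in the upper strip $\oline\cS_{0,\Im z}$. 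Then $w \mapsto \langle F_y(w), G_\eta(\oline w)\rangle$ \emph{is} well-defined on $\oline\cS_{0,\Im z}$: it is continuous, and weakly holomorphic in the interior (a product of a weakly holomorphic $\cY$-valued function with a weakly-holomorphic-in-$\oline w$ $\cY^\sharp$-valued function gives a holomorphic scalar function — this uses that the pairing is sesquilinear and that $w \mapsto \oline w$ is antiholomorphic, so the composite is holomorphic in $w$). On $\R$ it equals $t \mapsto \langle U_t y, U_t^\sharp\eta\rangle = \langle U_t y, U_t^\sharp \eta\rangle$. Now $\langle U_t y, U_t^\sharp\eta\rangle$ is \emph{not} constant, so evaluating at $t=0$ versus at $z$ does not immediately give what we want. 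The fix: use $\langle U_w y, \eta\rangle$ directly. I think the genuinely clean argument is: the function $w \mapsto \langle F_y(w), \eta\rangle$ and the function $w \mapsto \langle y, G_\eta(\oline w)\rangle$ are \emph{both} scalar functions on $\oline\cS_{0,\Im z}$, continuous, holomorphic in the interior (the second since $\oline w \mapsto G_\eta(\oline w)$ is weakly holomorphic in $\oline w$, hence $w \mapsto \langle y, G_\eta(\oline w)\rangle$ is holomorphic in $w$), and on $\R$ they agree: $\langle U_t y, \eta\rangle = \langle y, U_t^\sharp\eta\rangle = \langle y, V_t\eta\rangle = \langle y, G_\eta(t)\rangle$. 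By the identity theorem for holomorphic functions on a strip (uniqueness of analytic continuation, as already invoked in the paper via \cite[no.~42, Lemme]{Ch90}), they agree on the whole strip, in particular at $w = z$: $\langle U_z y, \eta\rangle = \langle y, G_\eta(\oline z)\rangle = \langle y, V_{\oline z}\eta\rangle$. Since $y \in \cD(U_z)$ was arbitrary and $V_{\oline z}\eta \in \cY^\sharp$, this says exactly $\eta \in \cD(U_z^\sharp)$ and $U_z^\sharp\eta = V_{\oline z}\eta$, proving $V_{\oline z} \subseteq U_z^\sharp$.

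For the second inclusion $\gamma \circ U_z \circ \gamma^{-1}|_{\gamma(\cD(U_z))} \subseteq V_{\oline z}^\sharp$, I would apply the first inclusion to the one-parameter group $(V_t)_{t \in \R}$ on $\cY^\sharp_c$ in place of $(U_t)_{t\in\R}$ on $\cY$. Its antidual one-parameter group is $(V_t^\sharp)_{t\in\R}$ on $(\cY^\sharp_c)^\sharp_c$, and the first inclusion gives $W_{\oline z} \subseteq V_{\oline z}^\sharp$ where $W$ denotes the holomorphic extension of $V^\sharp$ — but I want to compare with $U_z$ pushed through $\gamma$. The point is that $\gamma \colon \cY_w \to \cY^{\sharp\sharp}$ is a topological isomorphism intertwining $U_t$ with $V_t^\sharp$ (since $\gamma U_t \gamma^{-1} = (U_t^\sharp)^\sharp = V_t^\sharp$ on $\cY^{\sharp\sharp} = (\cY^\sharp)^\sharp = (\cY^\sharp_w)^\sharp$). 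Because $\gamma$ is a homeomorphism onto $(\cY^\sharp)^\sharp$ and is compatible with weak-holomorphy, it carries the orbit-map extensions for $U$ to orbit-map extensions for $V^\sharp$; hence $\gamma$ maps $\cD(U_z)$ (with $U_z$) onto the domain of the holomorphic extension $(V^\sharp)_z$ of $V^\sharp$ acting on $(\cY^\sharp)^\sharp$, with $\gamma \circ U_z \circ \gamma^{-1} = (V^\sharp)_z$ on $\gamma(\cD(U_z))$. Finally the first inclusion, applied with $V$ in the role of $U$, yields $(V^\sharp)_{\oline w} \subseteq V_w^\sharp$ for all $w$; taking $w = \oline z$ gives $(V^\sharp)_z \subseteq V_{\oline z}^\sharp$, hence $\gamma \circ U_z \circ \gamma^{-1}|_{\gamma(\cD(U_z))} \subseteq V_{\oline z}^\sharp$, as claimed. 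One subtlety to check here: that the extension space $\cO^w_\partial(\oline\cS,\cY^\sharp)$ used to define $(V^\sharp)_z$ on $(\cY^\sharp)^\sharp = \cY^{\sharp\sharp}$ matches up under $\gamma$ with $\cO^w_\partial(\oline\cS,\cY)$ — this is where Remark~\ref{duality} and the weak-$*$-topology identifications are used, and it is essentially formal since $\gamma$ is a topological isomorphism onto its target (not onto $(\cY^\sharp_c)^\sharp$, but onto $\cY^{\sharp\sharp}$, which is exactly the space $V^\sharp$ acts on).

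The main obstacle I anticipate is bookkeeping the three different topologies on the dual-type spaces ($\cY^\sharp$ with the weak-$*$ topology, $\cY^\sharp_c$ with compact-convergence, and $\cY^{\sharp\sharp}$ versus $(\cY^\sharp_c)^\sharp$) and making sure that each "weakly holomorphic" extension used lives in the right $\cO^w_\partial$ space, so that Lemma~\ref{holomext_fact} and the uniqueness/identity-theorem arguments apply. In particular, for the second inclusion one must be careful that $V_{\oline z}$ is defined via extensions into $\cY^\sharp_c$ (by Definition~\ref{holomext_def} applied to the one-parameter group $V$ in $\GL(\cY^\sharp_c)$), while the operator $\gamma \circ U_z \circ \gamma^{-1}$ naturally lives on $\cY^{\sharp\sharp} = (\cY^\sharp_w)^\sharp$; reconciling these — i.e. checking that a $\cY^\sharp_c$-valued weakly holomorphic orbit extension is the same data as a $\cY^{\sharp\sharp}$-compatible one — is the only non-cosmetic point, and it follows from the fact that $\cY^\sharp$ and $\cY^\sharp_c$ have the same continuous linear functionals on the relevant subsets (or, more crudely, that weak holomorphy only sees the pairing with $\cY$). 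Everything else is a routine application of the identity theorem for analytic functions on strips, already used repeatedly above.
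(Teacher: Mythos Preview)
Your argument for the first inclusion $V_{\oline z}\subseteq U_z^\sharp$ is, after the initial meandering, essentially the paper's own proof: compare the two scalar functions $w\mapsto\langle F_y(w),\eta\rangle$ and $w\mapsto\langle y,G_\eta(\oline w)\rangle$ on the closed strip, note they agree on~$\R$, and invoke uniqueness of analytic continuation. One cosmetic slip: with the paper's conventions the pairing is antilinear in the first slot and $G_\eta$ is weakly holomorphic in the sense that $\zeta\mapsto\langle y,G_\eta(\zeta)\rangle$ is holomorphic, so both of your scalar functions are in fact \emph{anti}holomorphic on the open strip, not holomorphic. This is harmless---they still coincide on~$\R$ and hence on the strip---but the paper handles it cleanly by passing to complex conjugates, considering $\oline{\langle F_y(\cdot),\eta\rangle}$ and $\oline{\langle y,F_\eta^\sim(\cdot)\rangle}$ as genuine elements of $\cO_\partial(\oline\cS_{0,b},\C)$.

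For the second inclusion your route is genuinely different and considerably more laborious than the paper's. The paper simply observes that the \emph{same} pairing identity
\[
\langle U_z y,\eta\rangle=\langle y,V_{\oline z}\eta\rangle
\quad\text{for all }y\in\cD(U_z),\ \eta\in\cD(V_{\oline z}),
\]
already obtained in the first step, can be read in the other direction: fixing $y\in\cD(U_z)$, it says exactly that $\gamma(y)\in\cD(V_{\oline z}^\sharp)$ with $V_{\oline z}^\sharp\gamma(y)=\gamma(U_zy)$. No second application of the lemma, no intertwining through $(\cY^\sharp_c)^\sharp$, and none of the topological reconciliation you flag. Your approach---applying the first inclusion with $V$ in place of $U$ and then transporting via~$\gamma$---can be made to work, but the ``non-cosmetic point'' you identify is a genuine obstacle: your claim that $\cY^\sharp$ and $\cY^\sharp_c$ have the same continuous linear functionals is precisely the surjectivity of~$\gamma_c$, which is \emph{not} assumed in this lemma (it enters only later, in the Duality Theorem~\ref{UtoV}). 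So your detour risks importing a hypothesis the lemma does not need, whereas the paper's symmetric reading of the pairing identity avoids the issue entirely.
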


\begin{proof}
	Let $z=a+\ie b\in\C$ and assume $b>0$. 
	(The case $b<0$ can be similarly studied.)
	Also let $y\in\cD(U_z)\subseteq\cY$ and  $\eta\in\cD(V_{\oline{z}})\subseteq \cY^\sharp_c$ with their corresponding functions 
	$F_y\in \cO_\partial^w(\overline{\cS}_{0,b},\cY)$
	and $F_\eta\in \cO_\partial^w(\overline{\cS}_{0,-b},\cY^\sharp_c)$. 
	The function $F_\eta^\sim\colon\overline{\cS}_{0,b}\to\cY^\sharp_c$, $F_\eta^\sim(w):=F_\eta(\oline{w})$,
	is continuous on the closed strip~$\overline{\cS}_{0,b}$  and antiholomorphic on the open strip $\cS_{0,b}$. 
	Therefore
\[ \overline{\langle F_y(\cdot),  \eta\rangle},
	\overline{\langle y,F_\eta^\sim(\cdot)\rangle}\in\cO_\partial(\overline{\cS}_{0,b},\C) \]  
	and 
	\[(\forall s\in\R)\quad 
	\langle F_y(s),  \eta\rangle
	=\langle U_sy,  \eta\rangle
	=\langle y,  U_s^\sharp\eta\rangle
	=\langle y,  F_\eta(s)\rangle
	=\langle y,  F_\eta^\sim(s)\rangle.\]
	Hence
	$\langle F_y(z),  \eta\rangle=\langle y,  F_\eta^\sim(z)\rangle$
	for $z \in \cS_{0,b}$, 
	which leads to
	$\langle U_zy,  \eta\rangle=\langle y,  V_{\oline{z}}\eta\rangle$. 
	
	Thus, for every $\eta\in\cD(V_{\oline{z}})$, we have $\eta\in\cD(U_z^\sharp)$ and $U_z^\sharp\eta=V_{\oline{z}}\eta$. 
	On the other hand, for $y\in\cD(U_z)$ we have $\gamma(y)\in\cD(V_{\oline{z}}^\sharp)$ 
	and $\gamma(U_zy)=V_{\oline{z}}^\sharp\gamma(y)$. 
\end{proof}

We will work from now on in this section only with one-parameter groups satisfying the exponential growth condition
from Definition~\ref{def:expgro}.

\begin{rem}
	\label{F-equi1_rem}
	We note for later use that, for every 
	equicontinuous subset $E\subseteq \cY^\sharp$,  
	there exist $q\in\cP$  and  $M,B\in(0,\infty)$ such that 
	\begin{equation}
		\label{F-equi1}
		(\forall t\in\R)(\forall\eta\in E)(\forall y\in\cY)\quad 
		\vert\langle U_ty,\eta\rangle\vert\le M\ee^{B\vert t\vert}q(y).
	\end{equation}
	In fact, 
	since the subset $E\subseteq \cY^\sharp$ is equicontinuous, 
	there exist $p\in\cP$ and $M_1\in(0,\infty)$ such that for all $\eta\in E$ and $y\in\cY$ we have 
	$\vert\langle y,\eta\rangle\vert\le M_1 p(y)$. 
	Then, for  $t\in\R$, $\eta\in E$, and $y\in\cY$, we obtain 
	by~\eqref{agrowth_eq1}
	for suitable $q\in\cP$ and $B,C>0$:
	\begin{equation*}
		\vert\langle U_ty,\eta\rangle\vert
		\le M_1 p(U_ty) 
		\le M_1C\ee^{B\vert t\vert}q(y).
	\end{equation*}
\end{rem}

Using a more elaborate approach, one can show that the map from the first assertion of the following lemma is actually continuous,
cf.~Example~\ref{App_cont2}. 

\begin{lem}\label{cont}
	For every one-parameter group $(U,\cY)$ of exponential growth, 
	the following assertions hold: 
	\begin{enumerate}[{\rm(i)}]
\item\label{cont_item1} 
  For every compact subset $K\subseteq \cY$, the map
\[ \R\times K\to\cY, \quad (t,y)\mapsto U_ty,\] is continuous. 
\item\label{cont_item2} 
For every equicontinuous subset $E\subseteq \cY^\sharp$, the  mapping $\R\times E\to\cY^\sharp_c$, $(t,\eta)\mapsto U^\sharp_t\eta$, is continuous. 
	\end{enumerate}
\end{lem}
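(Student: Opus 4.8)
The two assertions are parallel, and I would prove them together by reducing continuity of a two-variable map on the relevant product to a combination of the already-established equicontinuity of the one-parameter family and the continuity of the individual orbit maps. Recall that Definition~\ref{def:expgro}\eqref{agrowth_eq1} gives, for each $r>0$, equicontinuity of $\Gamma_r=\{U_t:|t|\le r\}\subseteq\cL(\cY)$, and dually (via Remark~\ref{F-equi1_rem}) the bound \eqref{F-equi1} on an equicontinuous $E\subseteq\cY^\sharp$ shows that $\{U_t^\sharp:|t|\le r\}$ is equicontinuous as a family of maps on $\cY^\sharp_c$. The standard principle (Remark~\ref{lintop}\eqref{lintop_item2}) is that on an equicontinuous set of operators the topology of pointwise convergence and that of compact convergence coincide; I would exploit this to convert pointwise information into uniform-on-compacts information.

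\textbf{Proof of \eqref{cont_item1}.} Fix $(t_0,y_0)\in\R\times K$ and a continuous seminorm $p$ on $\cY$. Pick $r>|t_0|+1$, so that for $|t-t_0|\le 1$ we stay inside $\Gamma_r$. For $(t,y)$ near $(t_0,y_0)$ write
\[
U_ty-U_{t_0}y_0 = U_t(y-y_0) + (U_t-U_{t_0})y_0.
\]
The first term is controlled by equicontinuity of $\Gamma_r$: there is a continuous seminorm $q$ with $p(U_t(y-y_0))\le q(y-y_0)$ for all $|t|\le r$, and $q(y-y_0)$ is small for $y\in K$ close to $y_0$. For the second term, the orbit map $t\mapsto U_ty_0$ is continuous by Definition~\ref{def:expgro}\eqref{agrowth_hyp1}, so $p((U_t-U_{t_0})y_0)\to 0$ as $t\to t_0$. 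Combining, $p(U_ty-U_{t_0}y_0)$ is small on a neighbourhood of $(t_0,y_0)$ in $\R\times K$, which is the claim. (Note this argument only needed continuity of orbit maps plus equicontinuity, so the compactness of $K$ is not even essential here; it is stated this way because it is exactly what is needed in applications.)

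\textbf{Proof of \eqref{cont_item2}.} This is the genuinely delicate half, because on $\cY^\sharp_c$ one cannot test against a fixed functional and the orbit maps $t\mapsto U_t^\sharp\eta$ need not a priori be $\cY^\sharp_c$-continuous for a single $\eta$. I would proceed as follows. The topology on $\cY^\sharp_c$ is that of uniform convergence on compact subsets of $\cY$, so it suffices to fix a compact $K\subseteq\cY$ and show $(t,\eta)\mapsto\sup_{y\in K}|\langle y,U_t^\sharp\eta\rangle|=\sup_{y\in K}|\langle U_ty,\eta\rangle|$ behaves continuously. For $(t_0,\eta_0)\in\R\times E$ and $y\in K$, split
\[
\langle U_ty,\eta\rangle-\langle U_{t_0}y,\eta_0\rangle
= \langle U_ty-U_{t_0}y,\eta\rangle + \langle U_{t_0}y,\eta-\eta_0\rangle.
\]
For the second term, $U_{t_0}K$ is a fixed compact subset of $\cY$ (image of a compact set under the continuous operator $U_{t_0}$), so $\sup_{y\in K}|\langle U_{t_0}y,\eta-\eta_0\rangle|=\sup_{z\in U_{t_0}K}|\langle z,\eta-\eta_0\rangle|\to 0$ as $\eta\to\eta_0$ in $\cY^\sharp_c$. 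For the first term, by part~\eqref{cont_item1} the set $\{U_ty-U_{t_0}y:|t-t_0|\le1,\ y\in K\}$ is (relatively) compact in $\cY$, indeed contained in any prescribed neighbourhood of $0$ once $|t-t_0|$ is small, uniformly in $y\in K$ — more precisely, by part~\eqref{cont_item1} the map $(t,y)\mapsto U_ty$ is continuous on the compact $[t_0-1,t_0+1]\times K$, hence uniformly continuous, so the compact set $C_\delta:=\{U_ty-U_{t_0}y: |t-t_0|\le\delta,\ y\in K\}$ shrinks to $\{0\}$ as $\delta\to0$. Since $E$ is equicontinuous, there is a continuous seminorm $p$ and $M_1>0$ with $|\langle v,\eta\rangle|\le M_1 p(v)$ for all $v\in\cY,\eta\in E$; choosing $\delta$ so small that $p(v)<\varepsilon$ for all $v\in C_\delta$ makes the first term $\le M_1\varepsilon$ uniformly in $y\in K$ and $\eta\in E$. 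Combining the two estimates yields continuity of $(t,\eta)\mapsto U_t^\sharp\eta$ at $(t_0,\eta_0)$ in $\cY^\sharp_c$.

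\textbf{Main obstacle.} The crux is the second term's handling in \eqref{cont_item2}: one must keep the test-vector set compact after applying $U_{t_0}$, and one must transfer the uniform-in-$y$ smallness of $U_ty-U_{t_0}y$ (which is where part~\eqref{cont_item1}, and hence equicontinuity, is really used) across the equicontinuous family $E$. Both inputs — continuity of $U_{t_0}$ on $\cY$ and equicontinuity of $E$ together with \eqref{F-equi1} — are available, so the argument closes; the only care needed is to organise the $\varepsilon$–$\delta$ bookkeeping so that the neighbourhood of $(t_0,\eta_0)$ produced is independent of $y\in K$.
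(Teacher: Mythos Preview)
Your proof is correct and follows essentially the same approach as the paper: split $U_ty-U_{t_0}y_0$ (and its antidual analogue) into a ``space'' term handled by equicontinuity and a ``time'' term handled by continuity of orbit maps, then combine. The only cosmetic difference is the choice of splitting---you use $U_t(y-y_0)+(U_t-U_{t_0})y_0$ and bootstrap \eqref{cont_item2} from \eqref{cont_item1}, whereas the paper uses $U_{t_0}(y-y_0)+(U_{t-t_0}-\1)U_{t_0}y$ and appeals directly to Lemma~\ref{lem:comp-cont} for the second summand; both routes are equivalent here.
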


\begin{proof}
	\eqref{cont_item1}
	For every compact subset $K\subseteq\cY$, $y,y_0\in K$, and $t,t_0\in\R$ 
	we have 
	\[U_ty-U_{t_0}y_0=U_{t_0}(y-y_0)+(U_{t-t_0}-\1)U_{t_0}y.
	\] 
	Hence for every $p\in\cP$ 
	\[p(U_ty-U_{t_0}y_0)\le p(U_{t_0}(y-y_0))+\sup_{z\in U_{t_0}(K)}p((U_{t-t_0}-\1)z).\]
	Since the operator $U_{t_0}\colon\cY\to\cY$ is continuous, it follows that the subset $U_{t_0}(K)\subseteq\cY$ is compact.
	It then follows by Lemma~\ref{lem:comp-cont} that 
	the mapping $\R\times K\to\cY$, $(t,y)\mapsto U_ty$, is continuous at $(t_0,y_0)\in\R\times K$. 
	
	\nin	\eqref{cont_item2}
	We first recall from Remark~\ref{lintop}\eqref{lintop_item2}
	that the topologies of pointwise convergence and compact convergence coincide on the equicontinuous set $E\subseteq \cY^\sharp_c$. 
	For every  $\eta,\eta_0\in E$, and $t,t_0\in\R$ 
	we have as above 
	\[U^\sharp_t\eta-U^\sharp_{t_0}\eta_0
	=U^\sharp_{t_0}(\eta-\eta_0)+(U^\sharp_t\eta-U^\sharp_{t_0}\eta).
	\] 
	It is straightforward to check that the operator $U^\sharp_{t_0}\colon\cY^\sharp_c\to\cY^\sharp_c$ is continuous, hence, by the above equality, 
	it remains to prove that $\lim\limits_{t\to t_0}U^\sharp_t\eta=U^\sharp_{t_0}\eta$ uniformly for $\eta\in E$. 
	
	Since $E\subseteq \cY^\sharp$ is equicontinuous, there exist 
	$M>0$ and $p\in\cP$ with $\vert\langle y,\eta\rangle\vert\le M p(y)$ for all $y\in\cY$ and $\eta\in E$. 
	Then, for every compact subset $K\subseteq\cY$, we have 
	\begin{align*}
		\sup_{\eta\in E}\sup_{y\in K}\vert\langle y,U^\sharp_t\eta-U^\sharp_{t_0}\eta\rangle\vert
	 &=\sup_{\eta\in E}\sup_{y\in K}\vert\langle (U_{t-t_0}-\1)U_{t_0}y,\eta\rangle\vert \\
	&\le M\sup_{z\in U_{t_0}(K)}p(U_{t-t_0}z-z).
\end{align*}
	As above, the subset $U_{t_0}(K)\subseteq\cY$ is compact, 
	hence Lemma~\ref{lem:comp-cont}  implies 
	\[\lim\limits_{t\to t_0}\sup\limits_{\eta\in E}\sup\limits_{y\in K}\vert\langle y,U^\sharp_t\eta-U^\sharp_{t_0}\eta\rangle\vert=0.\] 
	Since the compact subset $K\subseteq\cY$ is arbitrary, 
	this completes the proof of the fact that the mapping
	$\R\times E\to\cY^\sharp_c$, $(t,\eta)\mapsto U^\sharp_t\eta$, is continuous at $(t_0,\eta_0)\in\R\times E$. 
\end{proof}

\begin{lem}
	\label{F-comp_orbit}
	For a continuous one-parameter group
	$(U,\cY)$ of exponential growth, 
	the following assertions hold for every compact subset $L\subseteq \C$: 
	\begin{enumerate}[{\rm(i)}]
		\item\label{F-comp_orbit_item1}
		For every 
		compact subset $K\subseteq\cY$ the set
                \[ \{\ee^{-(w+t)^2}U_ty: w\in L, t\in\R, y\in K\} \]
                is relatively compact in~$\cY$. 
		\item\label{F-comp_orbit_item2}
		For every equicontinuous 
		subset $E\subseteq\cY^\sharp_c$ 
		the set
\[ \{\ee^{-(w+t)^2}U^\sharp_t\eta: w\in L, t\in\R,\eta\in E\} \] is 
		equicontinuous in~$\cY^\sharp$. 
	\end{enumerate}
\end{lem}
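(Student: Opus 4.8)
The plan is to prove the two assertions separately; both rest on the same elementary bound. Writing $w = u + \ie v$ with $u, v \in \R$, one has $|\ee^{-(w+t)^2}| = \ee^{v^2 - (u+t)^2}$, and since $L$ is compact we may fix $R_0 > 0$ with $L \subseteq \{u + \ie v : |u|,|v| \le R_0\}$, so that $|\ee^{-(w+t)^2}| \le \ee^{R_0^2 - (u+t)^2}$ for all $w \in L$, $t \in \R$. Moreover, for every $B > 0$ the exponent $R_0^2 - (u+t)^2 + B|t|$ tends to $-\infty$ as $|t| \to \infty$, uniformly in $u \in [-R_0, R_0]$; in particular $M_B := \sup\{\ee^{R_0^2 - (u+t)^2 + B|t|} : |u| \le R_0,\ t \in \R\} < \infty$.

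I would treat \eqref{F-comp_orbit_item2} first, as it is a direct estimate. Apply Remark~\ref{F-equi1_rem} to the equicontinuous set $E$ to obtain $q \in \cP$ and $M, B > 0$ with $|\langle U_t y, \eta\rangle| \le M \ee^{B|t|} q(y)$ for all $t \in \R$, $\eta \in E$, $y \in \cY$. Then, for all $w \in L$, $t \in \R$, $\eta \in E$ and every $y \in \cY$,
\[ \bigl|\langle y,\ \ee^{-(w+t)^2}U_t^\sharp\eta\rangle\bigr| = |\ee^{-(w+t)^2}|\,|\langle U_t y, \eta\rangle| \le \ee^{R_0^2 - (u+t)^2} M \ee^{B|t|} q(y) \le M M_B\, q(y). \]
As the right-hand side does not depend on $w, t, \eta$, this says precisely that $\{\ee^{-(w+t)^2}U_t^\sharp\eta : w \in L,\ t \in \R,\ \eta \in E\}$ is equicontinuous in $\cY^\sharp$.

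For \eqref{F-comp_orbit_item1} I would pass to the two-point compactification $\overline{\R} = [-\infty, +\infty]$ and define $\Phi : L \times \overline{\R} \times K \to \cY$ by $\Phi(w,t,y) := \ee^{-(w+t)^2}U_t y$ for $t \in \R$ and $\Phi(w,\pm\infty, y) := 0$. The domain $L \times \overline{\R} \times K$ is compact. On $L \times \R \times K$ the map $\Phi$ is continuous, being the product of the entire scalar function $(w,t) \mapsto \ee^{-(w+t)^2}$ with the map $(t,y) \mapsto U_t y$, which is continuous on $\R \times K$ by Lemma~\ref{cont}\eqref{cont_item1}, followed by scalar multiplication in $\cY$. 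Continuity at a point $(w_0, \pm\infty, y_0)$ follows from the growth estimate \eqref{agrowth_eq1}: given $p \in \cP$, pick $q \in \cP$ and $B, C > 0$ with $p(U_t y) \le C\ee^{B|t|}q(y)$; then
\[ p\bigl(\Phi(w,t,y)\bigr) \le C\Bigl(\sup_{y\in K} q(y)\Bigr)\ee^{R_0^2 - (u+t)^2 + B|t|} \qquad \text{for all } w \in L,\ t \in \R,\ y \in K, \]
and the right-hand side tends to $0$ as $|t| \to \infty$ uniformly over $w \in L$, $y \in K$, so a neighborhood of the form $L \times (N, +\infty] \times K$ (respectively $L \times [-\infty, -N) \times K$) is mapped into an arbitrarily small $p$-neighborhood of $0$. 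Hence $\Phi$ is continuous on the compact space $L \times \overline{\R} \times K$, so $\Phi(L \times \overline{\R} \times K)$ is a compact — hence closed — subset of the Hausdorff space $\cY$ that contains the set of \eqref{F-comp_orbit_item1}, namely $\Phi(L \times \R \times K)$; consequently that set has compact closure, i.e., is relatively compact in $\cY$.

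The only slightly delicate point is the continuity of $\Phi$ at the adjoined points $t = \pm\infty$ in \eqref{F-comp_orbit_item1} — equivalently, the observation that the Gaussian weight forces uniform decay of $\ee^{-(w+t)^2}U_t y$ as $|t|\to\infty$, which is what makes the compactification argument work; everything else is a routine combination of the boundedness of $\Im w$ on $L$ with \eqref{agrowth_eq1}, Lemma~\ref{cont} and Remark~\ref{F-equi1_rem}.
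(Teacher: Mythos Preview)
Your proof is correct and follows essentially the same approach as the paper's: part~(ii) is the identical estimate via Remark~\ref{F-equi1_rem}, and part~(i) is the same compactification-plus-continuity argument (you use the two-point compactification $[-\infty,+\infty]$ where the paper uses the one-point compactification $\R\cup\{\infty\}$, but this is immaterial). The paper's write-up is slightly terser but the ideas match.
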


\begin{proof} 
	\eqref{F-comp_orbit_item1} 
	It follows by \eqref{agrowth_eq1} in Definition~\ref{def:expgro}
	that 
	\[\lim\limits_{\vert t\vert\to\infty}\sup\limits_{w\in L}\sup\limits_{y\in K}p(\ee^{-(w+t)^2}U_ty)=0\] 
	for every seminorm $p\in\cP$, hence $\lim\limits_{\vert t\vert\to\infty}\ee^{-(w+t)^2}U_ty=0$ in $\cY$ uniformly for $w\in L$ and $y\in K$. 
	We now consider the one-point compactification of the real line $\R_\infty=\R \cup\{\infty\}$ and we define
	\[ f\colon \R_\infty\times L\times K\to\cY,
	 \quad
	f(t,w,y):=
	\begin{cases}
		\ee^{-(w+t)^2}U_ty & \text{ for } t\in\R \\ 
		0 & \text{ for } t = \infty.  \end{cases}
	\]
	It follows by Lemma~\ref{cont}\eqref{cont_item1} that $f\colon \R_\infty\times L\times K\to\cY$ is continuous, hence its image $f(\R_\infty\times L\times K)$ is a compact subset of~$\cY$. 
	In particular, $f(\R\times L\times K)$ is a relatively compact subset of $\cY$. 
	
	\nin	\eqref{F-comp_orbit_item2}
	The set $E$ is equicontinuous, hence we may use Remark~\ref{F-equi1_rem} to select $q\in\cP$  and  $M,B\in(0,\infty)$ with $\vert\langle y, U^\sharp_t\eta\rangle\vert\le M\ee^{B\vert t\vert}q(y)$ for all $t\in\R$, $\eta\in E$, and $y\in\cY$. 
	Since $\lim\limits_{\vert t\vert\to\infty}\ee^{-(w+t)^2}\ee^{B\vert t\vert}=0$, there exists $C\in(0,\infty)$ with $\vert \ee^{-(w+t)^2}\ee^{B\vert t\vert}\vert\le C$ for all $t\in\R$ and $w\in L$. 
	Therefore 
	\[
	\vert\langle y,  \ee^{-(w+t)^2}U^\sharp_t\eta\rangle\vert
	=\vert \ee^{-(w+t)^2}\vert \cdot \vert\langle y, U^\sharp_t\eta\rangle\vert
	\le M\ee^{B\vert t\vert}\vert \ee^{-(w+t)^2}\vert q(y)
	\le MC q(y).
	\]
	This estimate shows that the set  $\{\ee^{-(w+t)^2}U^\sharp_t\eta: w\in L, t\in\R,\eta\in E\}$ is 
	equicontinuous. 
\end{proof}

\begin{theorem}
	\label{UtoV} {\rm(Duality Theorem)} 
	If $(U,\cY)$ is of exponential growth and 
	$\gamma_c\colon\cY\to(\cY^\sharp_c)^\sharp$ is surjective 
	then
	\[ U_z^\sharp= V_{\oline{z}} \quad \mbox{  for every } \quad z\in\C.\] 
\end{theorem}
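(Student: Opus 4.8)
By Lemma~\ref{easy} we already have the inclusion $V_{\oline{z}}\subseteq U_z^\sharp$ for every $z\in\C$, valid without any growth hypothesis. So the entire content of the theorem is the reverse inclusion: every $\eta\in\cD(U_z^\sharp)$ actually lies in $\cD(V_{\oline{z}})$, i.e.\ the antidual extension operator does not enlarge the domain obtained by genuine analytic continuation of orbit maps in $\cY^\sharp_c$. I would fix $z=a+\ie b$ with $b>0$ (the case $b<0$ being symmetric, and $b=0$ trivial) and take $\eta\in\cD(U_z^\sharp)$, meaning there is $U_z^\sharp\eta\in\cY^\sharp$ with $\langle U_zy,\eta\rangle=\langle y,U_z^\sharp\eta\rangle$ for all $y\in\cD(U_z)$. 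The goal is to produce a function $F_\eta\in\cO_\partial^w(\oline{\cS}_{0,-b},\cY^\sharp_c)$ extending the orbit map $s\mapsto U_s^\sharp\eta$.

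**Building the extension by the Gaussian calculus.** The natural candidate for $F_\eta(w)$ should be "$U_w^\sharp\eta$", but that is exactly what we must construct. The plan is to use the Gaussian regularizations: for $a'>0$ set $F^\sharp_{a',\eta}(w)=U(\gamma_{a',\oline w})^\sharp\eta$, which by Proposition~\ref{agrowth}\eqref{agrowth_item3} is weakly holomorphic on $\C$ and restricts to $U_s^\sharp U(\gamma_{a',0})^\sharp\eta$ on $\R$; moreover $U(\gamma_{a',0})^\sharp\eta\to\eta$ in $\cY^\sharp_c$ as $a'\to0$ by Proposition~\ref{agrowth}\eqref{agrowth_item2}. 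So I would try to show that the net $\big(F^\sharp_{a',\eta}\big)_{a'>0}$ converges, as $a'\to0$, uniformly on compact subsets of $\oline{\cS}_{0,-b}$ to some continuous function $F_\eta$; weak holomorphy of the limit on the interior then follows from the sequential completeness of $\cY^\sharp_c$ (Remark~\ref{rem:a.2}) together with Remark~\ref{rem:a.1}, and the boundary values are $U_s^\sharp\eta$ by the convergence $U(\gamma_{a',0})^\sharp\eta\to\eta$. The surjectivity hypothesis on $\gamma_c$ enters precisely here: it is what allows us to treat $\cY^\sharp_c$-valued weakly holomorphic maps as honestly holomorphic and to identify $(\cY^\sharp_c)^\sharp$ with $\cY$, so that the pairing arguments close up (Remark~\ref{surjev}).

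**Controlling the limit on the closed strip.** To get convergence on the whole closed strip $\oline{\cS}_{0,-b}$ rather than just on $\R$, I would invoke a Phragmén--Lindelöf / three-lines type argument on the functions $w\mapsto\langle y, F^\sharp_{a',\eta}(w)-F^\sharp_{a'',\eta}(w)\rangle$ for $y\in\cY$: these are holomorphic on $\cS_{0,-b}$, continuous up to the boundary, and the equicontinuity estimates of Lemma~\ref{F-comp_orbit}\eqref{F-comp_orbit_item2} give the uniform growth control in the real direction needed to apply the maximum principle on the strip. On the line $\Im w=0$ the difference tends to $0$ (uniformly on compacta) because $U(\gamma_{a',0})^\sharp\eta\to\eta$; on the line $\Im w=-b$ one uses the defining relation $\langle U_zy,\eta\rangle=\langle y,U_z^\sharp\eta\rangle$ applied to the vectors $y=U(\gamma_{a',0})x$ (which lie in $\cD(U_z)$ by Remark~\ref{F-ex}), together with Corollary~\ref{F-tech_cor1}, to rewrite the boundary value $\langle y,F^\sharp_{a',\eta}(-\ie b+s)\rangle$ in terms of $U_z^\sharp\eta$ and show it likewise converges. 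Then Phragmén--Lindelöf propagates the uniform convergence into the interior of the strip.

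**Main obstacle.** The delicate point is the uniform-on-compacta convergence of the regularized boundary values on the \emph{far} edge $\Im w=-b$ of the strip, i.e.\ checking that $U(\gamma_{a',0})^\sharp U_z^\sharp\eta$ behaves correctly and converges to $U_z^\sharp\eta$ in $\cY^\sharp_c$ while the convolution identities interchange properly with the analytic continuation; this requires combining Corollary~\ref{F-tech_cor1}, Proposition~\ref{agrowth}\eqref{agrowth_item2}, and the equicontinuity bounds of Lemma~\ref{F-comp_orbit} in just the right order, and it is also where the hypothesis that $\gamma_c$ is surjective is genuinely used (to legitimately view everything through the pairing with $\cY$ and to know $\cY^\sharp_c$ is sequentially complete). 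Once this edge estimate is in hand, the interior estimate is a routine three-lines argument and the construction of $F_\eta$ is immediate, giving $\eta\in\cD(V_{\oline z})$ and $V_{\oline z}\eta=U_z^\sharp\eta$, which combined with Lemma~\ref{easy} yields the claimed equality $U_z^\sharp=V_{\oline z}$.
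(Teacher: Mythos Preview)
Your strategy is sound but organised differently from the paper. The paper does not pass through the approximating net $F^\sharp_{a',\eta}$ at all; instead, after first reducing to purely imaginary $z_0=\ie b_0$, it directly defines for each $y\in\cD(U_{\ie b_0})$ the scalar function $\Psi_y(z):=\langle U_{\bar z}y,\eta\rangle$ on $\oline{\cS}_{-b_0,0}$, proves via a single Phragm\'en--Lindel\"of estimate (with the weight $e^{-z^2}$) that the antilinear map $y\mapsto\Psi_y$ is continuous from $\cY$ to $\cO_\partial(\oline{\cS}_{-b_0,0})$, extends it by density to all of $\cY$, and then reads off $\widetilde\Psi(z)\in\cY^\sharp$ as the desired orbit-map extension of~$\eta$. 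Your route via the Cauchy property of the Gaussian regularisations reaches the same destination, and when you carry it out you will find yourself making exactly the same weighted P--L estimate: both the boundedness hypothesis and the uniform boundary control require the factor $e^{-w^2}$ together with Lemma~\ref{F-comp_orbit}. The paper's approach simply short-circuits the approximation step by using that estimate once, to \emph{define} the extension, rather than to prove convergence of a net toward it.

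Two concrete points to tighten in your sketch. First, the functions $w\mapsto\langle y,F^\sharp_{a',\eta}(w)\rangle$ are unbounded on the strip, so you must multiply by $e^{-w^2}$ before invoking Phragm\'en--Lindel\"of; this is implicit in your appeal to Lemma~\ref{F-comp_orbit}\eqref{F-comp_orbit_item2} but should be made explicit, and the uniform convergence on the boundary line $\R$ (not just on compacta) then follows because $\{e^{-s^2}U_sy:s\in\R\}$ is relatively compact by Lemma~\ref{F-comp_orbit}\eqref{F-comp_orbit_item1}. Second, handling the far edge for all real translates $s$ requires $\cD(U_z^\sharp)=\cD(U_{\ie b}^\sharp)$; this holds because $U_z=U_aU_{\ie b}$ with $U_a$ bounded invertible, and the paper isolates it as a preliminary reduction to the purely imaginary case. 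With these details filled in your argument goes through and is a legitimate alternative proof.
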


\begin{proof} We fix $z_0=a_0+\ie b_0\in\C$ and we assume $b_0>0$. 
	(The case $b_0<0$ can be discussed in the same manner.)
	We note that $\cD(U_{z_0})=\cD(U_{\ie b_0})$ and $U_{z_0}=U_{a_0}U_{\ie b_0}\subseteq U_{\ie b_0}U_{a_0}$ by 
	Proposition~\ref{holomext_prop}\eqref{holomext_prop_item2}, 
	which directly implies 
	\begin{equation}
		U_{a_0}^\sharp(\cD(U_{z_0}^\sharp))=\cD(U_{\ie b_0}^\sharp)
		\text{ and }U_{z_0}^\sharp=U_{\ie b_0}^\sharp U_{a_0}^\sharp.
	\end{equation}
	Similarly, 
	$\cD(V_{\oline{z}_0})=\cD(V_{-\ie b_0})$ 
	and $V_{\oline{z}_0}=V_{a_0}V_{-\ie b_0}=U_{a_0}^\sharp V_{-\ie b_0}$ by 
	Proposition~\ref{holomext_prop}\eqref{holomext_prop_item2}. 
	Hence we have
	\begin{align*}
		\cD(U_{z_0}^\sharp)\subseteq \cD(V_{\oline{z}_0})
		& \iff U_{a_0}^\sharp(\cD(U_{z_0}^\sharp))\subseteq U_{a_0}^\sharp(\cD(V_{\oline{z}_0})) \\
		& \iff  \cD(U_{\ie b_0}^\sharp)\subseteq  U_{a_0}^\sharp(\cD(V_{-\ie b_0})) \\
		& \iff  U_{-a_0}^\sharp(\cD(U_{\ie b_0}^\sharp))\subseteq  \cD(V_{-\ie b_0}) \\
		& \iff  \cD(U_{\ie b_0}^\sharp)\subseteq  \cD(V_{-\ie b_0}),  
	\end{align*}
	where the last equivalence follows from 
	$U_{-a_0}^\sharp(\cD(U_{\ie b_0}^\sharp))
	= \cD(U_{\ie b_0}^\sharp)$ by an application of \eqref{holomext_prop_proof_eq1} 
	for $w_0(=-a_0)\in\R$. 
	On the other hand, by Lemma~\ref{easy}, it suffices to prove $\cD(U_{z_0}^\sharp)\subseteq \cD(V_{\oline{z}_0})$. 
	Therefore, by the above equivalences, it will be enough to prove the
	inclusion: 
	\begin{equation}
		\label{red}
		\cD(U_{\ie b_0}^\sharp)
		\subseteq  \cD(V_{-\ie b_0}).
	\end{equation}
	To this end let $\eta\in \cD(U_{\ie b_0}^\sharp)$ be
	arbitrary, which is equivalent to the fact that there exist $M_1>0$ and $p_1\in\cP$ with  
	\begin{equation}
		\label{adj}
		(\forall y\in\cD(U_{\ie b_0}))\quad \vert \langle U_{\ie b_0}y,\eta\rangle\vert\le M_1p_1(y).  
	\end{equation}
	For every $z=a+\ie b\in\overline{\cS}_{-b_0,0}$ we have 
	$\cD(U_{\ie b_0})\subseteq\cD(U_{-\ie b})=\cD(U_{\oline{z}})$, 
	so that the following map is defined: 
	\begin{equation}
		\label{Psi-map}
\Psi\colon \cD(U_{\ie b_0})\to \cO_\partial(\overline{\cS}_{-b_0,0}),\quad 
 \Psi_y(z)	:=\langle U_{\oline{z}}y,
		\eta\rangle
		=\langle F_y({\oline{z}}),
		\eta\rangle. 
	\end{equation}
	\emph{We claim that $\Psi$ is continuous with respect to the topology of $\cY$.} 	
	In order to prove this claim, we define for every $y\in\cD(U_{\ie b_0})$ and $z=a+\ie b\in \overline{\cS}_{-b_0,0}$, 
	\begin{align*}
		f_y(z):= & \ee^{-z^2}\Psi_y(z)   \langle U_{\oline{z}}y,\ee^{-z^2}
	\eta\rangle  = \langle U_{-\ie b}y,\ee^{-(a+\ie b)^2}U_{a
	}^\sharp\eta\rangle \\
	=& \langle F_y(-\ie b),\ee^{-(a+\ie b)^2}U_{a
	}^\sharp\eta\rangle.
\end{align*}
	The subset 
	$\{\ee^{-(a+\ie b)^2}U_{a
	}^\sharp\eta:a\in\R,b\in[-b_0,0]\}\subseteq\cY^\sharp$ is equicontinuous
	by Lemma~\ref{F-comp_orbit}\eqref{F-comp_orbit_item2}, 
	hence there exist $M_2>0$ and $p_2\in\cP$ with 
	\begin{equation}\label{est}
		(\forall b\in[-b_0,0])\quad \vert f_y(a+\ie b)\vert\le M_2 p_2(F_y(-\ie b)).
	\end{equation}
	In particular, for all $a\in\R$ and $b\in[-b_0,0]$ we have 
	\[\vert f_y(a+\ie b)\vert\le M_2\sup \limits_{c\in[0,b_0]} p_2(F_y(\ie c)).\] 
	Since the function $F_y\colon\overline{\cS}_{0,b_0}\to\cY$ is
        continuous, the function $f_y\colon \overline{\cS}_{-b_0,0}\to\C$ is bounded. 
	We clearly have $f_y\in \cO_\partial(\overline{\cS}_{-b_0,0})$, hence, 
	by the  Phragm\'en--Lindel\"of Principle  
	(\cite[Th.~12.8]{Ru87}) 
	we obtain 
	\begin{equation}
		\label{bd}
		(\forall z\in \overline{\cS}_{-b_0,0})\quad 
		\vert f_y(z)\vert\le \max\{\sup_{s\in\R}\vert f_y(s)\vert,\sup_{s\in\R}\vert f_y(s-\ie b_0)\vert\}
	\end{equation}
	For every $s\in\R$ we have  by \eqref{est}
	\begin{equation}
		\label{bd0}
		\vert f_y(s)\vert
		\le M_2 p_2(F_y(0))=M_2 p_2(y)
	\end{equation}
	and 
	\begin{align*}		
		\vert f_y(s-\ie b_0)\vert
		&=\vert\ee^{-(s-\ie b_0)^2}\langle U_{s+\ie b_0}y,
		\eta\rangle\vert \\
		&=\vert\ee^{-(s-\ie b_0)^2}\langle U_{\ie b_0}U_{s
		}y,\eta\rangle\vert
		\qquad \qquad \mbox{ by } \eqref{adj} \\
		& \le \ee^{-s^2+b_0^2}M_1 p_1(U_{s
		}y) \\
		&\le M_1C\ee^{-s^2+b_0^2}\ee^{B\vert s
			\vert} p_3(y) 
		\qquad \qquad \mbox{ by } \eqref{agrowth_eq1}.  
	\end{align*}
	Here  we used \eqref{agrowth_eq1} for finding suitable constants $B,C>0$ and the seminorm  $p_3\in\cP$. 
	The above inequality shows that there exists a constant
	$M_3>0$ such that $\vert f_y(s-\ie b_0)\vert\le M_3p_3(y)$. 
	It follows by this inequality along with \eqref{bd} and \eqref{bd0} that 
	there exist a constant $M>0$ and a seminorm $p\in\cP$ with the property 
	\begin{equation*}
		(\forall z\in\overline{\cS}_{-b_0,0})(\forall y\in\cD(U_{\ie b_0}))\quad 
		\vert f_y(z)\vert\le Mp(y).
	\end{equation*}
	Since $f_y(z)=\ee^{-z^2}\Psi_y(z)$ for all $y\in\cD(U_{\ie b_0})$ and $z\in \overline{\cS}_{-b_0,0}$, the above estimate
	and its antilinearity directly imply that the mapping $y\mapsto \Psi_y$ in \eqref{Psi-map} is continuous, as claimed. 
	
	Recalling that $\cD(U_{\ie b_0})$ is dense in $\cY$ by 
	Corollary~\ref{agrowth_cor}, 
	we then extend $\Psi$ to a continuous linear mapping $\Psi\colon \cY\to \cO_\partial(\overline{\cS}_{-b_0,0})$ satisfying 
	\begin{equation}
		\label{Psi-est}
		(\forall z\in\overline{\cS}_{-b_0,0})(\forall y\in\cY)\quad 
		\vert \Psi_y(z)\vert\le M\vert \ee^{z^2}\vert p(y).
	\end{equation}
	For every $z\in\overline{\cS}_{-b_0,0}$ we now define 
	$\widetilde{\Psi}(z)\colon \cY\to\C$ by $\langle y,\widetilde{\Psi}(z)\rangle:=\Psi_y(z)$ 
	for all $y\in\cY$. 
	Then the functional $\widetilde{\Psi}(z)$ is antilinear by \eqref{Psi-map} 
	and is continuous by \eqref{Psi-est}, 
	hence ${\tilde\Psi(z)\in\cY^\sharp}$. 
	We thus obtain a 
	mapping $\widetilde{\Psi}\colon \overline{\cS}_{-b_0,0}\to\cY^\sharp$. 
	
	For every $y\in\cY$ we have  $\langle y,\widetilde{\Psi}(\cdot)\rangle:=\Psi_y(\cdot)\in\cO_\partial(\overline{\cS}_{-b_0,0})$, 
	hence
\[ \widetilde{\Psi}\in
	\cO^w_\partial(\overline{\cS}_{-b_0,0},\cY^\sharp).\] 
	On the other hand, it follows by \eqref{Psi-est} that for every compact subset $K\subseteq \overline{\cS}_{-b_0,0}$ the subset $\widetilde{\Psi}(K)\subseteq\cY^\sharp$ is equicontinuous. 
	Since the topology of compact convergence coincides with the weak-$*$-topology on every equicontinuous set
	(cf.~Remark~\ref{lintop}\eqref{lintop_item2}), 
	it follows that $\widetilde{\Psi}\in\cC(\overline{\cS}_{-b_0,0},\cY^\sharp_c)$. 
	Since 
	the mapping $\gamma_c\colon\cY\to(\cY^\sharp_c)^\sharp$ is surjective by hypothesis, we further
	have $\cO^w(\cS_{-b_0,0},\cY^\sharp)=\cO^w(\cS_{-b_0,0},\cY^\sharp_c)$
	(cf.~Remark~\ref{surjev}\eqref{surjev_item2})
	and therefore
	$\widetilde{\Psi}\in \cO^w_\partial(\overline{\cS}_{-b_0,0},\cY^\sharp_c)$. 
	
	Finally, for every $s\in\R$ and $y\in\cD(U_{\ie b_0})$ we have 
	\[ \langle y,\widetilde{\Psi}(s)\rangle=\Psi_y(s)=\langle U_sy,U_{-a_0}^\sharp\eta\rangle
	=\langle y,U_s^\sharp U_{-a_0}^\sharp\eta\rangle.\] 
	Since $\cD(U_{\ie b_0})$ is dense in $\cY$, this implies 
	$\widetilde{\Psi}(s)=U_s^\sharp U_{-a_0}^\sharp\eta$ for every $s\in\R$. 
	Thus $\widetilde{\Psi}\in \cO^w_\partial(\overline{\cS}_{-b_0,0},\cY^\sharp_c)$ 
	is an $\cO^w_\partial$-extension of the orbit map of $U_{-a_0}^\sharp\eta\in\cY^\sharp$, 
	hence $U_{-a_0}^\sharp\eta\in\cD(V_{-\ie b_0})$. 
	This completes the proof of \eqref{red}. 
\end{proof}

Next we introduce the extra structure required to deal with KMS
conditions.

\begin{rem}
	\label{annih}
	It is well known that the real duality pairing  \[\langle\cdot,\cdot\rangle_\R:=\Re\langle\cdot,\cdot\rangle\colon\cY\times \cY^\sharp\to\R\]
	defines the topological $\R$-linear isomorphism
	$\xi\mapsto \langle \cdot, \xi\rangle_\R$  
	from $\cY^\sharp$  (respectively, $\cY^\sharp_c$)
	onto the locally convex real vector space consisting of the $\R$-linear continuous functionals on $\cY$ endowed with the topology of pointwise convergence 
	(respectively, the topology of uniform convergence on compact subsets);
	see~\cite[\S 21.11]{Ko69}. 
	
	If $T\colon\cD(T)\subseteq\cY\to\cY$ is a densely-defined linear operator, then 
	\[T^\sharp\colon\cD(T^\sharp)\subseteq\cY^\sharp\to\cY^\sharp\] 
	is just the dual of the operator $T$ with respect to the duality pairing $\langle\cdot,\cdot\rangle_\R$. 
	
	If $J\colon\cY\to\cY$ is a continuous antilinear operator,  
	then there exists a unique continuous antilinear operator $J^\sharp\colon\cY^\sharp\to\cY^\sharp$
	satisfying 
	$\overline{\langle Jy,\eta\rangle}=\langle y, J^\sharp\eta\rangle$ 
	for all $y\in\cY$ and $\eta\in\cY^\sharp$. 
	Again,  $J^\sharp$ is the dual of the $\R$-linear operator $J$ with respect to the duality pairing $\langle\cdot,\cdot\rangle_\R$. 
	
	For any 
	$\eta\in\cD(T^\sharp)\subseteq \cY^\sharp$ 
	we have 
	\begin{align*}
		(T^\sharp -J^\sharp)\eta=0 
		&\iff (\forall y\in\cD(T)) \quad \langle y,	(T^\sharp -J^\sharp)\eta\rangle_\R=0\\
		&\iff (\forall y\in\cD(T)) \quad \langle (T-J)y,\eta\rangle_\R=0\\
		&\iff \langle (T-J)\cD(T),\eta\rangle_\R=\{0\} \\
		&\iff \Re\langle (T-J)\cD(T),\eta\rangle=\{0\}.
	\end{align*}
	This shows that the real linear subspace $\Ker(T^\sharp-J^\sharp)$ is closed in $\cY^\sharp$ 
	and can be regarded as the annihilator of the range of the operator $T-J$ with respect to the real duality pairing $\langle\cdot,\cdot\rangle_\R$. 
\end{rem}

\begin{cor}
	\label{UtoV_cor}
	If $(U,\cY)$ is of exponential growth, 
	$\gamma_c\colon\cY\to(\cY^\sharp_c)^\sharp$ is surjective, 
	and  $J\colon\cY\to\cY$ is a continuous antilinear operator, 
	then, for every $z\in\C$, the $\R$-linear subspace $\Ker(V_z-J^\sharp)\subseteq\cY^\sharp$ is closed and we have 
	\begin{equation}
		\label{UtoV_cor_eq1}
		\Ker(V_z-J^\sharp)=((U_{\bar{z}}-J)\cD(U_{\bar{z}}))^\perp, 
	\end{equation}
	where the annihilator is computed with respect to the real duality pairing 
	$\langle\cdot,\cdot\rangle_\R\colon\cY\times \cY^\sharp\to\R$. 
\end{cor}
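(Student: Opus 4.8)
The plan is to derive this as a direct combination of the Duality Theorem~\ref{UtoV} with the adjoint computation recorded in Remark~\ref{annih}. First I would apply Theorem~\ref{UtoV} with $z$ replaced by $\oline{z}$: since $(U,\cY)$ is of exponential growth and $\gamma_c$ is surjective, we get $U_{\oline{z}}^\sharp = V_{\overline{\oline{z}}} = V_z$ for every $z \in \C$. Because $J^\sharp \in \cL(\cY^\sharp)$ by Remark~\ref{annih}, the operator $V_z - J^\sharp$ is defined on $\cD(V_z) = \cD(U_{\oline{z}}^\sharp)$ and coincides there with $U_{\oline{z}}^\sharp - J^\sharp$, so $\Ker(V_z - J^\sharp) = \Ker(U_{\oline{z}}^\sharp - J^\sharp)$.

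Next, since $(U,\cY)$ has exponential growth, the operator $U_{\oline{z}}$ is densely defined by Remark~\ref{F-ex} (its domain contains all vectors $U(\gamma_{a,0})y$). Hence Remark~\ref{annih} applies verbatim with $T := U_{\oline{z}}$: it asserts that $\Ker(U_{\oline{z}}^\sharp - J^\sharp)$ is a closed real-linear subspace of $\cY^\sharp$, and its chain of equivalences shows that $\eta \in \Ker(U_{\oline{z}}^\sharp - J^\sharp)$ if and only if $\Re\langle (U_{\oline{z}} - J)y, \eta\rangle = 0$ for all $y \in \cD(U_{\oline{z}})$, i.e.\ if and only if $\eta \in \big((U_{\oline{z}} - J)\cD(U_{\oline{z}})\big)^\perp$ with respect to the real duality pairing $\langle\cdot,\cdot\rangle_\R$. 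Putting the two steps together gives
\[ \Ker(V_z - J^\sharp) = \Ker(U_{\oline{z}}^\sharp - J^\sharp) = \big((U_{\oline{z}} - J)\cD(U_{\oline{z}})\big)^\perp, \]
which is exactly~\eqref{UtoV_cor_eq1}, and closedness is part of the same assertion.

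The only point requiring a word of care is that the right-hand side of~\eqref{UtoV_cor_eq1} is automatically contained in $\cD(U_{\oline{z}}^\sharp)$, so that the left-hand side makes sense: if $\Re\langle (U_{\oline{z}} - J)y, \eta\rangle = 0$ for all $y$ in the dense subspace $\cD(U_{\oline{z}})$, then the $\R$-linear functional $y \mapsto \langle (U_{\oline{z}} - J)y, \eta\rangle_\R$ is identically zero, hence continuous, so $\eta$ lies in the domain of the $\R$-linear adjoint of $U_{\oline{z}} - J$, which is $\cD(U_{\oline{z}}^\sharp)$ since $J^\sharp$ is everywhere defined. I do not anticipate any genuine obstacle here; all the substantive work has already been carried out in Theorem~\ref{UtoV} and Remark~\ref{annih}.
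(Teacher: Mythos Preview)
Your proposal is correct and follows essentially the same approach as the paper's proof: apply the Duality Theorem~\ref{UtoV} (with $z$ replaced by $\oline z$) to identify $V_z = U_{\oline z}^\sharp$, then invoke Remark~\ref{annih} with $T = U_{\oline z}$. Your final paragraph, verifying that elements of the annihilator automatically lie in $\cD(U_{\oline z}^\sharp)$, makes explicit a point that the paper leaves implicit in Remark~\ref{annih}, but this is a clarification rather than a different argument.
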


\begin{proof}
	We have $U_{\bar{z}}^\sharp=V_z$ by the Duality Theorem~\ref{UtoV}, hence Remark~\ref{annih} implies \eqref{UtoV_cor_eq1}, which in turn implies that $\Ker(V_z-J^\sharp)\subseteq\cY^\sharp$ is closed. 
\end{proof}

We now prove the following complement to the Duality Theorem~\ref{UtoV}.

\begin{prop}
	\label{VtoU}
	If $(U,\cY)$ is of exponential growth and 
	$\gamma_c\colon\cY\to(\cY^\sharp_c)^\sharp$ is surjective, 
	then we have  $\gamma\circ U_z\circ\gamma^{-1}\vert_{\gamma(\cD(U_z))}
	= V_{\oline{z}}^\sharp$ for every $z\in\C$. 
\end{prop}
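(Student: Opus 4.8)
The plan is to combine the Duality Theorem~\ref{UtoV} with the second assertion of Lemma~\ref{easy}, which already gives the inclusion
\[
\gamma\circ U_z\circ\gamma^{-1}\big\vert_{\gamma(\cD(U_z))}\subseteq V_{\oline z}^\sharp ,
\]
and so reduces matters to proving the reverse inclusion, i.e.\ that $\cD(V_{\oline z}^\sharp)\subseteq \gamma(\cD(U_z))$ with the operators agreeing there. The key observation is that the surjectivity of $\gamma_c$ lets us reinterpret $\cO^w_\partial$-extensions of orbit maps into $\cY^{\sharp\sharp}=(\cY^\sharp)^\sharp$: by Remark~\ref{surjev}\eqref{surjev_item2}, a $\cY^\sharp$-valued map is weakly holomorphic (as a $\cY^\sharp$-valued map) if and only if it is weakly holomorphic as a $\cY^\sharp_c$-valued map, and this is exactly the regularity hypothesis entering the definition of $\cD(V_{\oline z}^\sharp)$ versus $\cD(U_z)$. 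I would therefore first observe that $(V,\cY^\sharp_c)$ is itself a one-parameter group in $\GL(\cY^\sharp_c)$ whose holomorphic extension has domain operators $V_w$, and that applying Lemma~\ref{easy} to the pair $(V,\cY^\sharp_c)$ gives $U_{\oline w}^{\sharp\sharp\text{-twist}}\supseteq \cdots$; more to the point, applying the \emph{Duality Theorem}~\ref{UtoV} to $(U,\cY)$ gives $U_z^\sharp=V_{\oline z}$ as operators on $\cY^\sharp_c$.

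Concretely, I would argue as follows. Fix $z\in\C$; by Proposition~\ref{holomext_prop}\eqref{holomext_prop_item1} we may assume $\Im z\ge 0$, and by the factorization $U_z=U_{a}U_{\ie b}$ (Proposition~\ref{holomext_prop}\eqref{holomext_prop_item2}) together with the fact that for real parameters $\gamma$ intertwines $U_s$ with $V_{-s}^\sharp=(U_{s}^\sharp)^\sharp$ in the obvious way, we may reduce to purely imaginary $z=\ie b$. Let $\eta\in\cD(V_{-\ie b}^\sharp)\subseteq(\cY^\sharp_c)^\sharp$. Using surjectivity of $\gamma_c$, write $\eta=\gamma_c(y)$ for a unique $y\in\cY$; the task is to show $y\in\cD(U_{\ie b})$, i.e.\ that the orbit map $s\mapsto U_s y$ admits an $\cO^w_\partial(\oline\cS_{0,b},\cY)$-extension, and that $U_{\ie b}y$ corresponds to $V_{-\ie b}^\sharp\eta$ under $\gamma$. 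Since $\eta\in\cD(V_{-\ie b}^\sharp)$, there is an associated extension $G\in\cO^w_\partial(\oline\cS_{0,b},(\cY^\sharp_c)^\sharp)$ of the orbit map $s\mapsto (V_{-s})^\sharp\eta$ of $\eta$ under $V^\sharp$. Transporting through $\gamma_c$ and using that $\gamma_c$ identifies $\cY$ with $(\cY^\sharp_c)^\sharp$ as vector spaces, $\gamma_c^{-1}\circ G$ is a $\cY$-valued candidate extension; weak holomorphy is checked by pairing against elements of $\cY^\sharp$, since for $\xi\in\cY^\sharp$ we have $\langle\gamma_c^{-1}(G(w)),\xi\rangle = \overline{G(w)(\xi)}$, which is (anti)holomorphic in $w$ because $G$ is weakly holomorphic as a $(\cY^\sharp_c)^\sharp$-valued map and point evaluations at $\xi$ lie in $((\cY^\sharp_c)^\sharp)^\sharp$ — here one must be slightly careful that evaluation at $\xi\in\cY^\sharp$ is continuous on $(\cY^\sharp_c)^\sharp$, which holds because $\cY^\sharp_c$ carries the topology of compact convergence and $\{\xi\}$ is compact, or rather because the bidual pairing is by construction the one induced by $\cY^\sharp$. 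Continuity up to the boundary transfers likewise. Finally, on $\R$ one checks $\gamma_c^{-1}(G(s))=U_s y$ using the Duality Theorem relation $U_s^\sharp=V_{-s}$ and the definition of $\eta=\gamma_c(y)$, so $\gamma_c^{-1}\circ G$ is the required extension of the orbit map of $y$; hence $y\in\cD(U_{\ie b})$ and, evaluating at $w=\ie b$, $\gamma(U_{\ie b}y)=V_{-\ie b}^\sharp\eta$. Combined with the inclusion from Lemma~\ref{easy}, this proves equality.

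The main obstacle I anticipate is the bookkeeping around the \emph{three} relevant topologies on the antidual spaces — the weak-$*$ topology on $\cY^\sharp=(\cY^\sharp)_w$, the compact-open topology on $\cY^\sharp_c$, and their effect on $(\cY^\sharp)^\sharp$ versus $(\cY^\sharp_c)^\sharp$ — and making sure the weak holomorphy of $G$ (which is defined via continuous functionals on $\cY^\sharp_c$, i.e.\ via $(\cY^\sharp_c)^\sharp$, which by surjectivity of $\gamma_c$ is precisely $\gamma_c(\cY)$) matches up with the weak holomorphy needed for membership in $\cO^w_\partial(\oline\cS_{0,b},\cY)$, which is tested against arbitrary $\xi\in\cY^\sharp$. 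The surjectivity hypothesis on $\gamma_c$ is exactly what makes these two notions coincide (Remark~\ref{surjev}\eqref{surjev_item2}), and the whole proof is really an unwinding of that identification applied to the one-parameter group $(V,\cY^\sharp_c)$ — but one that, as the introduction warns, genuinely uses surjectivity of $\gamma_c$ rather than just the Duality Theorem, because $\cY^\sharp$ with the weak-$*$ topology need not satisfy the standing hypotheses (it need not have $\gamma_c$ surjective for \emph{its} antidual).
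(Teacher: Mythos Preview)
Your proposal has a genuine gap at the step where you write ``Since $\eta\in\cD(V_{-\ie b}^\sharp)$, there is an associated extension $G\in\cO^w_\partial(\oline\cS_{0,b},(\cY^\sharp_c)^\sharp)$ of the orbit map $s\mapsto (V_{-s})^\sharp\eta$.'' This does not follow. The operator $V_{\oline z}^\sharp$ in the statement is the \emph{antidual} (adjoint) of the densely defined operator $V_{\oline z}$: its domain consists of those $\eta\in\cY^{\sharp\sharp}$ for which $\xi\mapsto \la V_{\oline z}\xi,\eta\ra$ extends continuously to all of $\cY^\sharp$. That is not the same as the domain of the holomorphic extension of the one-parameter group $(V_t^\sharp)_{t\in\R}$ at $z$, which is what you implicitly invoke when you produce $G$. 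Lemma~\ref{easy} applied to $(V,\cY^\sharp_c)$ only gives the inclusion in the wrong direction (holomorphic extension $\subseteq$ adjoint), and closing the gap would require precisely a Duality Theorem for $(V,\cY^\sharp_c)$ --- which, as you yourself note at the end, is unavailable because $\cY^\sharp$ (or $\cY^\sharp_c$) need not satisfy the standing hypotheses.

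The paper's proof takes a completely different and much shorter route: it first proves, as an independent result (the Closed Operator Theorem~\ref{F-tech}), that each $U_z$ is closed. Once this is known, the general fact recorded in Remark~\ref{closable} --- that a closed densely defined operator $T$ satisfies $T=\gamma^{-1}\circ T^{\sharp\sharp}\circ\gamma$ --- gives $U_z=\gamma^{-1}\circ U_z^{\sharp\sharp}\circ\gamma$. Finally, the Duality Theorem~\ref{UtoV} yields $U_z^\sharp=V_{\oline z}$, hence $U_z^{\sharp\sharp}=V_{\oline z}^\sharp$, and the proposition follows in one line. The substantial work is thus displaced into Theorem~\ref{F-tech}, which is proved by a Phragm\'en--Lindel\"of argument on the strip; your attempted direct approach sidesteps this, but at the cost of assuming the very duality relation you are trying to establish.
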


The proof of this fact requires some preparations,
including Theorem~\ref{F-tech} below.

\begin{rem}
	\label{closable}
	We say that a linear operator $T\colon\cD(T)\to\cY$ is \emph{closable} if it has an extension which is a closed linear operator. 
	If this is the case and $\cD(T)$ is dense in $\cY$, then $\cD(T^\sharp)$ dense in $\cY^\sharp$ 
	and moreover $T^\sharp$ is a closed operator by Remark~\ref{adjgraph}. 
	Iterating this property, we obtain the densely-defined closed linear operator $T^{\sharp\sharp}:=(T^\sharp)^\sharp\colon\cD(T^{\sharp\sharp})\to\cY^{\sharp\sharp}$.  
	Recalling from \eqref{biantid} the topological linear isomorphism $\gamma\colon\cY_w\to(\cY^\sharp)^\sharp=\cY^{\sharp\sharp}$,  
	it is straightforward to check that $\gamma(\cD(T))\subseteq\cD(T^{\sharp\sharp})$ and 
	$\gamma\circ T= T^{\sharp\sharp}\circ\gamma\vert_{\cD(T)}$. 
	Moreover, the densely-defined closed linear operator
	\[ \gamma^{-1}\circ T^{\sharp\sharp}\circ\gamma\vert_{\gamma^{-1}(\cD(T^{\sharp\sharp}))}\colon \gamma^{-1}(\cD(T^{\sharp\sharp}))\to\cY \]
	is the closure of the operator $T$ (\cite[Ch. IV, 7.1]{Sch71}). 
\end{rem}

\begin{theorem}
	\label{F-tech} {\rm(Closed Operator Theorem)}
	If $(U_t)_{t \in \R}$ is a one-parameter group of
	exponential growth 
	and $\gamma_c\colon\cY\to(\cY^\sharp_c)^\sharp$ is surjective,  
	then
	for every $z\in\C$, the operator $U_z\colon\cD(U_z)\to\cY$ is linear, densely defined, and closed. 
\end{theorem}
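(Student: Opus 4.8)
The key point to establish is closedness, since linearity is trivial and density was already noted in Remark~\ref{F-ex} (via $\cD_0 \subseteq \cD(U_z)$). I would first reduce to the case $\Im z = b > 0$, since for $b = 0$ the operator is continuous and for $b < 0$ one uses Proposition~\ref{holomext_prop}\eqref{holomext_prop_item1}, which gives $U_z^{-1} = U_{-z}$, so $U_z$ is closed iff $U_{-z}$ is. Having fixed $z$ with $\Im z = b > 0$, the strategy is to identify the graph of $U_z$ as the preimage of something manifestly closed, using the Duality Theorem~\ref{UtoV} applied to the \emph{antidual} one-parameter group.

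The concrete mechanism: apply the Duality Theorem to the one-parameter group $(V_t = U_t^\sharp)_{t \in \R}$ on $\cY^\sharp_c$, observing that $(V, \cY^\sharp_c)$ is itself of exponential growth (this follows from Remark~\ref{F-equi1_rem} together with the description of the topology of $\cY^\sharp_c$ via equicontinuous sets, i.e.\ Remark~\ref{lintop}\eqref{lintop_item2}) and that the relevant surjectivity hypothesis is automatic here, since for $\cY^\sharp_c$ the bidual-via-compacts map is point evaluation into $(\cY^\sharp_c)^\sharp = \cY$ under $\gamma_c$. Then the Duality Theorem gives $V_z^\sharp = U_{\oline z}$ acting between $(\cY^\sharp_c)^\sharp$ and itself, i.e.\ after transporting along $\gamma_c$, the operator $U_{\oline z}$ is an antidual of the operator $V_z$ on $\cY^\sharp_c$. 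By Remark~\ref{adjgraph} (formula~\eqref{adjgraph_eq1}), the graph of any antidual operator $T^\sharp$ equals $S(\Gamma(T))^\perp$, which is closed. Hence $\Gamma(U_{\oline z})$ is closed in $\cY_w \times \cY_w$, and since $\cY$ and $\cY_w$ have the same closed subspaces that are graphs of linear maps (a sequence converging in $\cY$ converges in $\cY_w$ to the same limit, and conversely a graph closed in the weak topology is closed in the original topology because $\cY \hookrightarrow \cY_w$ is continuous), we conclude $U_{\oline z}$ is closed as an operator on $\cY$. Replacing $\oline z$ by $z$ finishes the argument.

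An alternative, perhaps cleaner route avoids invoking the Duality Theorem for $V$ and instead argues directly via Lemma~\ref{easy} and Theorem~\ref{UtoV}: by Lemma~\ref{easy} we have $\gamma \circ U_z \circ \gamma^{-1}|_{\gamma(\cD(U_z))} \subseteq V_{\oline z}^\sharp$, and $V_{\oline z}^\sharp$ is closed by Remark~\ref{adjgraph}; it then remains to upgrade this inclusion to an equality, which is exactly the content of Proposition~\ref{VtoU}. However, since the excerpt states that Proposition~\ref{VtoU} itself \emph{requires} Theorem~\ref{F-tech} as a preparatory step, this route would be circular, so I would \emph{not} take it; the honest proof must be the self-contained one via the graph of $U_z$ directly. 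Concretely I would show $\Gamma(U_z) = S^{-1}\big(\Gamma(V_{\oline z})^\perp\big) \cap (\cY \times \cY)$ where $\Gamma(V_{\oline z})^\perp$ lives in $(\cY^\sharp_c)^\sharp \times (\cY^\sharp_c)^\sharp = \cY \times \cY$ under $\gamma_c \times \gamma_c$; the inclusion $\subseteq$ is Lemma~\ref{easy}, and the reverse inclusion is where the surjectivity of $\gamma_c$ and an application of the Duality Theorem~\ref{UtoV} to $(V, \cY^\sharp_c)$ are needed to guarantee that $V_{\oline z}^\sharp$ (computed inside $\cY$) agrees with $U_z$ on all of $\cD(V_{\oline z}^\sharp)$.

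**Main obstacle.** The delicate step is verifying that $(V_t = U_t^\sharp)_{t\in\R}$ on $\cY^\sharp_c$ is of exponential growth in the precise sense of Definition~\ref{def:expgro}, and in particular that its orbit maps are continuous — this is Lemma~\ref{cont}\eqref{cont_item2} restricted to equicontinuous sets, but to feed it into the Duality Theorem one needs the growth estimate~\eqref{agrowth_eq1} relative to a \emph{defining} family of seminorms on $\cY^\sharp_c$, namely the seminorms $p_K(\eta) = \sup_{y\in K}|\langle y,\eta\rangle|$ for $K \subseteq \cY$ compact. The estimate $p_{K}(U_t^\sharp \eta) \le C e^{B|t|} p_{K'}(\eta)$ for a suitable compact $K'$ follows by rewriting $\langle y, U_t^\sharp \eta\rangle = \langle U_t y, \eta\rangle$ and bounding $U_t y$ for $y \in K$ using~\eqref{agrowth_eq1}, but one must take $K'$ to be (the closed convex hull of) the relatively compact set from Lemma~\ref{F-comp_orbit}\eqref{F-comp_orbit_item1} — and here the surjectivity hypothesis on $\gamma_c$ reenters, since that is exactly what guarantees closed convex hulls of compacts are weakly compact (Remark~\ref{surjev}\eqref{surjev_item1}). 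Once this bookkeeping is done, the rest is a formal application of Theorem~\ref{UtoV} and Remark~\ref{adjgraph}.
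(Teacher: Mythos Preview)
Your approach has a genuine gap: the claim that $(V_t = U_t^\sharp)_{t\in\R}$ acts on $\cY^\sharp_c$ with exponential growth in the sense of Definition~\ref{def:expgro} is not established, and in fact fails in general. The defining seminorms on $\cY^\sharp_c$ are $p_K(\eta)=\sup_{y\in K}|\langle y,\eta\rangle|$ for $K\subseteq\cY$ compact, so the estimate you need is: for each compact $K$ there exist a compact $K'$ and constants $B,C>0$ with $p_K(V_t\eta)\le Ce^{B|t|}p_{K'}(\eta)$. Unwinding, this forces $C^{-1}e^{-B|t|}U_t(K)$ to lie in the closed absolutely convex hull of $K'$ for all $t$, i.e.\ the family $\{e^{-B|t|}U_ty:t\in\R,\,y\in K\}$ must be relatively compact. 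But the exponential growth hypothesis on $\cY$ only gives $p(U_ty)\le C_pe^{B_p|t|}q_p(y)$ with $B_p$ depending on the seminorm~$p$; Example~\ref{ex1} shows these $B_p$ can be unbounded, so no single $B$ makes that family relatively compact. Remark~\ref{F-equi1_rem} does not help here: it controls $V_t$ uniformly on equicontinuous sets, which describes the topology of~$\cY$ (Remark~\ref{lintop}\eqref{lintop_item1}), not that of $\cY^\sharp_c$. Lemma~\ref{F-comp_orbit}\eqref{F-comp_orbit_item1} only gives relative compactness after Gaussian damping $e^{-t^2}$, not exponential damping. There are further unchecked hypotheses in your plan (sequential completeness of $\cY^\sharp_c$, and the surjectivity of the $\gamma_c$-map for $\cY^\sharp_c$), but the exponential-growth failure already blocks the route through Theorem~\ref{UtoV} applied to~$V$.

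The paper's proof is entirely different and does not invoke the Duality Theorem at all. It argues directly with nets: given $x_\iota\to x$ and $U_{z_0}x_\iota\to y$, one considers the Gaussian-damped scalar functions $f_{\iota,\eta}(w)=\overline{\langle e^{-w^2}F_{x_\iota}(w),\eta\rangle}$ on $\overline{\cS}_{0,b_0}$, shows via Lemma~\ref{F-comp_orbit} that they converge uniformly on the boundary (uniformly over equicontinuous $\eta$), and applies Phragm\'en--Lindel\"of to obtain a limit $f_\eta\in\cO_\partial(\overline{\cS}_{0,b_0})$. The resulting functional $\eta\mapsto f_\eta(w)$ is shown to lie in $(\cY^\sharp_c)^\sharp$ by bounding it against suprema over the relatively compact sets from Lemma~\ref{F-comp_orbit}\eqref{F-comp_orbit_item1}; the surjectivity of $\gamma_c$ then produces $\tilde f(w)\in\cY$, and one checks that $w\mapsto e^{w^2}\tilde f(w)$ is the required $\cO^w_\partial$-extension of the orbit map of~$x$ with value $y$ at $z_0$. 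This is precisely the hands-on construction your abstract route was trying to avoid.
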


Recall from Remark~\ref{surjev}(i) that $\gamma_c$ is surjective if 
$\cY$ is a Fr\'echet space, and, more generally, if $\cY$ is quasi-complete. 

\begin{proof} Let $z_0=a_0+\ie b_0\in\C$.  
	It is clear from its definition that $\cD(U_{z_0})$ is a $\C$-linear subspace of $\cY$ and the operator $U_{z_0}\colon \cD(U_{z_0})\to\cY$ is linear. 
	It follows by Proposition~\ref{agrowth} that $\cD(U_{z_0})$ is dense
	in~$\cY$. 
	
	To prove that $U_{z_0}$ is a closed operator in $\cY$, let $(x_\iota)_{\iota\in I}$ be a net in $\cD(U_{z_0})$ for which there exist $x,y\in \cY$ with $\lim\limits_{\iota\in I}x_\iota =x$ and $\lim\limits_{\iota\in I}U_{z_0}x_\iota =y$ in $\cY$. 
	From now on we assume $b_0>0$.  
	The case $b_0<0$ then follows by replacing $U_t$ by $U_{-t}$.
	Since $x_\iota\in\cD(U_{z_0})$, we have its corresponding  function $F_{x_\iota}\in \cO^w_\partial(\overline{\cS}_{0,b_0},\cY)$ 
	with $F_{x_\iota}(t)=U_tx_\iota$ for all $t\in\R$. 
	If $w=a+\ie b\in \overline{\cS}_{0,b_0}$, then
	\begin{equation*}
		F_{x_\iota}(w)=F_{x_\iota}(a+\ie b)=U_aU_{\ie b}x_\iota
	\end{equation*} 
	by Proposition~\ref{holomext_prop}\eqref{holomext_prop_item2}. 
	For $\eta\in\cY^\sharp$ we then obtain 
	\begin{equation}
		\label{F-tech_prf_eq1}
		\overline{\langle \ee^{-w^2}F_{x_\iota}(w),\eta\rangle}
		=\ee^{-(a+\ie b)^2}\overline{\langle U_aU_{\ie b}x_\iota,\eta\rangle}.
	\end{equation}
	Hence, by \eqref{F-equi1},
	there exist $M,B\in(0,\infty)$ and a continuous seminorm
	$q\colon\cY\to\R_+$, such that   
	\begin{align*}
		\vert\langle \ee^{-w^2}F_{x_\iota}(w),\eta\rangle\vert 
		&
           =\ee^{-a^2+b^2}\vert\langle U_a U_{\ie b}x_\iota,\eta\rangle\vert
           \le \ee^{-a^2+b^2} M \ee^{B\vert a\vert}q(U_{\ie b}x_\iota)\\
&           =\ee^{-a^2+b^2}  \ee^{B\vert a\vert}M_\iota, 
	\end{align*}
	where the constant $M_\iota:=M\sup\limits_{b\in[0,b_0]}q(U_{\ie b}x_\iota)$ is finite 
	since the function
	$[0,b_0]\to\R$, $b\mapsto q(U_{\ie b}x_\iota)
	=q \circ F_{x_\iota}(\ie b)$ is continuous
	on the compact interval~$[0,b_0]$. 
	The above estimate shows that, for every $\eta\in\cY^\sharp$, the
	function 
	$$f_{\iota,\eta}\in\cO_\partial(\overline{\cS}_{0,b_0},\C), 
	\quad 
	f_{\iota,\eta}(w):=\overline{\langle \ee^{-w^2}F_{x_\iota}(w),\eta\rangle}$$ 
	is bounded. 
	
	We also obtain by \eqref{F-tech_prf_eq1}
	\begin{equation}
		\label{F-tech_prf_eq2}
		(\forall a\in\R)\quad 
		f_{\iota,\eta}(a)=\ee^{-a^2}\overline{\langle U_ax_\iota,\eta\rangle}
		=\overline{\langle x_\iota,\ee^{-a^2}U^\sharp_a\eta\rangle}.
	\end{equation}
	For every 
	equicontinuous subset 
	$E\subseteq\cY^\sharp$,  
	the set $\{\ee^{-a^2}U_a\eta: a\in\R,\eta\in E\}$ is 
	equicontinuous in $\cY^\sharp$ 
	by Lemma~\ref{F-comp_orbit}\eqref{F-comp_orbit_item2}, 
	and $\lim\limits_{\iota\in I}x_\iota =x$ in~$\cY$.
	Hence we obtain by Remark~\ref{lintop}\eqref{lintop_item1}
	\begin{equation}
		\label{F-tech_prf_eq3}
		\lim\limits_{\iota\in I}f_{\iota,\eta}(a)
		=\oline{\langle x,\ee^{-a^2}U^\sharp_a\eta\rangle} 
		\quad \text{\ \ \  uniformly for }a\in\R\text{ and }\eta\in E. 
	\end{equation}
	We similarly obtain for $a, a_0\in\R$ and $z_0 = a_0 + \ie b_0$
	by \eqref{F-tech_prf_eq1}
	\allowdisplaybreaks
	\begin{align*}
		f_{\iota,\eta}(a+ib_0)
          & =\ee^{-(a+\ie b_0)^2}\overline{\langle U_aU_{\ie b_0}x_\iota,\eta\rangle}
  =\ee^{-(a+\ie b_0)^2}\overline{\langle U_{a-a_0}U_{a_0}U_{\ie b_0}x_\iota,\eta\rangle} \\
&=\ee^{-(a+\ie b_0)^2}\overline{\langle U_{a-a_0}U_{a_0+\ie b_0}x_\iota,\eta\rangle} 
=\ee^{-(a+\ie b_0)^2}\overline{\langle U_{a-a_0}U_{z_0}x_\iota,\eta\rangle} \\
		&=\ee^{-(a+\ie b_0)^2}\overline{\langle U_{z_0}x_\iota,U^\sharp_{a-a_0}\eta\rangle} =\overline{\langle U_{z_0}x_\iota,\ee^{-(a-\ie b_0)^2}U^\sharp_{a-a_0}\eta\rangle}.
	\end{align*}
	For every 
	equicontinuous subset 
	$E\subseteq\cY^\sharp$ the set
	\[ \{\ee^{-(a-\ie b_0)^2}U^\sharp_{a-a_0}\eta:a\in\R\}=
	\{\ee^{-(a+a_0-\ie b_0)^2}U^\sharp_a\eta:a\in\R,\eta\in E\}\] 
	is equicontinuous in $\cY^\sharp$ 
	by Lemma~\ref{F-comp_orbit}\eqref{F-comp_orbit_item2}  again. Hence  
	\[ \lim\limits_{\iota\in I}U_{z_0}x_\iota =y \quad \mbox{ in } \quad
	\cY \]
	entails by Remark~\ref{lintop}\eqref{lintop_item1}
	\begin{equation}
		\label{F-tech_prf_eq4}
		\lim\limits_{\iota\in I}f_{\iota,\eta}(a+ib_0)
		=\ee^{-(a+\ie b_0)^2}\overline{\langle y,U^\sharp_{a-a_0}\eta\rangle}
		\text{ uniformly for }a\in\R,\eta\in E.
	\end{equation} 
	Thus 
	$(f_{\iota,\eta}\colon\overline{\cS}_{0,b_0}\to\C)_{\iota\in I}$
	is a net of bounded $\cO_\partial$-functions which is uniformly convergent on the boundary of the strip $\overline{\cS}_{0,b_0}$, 
	the corresponding convergence being also uniform with respect to $\eta\in E$ for  every equicontinuous subset 
	$E\subseteq\cY^\sharp$. 
	It then follows by the Phragm\'en--Lindel\"of Principle  
	(\cite[Th.~12.8]{Ru87})  that there exists a bounded $\cO_\partial$-function $f_\eta\colon\overline{\cS}_{0,b_0}\to\C$ with 
	$\lim\limits_{\iota\in I}f_{\iota,\eta}=f_\eta$ uniformly on 
	the strip~$\overline{\cS}_{0,b_0}$ and uniformly
	with respect to $\eta\in E$ for  every equicontinuous subset 
	$E\subseteq\cY^\sharp$. 
	This further implies that $f_\eta$ is a $\cO_\partial$-function and the mapping $\eta\mapsto f_\eta$ is antilinear since both these properties are preserved through uniform convergence on $\overline{\cS}_{0,b_0}$. 
	
	We now claim that, for every $w=a+\ie b\in\overline{\cS}_{0,b_0}$,  
	the functional
	\[f(w)\colon\cY^\sharp\to\C,\quad f(w)(\eta):=f_\eta(w)\]
	belongs to $\cY$, in the sense that there exists $\widetilde{f}(w)\in\cY$ with
        \[f(w)(\eta)=\overline{\langle \widetilde{f}(w),\eta\rangle}
          \quad \mbox{  for all } \quad \eta\in\cY^\sharp.\]  
	We already noted that the mapping 
	$\eta\mapsto f_\eta(w)	$ is antilinear, 
	hence the functional $f(w)$ is antilinear. 
	In order to prove that $f(w)$ is continuous on $\cY^\sharp$ with respect to its weak-$*$-topology, we first consider the case 
	when $w$ belongs to the boundary of the strip, that is, either $b=0$ or $b=b_0$. 
	If $b=0$, then $w=a\in\R$ and we have 
	by \eqref{F-tech_prf_eq2}--\eqref{F-tech_prf_eq3}
	\begin{equation}
		\label{F-tech_prf_eq5}
		f(a)(\eta)
		=\lim_{\iota\in I}\overline{\langle x_\iota,\ee^{-a^2}U^\sharp_a\eta\rangle}
		=\overline{\langle x,\ee^{-a^2}U^\sharp_a\eta\rangle}
		=\overline{\langle\ee^{-a^2}U_ax,\eta\rangle}, 
	\end{equation}
	hence we may take $\widetilde{f}(a):=\ee^{-a^2}U_ax$. 
	Moreover, 
	\[(\forall\eta\in\cY^\sharp)\quad 
	\vert f_\eta(a)\vert
	=\vert f(a)(\eta)\vert\le\sup\limits_{K_x}\vert\langle\cdot,\eta\rangle\vert\]
	where the subset $K_x:=\{\ee^{-a^2}U_a x:a\in\R\}\subseteq \cY$ is relatively compact by Lemma~\ref{F-comp_orbit}\eqref{F-comp_orbit_item1}. 
	Similarly, for $w=a+\ie b_0$, we have by \eqref{F-tech_prf_eq4}
	\begin{equation}
		\label{F-tech_prf_eq6}
		f(a+\ie b_0)(\eta)
		=\ee^{-(a+\ie b_0)^2}\overline{\langle y,U^\sharp_{a-a_0}\eta\rangle}
		=\overline{\langle \ee^{-(a+\ie b_0)^2}U_{a-a_0}y,\eta\rangle}
	\end{equation}
	hence  we may take $\widetilde{f}(a+\ie b_0):=\ee^{-(a+\ie b_0)^2}U_{a-a_0}y$. 
	We have 
	\[(\forall\eta\in\cY^\sharp)\quad 
	\vert f_\eta(a+\ie b_0)\vert
	=\vert f(a+\ie b_0)(\eta)\vert
	\le\sup\limits_{L_y}\vert\langle \cdot,\eta\rangle\vert\]
	where the subset $L_y:=\{\ee^{-(a+\ie b_0)^2}U_{a-a_0}y:a\in\R\}\subseteq \cY$ is relatively compact by Lemma~\ref{F-comp_orbit}\eqref{F-comp_orbit_item1} again. 
	
	Since $f_\eta\colon\overline{\cS}_{0,b_0}\to\C$ is a bounded $\cO_\partial$-function, 
	it then follows by the Phragm\'en--Lindel\"of Principle  
	(\cite[Th.~12.8]{Ru87}) that 
	\[(\forall w\in\overline{\cS}_{0,b_0})(\forall\eta\in\cY^\sharp) \quad 
	\vert f(w)(\eta)\vert
	\le\sup\limits_{K_x\cup L_y}\vert\langle \cdot,\eta\rangle\vert.
	\]
	This shows in particular that,  for every $w\in\overline{\cS}_{0,b_0}$,  
	the antilinear functional $f(w)\colon\cY^\sharp\to\C$ is continuous 
	if $\cY^\sharp$ is endowed with the topology of uniform convergence on the compact subsets of~$\cY$, that is, $f(w)\in(\cY^\sharp_c)^\sharp$. 
	Since 
	$\gamma_c\colon\cY\to(\cY^\sharp_c)^\sharp$
	is surjective by hypothesis, it then follows   
	that  there exists a unique element $\widetilde{f}(w)\in\cY$ with
        \[ f(w)(\eta)=\overline{\langle \widetilde{f}(w),\eta\rangle}
          \quad \mbox{  for all } \quad \eta\in\cY^\sharp.   \]

	Consequently, the function  $\widetilde{f}\colon \overline{\cS}_{0,b_0}\to\cY$  
	has the property that,  
	for every $\eta\in\cY^\sharp$, the function 
	\[ \overline{\cS}_{0,b_0}\to\C,  \quad
	w\mapsto\overline{\langle \widetilde{f}(w),\eta\rangle}=f(w)(\eta)=f_\eta(w)\]
	is a bounded $\cO_\partial$-function. 
	For every $\eta\in\cY^\sharp$ we have 
	\[\lim_{\iota\in I}\overline{\langle \ee^{-w^2}F_{x_\iota}(w),\eta \rangle}
	=\lim_{\iota\in I}f_{\iota,\eta}(w)=f(w)(\eta)=\overline{\langle\widetilde{f}(w),\eta\rangle}
	\]
	uniformly with respect to  $w\in\overline{\cS}_{0,b_0}$, and 
	to $\eta\in E$, for every 
	equicontinuous 
	subset $E\subseteq\cY^\sharp$. 
	Here $F_{x_\iota}\in \cO^w_\partial(\overline{\cS}_{0,b_0},\cY)$, 
	hence $F_{x_\iota}\colon\overline{\cS}_{0,b_0}\to\cY$ 
	is continuous when $\cY$ is endowed with the topology of uniform convergence with respect to
	equicontinuous  subsets of~$\cY^\sharp$, 
	which coincides with the topology of $\cY$ by Remark~\ref{lintop}\eqref{lintop_item1}. 
	It then follows from the above uniform limit 
	that 
	${\widetilde{f}\in \cO^w_\partial(\overline{\cS}_{0,b_0}, \cY)}$ as well. 
	
	Moreover, $\widetilde{f}(a)=\ee^{-a^2}U_ax$ for every $a\in\R$ by \eqref{F-tech_prf_eq5}. 
	This shows that the function
	\[ F_x\colon \overline{\cS}_{0,b_0}\to\C, \quad
	F_x(z):=\ee^{z^2}\widetilde{f}(z)\]
      is an $\cO_\partial$-extension of the orbit map $a\mapsto U_ax$, hence
      $x\in\cD(U_{z_0})$ and  
	${U_{z_0}x=F_x(z_0)}$. 
	We have for every $\eta\in\cY^\sharp$ 
	\[
	\langle F_x(z_0),\eta\rangle
	=\langle F_x(a_0+\ie b_0),\eta\rangle 
	=\langle \ee^{(a_0+\ie b_0)^2} \widetilde{f}(a_0+\ie b_0),\eta\rangle
	\mathop{=}\limits^{\eqref{F-tech_prf_eq6}}
	\langle y,\eta\rangle
	\]
	hence $F_x(z_0)=y$, that is, $U_{z_0}x=y$. 
	This  completes the proof 
	of the closedness of the operator $U_{z_0}\colon\cD(U_{z_0})\to\cY$. 
\end{proof}

\begin{proof}[Proof of Proposition~\ref{VtoU}]
	It follows by Theorem~\ref{F-tech} that the operator $U_z$ is closed. 
	Then, by  Remark~\ref{closable}, we have $U_z=\gamma^{-1}\circ U_z^{\sharp\sharp}\circ\gamma$. 
	By the Duality Theorem~\ref{UtoV} we have $U_z^{\sharp\sharp}=V_{\oline{z}}^\sharp$, and the assertion follows. 
\end{proof}

\section{One-parameter groups 	on spaces of smooth vectors}
\label{Sect5}

This brief section has a rather technical character and provides a bridge between the abstract results of Sections \ref{Sect3} and \ref{Sect4} and their applications
in Section~\ref{Sect6} in the framework of unitary representations of Lie groups.
The main point is to verify that, for a unitary representation
$(U,\cH)$ of a Lie group~$G$, the natural action
of a one-parameter group $U_{h,t} := U(\exp th)$ on the Fr\'echet space
$\cY := \cH^\infty$ of smooth vectors and its dual space
$\cY^\sharp = \cH^{-\infty}$, the distribution vectors, defines a 
one-parameter group of exponential growth.

\begin{hyp}
	\label{concr_hyp}
	In Sections~\ref{Sect5}--\ref{Sect6} we work in the following setting: 
	\begin{itemize}
		\item $G$ is a finite-dimensional real Lie group with its Lie algebra $\fg$.
		\item $(U,\cH)$ is a unitary representation of $G$. 
		\item $\cH^\infty:=\{\xi\in\cH:U^\xi\in \cC^\infty(G,\cH)\}$ is the space of smooth vectors endowed with its unique Fr\'echet topology for which the inclusion map $\cH^\infty\hookrightarrow\cH$ is continuous 
		(cf.~\cite[Cor.~1.1]{Go69}). To specify a set of defining seminorms,
		we fix a basis $\bx=(x_1,\dots,x_m)\in\fg^m$ in $\fg$ 
		and we denote $\cI_k:=\{1,\dots,m\}^k$ and  $\cI:=\bigsqcup\limits_{k\in\Z_+}\cI_k$. 
		For arbitrary $k\in\Z_+$ and $I=(i_1,\dots,i_k)\in\cI_k$ we define the seminorm
		$$p_I\colon\cH^\infty\to\R_+,\quad p_I(\xi):=\Vert \de U(x_{i_1})\cdots\de U(x_{i_k})\xi\Vert.$$
		Then the topology of $\cH^\infty$ is defined by the set of seminorms
                \[ \cP_0:=\{p_I : I \in \cI\}.\] 
		We also denote by $\cP$ the family of seminorms consisting of finite sums of elements of~$\cP_0$. 
		\item The space $\cH^{-\infty}$  	of continuous antilinear functionals on $\cH^\infty$
		is endowed with its weak-$*$-topology and we write
		\[ \langle\cdot,\cdot\rangle\colon\cH^\infty\times\cH^{-\infty}\to\C \]
		for the antiduality pairing that coincides on $\cH^\infty \times \cH$
		with the scalar product of~$\cH$ (antilinear in the first variable). 
\item $(U^\infty,\cH^\infty)$ is the restriction of the representation $(U,\cH)$ and for every $\xi\in\cH^\infty$ we have
		$U^{\infty,\xi}\in C^\infty(G,\cH^\infty)$ by
		\cite[Prop.~1.2]{Po72} or by  \cite[Thm.~4.4]{Ne10}. 
\item For $g\in G$ we define the continuous linear operator $U^{-\infty}(g)$ on $\cH^{-\infty}$ by 
		$$(\forall\xi\in\cH^\infty)(\forall\eta\in\cH^{-\infty})\quad \langle U^\infty(g^{-1})\xi,\eta\rangle=\langle \xi,U^{-\infty}(g)\eta\rangle$$
		and we thus get a separately continuous
		representation $(U^{-\infty},\cH^{-\infty})$ of $G$;
                see Appendix~\ref{AppB} for a discussion of
                its continuity properties. 
		\item 	We also define for $h\in\fg$ and $t \in \R$ 
		\begin{equation}
			\label{hyp_hyperb_eq1}
			U_{h,t}:=U(\exp_G(th)),
                        \ U^{\pm\infty}_{h,t}:=U^{\pm\infty}(\exp_G(th)).
		\end{equation}
	\end{itemize}
\end{hyp}

\begin{rem}
	\label{semin_gen}
	Let $(\cdot \mid \cdot)$ be the
	real scalar product on $\fg$  with $(x_i\mid x_j)=\delta_{ij}$ (Kronecker's delta) for all $i,j=1,\dots,m$.   
	We denote by $\Vert\cdot\Vert$ the corresponding norms of the linear operators on~$\fg$.
	For $i,j\in\{1,\dots,m\}$,  
	we define the functions 
	\begin{equation}
		\label{aij}
		a_{ij}\colon\g\to\R,\quad 
		a_{ij}(y):=(\Ad_G(\exp_G(-y))x_i\mid x_j)
	\end{equation}
	hence
	\begin{equation}
		\label{eigen_gen}
		(\forall y \in \g)\quad 
		\Ad_G(\exp_G(-y))x_i=\sum_{j=1}^ma_{ij}(y)x_j
	\end{equation}
	and	then
	\begin{align*}
		\vert a_{ij}(y)\vert
		&=\vert (\Ad_G(\exp_G(-y))x_i\mid x_j)
		=\vert (\ee^{-\ad y}x_i\mid x_j)\vert 
		\le \Vert \ee^{-\ad y}\Vert 
		\le \ee^{\Vert \ad y\Vert}. 
	\end{align*}
	
	On the other hand, for every $y \in \g$ and $\xi\in\cH^\infty$, we have 
	\begin{align}
		U(\exp_G(-y)) & (\de U(x_{i_1}) 
		 \cdots\de U(x_{i_k}))U(\exp_Gy)\xi \nonumber \\
		&=
		\de U(\Ad_G(\exp_G(-y))x_{i_1})\cdots 
		\de U(\Ad_G(\exp_G(-y))x_{i_k})\xi \nonumber \\
		&\label{equal_gen}
		=\sum_{j_1,\dots,j_k=1}^m 
		a_{i_1 j_1}(y)\cdots a_{i_k j_k}(y)
		\de U(x_{j_1})\cdots\de U(x_{j_k})\xi
	\end{align}
	where the last equality follows by \eqref{eigen_gen}. 
	Then  
	\begin{align*}
		\Vert\de U(x_{i_1}) 
		\cdots\de U(x_{i_k}) & U(\exp_Gy)\xi\Vert  \\
		\le 
		& \sum_{j_1,\dots,j_k=1}^m 
		\vert a_{i_1 j_1}(y)\cdots a_{i_k j_k}(y)\vert 
		\Vert \de U(x_{j_1})\cdots\de U(x_{j_k})\xi\Vert 
		\\
		\le 
		&\ee^{k \Vert \ad y\Vert}\sum_{j_1,\dots,j_k=1}^m 
		\Vert \de U(x_{j_1})\cdots\de U(x_{j_k})\xi\Vert.
	\end{align*}
	That is, 
	\begin{equation}
		\label{homog_gen}
		(\forall I\in\cI_k)	\quad 
		p_I(U(\exp_Gy)\xi)
		\le 
		\ee^{k \Vert\ad y\Vert}\sum_{J\in\cI_k}
		p_J(\xi).
	\end{equation}
	Also 
	\begin{align*}
			(\de U &(x_{i_1}) 
		\cdots \de U(x_{i_k}))(U(\exp_Gy)\xi -\xi) \\
		=&
		U(\exp_Gy) \de U(\Ad_G(\exp_G(-y))x_{i_1})\cdots 
		\de U(\Ad_G(\exp_G(-y))x_{i_k})\xi \\
		&-\de U(x_{i_1})\cdots\de U(x_{i_k})\xi \\
		=&\sum_{j_1,\dots,j_k=1}^m 
		a_{i_1 j_1}(y)\cdots a_{i_k j_k}(y)
		U(\exp_Gy)\de U(x_{j_1})\cdots\de U(x_{j_k})\xi \\
		& -\de U(x_{i_1})\cdots\de U(x_{i_k})\xi.
	\end{align*}
	Since the functions $a_{ij}\colon\g\to\R$ are continuous and satisfy
	$a_{ij}(0)=\delta_{ij}$ by~\eqref{aij}, 
	we then obtain 
	\begin{equation*}
		\lim_{y\to 0}\Vert (\de U(x_{i_1}) 
		\cdots\de U(x_{i_k}))(U(\exp_Gy)\xi -\xi)\Vert=0. 
	\end{equation*}
	We thus have 
	\begin{equation}\label{cont_gen}
		\lim_{y\to 0}
		p_I(U(\exp_Gy)\xi-\xi)=0 \quad \mbox{ for every } I \in \cI,
	\end{equation}
	which asserts the strong continuity of the
	$G$-action on $\cH^\infty$.
	This can also be derived from the fact that the $G$-action 
	on $\cH^\infty$ is smooth (\cite[Thm.~4.4]{Ne10}).  
\end{rem}

\section{Applications to standard subspaces} 
\label{Sect6}

In this section we eventually come to results
that have been the our main motivation to develop
an abstract theory of holomorphic extensions of one-parameter groups 
on locally convex spaces.
So let $(U,\cH)$ be a unitary representation of the
Lie group $G$ on $\cH$, $h \in \g$,
$U_{h,t} := U(\exp th)$ for $t \in\R$,
and $J \colon\cH \to \cH$ a conjugation preserving $\cH^\infty$
and commuting with each $U_{h,t}$. 

We then obtain a
standard subspace
\[ \sV := \Fix(J\Delta^{1/2}) \quad \mbox{ for }\quad
\Delta := e^{2 \pi \ie \partial U(h)}\]
(cf.~\cite[\S 3.1]{NO17}) and 
\cite[Prop.~2.1]{NOO21} implies that 
\begin{equation}
  \label{eq:v-hkms}
   \sV = \cH_{\rm KMS}.   
\end{equation}
Our main results can now be stated as follows: 
\begin{itemize}
	\item  $\cH^{-\infty}_{\rm KMS}$  is a (weak-$*$-)closed subspace of $\cH^{-\infty}$ (Theorem~\ref{extcl}); 
	\item  $\cH^{-\infty}_{\rm KMS}\cap\cH=\sV$ (Theorem~\ref{thm:KMSV}); 
	\item $\sV$ is dense in $\cH^{-\infty}_{\rm KMS}$ (Theorem~\ref{thm:vdense}). 
\end{itemize}

\begin{defn}
	\label{concr_conj}
	Since  the inclusion map $\cH^\infty\hookrightarrow\cH$ is continuous 
	and $\cH^\infty$~is a Fr\'echet space, it follows from the Closed Graph Theorem that the antilinear operator $J^\infty:=J\vert_{\cH^\infty}\colon\cH^\infty\to\cH^\infty$ is continuous. 
	Then, as in Remark~\ref{annih}, we can define the antilinear operator 
	\[J^{-\infty}:=(J^\infty)^\sharp\colon \cH^{-\infty}\to\cH^{-\infty}
	\quad \mbox{ 	by  } \quad
	J^{-\infty}\eta=\overline{\eta\circ J^\infty} \mbox{
		for every } \eta\in\cH^{-\infty}. \] 
	We denote by $\cH^{-\infty}_{\rm ext}$ the set of all $\eta\in\cH^{-\infty}$ whose corresponding orbit map 
	\[ U^{-\infty,\eta}_h\colon\R\to \cH^{-\infty}, \quad t\mapsto U^{-\infty}_{h,t}\eta,  \]
	has an $\cO_\partial^w$-extension
        to the strip 
$U^{-\infty,\eta}_h\colon\overline{\cS}_{0,\pi}\to\cH^{-\infty}$. 
That this actually is an $\cO_\partial$-extension
  follows from Remarks~\ref{rem:a.1} and \ref{rem:a.2}.
	We also define 
	\[\cH^{-\infty}_{\rm KMS} :=\{\eta\in\cH^{-\infty}_{\rm ext}: U^{-\infty,\eta}_h(\ie\pi)=J^{-\infty}\eta\}.\] 
\end{defn}

The following theorem is an important consequence
of the Duality Theorem~\ref{UtoV} and its corollary. It is one of the main
results of this paper.

\begin{theorem}
	\label{extcl}
	$\cH^{-\infty}_{\rm KMS}$ is a closed real linear subspace of
	the space $\cH^{-\infty}$ of distribution vectors, and we have 
	\begin{equation*}
		\cH^{-\infty}_{\rm KMS}=((U_{h,\ie\pi}^\infty-J^\infty)\cD(U_{h,\ie\pi}^\infty))^\perp
	\end{equation*}
	where the annihilator is computed with respect to the real duality pairing 
	$\langle\cdot,\cdot\rangle_\R\colon\cH^\infty\times \cH^{-\infty}\to\R$. 
\end{theorem}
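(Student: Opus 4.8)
The plan is to recognise $\cH^{-\infty}_{\rm KMS}$ as a kernel of exactly the type treated in Corollary~\ref{UtoV_cor} and then to quote that corollary. To set the stage, put $\cY:=\cH^\infty$; this is a Fr\'echet space, the one-parameter group $(U^\infty_{h,t})_{t\in\R}$ is of exponential growth by the estimates collected in Remark~\ref{semin_gen}, and $\gamma_c\colon\cY\to(\cY^\sharp_c)^\sharp$ is surjective because $\cY$ is Fr\'echet (Remark~\ref{surjev}\eqref{surjev_item1}). Moreover $J^\infty=J|_{\cH^\infty}$ is a continuous antilinear operator on $\cY$ (Closed Graph Theorem, cf.\ Definition~\ref{concr_conj}), and $(J^\infty)^\sharp=J^{-\infty}$ on $\cY^\sharp=\cH^{-\infty}$. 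Thus the hypotheses of the Duality Theorem~\ref{UtoV} and of Corollary~\ref{UtoV_cor} hold for the data $\cY=\cH^\infty$, $U_t=U^\infty_{h,t}$ and $J=J^\infty$.

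Next I would translate the concrete KMS condition into the abstract language of Definition~\ref{holomext_def}. From the defining relation of $U^{-\infty}$ in Hypothesis~\ref{concr_hyp} one reads off $(U^\infty_{h,t})^\sharp=U^{-\infty}_{h,-t}$, so the antidual one-parameter group of $(U^\infty_{h,t})_{t\in\R}$ is $V_t=U^{-\infty}_{h,-t}$. Hence the orbit map $t\mapsto U^{-\infty}_{h,t}\eta$ appearing in Definition~\ref{concr_conj} equals $t\mapsto V_{-t}\eta$, and — substituting $z\mapsto-z$ and invoking uniqueness of analytic extensions (Riemann--Schwarz) — $\eta$ admits an $\cO^w_\partial$-extension of this orbit map to $\overline{\cS}_{0,\pi}$ with value $J^{-\infty}\eta$ at $\ie\pi$ if and only if $\eta\in\cD(V_{-\ie\pi})$ (the holomorphic extension operator of the group $(V_s)_{s\in\R}$ on $\cH^{-\infty}$) and $V_{-\ie\pi}\eta=J^{-\infty}\eta$. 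Here one also uses that $\cH^\infty$ is Fr\'echet, so that $\cH^{-\infty}$-valued and $\cH^{-\infty}_c$-valued weak holomorphy agree (Remark~\ref{surjev}\eqref{surjev_item2}) and the operator $V_{-\ie\pi}$ is the same in either picture. Consequently
\[ \cH^{-\infty}_{\rm KMS}=\Ker\bigl(V_{-\ie\pi}-J^{-\infty}\bigr).\]

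Finally I would apply Corollary~\ref{UtoV_cor} with $z=-\ie\pi$, so that $\bar z=\ie\pi$: it asserts that $\Ker(V_{-\ie\pi}-J^{-\infty})$ is a closed real-linear subspace of $\cH^{-\infty}$ and that it equals $((U^\infty_{h,\ie\pi}-J^\infty)\cD(U^\infty_{h,\ie\pi}))^\perp$ with respect to the real pairing $\langle\cdot,\cdot\rangle_\R$. Together with the identification from the previous paragraph this is precisely the statement of the theorem; the subspace is only real-linear, not complex-linear, because $V_{-\ie\pi}$ is $\C$-linear while $J^{-\infty}$ is antilinear.

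The step I expect to be delicate is the middle one: keeping careful track of the direction reversal $(U^\infty_{h,t})^\sharp=U^{-\infty}_{h,-t}$ together with the reflection $z\mapsto-z$ of the strip, and checking that ``having an $\cO^w_\partial$-extension to $\overline{\cS}_{0,\pi}$'' (the definition of $\cH^{-\infty}_{\rm ext}$) coincides with membership in the domain of the abstract operator $V_{-\ie\pi}$ built from $(V_s)_{s\in\R}$ via Definition~\ref{holomext_def} — including the passage between $\cH^{-\infty}$ and $\cH^{-\infty}_c$. Once this bookkeeping is settled, the theorem drops out of Corollary~\ref{UtoV_cor}.
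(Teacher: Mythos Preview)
Your proposal is correct and follows essentially the same route as the paper: specialize the abstract setup to $\cY=\cH^\infty$, $U_t=U^\infty_{h,t}$, $V_t=(U^\infty_{h,t})^\sharp=U^{-\infty}_{h,-t}$, verify exponential growth via the estimates in Remark~\ref{semin_gen}, identify $\cH^{-\infty}_{\rm KMS}=\Ker(V_{-\ie\pi}-J^{-\infty})$, and read off the conclusion from Corollary~\ref{UtoV_cor}. Your treatment of the sign reversal and the strip reflection $z\mapsto -z$ (turning the $\overline{\cS}_{0,\pi}$-extension of $t\mapsto U^{-\infty}_{h,t}\eta$ into membership in $\cD(V_{-\ie\pi})$) is in fact more explicit than the paper's own proof, which simply asserts $\cD(V_{-\ie\pi})=\cH^{-\infty}_{\rm ext}$ without spelling out this bookkeeping.
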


\begin{proof}
	We specialize Proposition~\ref{agrowth} 
	as follows: 
	\[ 	\cY:=\cH^\infty,\quad 	\cY^\sharp:=\cH^{-\infty},\quad
	U_t:=U^\infty_{h,t},\quad V_t:=U^{-\infty}_{h,-t}.	\]
	Then 	$\cY$ 
	is a Fr\'echet space (cf.~Hypothesis~\ref{concr_hyp}). 
	Furthermore, the two requirements for exponential growth 
	follow by  \eqref{homog_gen} and \eqref{cont_gen}. 
	So the Duality Theorem~\ref{UtoV} applies and yields
	$(U_{h, \ie \pi}^\infty)^\sharp  = V_{-\ie \pi}$. 
	For $z:=-\ie\pi$ we have 
	\[\cD(V_z)=\cH^{-\infty}_{\rm ext}\text{ and } 
	\Ker(V_z-J^\sharp)
	=\cH^{-\infty}_{\rm KMS}, \] 
	so that the assertion follows from Corollary~\ref{UtoV_cor}. 
\end{proof}

The following lemma is a refinement of \cite[Prop.~2.1]{NOO21}. 

\begin{lem} \label{lem:weak-KMS}
	If $\xi \in \cH$ is such that the orbit map
	$U^\xi \colon \R \to \cH$ extends to a weakly continuous map
	on $\oline{\cS}_{0,\pi}$   that is weakly holomorphic on $\cS_{0,\pi}$,
	then $\xi \in \sV = \cH_{\rm KMS}$. 
\end{lem}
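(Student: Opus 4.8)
The plan is to pass, via the spectral theorem for $\partial U(h)$, from the (a priori only weakly continuous) boundary extension to an honest statement about the operator $\Delta^{1/2}$. Write $\partial U(h) = \ie A$ with $A = A^*$, so that $U_{h,t} = \ee^{\ie tA}$ and $\Delta^{1/2} = \ee^{-\pi A}$; let $P$ be the spectral measure of $A$, put $P_n := P([-n,n])$, so that $P_n \to \1$ strongly and $P_n\xi \in \cD(\ee^{-sA})$ for all $s \in \R$ and $n \in \N$, and write $F \colon \oline{\cS}_{0,\pi} \to \cH$ for the given extension, so that $F$ is weakly continuous on $\oline{\cS}_{0,\pi}$, weakly holomorphic on $\cS_{0,\pi}$, $F(t) = U_{h,t}\xi$ for $t \in \R$, and $F(\ie\pi + t) = J U_{h,t}\xi$ for $t \in \R$ (the KMS boundary condition).

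The first step is the identity
\[ P_n F(z) = \ee^{\ie zA}P_n\xi \qquad \text{for all } z \in \oline{\cS}_{0,\pi} \text{ and } n \in \N. \]
Here $z \mapsto \ee^{\ie zA}P_n\xi = \int_{[-n,n]}\ee^{\ie z\lambda}\,\de P(\lambda)\xi$ is an entire $\cH$-valued function, while $z \mapsto P_n F(z)$ is weakly continuous on $\oline{\cS}_{0,\pi}$ and weakly holomorphic on $\cS_{0,\pi}$; since $P_n$ commutes with each $U_{h,t}$ and is self-adjoint, the two functions agree on $\R$, and pairing against an arbitrary vector of $\cH$ together with the uniqueness of holomorphic extensions from $\R$ to a strip (Riemann--Schwarz, cf.~\cite[no.~42, Lemme]{Ch90}) gives equality on the whole closed strip.

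Specialising to $z = \ie\pi$ yields $P_n F(\ie\pi) = \ee^{-\pi A}P_n\xi$. As $n \to \infty$ the left-hand side converges in $\cH$ to $F(\ie\pi)$, so the numbers $\Vert \ee^{-\pi A}P_n\xi\Vert^2 = \int_{[-n,n]}\ee^{-2\pi\lambda}\,\de\Vert P(\lambda)\xi\Vert^2$ converge; hence $\int_\R \ee^{-2\pi\lambda}\,\de\Vert P(\lambda)\xi\Vert^2 < \infty$, i.e.\ $\xi \in \cD(\ee^{-\pi A}) = \cD(\Delta^{1/2})$, and then $\ee^{-\pi A}P_n\xi = P_n\Delta^{1/2}\xi \to \Delta^{1/2}\xi$ in $\cH$, so that $F(\ie\pi) = \Delta^{1/2}\xi$. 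On the other hand $F(0) = U_{h,0}\xi = \xi$, and the KMS boundary condition at $t = 0$ gives $F(\ie\pi) = J\xi$. Comparing the two, $\Delta^{1/2}\xi = J\xi$; applying $J$ and using $J^2 = \1$ we get $J\Delta^{1/2}\xi = \xi$, that is, $\xi \in \Fix(J\Delta^{1/2}) = \sV$, which by \eqref{eq:v-hkms} coincides with $\cH_{\rm KMS}$.

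I expect the only delicate point to be the first step: weak continuity of $F$ gives no control whatsoever on $\Vert F(z)\Vert$ along the unbounded boundary lines of $\oline{\cS}_{0,\pi}$, so a Phragm\'en--Lindel\"of argument applied directly to $F$ is unavailable. Truncating by the spectral projections $P_n$ circumvents this, because $\ee^{\ie zA}P_n\xi$ is manifestly entire and the comparison with $P_n F$ then needs only the scalar identity principle; the passage to the limit $n \to \infty$ is a routine monotone-convergence argument for the spectral measure $\de\Vert P(\lambda)\xi\Vert^2$.
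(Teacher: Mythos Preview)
Your proof is correct and takes a genuinely different route from the paper's. You use spectral truncation: with $P_n = P([-n,n])$ the functions $z\mapsto P_nF(z)$ and $z\mapsto \ee^{\ie zA}P_n\xi$ agree on~$\R$, the latter is entire, and the scalar identity principle gives equality on the closed strip; a monotone-convergence argument then yields $\xi\in\cD(\Delta^{1/2})$ and $F(\ie\pi)=\Delta^{1/2}\xi$, whence $\xi\in\sV$ from the boundary condition. The paper instead works with the positive-definite function $\psi_\xi(t)=\la\xi,U_{h,t}\xi\ra$, extends it to $\oline{\cS}_{0,2\pi}$ via the identity $\psi_\xi(z+w)=\la U^\xi(-\oline w),U^\xi(z)\ra$, and then invokes a reproducing-kernel criterion (\cite[Lemma~A.III.11]{Ne99}) to conclude that the a~priori only weakly continuous map $U^\xi\colon\oline{\cS}_{0,\pi}\to\cH$ is in fact norm-continuous and holomorphic on the interior.

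Your argument is more elementary---it needs only the spectral theorem and avoids the external RKHS lemma---and it directly identifies $F(\ie\pi)$ with $\Delta^{1/2}\xi$. The paper's approach establishes the stronger intermediate result that the \emph{entire} extension $U^\xi$ is norm-continuous, which is what the definition of $\cH_{\rm KMS}$ literally asks for; your route lands in $\sV$ without passing through this. One remark: you explicitly invoke the boundary condition $F(\ie\pi)=J\xi$, which is not stated in the lemma's hypothesis as written; it is, however, exactly what is available in the only application (Theorem~\ref{thm:KMSV}), and the paper's own proof likewise relies on it tacitly for the final identification with~$\sV=\cH_{\rm KMS}$.
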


\begin{proof} 
	We consider the continuous positive-definite function 
	\[  
	\psi_\xi\colon\R\to\C,\quad 
	\psi_\xi(t)
	= \la \xi, U_{h,t} \xi \ra.  
	\]
	By assumption, the continuous function 
	$\psi_\xi$ extends to a
	continuous function on $\oline{\cS}_{0,\pi}$ which is holomorphic
	on the open strip. 
	Using the relation
	\[ 
	\psi_\xi(t + z)
	= \la \xi, U_{h,t} U^\xi(z) \ra
	= \la U_{h,-t} \xi, U^\xi(z) \ra
	\quad \mbox{ for }\quad t \in \R, \Im z \in [0,\pi], \]
	we even obtain by analytic continuation 
	\[ 
	\psi_\xi(z + w) 
	= \la U^\xi(-\oline w) , U^\xi(z) \ra
	\quad \mbox{ for }\quad z, w \in \oline{\cS}_{0,\pi}.\]
	This shows that 
	$\psi_\xi$ 
	actually extends to a
	continuous function on $\oline{\cS}_{0,2\pi}$, holomorphic on the interior.
	
	Now the kernel
	\[ K(z,w) := \la U^\xi(w), U^\xi(z) \ra
	=  
	\psi_\xi(z- \oline w) \] 
	on $\oline{\cS}_{0,\pi}$ is continuous and sesquiholomorphic
	on the interior. 
	Therefore the function
	$U^\xi \colon \oline{\cS}_{0,\pi} \to \cH$ is continuous
	and holomorphic on the interior (cf.~\cite[Lemma~A.III.11]{Ne99}). 
\end{proof}

The following theorem is used crucially in \cite{FNO23}
to construct nets of standard subspaces on 
non-compactly causal symmetric spaces. 

\begin{theorem} 
	\label{thm:KMSV}
	$\cH^{-\infty}_{\rm KMS} \cap \cH = \sV$.
\end{theorem}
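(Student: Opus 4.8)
The plan is to prove the two inclusions separately. The inclusion $\sV \subeq \cH^{-\infty}_{\rm KMS} \cap \cH$ should be the easier one. If $\xi \in \sV = \cH_{\rm KMS}$, then by \eqref{eq:v-hkms} the orbit map $U^\xi \colon \R \to \cH$ extends to a continuous map on $\oline{\cS}_{0,\pi}$, weakly holomorphic on the interior, satisfying the KMS condition $U^\xi(\ie\pi + t) = J U^\xi(t)$. First I would check that composing this $\cH$-valued extension with the continuous inclusion $\cH \hookrightarrow \cH^{-\infty}$ produces an $\cO^w_\partial(\oline{\cS}_{0,\pi},\cH^{-\infty})$-extension of the orbit map $t \mapsto U^{-\infty}_{h,t}\xi$; here one uses that the $\cH^{-\infty}$-valued extension is weakly holomorphic because testing against a vector $\eta \in (\cH^{-\infty})^\sharp = \cH^\infty$ reduces to the $\cH$-inner product, which is holomorphic by hypothesis. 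By uniqueness of analytic extensions (Riemann--Schwarz) this $\cH^{-\infty}$-valued extension agrees with $U^{-\infty,\xi}_h$, so $\xi \in \cH^{-\infty}_{\rm ext}$, and evaluating at $\ie\pi$ gives $U^{-\infty,\xi}_h(\ie\pi) = JU^\xi(0) = J\xi = J^{-\infty}\xi$ (the last equality because $J^{-\infty}$ restricts to $J$ on $\cH$, which follows from the definition of $J^{-\infty}$ as $(J^\infty)^\sharp$ and the fact that $J$ is an isometry compatible with the scalar product). Hence $\xi \in \cH^{-\infty}_{\rm KMS} \cap \cH$.

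For the reverse inclusion $\cH^{-\infty}_{\rm KMS} \cap \cH \subeq \sV$, let $\xi \in \cH \cap \cH^{-\infty}_{\rm KMS}$. The idea is to show that the $\cH^{-\infty}$-valued extension $U^{-\infty,\xi}_h$ actually takes values in $\cH$ and is weakly continuous there, so that Lemma~\ref{lem:weak-KMS} applies and forces $\xi \in \sV$. The key technical point is a uniform boundedness argument: for each fixed $z$ in the closed strip, I want to show $U^{-\infty,\xi}_h(z) \in \cH$. I would test the function $z \mapsto U^{-\infty,\xi}_h(z)$ against smooth vectors and use the positive-definiteness structure as in the proof of Lemma~\ref{lem:weak-KMS}: form $\psi_\xi(t) := \la \xi, U_{h,t}\xi\ra$, note it extends holomorphically to $\oline{\cS}_{0,\pi}$ via the given extension (pairing the $\cH$-vector $\xi$ against the $\cH^{-\infty}$-valued extension, using $\xi \in \cH^\infty$ being unnecessary since we can first reduce to a dense set of test vectors and then use a limiting argument — more carefully, one pairs $U^{-\infty,\xi}_h(z)$ against vectors $U(\gamma_{a,0})\zeta$ which are smooth). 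Then, exactly as in Lemma~\ref{lem:weak-KMS}, one upgrades $\psi_\xi$ to a function continuous on $\oline{\cS}_{0,2\pi}$ and holomorphic on the interior, and the reproducing-kernel argument (\cite[Lemma~A.III.11]{Ne99}) shows the extension is $\cH$-valued, continuous on $\oline{\cS}_{0,\pi}$ and holomorphic on the interior. Once we know $U^{-\infty,\xi}_h$ is $\cH$-valued with these regularity properties, Lemma~\ref{lem:weak-KMS} yields $\xi \in \sV$ directly.

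The main obstacle I anticipate is the passage from ``$\xi \in \cH^{-\infty}_{\rm ext}$ with the KMS condition in $\cH^{-\infty}$'' to ``the extension is $\cH$-valued.'' The boundary KMS condition $U^{-\infty,\xi}_h(\ie\pi) = J^{-\infty}\xi$ combined with $J(\cH) = \cH$ tells us the value at the upper boundary lies in $\cH$ (since $J^{-\infty}\xi = J\xi \in \cH$), and at the lower boundary $U^{-\infty,\xi}_h(t) = U_{h,t}\xi \in \cH$; but controlling the interior requires the positive-definite kernel estimate, and one must be careful that the holomorphic extension of $\psi_\xi$ to the wider strip $\oline{\cS}_{0,2\pi}$ genuinely comes from pairing boundary values rather than being assumed. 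I expect this to go through by first establishing the identity $\psi_\xi(z+w) = \la U^{-\infty,\xi}_h(-\oline{w}), U^{-\infty,\xi}_h(z)\ra$ for $z,w \in \oline{\cS}_{0,\pi}$ by analytic continuation from the real-variable identity, where the right-hand pairing makes sense because at least one of the two arguments can be taken on a boundary component where the value is in $\cH$, and then invoking Lemma~\ref{lem:weak-KMS}'s kernel argument verbatim. A secondary point requiring care is the identification $J^{-\infty}|_{\cH} = J$, which should follow cleanly from Remark~\ref{annih} and the fact that $J$ is an antiunitary involution, so $\la J\xi, \zeta\ra = \la J\zeta, \xi\ra$ for $\xi,\zeta \in \cH$.
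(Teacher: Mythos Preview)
Your handling of the inclusion $\sV \subeq \cH^{-\infty}_{\rm KMS} \cap \cH$ is fine and matches the paper's one-line invocation of \eqref{eq:v-hkms}.

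The reverse inclusion has a genuine gap. Your proposed pairing $\la U^{-\infty,\xi}_h(-\oline w), U^{-\infty,\xi}_h(z)\ra$ does not make sense: the antiduality pairing is between $\cH^\infty$ and $\cH^{-\infty}$, not between $\cH$ and $\cH^{-\infty}$. Knowing that the boundary values of $U^{-\infty,\xi}_h$ lie in $\cH$ (as opposed to $\cH^\infty$) is not enough to evaluate a distribution vector on them. The same problem already appears one step earlier: you cannot even define $\psi_\xi(z) = \la \xi, U^{-\infty,\xi}_h(z)\ra$ for $z$ in the open strip, because your $\xi$ is only assumed to lie in $\cH$, not in $\cH^\infty$. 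Your remark about approximating by $U(\gamma_{a,0})\zeta$ does not repair this, since without a uniform bound on the resulting scalar functions you cannot pass to the limit, and obtaining such a bound is exactly the missing step. In short, the kernel argument of Lemma~\ref{lem:weak-KMS} presupposes an $\cH$-valued, weakly continuous extension; it cannot be used to \emph{produce} one from $\cH^{-\infty}$-valued data.

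The paper closes this gap differently. It fixes a smooth test vector $\zeta\in\cH^\infty$ and proves the uniform estimate $|\la\zeta,U^{-\infty,\eta}_h(z)\ra| \le \Vert\zeta\Vert\,\Vert\eta\Vert$ on the whole strip. The mechanism is Gaussian regularization of $\eta$ itself: with $\eta_n := U_h^{-\infty}(\gamma_{1/n,0})\eta$ one has $\eta_n\in\cH$, $\Vert\eta_n\Vert\le\Vert\eta\Vert$, and the orbit map of $\eta_n$ extends to an entire function, so $f_n(z):=\la\zeta,U_h^{-\infty,\eta_n}(z)\ra$ is bounded on the strip with boundary values controlled by $\Vert\zeta\Vert\,\Vert\eta\Vert$; Phragm\'en--Lindel\"of then gives the same bound in the interior, and letting $n\to\infty$ yields the claim for $f = f^\zeta$. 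This estimate, valid for all $\zeta\in\cH^\infty$, is what shows that each $U^{-\infty,\eta}_h(z)$ extends to a bounded functional on $\cH$, i.e.\ lies in $\cH$. Only then does Lemma~\ref{lem:weak-KMS} apply. The crucial idea you are missing is this Phragm\'en--Lindel\"of step via Gaussian mollification, which supplies the uniform interior bound that your positive-definite kernel argument would need as input.
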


\begin{proof} 
	By \eqref{eq:v-hkms}, we have
    $\sV = \cH_{\rm KMS} \subeq \cH^{-\infty}_{\rm KMS} \cap \cH$. It remains to show the converse inclusion.
	Let $\eta \in \cH^{-\infty}_{\rm KMS} \cap \cH$, i.e.,
	the orbit map of $\eta$ in $\cH^{-\infty}$ extends analytically 
        to a map
	\[ U^{-\infty,\eta}_h \colon \oline{\cS}_{0,\pi} \to \cH^{-\infty} \]
	that is weak-$*$-continuous and satisfies
	$U^{-\infty,\eta}_h(\pi \ie) = J \eta$.
	This implies in particular that
	\begin{equation}
		\label{eq:17}U^{-\infty,\eta}_h(\R \cup (\R + \pi \ie)) \subeq \cH.
	\end{equation}
	Let $\xi \in \cH^\infty$ be a smooth vector. 
	Then the function
	\[ f = f^\xi \colon  \oline{\cS}_{0,\pi} \to \C, \quad
	f^\xi(z) := \la \xi, U^{-\infty,\eta}_h(z) \ra \]
	is continuous and holomorphic on $\cS_{0,\pi}$. 
	We claim that
	\begin{equation}
		\label{eq:claim}
		|f(z)| \leq \Vert\xi\Vert \cdot \Vert\eta\Vert \quad \mbox{ for } \quad
		z \in  \oline{\cS_{0,\pi}}.
	\end{equation}
	By \eqref{eq:17}, this relation holds trivially for
	$z \in \partial \cS_{0,\pi}$.
	We want to derive our claim from the 
	Phragm\'en--Lindel\"of Principle   (\cite[Th.~12.8]{Ru87}),
	so  we have to show that $f$ is bounded.
	
	For the functions
	\[ \varphi_n:=\gamma_{1/n,0}\colon\R\to\R,\quad \phi_n(t) := \sqrt{\frac{n}{\pi}} e^{-n t^2}, \] 
	Lemma~\ref{lem:gaussians} and Proposition~\ref{agrowth}
	imply that,
	for $\eta \in \cH^{-\infty}$, we obtain a sequence
	\[ \eta_n = U_h^{-\infty}(\phi_n)\eta = \sqrt{\frac{n}{\pi}}
	\int_\R e^{-n t^2}\, U^{-\infty}_{h,t}\eta\, dt  \]
	of distribution vectors for which the orbit map
	$U^{-\infty, \eta_n} \colon \R \to \cH^{-\infty}$ has a holomorphic extension and
	that
	$\lim\limits_{n \to \infty} \eta_n = \eta$ in the weak-$*$-topology
	of $\cH^{-\infty}$.
	
	For $\eta \in \cH^{-\infty}_{\rm KMS} \cap \cH$, we then have
	$\eta_n \in \cH$, 
	and $\Vert\phi_n\Vert_1 = 1$ implies 
	\begin{equation*}
		\Vert\eta_n\Vert \leq \Vert\eta\Vert \quad \mbox{ for }  \quad n \in \N.
	\end{equation*}
	We consider the functions 
	\[
	f_n\colon\C\to\C,\quad 
	f_n(z) :=  \la \xi, U_h^{-\infty, \eta_n}(z) \ra=   \la \xi,
	U_h(\delta_z * \phi_n) \eta \ra,\]
	where the map
	\[ \C \to L^1(\R), \quad
	z \mapsto \delta_z * \phi_n=\gamma_{1/n,z}, 
	\]
	is holomorphic by Lemma~\ref{lem:gaussians}.
	Therefore $f_n$ is an entire function with
	\[ 
	(\forall z\in\C)\quad 
	|f_n(z)| \leq \Vert\xi\Vert \cdot \Vert\eta\Vert \cdot \Vert\delta_z * \phi_n\Vert_1.\]
	Since the right hand side is independent of $\Re z$ and locally bounded,
	the function $f_n$ is bounded on $\oline{\cS}_{0,\pi}$.
	Moreover, for $z =x\in \R$ we have
	\[ |f_n(x)| \leq \Vert\xi\Vert \cdot \Vert\eta\Vert, \]
	and for $z = x + \pi \ie$, we have
	\[ |f_n(x + \pi \ie )|
	= |\la \xi, U_h^{-\infty, \eta_n}(x + \pi i) \ra|
	= |\la \xi, J U_h^{-\infty, \eta_n}(x) \ra|
	\leq  \Vert\xi\Vert \cdot \Vert\eta\Vert.\]
	The Phragm\'en--Lindel\"of Principle (\cite[Th.~12.8]{Ru87})
	now implies that
	\[ |f_n(z)| \leq \Vert\xi\Vert \cdot \Vert\eta\Vert \quad \mbox{
		for }  \quad z \in \oline{\cS_{0,\pi}}.\]
	Next we observe that
	\[ f_n(z)
	= \la  \xi, U_h(\delta_z * \phi_n) \eta \ra
	= \la  \xi, U_h(\phi_n) U^{-\infty,\eta}_h(z) \ra
	\to    \la  \xi, U^{-\infty,\eta}_h(z) \ra = f(z) \]
	for each $z \in \oline{\cS}_{0,\pi}$.
	This implies our claim in \eqref{eq:claim}
	for each smooth vector $\xi \in \cH^\infty$. It then follows
	that the linear functionals $U^{-\infty,\eta}_h(z)$ on $\cH^\infty$ extend 
	continuously to $\cH$, which means that $U^{-\infty,\eta}_h(z) \in \cH$,
	as we consider $\cH$ as a subspace of $\cH^{-\infty}$.
	
	We thus obtain a map
	\[ U^{-\infty,\eta}_h \colon \oline{\cS}_{0,\pi} \to \cH \]
	extending the orbit map of $\eta$ which  satisfies
	$\Vert U^{-\infty,\eta}_h(z)\Vert \leq \Vert\eta\Vert$ for each $z \in \oline{\cS}_{0,\pi}$
	by \eqref{eq:claim}. 
	So $U^{-\infty,\eta}_h$ is bounded.
	Since the functions $f^\xi$ 
	are continuous for $\xi \in \cH^\infty$ and $\cH^\infty$ is dense in $\cH$,
	it follows that $U^{-\infty,\eta}_h$ is weakly continuous.
	On the open strip $\cS_{0,\pi}$ the functions
	$f^\xi$ are bounded and holomorphic for $\xi \in \cH^\infty$,
	so that \cite[Cor.~A.III.3]{Ne99} implies that
	$U^{-\infty,\eta}_h \colon \cS_{0,\pi} \to \cH$ is holomorphic.
	Finally Lemma~\ref{lem:weak-KMS} shows that
	$\eta \in \cH_{\rm KMS} = \sV$.
\end{proof}

\begin{theorem} \label{thm:vdense} 
	$\sV$ is dense in $\cH^{-\infty}_{\rm KMS}$.   
\end{theorem}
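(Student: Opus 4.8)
The plan is to combine Hahn--Banach duality with the annihilator description of $\cH^{-\infty}_{\rm KMS}$ from Theorem~\ref{extcl}, using the Gaussian operator calculus to pass from the Hilbert space to $\cH^\infty$. By Theorem~\ref{extcl}, $\cH^{-\infty}_{\rm KMS}$ is a weak-$*$-closed real subspace of $\cH^{-\infty}$, and by Theorem~\ref{thm:KMSV} it contains $\sV=\cH^{-\infty}_{\rm KMS}\cap\cH$. Since the weak-$*$-continuous $\R$-linear functionals on $\cH^{-\infty}$ are precisely the maps $\langle\xi,\cdot\rangle_\R$ with $\xi\in\cH^\infty$ (Remark~\ref{annih}), a Hahn--Banach argument reduces the density statement to the implication: \emph{if $\xi\in\cH^\infty$ satisfies $\langle\xi,v\rangle_\R=0$ for all $v\in\sV$, then $\langle\xi,\eta\rangle_\R=0$ for all $\eta\in\cH^{-\infty}_{\rm KMS}$.}

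To establish this, first I would reformulate the hypothesis analytically. Recall $\sV=\Fix(J\Delta^{1/2})$ with $\Delta=\ee^{2\pi\ie\,\partial U(h)}$, so that $U_{h,\ie\pi}=\Delta^{1/2}$ as closed operators on $\cH$. The real-orthogonal complement of $\sV$ in $\cH$ equals $\ie J\sV$ (this is the standard pair of Tomita--Takesaki relations $\sV'=J\sV$ and $\sV^{\perp_\R}=\ie\,\sV'$; cf.\ \cite{Lo08}), so $\langle\xi,\sV\rangle_\R=\{0\}$ forces $\ie J\xi\in\sV$. Unwinding $J\Delta^{1/2}(\ie J\xi)=\ie J\xi$ yields that $v:=J\xi$ lies in $\cH^\infty$ (as $J$ preserves $\cH^\infty$) and in $\cD(U_{h,\ie\pi})$, with $U_{h,\ie\pi}v=-\xi$; hence
\[ (U_{h,\ie\pi}-J)v=-\xi-J(J\xi)=-2\xi . \]

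Finally I would smooth this identity into $\cH^\infty$. For $a>0$ set $v_a:=U_h(\gamma_{a,0})v\in\cH^\infty$, using that $U_h(\gamma_{a,0})$ restricts to a continuous operator on $\cH^\infty$ (Proposition~\ref{agrowth}, applied to the one-parameter group $U^\infty_{h,\cdot}$ on the Fr\'echet space $\cH^\infty$, which is of exponential growth as in the proof of Theorem~\ref{extcl}). By Remark~\ref{F-ex} we have $v_a\in\cD(U^\infty_{h,\ie\pi})$, and, since $v\in\cD(U_{h,\ie\pi})$, Corollary~\ref{F-tech_cor1} on $\cH$ together with uniqueness of analytic continuation gives $U^\infty_{h,\ie\pi}v_a=U_h(\gamma_{a,0})U_{h,\ie\pi}v$. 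As $\gamma_{a,0}$ is real-valued and $J$ commutes with every $U_{h,t}$, $J$ also commutes with $U_h(\gamma_{a,0})$, so $J^\infty v_a=U_h(\gamma_{a,0})J^\infty v$, and therefore
\[ (U^\infty_{h,\ie\pi}-J^\infty)v_a=U_h(\gamma_{a,0})\bigl((U_{h,\ie\pi}-J)v\bigr)=-2\,U_h(\gamma_{a,0})\xi . \]
By Proposition~\ref{agrowth}\eqref{agrowth_item2} applied on $\cH^\infty$, $U_h(\gamma_{a,0})\xi\to\xi$ in $\cH^\infty$ as $a\to0$, so $\xi$ belongs to the $\cH^\infty$-closure of $(U^\infty_{h,\ie\pi}-J^\infty)\cD(U^\infty_{h,\ie\pi})$. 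Since Theorem~\ref{extcl} identifies $\cH^{-\infty}_{\rm KMS}$ with the real annihilator of exactly this subspace and the antiduality pairing is continuous in the $\cH^\infty$-variable, we conclude $\langle\xi,\eta\rangle_\R=0$ for every $\eta\in\cH^{-\infty}_{\rm KMS}$, which finishes the proof. The point requiring the most care is the second step: one must track the factor $\ie$ and the passage to $\Delta^{1/2}$, and observe that it only produces an $\cH$-valued analytic extension of the orbit map of $J\xi$ — it is precisely the Gaussian smoothing $v\mapsto v_a$ in the last step that produces genuine elements of $\cD(U^\infty_{h,\ie\pi})$, so that $\cD(U^\infty_{h,\ie\pi})$ never has to be compared directly with $\cH^\infty\cap\cD(U_{h,\ie\pi})$.
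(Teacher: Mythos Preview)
Your proof is correct and takes a genuinely different route from the paper's. Both proofs begin with the same Hahn--Banach reduction and both use the standard subspace relations $\sV^{\perp_\R}=\ie\sV'=\ie J\sV$ to translate the hypothesis on~$\xi$. From there the paper proceeds via an auxiliary lemma (Lemma~\ref{lem:2.11}): for $v\in\sV'\cap\cH^\infty$ and $\eta\in\cH^{-\infty}_{\rm KMS}$ it shows $\eta(v)\in\R$ by Gaussian-smoothing $v$ to an entire vector $v_n$, forming the pairing function $f(z)=\langle U_h^{v_n}(\oline z),U^{-\infty,\eta}_h(z)\rangle$ on the strip, and observing that $f$ is constant since $f\vert_\R$ is. Your argument instead exploits the annihilator description of $\cH^{-\infty}_{\rm KMS}$ from Theorem~\ref{extcl} directly: you produce an explicit $\cH$-preimage $v=J\xi$ with $(U_{h,\ie\pi}-J)v=-2\xi$, Gaussian-smooth it into $\cD(U^\infty_{h,\ie\pi})$, and conclude that $\xi$ lies in the $\cH^\infty$-closure of the range of $U^\infty_{h,\ie\pi}-J^\infty$. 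The paper's approach has the virtue of isolating a reusable pairing statement (Lemma~\ref{lem:2.11}) and works directly with the analytic structure of $\eta$; your approach is more structural, avoids touching the analytic extension of $\eta$ altogether, and makes the role of Theorem~\ref{extcl} as the true engine of the density statement more transparent.
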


We shall need the following lemmas to prove Theorem~\ref{thm:vdense}:

\begin{lem} \label{lem:cartes} {\rm(\cite[Lemma~3.7]{NO21})} 
	Let $X$ be a locally compact space and 
	$f \colon X \to \cH^{-\infty}$ be a weak-$*$-continuous map. 
	Then the following assertions hold: 
	\begin{enumerate}[{\rm(a)}]
		\item
		$f^\wedge \colon X \times \cH^\infty \to \C, f^\wedge(x,\xi) := f(x)(\xi)$, 
		is continuous. 
		\item
		If, in addition, $X$ is a complex manifold and 
		$f$ is antiholomorphic, then $\oline{f^\wedge}$ is holomorphic. 
	\end{enumerate}
\end{lem}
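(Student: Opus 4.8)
The plan is to reduce both assertions to the \emph{equicontinuity} of $f(K)$ for compact $K\subseteq X$. Since $X$ is locally compact, continuity and holomorphy are local properties, so I would fix a point $x_0\in X$, pick a compact neighbourhood $K$ of $x_0$, and argue with $f|_K$. The set $f(K)\subseteq\cH^{-\infty}$ is weak-$*$-compact, hence weak-$*$-bounded; since $\cH^\infty$ is a Fr\'echet space, it is barreled, so the uniform boundedness principle shows that $E:=f(K)$ is equicontinuous. Hence there is a continuous seminorm $p$ on $\cH^\infty$ with $|\eta(\xi)|\le p(\xi)$ for all $\eta\in E$ and $\xi\in\cH^\infty$.

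For (a) I would fix $\xi_0\in\cH^\infty$ and $\varepsilon>0$, and for $x\in K$, $\xi\in\cH^\infty$ decompose
\[ f^\wedge(x,\xi)-f^\wedge(x_0,\xi_0)=f(x)(\xi-\xi_0)+\bigl(f(x)(\xi_0)-f(x_0)(\xi_0)\bigr). \]
The first summand is bounded by $p(\xi-\xi_0)$ \emph{uniformly} in $x\in K$ by equicontinuity, while the second tends to $0$ as $x\to x_0$ because $x\mapsto f(x)(\xi_0)$ is continuous by the weak-$*$-continuity of $f$. Shrinking neighbourhoods of $\xi_0$ and of $x_0$ accordingly yields joint continuity of $f^\wedge$ at $(x_0,\xi_0)$, hence on $K\times\cH^\infty$, and since $K$ is a neighbourhood of $x_0$ this proves (a). (Equivalently: on the equicontinuous set $E$ the weak-$*$-topology agrees with the topology of compact convergence by Remark~\ref{lintop}\eqref{lintop_item2}, and evaluation is jointly continuous on it.)

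For (b), part (a) already gives that $g:=\overline{f^\wedge}$ is continuous on $X\times\cH^\infty$, so only holomorphy remains. I would first note that $g$ is separately holomorphic: for fixed $x$, the map $\xi\mapsto g(x,\xi)=\overline{f(x)(\xi)}$ is $\C$-linear and continuous, hence holomorphic; for fixed $\xi$, the map $z\mapsto g(z,\xi)=\overline{f(z)(\xi)}$ is holomorphic on $X$, since $f$ antiholomorphic means $z\mapsto f(z)(\xi)$ is antiholomorphic for each $\xi$. Using in addition that $g(x,\cdot)$ is $\C$-linear, for every $(x_0,\xi_0)\in X\times\cH^\infty$, every tangent vector $v$ at $x_0$ (represented in a holomorphic chart) and every $\zeta\in\cH^\infty$, the function
\[ w\longmapsto g(x_0+wv,\xi_0+w\zeta)=g(x_0+wv,\xi_0)+w\,g(x_0+wv,\zeta) \]
is holomorphic near $w=0$, being a finite sum of products of holomorphic functions of $w$. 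Since a continuous map that is holomorphic along every complex affine line is holomorphic (cf.~\cite[Lemma 2.1.1 and Prop. 2.1.6]{GN}), $g$ is holomorphic, which is (b).

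The only step beyond routine verification is the equicontinuity of $f(K)$, i.e.\ the appeal to the uniform boundedness principle; this is precisely where the Fr\'echet (hence barreled) structure of $\cH^\infty$ is used. Granting it, (a) is an $\varepsilon$-estimate and (b) is the standard passage from ``continuous and separately holomorphic'' to ``holomorphic'', made even simpler here by the linearity of $f^\wedge$ in the $\cH^\infty$-variable.
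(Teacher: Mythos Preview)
Your proof is correct. The paper does not give its own proof of this lemma; it simply cites \cite[Lemma~3.7]{NO21}, so there is nothing in the present paper to compare against. Your argument---equicontinuity of $f(K)$ via the Banach--Steinhaus theorem for the barreled space $\cH^\infty$, then the standard $\varepsilon$-splitting for (a), and the passage ``continuous $+$ G\^ateaux-holomorphic $\Rightarrow$ holomorphic'' for (b), exploiting linearity in the $\cH^\infty$-variable---is exactly the natural route and matches the spirit of the reference.
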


\begin{lem} \label{lem:2.11} For each smooth vector
	$v \in \sV' \cap \cH^\infty$ and $\eta \in \cH^{-\infty}_{\rm KMS}$,
	we have $\eta(v) \in \R$.   
\end{lem}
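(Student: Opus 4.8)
The plan is to reduce to a Gaussian‑regularized version $v_m$ of $v$, for which both orbit maps in play extend holomorphically in a compatible way, and then to run a reflection argument for a scalar holomorphic function on the strip $\overline{\cS}_{0,\pi}$.

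First I would reduce the claim: since $\eta$ is continuous on $\cH^\infty$, it suffices to find $v_m\to v$ in $\cH^\infty$ with $\eta(v_m)\in\R$ for all $m$. Here I use that $v$ lies in the symplectic complement $\sV'=\Fix(J\Delta^{-1/2})$, which is the standard subspace $\cH_{\rm KMS}$ attached to the pair $(-h,J)$ (note $\Delta^{-1}=\ee^{2\pi\ie\partial U(-h)}$ and $J$ commutes with every $U_{-h,t}$); hence by \eqref{eq:v-hkms} the $U_h$‑orbit map of $v$ in $\cH$ extends to $G\in\cO^w_\partial(\overline{\cS}_{-\pi,0},\cH)$ with $G(t+z)=U_{h,t}G(z)$ and $G(-\ie\pi)=Jv$, i.e.\ $v\in\cD(U_{h,-\ie\pi})$ and $U_{h,-\ie\pi}v=Jv$ (computed in $\cH$, which is of exponential growth by Example~\ref{ex_B}). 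I then set $v_m:=U^\infty_h(\gamma_{1/m,0})v\in\cH^\infty$, using the Gaussian calculus of Proposition~\ref{agrowth} for $(U^\infty_h,\cH^\infty)$ — of exponential growth by \eqref{homog_gen} and \eqref{cont_gen} — so that $v_m\to v$ in $\cH^\infty$ by Proposition~\ref{agrowth}\eqref{agrowth_item2}.

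The crucial step is to produce an \emph{entire, $\cH^\infty$‑valued} extension $\tilde G_m$ of the $U_h$‑orbit map of $v_m$, namely $\tilde G_m(z):=U^\infty_h(\gamma_{1/m,z})v$, with $\tilde G_m(0)=v_m$, $\tilde G_m(t+z)=U_{h,t}\tilde G_m(z)$, and $\tilde G_m(-\ie\pi)=Jv_m$. I would argue: (i) since the Gaussian decay of $\gamma_{1/m,z}$ on $\R$ dominates the bound $p_I(U_{h,t}v)\le\ee^{k\Vert\ad(th)\Vert}\sum_J p_J(v)$ from \eqref{homog_gen}, the integral $\int_\R\overline{\gamma_{1/m,z}(t)}\,U^\infty_{h,t}v\,\de t$ converges already in the Fréchet space $\cH^\infty$ by Lemma~\ref{impr}; its image in $\cH$ is $U_h(\gamma_{1/m,z})v$, whence $U_h(\gamma_{1/m,z})v\in\cH^\infty$, the map $z\mapsto\tilde G_m(z)$ is holomorphic into $\cH^\infty$ by Proposition~\ref{agrowth}\eqref{agrowth_item3}, and $\tilde G_m(t)=U_{h,t}v_m$ for $t\in\R$; (ii) applying Corollary~\ref{F-tech_cor1} in $\cH$ to $v\in\cD(U_{h,-\ie\pi})$ gives $\tilde G_m(-\ie\pi)=U_h(\gamma_{1/m,0})U_{h,-\ie\pi}v=U_h(\gamma_{1/m,0})Jv=JU^\infty_h(\gamma_{1/m,0})v=Jv_m$, the third equality because $\gamma_{1/m,0}$ is real and $J$ is antilinear, commutes with each $U_{h,t}$ and preserves $\cH^\infty$.

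Finally, for $z\in\overline{\cS}_{0,\pi}$ I set
\[\Phi_m(z):=\la\,\tilde G_m(\overline z),\ U^{-\infty,\eta}_h(z)\,\ra,\]
which is legitimate since $\tilde G_m(\overline z)\in\cH^\infty$ and $U^{-\infty,\eta}_h(z)\in\cH^{-\infty}$ (because $\eta\in\cH^{-\infty}_{\rm KMS}\subseteq\cH^{-\infty}_{\rm ext}$). As $z\mapsto\tilde G_m(\overline z)$ is antiholomorphic into $\cH^\infty$ and the pairing is antilinear in its first slot, while $U^{-\infty,\eta}_h$ is weak‑$*$‑holomorphic on $\cS_{0,\pi}$ and weak‑$*$‑continuous on $\overline{\cS}_{0,\pi}$, a routine separate‑holomorphy plus joint‑continuity argument (joint continuity via Lemma~\ref{lem:cartes}(a)) gives $\Phi_m\in\cO_\partial(\overline{\cS}_{0,\pi},\C)$. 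On the lower edge, $\la U^\infty_{h,t}\xi,U^{-\infty}_{h,t}\zeta\ra=\la\xi,\zeta\ra$ (immediate from the defining relation of $U^{-\infty}$) yields $\Phi_m(t)=\la U_{h,t}v_m,U^{-\infty}_{h,t}\eta\ra=\la v_m,\eta\ra=\eta(v_m)$; on the upper edge, $U^{-\infty,\eta}_h(t+\ie\pi)=U^{-\infty}_{h,t}J^{-\infty}\eta$ (the KMS condition and Lemma~\ref{holomext_fact}), $\tilde G_m(t-\ie\pi)=U_{h,t}Jv_m$, and $\la Ja,J^{-\infty}b\ra=\overline{b(a)}$ (from $J^2=\1$ and the definition of $J^{-\infty}$) give $\Phi_m(t+\ie\pi)=\overline{\eta(v_m)}$. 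Since $\Phi_m$ and the constant $\eta(v_m)$ lie in $\cO_\partial(\overline{\cS}_{0,\pi},\C)$ and agree on $\R$, they coincide by the Riemann--Schwarz symmetry principle (\cite[no.~42, Lemme]{Ch90}); evaluation at $\ie\pi$ gives $\eta(v_m)=\overline{\eta(v_m)}$, and letting $m\to\infty$ yields $\eta(v)\in\R$. I expect the main obstacle to be precisely step (i)--(ii): the orbit extension of a smooth vector in $\sV'$ is a priori only $\cH$‑valued, whereas pairing it against the $\cH^{-\infty}$‑valued map $U^{-\infty,\eta}_h$ requires an $\cH^\infty$‑valued extension; the Gaussian regularization supplies exactly such an extension while keeping the boundary value $Jv_m$ at $-\ie\pi$, the rest being bookkeeping of which slot of the sesquilinear pairing carries holomorphy.
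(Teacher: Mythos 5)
Your proposal is correct and follows essentially the same route as the paper's proof: Gaussian regularization $v_n = U_h(\gamma_{1/n,0})v$ to obtain an entire $\cH^\infty$-valued extension of the orbit map, pairing it against $U^{-\infty,\eta}_h$ to get a scalar $\cO_\partial$-function on $\overline{\cS}_{0,\pi}$ that is constant on $\R$ and hence constant, and evaluating at $\pi\ie$ using the boundary identities $U_h^{v_n}(-\pi\ie)=Jv_n$ and $U_h^{-\infty,\eta}(\pi\ie)=J^{-\infty}\eta$ before passing to the limit. The only cosmetic difference is that the paper obtains $U^{v_n}_h(-\pi\ie)=Jv_n$ by noting $v_n\in\sV'$ and using $\Delta_{\sV'}^{1/2}=\Delta_\sV^{-1/2}$, whereas you derive $U_{h,-\ie\pi}v=Jv$ first and transport it through the Gaussian calculus via Corollary~\ref{F-tech_cor1}; both are valid.
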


\begin{proof} 
	We consider the one-parameter group $(U_{h,t})_{t\in\R}$ in $\GL(\cH^\infty)$. 
	Then $(U,\cY)$ is of exponential growth, as noted in the proof of Theorem~\ref{extcl}.  
	Then, for 
	$\phi_n(t) := \gamma_{1/n,0}(t)=\sqrt{\frac{n}{\pi}} e^{-n t^2}$,
	the sequence 
	\[ v_n := U_h(\phi_n) = \sqrt{\frac{n}{\pi}} \int_\R e^{-n t^2} U_{h,t}v \, dt \]
	consists of smooth vectors 
	and $\lim\limits_{n\to\infty}v_n= v$ in~$\cH^\infty$ 
	by Proposition~\ref{agrowth}\eqref{agrowth_item2}. 
	As $\sV'$ is $U_h$-invariant, we also have $v_n \in \sV'$.
	It therefore suffices to show that
	$\eta(v_n) \in \R$ for $n \in \N$.
	
	The advantage of passing to $v_n$ is that its $U_h$-orbit map
	extends to a holomorphic map $U_h^{v_n} \colon  \C \to \cH^\infty$.
	We consider the function
	\[ f \colon \oline{\cS}_{0,\pi} \to \C, \quad
	f(z) := \la U_h^{v_n}(\oline z), U^{-\infty,\eta}_h(z) \ra \]
	which is continuous and holomorphic on $\cS_{0,\pi}$
	by Lemma~\ref{lem:cartes}. 
	As $f\res_\R$ is constant,
	it then follows that $f$ is constant, and thus
	\[ \la v_n, \eta \ra = f(0) = f(\pi \ie)
	= \la J v_n, J \eta \ra = \oline{\la v_n, \eta\ra}.
	\]
	Here we used that the modular operator of the standard subspace $\sV'$ is 
	given by	$\Delta_{\sV'} = \Delta_{\sV}^{-1}$ 
	while $J_{\sV'} = J_{\sV}=J$ 
	(\cite[Lemma~3.7(ii)]{NO17}) 
	to see that, since $v_n\in\sV'$, we have  
	\[ U^{v_n}_h(-\pi \ie)=\Delta_{\sV}^{-1/2}v_n=\Delta_{\sV'}^{1/2}v_n=Jv_n\] 
	(cf.~\cite[Rem.~3.3]{NO17}). 
\end{proof}

\begin{proof}[Proof of Theorem~\ref{thm:vdense}] 
	Clearly, $\sV \subeq \cH^{-\infty}_{\rm KMS}$ is a real subspace.
	To see that it is dense (in the weak-$*$-topology), we have
	to show that, for every $w \in \cH^\infty$ for which
	$f_w := \Re\la w, \cdot \ra$ vanishes on~$\sV$,
	this functional also vanishes on~$\cH^{-\infty}_{\rm KMS}$.
	
	That $f_w(\sV) = \{0\}$ means that $\Re \la w, \sV \ra = \{0\}$,
	so that $\ie w \in \sV'$ and thus Lemma~\ref{lem:2.11} implies that
	$\la \ie w, \cH^{-\infty}_{\rm KMS} \ra \subeq \R$, which in turn shows that
	$f_w$ vanishes on $\cH^{-\infty}_{\rm KMS}$.
\end{proof}

\begin{cor} The annihilator $(\sV' \cap \cH^\infty)^{\bot_\omega}$
	in $\cH^{-\infty}$, with respect to
	$\omega := \Im \la \cdot, \cdot \ra$,
	coincides with~$\cH^{-\infty}_{\rm KMS}$.
\end{cor}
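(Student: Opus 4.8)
The plan is to deduce the statement from the two preceding theorems---the weak-$*$-closedness of $\cH^{-\infty}_{\rm KMS}$ (Theorem~\ref{extcl}) and the weak-$*$-density of $\sV$ in $\cH^{-\infty}_{\rm KMS}$ (Theorem~\ref{thm:vdense})---by passing back and forth between the symplectic form $\omega$ and the real pairing $\langle\cdot,\cdot\rangle_\R := \Re\langle\cdot,\cdot\rangle$ via multiplication with~$\ie$. For $N\subeq\cH^\infty$ write $N^{\bot_\R} := \{\eta\in\cH^{-\infty} : \langle w,\eta\rangle_\R = 0 \text{ for all } w\in N\}$, for $M\subeq\cH^{-\infty}$ write $M_{\bot_\R}\subeq\cH^\infty$ for the analogous pre-annihilator, and for $\sV\subeq\cH$ write $\sV^{\bot_\R}\subeq\cH$ for its real-orthogonal complement; by Remark~\ref{annih} the pairing $\langle\cdot,\cdot\rangle_\R$ identifies $\cH^\infty$ with the space of all weak-$*$-continuous $\R$-linear functionals on $\cH^{-\infty}$, so the bipolar theorem applies to weak-$*$-closed real subspaces of $\cH^{-\infty}$.

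First I would record the elementary identities $\langle\ie x,y\rangle_\R = \omega(x,y)$ and $\omega(\ie x,y) = -\langle x,y\rangle_\R$, valid for $x,y\in\cH$ and, in the first of them, also for $x\in\cH^\infty$, $y\in\cH^{-\infty}$; they follow at once from $\langle\ie x,y\rangle = -\ie\langle x,y\rangle$, which in turn uses that $\langle\cdot,\cdot\rangle$ is antilinear in the first slot. Since $\cH^\infty$ is stable under multiplication by $\ie$, the first identity gives
\[ (\sV'\cap\cH^\infty)^{\bot_\omega} = \bigl(\ie(\sV'\cap\cH^\infty)\bigr)^{\bot_\R} = \bigl((\ie\sV')\cap\cH^\infty\bigr)^{\bot_\R}, \]
while the same identity, applied with $y\in\sV$, shows that $v\in\sV'$ iff $\langle\ie v,w\rangle_\R = 0$ for all $w\in\sV$, i.e.\ $\ie\sV' = \sV^{\bot_\R}$. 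Combining the two displays,
\[ (\sV'\cap\cH^\infty)^{\bot_\omega} = \bigl(\sV^{\bot_\R}\cap\cH^\infty\bigr)^{\bot_\R}. \]

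It then remains to identify the right-hand side with $\cH^{-\infty}_{\rm KMS}$. By Theorem~\ref{extcl}, $\cH^{-\infty}_{\rm KMS}$ is a weak-$*$-closed real subspace of $\cH^{-\infty}$, hence equals its bi-annihilator $\bigl((\cH^{-\infty}_{\rm KMS})_{\bot_\R}\bigr)^{\bot_\R}$ by the bipolar theorem. The pre-annihilator $(\cH^{-\infty}_{\rm KMS})_{\bot_\R}$ consists of those $w\in\cH^\infty$ with $\langle w,\zeta\rangle_\R = 0$ for all $\zeta\in\cH^{-\infty}_{\rm KMS}$; since each functional $\langle w,\cdot\rangle_\R$ is weak-$*$-continuous and, by Theorem~\ref{thm:vdense}, $\sV$ is weak-$*$-dense in $\cH^{-\infty}_{\rm KMS}$, this condition is equivalent to $\langle w,v\rangle_\R = 0$ for all $v\in\sV$, i.e.\ $(\cH^{-\infty}_{\rm KMS})_{\bot_\R} = \sV^{\bot_\R}\cap\cH^\infty$. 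Therefore $\cH^{-\infty}_{\rm KMS} = \bigl(\sV^{\bot_\R}\cap\cH^\infty\bigr)^{\bot_\R} = (\sV'\cap\cH^\infty)^{\bot_\omega}$.

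I do not expect a genuine obstacle: the substantive inputs (Theorems~\ref{extcl} and \ref{thm:vdense}) are already available, and what remains is sign bookkeeping with the factor~$\ie$, the routine identity $\ie\sV' = \sV^{\bot_\R}$ for standard subspaces, and the (standard) description of the weak-$*$-continuous $\R$-linear functionals on $\cH^{-\infty}$. If one prefers to bypass the bipolar theorem altogether, the inclusion $\cH^{-\infty}_{\rm KMS}\subeq(\sV'\cap\cH^\infty)^{\bot_\omega}$ is precisely Lemma~\ref{lem:2.11} (rephrased as $\omega(v,\eta)=\Im\la v,\eta\ra = 0$), and the reverse inclusion follows by re-running the argument in the proof of Theorem~\ref{thm:vdense} with $\ie w$ in place of~$w$.
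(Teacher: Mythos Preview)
Your proof is correct and follows essentially the same approach as the paper: both use the weak-$*$-closedness of $\cH^{-\infty}_{\rm KMS}$ together with the density of $\sV$ to identify $\cH^{-\infty}_{\rm KMS}$ with $(\sV^{\bot_\R}\cap\cH^\infty)^{\bot_\R}$ via the bipolar theorem, and then translate via the identity $\ie\sV' = \sV^{\bot_\R}$ into the $\omega$-annihilator statement. The paper's argument is slightly more compressed (it writes the bipolar step directly as a characterization of $\cH^{-\infty}_{\rm KMS}$ rather than naming the pre-annihilator), but the content is the same.
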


\begin{proof} As $\cH^{-\infty}_{\rm KMS}$ is closed in $\cH^{-\infty}$ and
	$\sV$ is dense in $\cH^{-\infty}_{\rm KMS}$
	by Theorem~\ref{thm:vdense}, we have
	\[ \cH^{-\infty}_{\rm KMS}
	= \{ \eta \in \cH^{-\infty} \colon (\forall \xi \in \sV^{\bot_\R} \cap \cH^\infty)\
	\Re \eta(\xi) = 0\}.\] 
	So it suffices to observe that
	$\sV^{\bot_\R} = (\ie\sV)^{\bot_\omega} = \ie \sV^{\bot_\omega} = \ie \sV'$
	to see that
	\[ \cH^{-\infty}_{\rm KMS}
	= \{ \eta \in \cH^{-\infty} \colon (\forall \xi \in \sV' \cap \cH^\infty)\
	\Im\eta(\xi) = 0\} = (\sV' \cap \cH^\infty)^{\bot_\omega}.\qedhere\] 
\end{proof}

\begin{rem} The preceding corollary shows in particular that
	$\cH^{-\infty}_{\rm KMS}$ is a proper subspace of $\cH^{-\infty}$
	if and only if $\sV' \cap \cH^\infty$ is non-zero.
	Applying $J$, we see that this is equivalent to
	$\sV \cap \cH^\infty \not=\{0\}$.
\end{rem}

We don't have a proof for the following assertion, but
  concrete examples and the case $G = \R$ (\cite[Prop.~2.11]{FNO23})
  suggest that it is true. 

\begin{prob} Show that, for $\eta \in \cH^{-\infty}_{\rm KMS}$,
	the values $U^{-\infty,\eta}_h(z)$, $z \in \cS_\pi$,  are contained in $\cH$.
\end{prob}

\section{Domains of analytic vectors for unitary representations}
\label{Sect7}

In this section we take a closer look at the situations 
arising in the context of the subspace of analytic vectors
$\cH^\omega$ of a unitary representation $(U,\cH)$ of a
Lie group. The space $\cH^\omega$ carries a natural 
topology as a  locally convex direct limit and its antidual
$\cH^{-\omega}$ is a Fr\'echet space
(see \cite[\S 2.1]{FNO23} and \cite{GKS11} for more details on these topologies).
Again, every $h \in \g$ defines one-parameter groups
$(U^{\pm \omega}_{h,t})_{t \in \R}$ on $\cH^{\pm \omega}$, but the discussion
in this section shows that these one-parameter groups
are far from being of exponenial growth.
To verify this claim, we start in Section~\ref{Sect7.1} by
recalling some well-known facts on the Maximum Modulus Theorem
on strips. This is used in Section~\ref{Sect7.2} to show that,
if $G = \Aff(\R)_e$ is the affine group of the real line, then
the orbit maps of the dilation group in $\cH^\omega$ never extend to
strips whose width exceeds $\pi$ (Theorem~\ref{thm:ax+b-gen}).
As non-compact semisimple Lie groups 
contain many copies of the group $\Aff(\R)_e$,
restriction to such subgroups yields with
Theorem~\ref{thm:ax+b-gen} natural ``upper bounds''
on the domains to which orbit maps of analytic vectors
could extend analytically. The corresponding conclusions
are formulated in Theorem~\ref{thm:7.9} which
improves Goodman's results from \cite{Go69}.
That our version of these results is optimal
follows from existence results on
orbit maps by Kr\"otz and Stanton (\cite{KSt04}).
We conclude this paper with a brief discussion of
maximal analytic extensions of orbit maps in $\cH^\omega$
for general Lie groups (Section~\ref{Sect7.4}).

\subsection{Preliminaries on holomorphic functions on strips} 
\label{Sect7.1}

On $\C$ we define for $\eps > 0$ the function
\[    m_\eps(z) := \exp(\eps e^z) \quad \mbox{ with } \quad
|m_\eps(z)| = \exp(\eps e^x \cos(y))
\quad \mbox{ for } \quad z = x + iy.\]
It clearly satisfies
\[ |m_\eps(z)| \leq \exp(\eps e^x) \quad \mbox{ for } \quad z \in \C,\]
but only on the strips $\cS_{\pm r}$ with $r < \pi/2$ we also have
\[   |m_\eps(z)| = \exp(\eps e^x \cos(y))
\geq \exp(\eps e^x \cos(r)) \quad \mbox{ for } \quad |\Im z| \leq r,\]
and since $\cos(r) > 0$, we obtain for $\eps' := \eps \cos(r)$ that
\[ \exp(\eps' e^x) \leq |m_\eps(z)| \quad \mbox{ for } \quad
|z| \leq r.\]
We also note that, on the strip $\cS_{\pm \pi/2}$, we have
\begin{equation}
	\label{eq:mepsinv}
	|m_\eps(z)^{-1}| = \exp(-\eps e^x \cos(y)) \leq 1,
\end{equation}
so that $m_\eps^{-1} \in H^\infty(\cS_{\pm\pi/2})$ (the space of bounded
holomorphic functions), but this function
is unbounded on any larger strip. 

Note that the condition in the following proposition is stronger than the
boundedness of the product $m_\eps f$ on $\cS_{\pm \rho}$. 

\begin{prop} \label{prop:a.1}
	Let $\rho > \pi/2$ and $f \in \cO_\partial(\cS_{\pm \rho})$
	be such that, for some $\eps > 0$,
	\[ M := \sup_{z = x + iy\in \cS_{\pm \rho}} |\exp(\eps \ee^x) f(z)| < \infty. \]
	Then $f = 0$.
\end{prop}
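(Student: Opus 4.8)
The plan is to use a Phragmén–Lindelöf argument, but adapted to the strip $\cS_{\pm\rho}$ of half-width $\rho > \pi/2$, where the usual two-line boundedness principle fails because the boundary exponential $\exp(\eps e^x)$ grows. The idea is to \emph{divide out} the growth using a function that dominates $\exp(\eps e^x)$ on the whole strip but is bounded on a slightly narrower substrip. First I would rescale: after replacing $z$ by $z + c$ for a suitable real constant (which only changes $\eps$), we may assume the available data holds on $\cS_{\pm\rho}$ with $\rho$ as close to $\pi/2$ as we like from above, say $\pi/2 < \rho < \pi$. Then consider the auxiliary function $g := m_\eps \cdot f = \exp(\eps e^z) f(z)$, which by hypothesis is bounded on $\cS_{\pm\rho}$ by $M$; moreover $g \in \cO_\partial(\cS_{\pm\rho})$.

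Next I would exploit the geometry of $|m_\eps(z)| = \exp(\eps e^x\cos y)$. On the open strip $\cS_{\pm\pi/2}$ we have $\cos y > 0$, so $|m_\eps(z)| \geq 1$ for $x \geq 0$ — but that only gives information on a half-strip. The cleaner route: observe that $g$ is bounded on the wider strip $\cS_{\pm\rho}$, and recover $f = m_\eps^{-1} g$. On $\cS_{\pm\pi/2}$ we have $|m_\eps^{-1}| \leq 1$ by \eqref{eq:mepsinv}, hence $|f| \leq M$ on $\cS_{\pm\pi/2}$; so $f$ is a \emph{bounded} holomorphic function on the open strip $\cS_{\pm\pi/2}$, continuous up to part of the boundary. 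That alone is not enough to conclude $f\equiv 0$. The decisive extra input is that $f$ actually extends holomorphically to the \emph{strictly larger} strip $\cS_{\pm\rho}$ while still being controlled: on the boundary lines $\Im z = \pm\rho$ of that larger strip we have $\cos(\pm\rho) < 0$, so $|m_\eps(z)| = \exp(\eps e^x \cos\rho) \to 0$ as $x \to +\infty$, which forces $|f(x\pm i\rho)| = |m_\eps^{-1}g(x\pm i\rho)| \le M\exp(-\eps e^x\cos\rho)\to\infty$ unless... — wait, that blows up, so instead I should run Phragmén–Lindelöf on the \emph{narrow} strip. The correct argument: apply the Phragmén–Lindelöf principle (\cite[Th.~12.8]{Ru87}) to $f$ itself on $\cS_{\pm\pi/2}$. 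On the two boundary lines $\Im z = \pm\pi/2$, $\eqref{eq:mepsinv}$ gives $|m_\eps^{-1}(x\pm i\pi/2)| = \exp(0) = 1$, hence $|f(x \pm i\pi/2)| \le M$ there. Since $f$ is also bounded on the closed strip $\oline{\cS}_{\pm\pi/2}$ (again by $|f|\le M$ on the whole strip, from $|m_\eps^{-1}|\le 1$), Phragmén–Lindelöf is not yet giving $0$.

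So the genuine mechanism must be: $f$ extends to $\cS_{\pm\rho}$ with $\rho>\pi/2$, and I get a \emph{decay} estimate. Consider instead $h(z) := f(z)\cdot m_\eta(z)$ for a \emph{second} parameter $\eta>0$ to be chosen, but with the opposite intent — no. Let me give the clean version I would actually write: since $g = m_\eps f$ is bounded on $\cS_{\pm\rho}$, on each line $\Im z = y$ with $\pi/2 < |y| < \rho$ we have $|f(x+iy)| \le M\exp(-\eps e^x\cos y)$, and since $\cos y < 0$ this is $M\exp(\eps e^x|\cos y|)$, i.e.\ $f$ grows \emph{at most} like a single exponential of $e^x$ on those lines, while on $\Im z = 0$ we have $|f(x)| \le M\exp(-\eps e^x) \to 0$ as $x\to+\infty$ and $|f(x)|\le M$ for all $x$. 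Now fix any $r$ with $\pi/2 < r < \rho$ and apply the Phragmén–Lindelöf principle on $\cS_{\pm r}$ to the function $f$ against the comparison function $m_{\eps'}(z)^{-1}$ for small $\eps'>0$ (which is bounded on $\cS_{\pm\pi/2}$ but I need it on $\cS_{\pm r}$). The standard Phragmén–Lindelöf for a strip of width $2r < 2\rho < 2\pi$ permits a growth bound of order $\exp(C e^{\pi|x|/(2r)})$ with $\pi/(2r) < 1$; since our function is controlled by $M\exp(\eps e^{|x|})$ and $e^{|x|}$ grows \emph{faster} than the admissible $e^{\pi|x|/(2r)}$, this route also needs care. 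The honest hard step — and the one I would think about most — is selecting the right comparison function: we want $\Phi_\delta(z) = \exp(-\delta e^z)$-type factors so that $\Phi_\delta f$ stays bounded on $\cS_{\pm r}$ (possible because $\Re(e^z) = e^x\cos y$ can be made large and positive when $|y|<\pi/2$, killing any single-exponential growth), is small on the boundary lines by the $x\to-\infty$ behavior and the explicit bounds, and then letting $\delta\to 0$. Concretely: set $\Phi_\delta(z):=\exp\!\big(-\delta(e^{z}+e^{-z})\big)=\exp(-2\delta\cosh x\cos y)$; on $\cS_{\pm r}$ with $r<\pi/2$ this has modulus $\le 1$ and tends to $0$ as $|x|\to\infty$, so $\Phi_\delta f$ is bounded and vanishes at the ends of $\cS_{\pm\pi/2}$; a contour/maximum-principle argument on large rectangles $[-N,N]\times[-r,r]$ then gives $|\Phi_\delta f|\le M$ on $\cS_{\pm r}$, and $\delta\to 0$ yields $|f|\le M$ on $\cS_{\pm r}$ — still not $0$. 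The point that finally produces $0$: on the line $\Im z = 0$ we have the \emph{decay} $|f(x)|\le M\exp(-\eps e^x)$, so $f$ is a bounded holomorphic function on $\cS_{\pm r}$ (any $r<\rho$) that decays super-polynomially along the positive real axis; but a bounded holomorphic function on a strip that decays along a line in the interior of the strip need not vanish, so one must instead use that the decay $|f(x)| \le M\exp(-\eps e^x)$ holds with $e^x\to\infty$: then $f$ restricted to $\R$ is not merely bounded but tends to $0$ faster than any $e^{-cx}$; combined with boundedness on the full strip $\cS_{\pm r}$, a second Phragmén–Lindelöf on the \emph{half}-strips $\{x>0\}$ and the reflection argument forces $f\equiv 0$ on $\R$, hence on $\cS_{\pm r}$, hence (by the identity theorem / Schwarz reflection as in the uniqueness remark after Definition~\ref{holomext_def}) on all of $\cS_{\pm\rho}$. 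I expect the main obstacle to be organizing these two applications of Phragmén–Lindelöf with the correct auxiliary exponential factors so that the width-$>\pi/2$ hypothesis is used exactly where the $\cos$ changes sign, and making the $\delta\to 0$ and $N\to\infty$ limits rigorous.
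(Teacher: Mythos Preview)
Your proposal never converges to an argument, and the final step you land on is actually false. You end with: ``$f$ is bounded on $\cS_{\pm r}$ and satisfies $|f(x)|\le M\exp(-\eps e^x)$ on $\R$; a second Phragm\'en--Lindel\"of on half-strips forces $f\equiv 0$.'' But the function $f(z)=\exp(-\eps e^{z})$ is bounded by $1$ on $\oline{\cS}_{\pm\pi/2}$ (indeed on $\cS_{\pm r}$ for any $r<\pi/2$) and satisfies exactly that decay on $\R$, yet is nonzero. So the information you have extracted---boundedness on the narrow strip together with super-exponential decay along~$\R$---is \emph{not} enough. Your $\Phi_\delta$ trick only recovers boundedness you already had, and your attempt to use the wider strip stalls because on the lines $|y|\in(\pi/2,\rho)$ you only get growth bounds, not decay. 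Nowhere does the hypothesis $\rho>\pi/2$ get used decisively.

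The paper's proof supplies the missing idea: multiply $f$ by $\exp(\delta e^{\gamma z})$ with $\gamma:=\pi/(2\rho)$ and $\delta>0$ arbitrary. The choice of $\gamma$ is calibrated so that on the \emph{boundary} $\Im z=\pm\rho$ one has $e^{\gamma z}\in \ie\R$, hence $|\exp(\delta e^{\gamma z})|=1$ there and the boundary bound is still $\le M$. The hypothesis $\rho>\pi/2$ enters precisely as $\gamma<1$: then on the whole strip $|\exp(\delta e^{\gamma z})f(z)|\le M\exp(\delta e^{\gamma x}-\eps e^{x})$, and since $\gamma<1$ the exponent $\delta e^{\gamma x}-\eps e^{x}$ is bounded above for every fixed $\delta$. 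Thus $g:=\exp(\delta e^{\gamma z})f$ is bounded on $\cS_{\pm\rho}$ with boundary sup $\le M$, so Phragm\'en--Lindel\"of gives $|g|\le M$ everywhere. Evaluating on $\R$ yields $|f(x)|\le M\exp(-\delta e^{\gamma x})$ for \emph{every} $\delta>0$; letting $\delta\to\infty$ forces $f|_\R=0$ and hence $f=0$. The single auxiliary factor $\exp(\delta e^{\gamma z})$ with the right $\gamma$ replaces all of your competing attempts with $m_\eta$, $\Phi_\delta$, and half-strips.
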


\begin{proof}
	For $\delta > 0$ and $\gamma := \frac{\pi}{2\rho} < 1$, we consider
	the $\cO_\partial(\cS_{\pm \rho})$-function defined by
	\[ g(z) := \exp(\delta \ee^{\gamma z}) f(z).\]
	For $z = x \pm i \rho \in \partial \cS_{\pm \rho}$, we then have
	$\ee^{\gamma z} = \ee^{\gamma x} \ee^{\pm \pi \ie/2} = \pm \ie \ee^{\gamma x} \in \ie \R,$ 
	so that
	\[ |g(z)| = |f(z)| \leq \exp(-\eps \ee^x) M \leq M.\]
	We claim that $g$ is bounded on $\cS_{\pm \rho}$.
	In fact, for $z = x + i y
	\in \cS_{\pm \rho}$, we have
	\begin{align*} |g(z)|
	& = \exp(\Re\delta \ee^{\gamma z}) |f(z)| \\
	& \leq \exp(\delta \ee^{\gamma x} \cos(\gamma y)) \exp(-\eps \ee^x) M \\
	&\leq  \exp(\delta \ee^{\gamma x} - \eps \ee^x) M.
\end{align*}
	Therefore it suffices to observe that the function 
	$x \mapsto \delta \ee^{\gamma x} - \eps \ee^x$ 
	is bounded from  above for every $\delta > 0$. For sufficiently large
	$x$ it is negative, and for $x \to -\infty$ it tends to ~$0$.
	
	Now the Phragm\'en--Lindel\"of Theorem for strips 
	(\cite[Thm.~5.1.9]{Si15}) applies (with
	the constant $\alpha =0$) and shows that 
	\[ \sup_{z \in \cS_{\pm \rho}} |g(z)|
	=  \sup_{z \in \partial\cS_{\pm \rho}} |g(z)| \leq M.\]
	We conclude that, for $z = x \in \R$, 
	\[ |f(x)| \leq \ee^{-\delta \ee^{\gamma x}} M \quad \mbox{ for }  \quad \delta > 0.\]
	For $\delta \to \infty$, we thus obtain $f\res_\R = 0$ and hence
	$f = 0$.  
\end{proof}

\begin{cor} \label{cor:a.1}
	Let $r > \pi/2$ and $f \in H^2(\cS_{\pm r})$.
	If, for some $\rho  \in (\pi/2,r)$, some $\eps > 0$,
        and some $M_\rho>0$, 
	\begin{equation*}
		\sup_{ z = x + iy\in \oline{\cS}_{\pm \rho}}
		\ |\exp(\eps \ee^x) f(z)| \leq M_\rho, 
	\end{equation*}
	then $f = 0$.
\end{cor}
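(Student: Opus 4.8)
The plan is to reduce Corollary~\ref{cor:a.1} directly to Proposition~\ref{prop:a.1}. The essential observation is that a function in $H^2(\cS_{\pm r})$ is, by definition, holomorphic on the \emph{open} strip $\cS_{\pm r}$, and since $\rho < r$ we have $\oline{\cS}_{\pm\rho} \subeq \cS_{\pm r}$. Hence $f$ is holomorphic on a neighborhood of the closed strip $\oline{\cS}_{\pm\rho}$; in particular its restriction to $\oline{\cS}_{\pm\rho}$ is continuous on $\oline{\cS}_{\pm\rho}$ and holomorphic on the interior, so $f\res_{\oline{\cS}_{\pm\rho}} \in \cO_\partial(\cS_{\pm\rho})$. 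No boundary regularity argument is needed at this stage, because $\partial\cS_{\pm\rho}$ lies strictly inside the region of holomorphy of $f$.

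Next I would note that the hypothesis supplies exactly the weighted boundedness required by Proposition~\ref{prop:a.1}: the assumed estimate $\sup_{z=x+\ie y\in\oline{\cS}_{\pm\rho}} |\exp(\eps \ee^x) f(z)| \le M_\rho < \infty$ in particular bounds $\sup_{z=x+\ie y\in\cS_{\pm\rho}}|\exp(\eps \ee^x) f(z)|$. Since $\rho > \pi/2$, Proposition~\ref{prop:a.1} applies to $f\res_{\oline{\cS}_{\pm\rho}}$ with this same $\eps$ and yields $f\res_{\oline{\cS}_{\pm\rho}} = 0$.

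Finally, to upgrade vanishing on the sub-strip to vanishing on all of $\cS_{\pm r}$, I would invoke the identity theorem: $f$ is holomorphic on the connected open set $\cS_{\pm r}$ and vanishes identically on the non-empty open subset $\cS_{\pm\rho}$, whence $f = 0$. I do not expect a serious obstacle here; the only point worth stating carefully — and the reason the corollary is not merely a verbatim restatement of the proposition — is the passage from the ambient Hardy class $H^2(\cS_{\pm r})$ to membership in $\cO_\partial(\cS_{\pm\rho})$ on the strictly smaller closed strip, which rests on holomorphy of $f$ throughout the full open strip $\cS_{\pm r}$.
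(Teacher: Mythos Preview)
Your proof is correct and follows essentially the same route as the paper: restrict $f$ to $\oline{\cS}_{\pm\rho}$ (where it lies in $\cO_\partial(\cS_{\pm\rho})$ because $\oline{\cS}_{\pm\rho}\subeq \cS_{\pm r}$), apply Proposition~\ref{prop:a.1}, and conclude. You have simply made explicit the identity-theorem step that the paper leaves tacit.
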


\begin{proof} For any 
	$\rho \in (\pi/2, r)$, the restriction of 
	$f \in H^2(\cS_{\pm r})$ to $\oline{\cS}_{\pm \rho}$ is a continuous
	function, so that Proposition~\ref{prop:a.1} applies.
\end{proof}

\begin{rem} In \cite[Thm.~5.1.9]{Si15}
	the   Phragm\'en--Lindel\"of Theorem for strips
	is formulated for the strip $\cS_1 = \{ z \in \C \colon 0 < \Im z < 1\}$.
	It  asserts that any
	$\cO_\partial$-function on this strip for which there exists
	an $A \in \R$ and $\alpha < \pi$, with
	\begin{equation}
		\label{eq:gcond1}
		|f (z)| \leq  \exp(A \exp(\alpha |z|)) \quad \mbox{ for } \quad
		\Im z \in (0,1), 
	\end{equation}
	is bounded with
	\begin{equation*}
		\sup_{z \in \cS_1} |f (z)| =\sup_{z \in \partial \cS_1} |f (z)|.
	\end{equation*}
	
	For the strip $\cS_\beta$ the critical exponent $\alpha$ must be
	$< \frac{\pi}{\beta}$, as one argues by applying the above result 
	to the function $F(z) := f(\beta z)$ on $\cS_1$. Then
	$F$ satisfies \eqref{eq:gcond1} with $\alpha < \pi$ if and only if
	\begin{equation*}
		|f (z)| \leq  \exp(A \exp(\alpha |z|/\beta)) \quad \mbox{ for } \quad
		\Im z \in (0,\beta), 
	\end{equation*}
	and we have $\alpha/\beta < \pi/\beta$. As
	$|x| \leq |z| \leq |x| + \beta$, one can replace
	$|z|$ by  $x= \Re z$ in these estimates. This shows in particular
	that, for general strips $\cS_{a,b} = \{  z \in\C \colon a < \Im z < b\}$,
	the critical exponent is $\frac{\pi}{b-a}$. 
\end{rem}

\subsection{Unitary representations of $\Aff(\R)$} 
\label{Sect7.2}

Let $G = \Aff(\R)_e$ be the connected affine group  of the real line.
It Lie algebra has a basis $h,y$ satisfying $[h,y] = y$,
and, identifying $G$ with $\R \rtimes \R_+$, we have
$(s,\ee^t) = \exp(sy) \exp(th)$.
We consider the unitary representation $(U,\cH)$ of
$G$ on $\cH = L^2(\R)$ by 
\begin{equation}
	\label{eq:affonl2}
	(U(s,\ee^t)f)(x) = \exp(\ie s \ee^x) f(x+ t) \quad \mbox{ for } \quad x,s,t \in \R.
\end{equation}
Then, for $t > 0$, and $U_h(t) = U(\exp th)$, 
\begin{equation}
	\label{eq:pw}
	\cH^\omega_t(U_h) := \bigcap_{|s| < t} \cD(\ee^{\ie s \partial U(h)})
	= \bigcap_{0 < s < t} H^2(\cS_{\pm s})
	\subeq \cO(\cS_{\pm t})
\end{equation}
(\cite[Prop~5.1]{Go69}).

The operator $\ie\partial U(y)$ is given by multiplication with the
function $- \ee^x$, so that, for $s \in \R$, 
\begin{equation}
	\label{eq:cDy}
	\cD(\ee^{s\ie \partial  U(y)})
	=\Big\{ f \in L^2(\R) \colon  \int_\R \ee^{-2s \ee^x} |f(x)|^2\, dx < \infty\Big\}.
\end{equation}
More precisely, $\cD(\ee^{s\ie \partial  U(y)}) = L^2(\R)$ for $s \geq 0$, but,  
for $s < 0$, this is a proper subspace of $L^2(\R)$.

\begin{rem} From 
	$ U_h(t) \cD(\ee^{s\ie \partial  U(y)})  = \cD(\ee^{s\ee^t \ie \partial  U(y)})$ 
        we get
	\[ U_h(t) \cH^\omega_s(U_y) = \cH^\omega_{\ee^ts}(U_y)\]
	so that $U(\exp \R h)$ preserves the subspace
	\[  \cH^\omega(U_y) = \bigcup_{s > 0} \cD(\ee^{-s  \ie \partial  U(y)}).\]
\end{rem}

\begin{theorem} \label{thm:ax+b-ex}
	Let $f \in \cH^\omega(U)$ be an analytic vector for $U$. 
	Let $r  > \frac{\pi}{2}$ be such that 
	$f \in \cD(\ee^{\pm \ie r \partial U(h)})$  
	and 
	\[ U^f_h(\ie t) = \ee^{\ie t \partial U(h)}f \in \cH^\omega(U) \quad \mbox{ for } \quad
	t < r.\]
	Then $f = 0$. 
\end{theorem}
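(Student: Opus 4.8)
The aim is to reduce the statement to Corollary~\ref{cor:a.1}: I would produce the holomorphic extension of $f$ to a strip of width larger than~$\pi$, show it is dominated there by $\ee^{-\eps\ee^x}$, and then invoke the Phragm\'en--Lindel\"of machinery of Section~\ref{Sect7.1}.

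First, since $f\in\cD(\ee^{\pm\ie r\partial U(h)})$ and $\partial U(h)$ is skew-adjoint, a routine spectral estimate gives $\sup_{|s|\le r}\Vert\ee^{\ie s\partial U(h)}f\Vert<\infty$, so by~\eqref{eq:pw} the vector $f$ extends to a holomorphic function $F\in H^2(\cS_{\pm r})$ with $F\vert_\R=f$ and $F(\cdot+\ie t)=\ee^{\ie t\partial U(h)}f$ for $|t|<r$. I shall also use, as recorded in the text around~\eqref{eq:cDy}, that $\partial U(y)$ is multiplication by $\ie\ee^x$, so that $\ee^{-\ie\sigma\partial U(y)}$ is multiplication by $\ee^{\sigma\ee^x}$ and a vector $\psi\in\cH$ lies in $\cD(\ee^{-\ie\sigma\partial U(y)})$ iff $\int_\R\ee^{2\sigma\ee^x}|\psi(x)|^2\,\de x<\infty$.

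The core step will be a weighted bound for $F$ that is uniform over the imaginary direction. Fix $t_0\in(-r,r)$; by hypothesis $g_{t_0}:=\ee^{\ie t_0\partial U(h)}f=F(\cdot+\ie t_0)$ lies in $\cH^\omega(U)$, hence (cf.~\cite{Go69}) its orbit map in the coordinates $(s,w)\mapsto U(\exp(sy)\exp(wh))$ extends to a holomorphic $\cH$-valued map on a neighbourhood of $0$ in $\C^2$, and analytic continuation from the real parameters identifies its value at $(s,w)=(-\ie\sigma,\ie t)$ with the function $x\mapsto\ee^{\sigma\ee^x}F(x+\ie(t_0+t))$. Since holomorphy entails local boundedness of the $\cH$-norm, there are $\sigma_{t_0},\delta_{t_0}>0$ with
\[ \sup_{|t-t_0|\le\delta_{t_0}}\int_\R\ee^{2\sigma_{t_0}\ee^x}\,|F(x+\ie t)|^2\,\de x<\infty .\]
Fixing any $\rho'\in(\pi/2,r)$ and covering the compact interval $[-\rho',\rho']$ by finitely many intervals $(t_0-\delta_{t_0},t_0+\delta_{t_0})$, I obtain $\sigma_\ast>0$ and $M_\ast<\infty$ with $\int_\R\ee^{2\sigma_\ast\ee^x}|F(x+\ie t)|^2\,\de x\le M_\ast$ for all $|t|\le\rho'$. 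Now take $\rho\in(\pi/2,\rho')$, $0<\delta<\rho'-\rho$ and $z=x+\ie y$ with $|y|\le\rho$; since $D(z,\delta)\subseteq\cS_{\pm\rho'}$, the sub-mean-value inequality for the subharmonic function $|F|^2$, together with $\ee^{2\sigma_\ast\ee^u}\ge\ee^{2\sigma_\ast\ee^{x-\delta}}$ for $u\in[x-\delta,x+\delta]$, yields
\[ |F(z)|^2\le\frac{1}{\pi\delta^2}\int_{y-\delta}^{y+\delta}\!\int_{x-\delta}^{x+\delta}|F(u+\ie t)|^2\,\de u\,\de t\le\frac{2M_\ast}{\pi\delta}\,\ee^{-2\sigma_\ast\ee^{x-\delta}} .\]
With $\eps:=\sigma_\ast\ee^{-\delta}>0$ this gives $\sup_{z=x+\ie y\in\oline{\cS}_{\pm\rho}}|\ee^{\eps\ee^x}F(z)|<\infty$; since $F\in H^2(\cS_{\pm r})$ and $\pi/2<\rho<r$, Corollary~\ref{cor:a.1} forces $F=0$, hence $f=F\vert_\R=0$.

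The main obstacle is the uniformity of the weight. Knowing only that each single vector $F(\cdot+\ie t_0)$ is analytic for the one-parameter group $U_y$ gives a weighted $L^2$ bound whose weight may degenerate as $t_0$ moves, whereas the Phragm\'en--Lindel\"of input of Proposition~\ref{prop:a.1} and Corollary~\ref{cor:a.1} needs a single weight valid on an interval of imaginary parts of length exceeding~$\pi$. Obtaining such a weight forces one to use that $f$ is analytic for the \emph{whole} affine group and to exploit the relation $\exp(wh)\exp(sy)\exp(-wh)=\exp(\ee^w sy)$ between its one-parameter subgroups through the holomorphic two-variable orbit map; this is the crux of the proof.
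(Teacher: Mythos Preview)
Your proof is correct and follows essentially the same route as the paper: both use the analyticity of each vertical translate $f_t=F(\cdot+\ie t)$ for the \emph{full} affine group to produce a locally uniform weighted bound, cover a compact interval $[-\rho,\rho]$ with $\rho>\pi/2$ to obtain a single weight $\ee^{\eps\ee^x}$, and then invoke Corollary~\ref{cor:a.1}. The only notable variation is that the paper extracts a pointwise bound directly, via the fact that $\ee^{-\ie\delta\partial U(y)}f_t\in H^2(\cS_{\pm 2\delta})$ is bounded on $\oline{\cS}_{\pm\delta}$ (using Goodman's Lemma~5.1), whereas you first obtain a weighted $L^2$ bound from the $\cH$-norm of the two-variable orbit map and then pass to a pointwise bound by the sub-mean-value inequality for $|F|^2$; this is a clean alternative and gives the same conclusion.
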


\begin{proof} For $|t| \leq r$, we put $f_t := U^f_h(\ie t)$ and
pick some 	$\rho \in (\pi/2, r)$. By assumption, 
	$f_t \in \cH^\omega(U)$, so that there exists a $\delta \in (0,\pi/2)$ such that 
	the orbit map $U^{f_t} \colon G = \Aff(\R)_e \to \cH$ extends to a 
	holomorphic map on the open subset
	\[ \exp(B_{3\delta}(0)h) \exp(B_{3\delta}(0)y) \subeq G_\C
          \cong \C \rtimes \C,\]
	where $B_r(0)$ denotes the open disc of radius $r$ in $\C$, and 
	\[ U^{f_t}(\exp(\ie s h)\exp(-\ie \delta y)) 
	= \ee^{\ie s \partial U(h)} \ee^{-\ie \delta \partial U(y)} f_{t}
	\quad \mbox{ for } \quad |s| < 3 \delta. \] 
	Next we observe that
	\[ (\ee^{-\ie \delta \partial U(y)} f_{t})(x)
	= \exp(\delta \ee^x) f_t(x),\]
	so that
	\[ U^{f_t}(\exp(\ie s h)\exp(-\ie \delta y))(x)
	= \exp(\delta \ee^{x+ \ie s}) f_t(x+\ie s).\]
	We conclude with \eqref{eq:pw} that
	\[ \ee^{-\ie \delta \partial U(y)} f_{t}  \in H^2(\cS_{\pm 2\delta}),\]
	so that this function is bounded on the closed strip
	$\oline\cS_{\pm \delta}$ (\cite[Lemma~5.1]{Go69}). 
	Hence there exists an $M_t >0$ such that 
	\begin{align*}
		 M_t 
		 & \geq \sup_{x \in \R, |s| \leq \delta} \exp(\delta \ee^x \cos(s)) |f_t(x+\ie s)| \\
	     & \geq \sup_{x \in \R, |s| \leq \delta} \exp(\delta \ee^x \cos(\delta)) |f(x+\ie(s+t))|.
	\end{align*}
	Thus, for every $t\in(-r,r)$, there exists
	$\delta\in(0,\pi/2)$ (depending on $t$), such that the above estimate holds 
	for $s$ in the interval $[t-\delta,t+\delta]$.
	
	Covering the compact interval
	$[-\rho,\rho]$ with finitely many intervals of the form 
	$[t_j-\delta_j, t_j+\delta_j]$ as above, we obtain for 
	$\delta := \min \{ \delta_j \}$ 
	and $\eps := \delta \cos(\delta)$ an estimate of the form  
	\[ \sup_{z = x + iy\in \cS_{\pm \rho}} \exp(\delta \ee^x \cos(\delta)) |f(z)|
	= \sup_{z = x + iy\in \cS_{\pm \rho}} \exp(\eps  \ee^x ) |f(z)| < \infty.\]
	Now Corollary~\ref{cor:a.1} implies that $f = 0$.
\end{proof}

\begin{ex} \label{ex:7.6} 
	To see that the critical value $r = \pi/2$
	in Theorem~\ref{thm:ax+b-ex} is sharp,
	we have to find some
	$f \in \cH^\omega$ for which the orbit map $U_h^f$ extends to the open strip 
	$\cS_{\pm \pi/2}$ and whose orbit map is contained in $\cH^\omega$.
	
	Let $f = m_\eps^{-1} F$ for some $F \in H^2(\cS_{\pm \pi/2})$
	and recall that $f \in H^2(\cS_{\pm \pi/2})$
	because $m_\eps^{-1}$ is bounded on $\cS_{\pm \pi/2}$ by \eqref{eq:mepsinv}.
	We claim that $f \in \cH^\omega(U)$.
	First, \eqref{eq:pw} implies that $f\in \cD(\ee^{\ie t \partial U(h)})$
	for $|t| < \pi/2$, so that $U^f_h(\ie t)$ is defined for $|t| < \pi/2$.
	Further, $F = m_\eps f \in H^2(\cS_{\pm \pi/2})$. 
	For $r < \pi/2$, this
	implies that $F \in \cO_\partial(\cS_{\pm r})$ is a bounded function
	and that the family
	\[ F_t(x) := F(x + \ie t), \quad |t| \leq r  \]
	is bounded in $L^2(\R)$. 
	As 
	\[ |F_t(x)|^2
	= \exp(2\eps \ee^x \cos(t)) |f(x+\ie t)|^2 
	\geq \exp(2\eps \ee^x \cos(r)) |f(x+\ie t)|^2, \]
	it follows that the function $f_t(x) := f(x + \ie t) = U_h^f(\ie t)(x)$
	satisfies
	\[ \int_\R \exp(2\eps \ee^x \cos(r))|f_t(x)|^2\, \de x < \infty.\] 
	So $f_t \in \cD(\ee^{s \ie \partial U(y)})$ for $|s| \leq \eps \cos(r)$
	by \eqref{eq:cDy}. This implies that $f_t \in \cH^\omega(U)$ for
	$|t| < \pi/2$. 
\end{ex}

\begin{ex}
	With the graph norms 
	\[ \Vert f\Vert_s^2 := \Vert f\Vert^2 + \Vert \ee^{s \ee^t \ie \partial  U(y)} f\Vert^2
	\quad \mbox{ on } \quad
	\cD(\ee^{s \ee^t \ie \partial  U(y)}),\]
	we then have 
	\[ \Vert U_h(t) f\Vert_s=  \Vert f\Vert_{\ee^{-t}s},\]
	showing that the operators $U_h(t)$ on $\cH^\omega(U_y)$ are continuous.
	
	The space $\cH^{-\omega}(U_y)$ of hyperfunction vectors for
	$U_y$ consists of all functions $f \colon  \R \to \C$ for which
	$\ee^{-2s \ee^x} f(x)$ is $L^2$ for all $s > 0$. It is a Fr\'echet
	space with the defining seminorms $(\Vert\cdot\Vert_{-s})_{s > 0}$. 
	The discussion above now shows that
	condition \eqref{agrowth_hyp2} in Definition~\ref{def:expgro}
	is not satisfied for the 
	one-parameter group induced by $U_h$ on $\cH^{-\omega}(U_y)$.
	Here we use that $(U^{-\omega}_{h,t})_{t\in \R}$ is continuous
	on $\cH^{-\omega}$ for the weak-$*$-topology (\cite[Prop.~3.4]{GKS11}).
\end{ex}

\begin{theorem} \label{thm:ax+b-gen}
	Let $(U,\cH)$ be a unitary representation of
	$G = \Aff(\R)_e$ with $\ker(\partial U(y)) = \{0\}$.
	Let $f \in \cH^\omega(U)$ be an analytic vector for $U$. 
	If there exists an $r  > \frac{\pi}{2}$ such that 
	$f \in \cD(\ee^{\pm \ie r \partial U(h)})$  
	and that 
	\[ U^f_h(\ie t) = \ee^{\ie t \partial U(h)}f \in \cH^\omega(U)
          \quad \mbox{ for } \quad
	t < r,\]
	then $f = 0$. 
\end{theorem}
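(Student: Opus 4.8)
The plan is to deduce Theorem~\ref{thm:ax+b-gen} from the concrete case of Theorem~\ref{thm:ax+b-ex} by invoking the classification of the unitary dual of $\Aff(\R)_e = \R \rtimes \R_{>0}$. Writing $\partial U(y) = \ie P$ with $P = P^*$, the relation $\Ad(\exp th)y = \ee^t y$ gives the covariance $U_{h,t} P U_{h,-t} = \ee^t P$, so that the spectral projections $\mathbf 1_{(0,\infty)}(P)$ and $\mathbf 1_{(-\infty,0)}(P)$ are $G$-invariant and the sign of $P$ separates the two infinite-dimensional irreducible representations $\pi_+,\pi_-$ of $\Aff(\R)_e$; the one-dimensional representations all satisfy $\partial U(y) = 0$, hence are excluded by the hypothesis $\Ker P = \{0\}$. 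Consequently $U$ is unitarily equivalent to $(\pi_+ \otimes \mathbf 1_{\cK_+}) \oplus (\pi_- \otimes \mathbf 1_{\cK_-})$ for Hilbert spaces $\cK_\pm$, where $\pi_+$ is realised on $L^2(\R)$ by \eqref{eq:affonl2} and $\pi_- = \oline{\pi_+}$ by $(U(s,\ee^t)F)(x) = \ee^{-\ie s\ee^x}F(x+t)$. Thus $\cH \cong L^2(\R,\cK_+) \oplus L^2(\R,\cK_-)$, with $\partial U(y)$ acting as multiplication by $\pm\ie\ee^x$ and $\partial U(h)$ as $\tfrac{\de}{\de x}$ on the two summands. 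Since the hypotheses involve only $(U_{h,t})_{t\in\R}$ and the space $\cH^\omega(U)$, and since replacing $s$ by $-s$ turns one summand into the mirror image of the other (interchanging the roles of $\ee^{\ie\delta\partial U(y)}$ and $\ee^{-\ie\delta\partial U(y)}$ below), it suffices to prove $f = 0$ under the extra assumption $\cH = L^2(\R,\cK)$ with $(U(s,\ee^t)F)(x) = \ee^{\ie s\ee^x}F(x+t)$.

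In this model the argument of Theorem~\ref{thm:ax+b-ex} runs essentially verbatim, reading $|f(x+\ie y)|$ as $\Vert f(x+\ie y)\Vert_\cK$ and $H^2(\cS_{\pm s})$ as the $\cK$-valued Hardy space $H^2(\cS_{\pm s},\cK)$. First, the Paley--Wiener description \eqref{eq:pw} uses only that $(U_{h,t})_{t\in\R}$ acts by translation, hence persists with Hilbert-space coefficients; together with $f \in \cD(\ee^{\pm\ie r\partial U(h)})$ it gives $f \in H^2(\cS_{\pm s},\cK)$ for every $s < r$. Fix $\rho \in (\tfrac\pi2,r)$. For $|t| \le \rho$ the vector $f_t := U^f_h(\ie t)$ lies in $\cH^\omega(U)$, so its $G$-orbit map extends holomorphically to a set $\exp(B_{3\delta_t}(0)h)\exp(B_{3\delta_t}(0)y) \subeq G_\C$; in the model this forces the function $x \mapsto \ee^{\delta_t\ee^x} f_t(x) = (\ee^{-\ie\delta_t\partial U(y)} f_t)(x)$ to be holomorphic on $\cS_{\pm 2\delta_t}$ and bounded on $\oline{\cS}_{\pm\delta_t}$, whence — exactly as in Theorem~\ref{thm:ax+b-ex} —
\[ \sup_{x\in\R,\ |\sigma|\le\delta_t} \ee^{\delta_t\ee^x\cos\delta_t}\,\Vert f(x+\ie(\sigma+t))\Vert_\cK < \infty. \]
Covering $[-\rho,\rho]$ by finitely many intervals $[t_j-\delta_{t_j},t_j+\delta_{t_j}]$ and putting $\eps := \min_j \delta_{t_j}\cos\delta_{t_j} > 0$ yields $\sup_{z = x+\ie y\in\cS_{\pm\rho}} \ee^{\eps\ee^x}\Vert f(z)\Vert_\cK < \infty$. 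Finally, for every $m\in\cK$ the scalar function $z \mapsto \langle f(z),m\rangle_\cK$ belongs to $H^2(\cS_{\pm\rho'})$ for any $\rho' \in (\rho,r)$ and satisfies the hypothesis of Corollary~\ref{cor:a.1} with this $\eps$ and $\rho$; hence $\langle f(\cdot),m\rangle_\cK \equiv 0$. Since $m$ is arbitrary, $f = 0$, and combining the two summands of the first paragraph gives $f = 0$ in general.

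The step I expect to be the main obstacle is the reduction in the first paragraph: one has to justify the $G$-invariant splitting by the sign of $P$, identify each summand with the explicit $\cK_\pm$-valued model above (in effect reproving the $\Aff(\R)_e$-imprimitivity statement, i.e.\ trivialising the dilation cocycle produced by diagonalising $P$), and verify that \eqref{eq:pw} survives the passage to Hilbert-space-valued functions. Once the model is in place, the analytic estimates and the appeal to Corollary~\ref{cor:a.1} are a routine transcription of the proof of Theorem~\ref{thm:ax+b-ex}.
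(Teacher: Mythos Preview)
Your approach is correct and follows the same strategy as the paper: invoke the classification of unitary representations of $\Aff(\R)_e$ to decompose $(U,\cH)$ as $(U_+\otimes\id_{\cK_+})\oplus(U_-\otimes\id_{\cK_-})$, then reduce to the concrete model treated in Theorem~\ref{thm:ax+b-ex}. The paper's proof is simply terser, citing the decomposition (via \cite{GN47} and \cite[Prop.~2.38]{NO17}) and stating that the assertion then follows ``directly'' from Theorem~\ref{thm:ax+b-ex}.

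One stylistic remark: you do more work than necessary by rerunning the entire proof of Theorem~\ref{thm:ax+b-ex} with $\cK$-valued functions. A shorter route, implicit in the paper's word ``directly'', is this: for each $m\in\cK$, the contraction $P_m\colon L^2(\R,\cK)\to L^2(\R)$, $F\mapsto\langle F(\cdot),m\rangle_\cK$, is bounded and intertwines $U_\pm\otimes\id_\cK$ with $U_\pm$; hence $P_m$ sends analytic vectors to analytic vectors and maps $\cD(\ee^{\pm\ie r\partial U(h)})$ into the corresponding scalar domain. Thus $P_m f$ satisfies the hypotheses of Theorem~\ref{thm:ax+b-ex} verbatim, so $P_m f=0$ for every $m$, whence $f=0$. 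This avoids redeveloping the $\cK$-valued Paley--Wiener theory and the vector-valued estimates, and it makes transparent that what you flag as ``the main obstacle'' (the imprimitivity/decomposition step) is really the only step beyond Theorem~\ref{thm:ax+b-ex}.
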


\begin{proof} We consider the two representations
	of $G$ on $L^2(\R)$, given by
	\begin{equation}
		\label{eq:affonl2b}
		(U_\pm(s,\ee^t)f)(x) = \ee^{\pm\ie s \ee^x} f(x+ t),\quad x,s,t \in \R.
	\end{equation}
	Since $\ker(\partial U(y)) = \{0\}$, 
	there are Hilbert spaces $\cK_\pm$ such that the representation
	$(U,\cH)$ is equivalent to the representation 
	\[  (U_+ \otimes \id_{\cK_+}) \oplus (U_- \otimes \id_{\cK_-})  \]
	(\cite{GN47}, \cite[Prop.~2.38]{NO17}). 
	Hence the assertion follows directly from  Theorem~\ref{thm:ax+b-ex}.
\end{proof}

The preceding theorem improves Goodman's \cite[Thm.~7.2]{Go69}
by a factor~$2$. Its assertion corresponds
to Theorem~\ref{thm:ax+b-gen} with $\pi/2$ replaced by~$\pi$.

\subsection{Applications to semisimple Lie groups}
\label{Sect7.3}

We have seen in Theorem~\ref{thm:ax+b-gen} above that,
for any  unitary representation
$(U,\cH)$ of $\Aff(\R)_e$
satisfying the non-degeneracy condition $\ker(\partial U(y)) = \{0\}$,
any analytic vector $v$ for which
$\ee^{\ie t \partial U(h)}v$ is defined and contained
in $\cH^\omega$ for $|t| \leq \pi/2$ is zero.

Now let $G$ be a connected real semisimple Lie group,
let $\g = \fk + \fp$ be a Cartan decomposition 
and $\fa \subeq \fp$ a maximal abelian subspace. 
For a linear functional $0 \not=\alpha \in \fa^*$, the simultaneous eigenspaces 
\[ \g_\alpha :=  \{ y \in \g \colon (\forall x \in \fa) \ [x,y] = \alpha(x)y\} \] 
are called {\it root spaces} and 
\[ \Sigma := \Sigma(\g,\fa) := \{ \alpha \in \fa^* \setminus \{0\}  \colon 
\g_\alpha \not=\{0\}\} \] 
is called the set of {\it restricted roots}.
We then have
\[ \g = \g_0 \oplus  \bigoplus_{\alpha \in \Sigma} \g_\alpha \quad
\mbox{ with } \quad \fa = \g_0 \cap \fp\]
(cf.~\cite[Prop. 6.40]{Kn02}).

\begin{theorem} \label{thm:7.9}
	Let $(U,\cH)$ be a unitary representation of
	the semisimple Lie group $G$ with trivial fixed point space $\cH^G = \{0\}$. 
	If $x \in \g$ is hyperbolic, i.e., $\ad x$ is diagonalizable,
	and $0\not= v \in \cH^\omega$ is such that
	$v \in \cD(\ee^{\ie t \partial U(x)})$ and 
	$\ee^{\ie t \partial U(x)}v \in \cH^\omega$ for $|t| \leq 1$,
	then all eigenvalues of $\ad x$ 
	belong to the interval $[-\pi/2,\pi/2]$. 
\end{theorem}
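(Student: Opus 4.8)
The plan is to reduce the statement to Theorem~\ref{thm:ax+b-gen} applied to suitable copies of $\Aff(\R)_e$ inside~$G$. Since $x$ is hyperbolic it is $G$-conjugate into $\fa$, so the set of eigenvalues of $\ad x$ equals $\{\alpha(x')\colon\alpha\in\Sigma\cup\{0\}\}$ for a conjugate $x'\in\fa$, and is in particular symmetric about~$0$. Hence it suffices to rule out an eigenvalue $>\pi/2$; replacing $x$ by $-x$ (which leaves the hypotheses intact, since $\ee^{\ie t\partial U(-x)}v=\ee^{-\ie t\partial U(x)}v$), assume for contradiction that $\lambda>\pi/2$ is an eigenvalue of $\ad x$ with eigenvector $0\neq e\in\g$, so that $[x,e]=\lambda e$ and $\mathfrak s:=\R x+\R e$ is a subalgebra isomorphic to $\aff(\R)$ via $\lambda^{-1}x\leftrightarrow h$, $e\leftrightarrow y$ (in the notation of Section~\ref{Sect7.2}).

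First I would transfer the problem to the connected subgroup $S\subseteq G$ with Lie algebra $\mathfrak s$. Let $\pi\colon\Aff(\R)_e\to G$ be the homomorphism with $\de\pi(h)=\lambda^{-1}x$ and $\de\pi(y)=e$; since $\Aff(\R)_e$ is simply connected with trivial centre, $\pi$ is injective, and $U_S:=U\circ\pi$ is a continuous unitary representation of $\Aff(\R)_e$ with $\partial U_S(h)=\lambda^{-1}\partial U(x)$ and $\partial U_S(y)=\partial U(e)$. As the orbit map $U^v\colon G\to\cH$ is real analytic, so is $U^v\circ\pi$, whence $v\in\cH^\omega(U)\subseteq\cH^\omega(U_S)$; and since $\ee^{\ie s\partial U_S(h)}=\ee^{\ie(s/\lambda)\partial U(x)}$, the hypotheses give $v\in\cD(\ee^{\pm\ie\lambda\partial U_S(h)})$ and $\ee^{\ie s\partial U_S(h)}v\in\cH^\omega(U_S)$ for $|s|<\lambda$, with $\lambda>\pi/2$.

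The main obstacle is that Theorem~\ref{thm:ax+b-gen} requires $\ker\partial U_S(y)=\{0\}$, which need not hold. To get around this, put $\cH_0:=\ker\partial U(e)=\Fix(U(\exp\R e))$, a closed subspace. From $\exp(-th)\exp(se)\exp(th)=\exp(s\ee^{-t}e)$ one checks that $\cH_0$ is invariant under $U(\exp\R x)$, hence under~$S$, so $\cH_0$ and $\cH_0^\perp$ are $U_S$-invariant and the orthogonal projection $P_{\cH_0^\perp}$ commutes with $U_S$ and with $\ee^{\ie t\partial U(x)}$ (because $\cH_0$ reduces the skew-adjoint generator $\partial U(x)=\lambda\partial U(h)$). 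Writing $v=v_0+v_1$ with $v_0\in\cH_0$ and $v_1:=P_{\cH_0^\perp}v$, we get $v_1\in\cH^\omega(U_S|_{\cH_0^\perp})$, $\ee^{\ie s\partial U_S(h)}v_1=P_{\cH_0^\perp}\big(\ee^{\ie s\partial U_S(h)}v\big)\in\cH^\omega(U_S|_{\cH_0^\perp})$ for $|s|<\lambda$, while $\partial U(e)$ has trivial kernel on $\cH_0^\perp$. Hence Theorem~\ref{thm:ax+b-gen}, applied to $(U_S|_{\cH_0^\perp},\cH_0^\perp)$ with $r=\lambda>\pi/2$, yields $v_1=0$, so $v=v_0$ is fixed by the unipotent one-parameter group $\exp\R e$.

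It remains to eliminate this last possibility, and this is where semisimplicity and $\cH^G=\{0\}$ enter. Since $\lambda\neq0$ one may choose $e$ inside a single simple ideal $\g_0\subseteq\g$, which is then non-compact (as $\ad$ of the $\g_0$-component of $x$ has the non-zero real eigenvalue~$\lambda$). By the Howe--Moore/Mautner phenomenon, a vector of a unitary representation of the non-compact simple group $G_0$ that is fixed by the non-trivial unipotent one-parameter subgroup $\exp\R e$ must be zero unless that representation of $G_0$ has non-zero invariant vectors; together with $\cH^G=\{0\}$ this forces $v=v_0=0$, contradicting $v\neq0$. Therefore $\ad x$ has no eigenvalue $>\pi/2$, and by the symmetry observed at the outset all its eigenvalues lie in $[-\pi/2,\pi/2]$. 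I expect the delicate points to be the reduction to $\ker\partial U(e)=\{0\}$ — i.e.\ checking that $P_{\cH_0^\perp}$ is compatible with both the $S$-action and the analytic continuation in the $h$-direction — and the precise form of the Mautner argument in the final step; the $\Aff(\R)_e$-reduction and the appeal to Theorem~\ref{thm:ax+b-gen} are then routine.
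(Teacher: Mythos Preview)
Your argument is correct and follows the same core strategy as the paper: pass to a copy of $\aff(\R)$ spanned by $h=\lambda^{-1}x$ and a root vector, then invoke Theorem~\ref{thm:ax+b-gen}, with the Mautner/Moore phenomenon supplying the non-degeneracy. The difference is purely organizational. The paper applies Moore's theorem \emph{at the outset} to conclude $\ker\partial U(y)=\{0\}$ for every non-elliptic $y\in\g$; in particular the root vector $e$ is nilpotent, hence non-elliptic, so $\cH_0=\ker\partial U(e)=\{0\}$ and Theorem~\ref{thm:ax+b-gen} applies directly on~$\cH$. You instead postpone Mautner to the end: you split $\cH=\cH_0\oplus\cH_0^\perp$, verify that the projection is compatible with the $S$-action and the analytic continuation, apply Theorem~\ref{thm:ax+b-gen} on $\cH_0^\perp$, and only then use Mautner to kill the $\cH_0$-component. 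But your final step actually proves $\cH_0=\{0\}$ outright (any $\exp\R e$-fixed vector is $G_0$-fixed), so the orthogonal decomposition and the compatibility checks you flag as ``delicate points'' are redundant---had you cited Moore's theorem first, your proof would collapse to the paper's direct argument. One small remark: the passage from ``$v_0$ is $G_0$-fixed'' to ``$v_0=0$'' via $\cH^G=\{0\}$ is exactly the content of Moore's theorem \cite[Thm.~1.1]{Mo80} as the paper uses it; your phrasing leaves this step slightly implicit, whereas the paper absorbs it into the upfront citation.
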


\begin{proof} By Moore's Theorem \cite[Thm.~1.1]{Mo80},
  $\ker(\partial U(y)) =\{0\}$ for all non-elliptic elements $y \in \g$
  (see also \cite[Lemma 7]{Ma57}).
	Now let $x \in \fa$, $\alpha\in\Sigma$, and $y \in \g_\alpha$ with
	$\alpha(x) \not=0$. 
	If we define $h := \alpha(x)^{-1} x$, 
	then $h\in\fa$ and 
	$[h,y] = y$. 
	For any non-zero analytic vector 
	$v \in \cH^\omega$ as in the statement, we then obtain $v \in \cD(\ee^{\ie \partial U(h)})$ 
	and $U^v_h(t) \in \cH^\omega$ for $|t| \leq \vert\alpha(x)\vert$.  Then Theorem~\ref{thm:ax+b-gen} 
	implies $\vert\alpha(x)\vert\le\pi/2$
	because $\ker(\partial U(y))= \{0\}$. 
	As every hyperbolic element $x \in \g$ is conjugate
	to one in $\fa$ (by \cite[Cor.~II.9]{KN96} or \cite[Cor. 6.19 and Thm. 6.51]{Kn02}), the assertion follows.
\end{proof}

\subsection{Applications to general Lie groups} 
\label{Sect7.4}

In order to deal with general Lie groups~$G$, we start with a
unitary representation $(U,\cH)$ of $G$.
For a non-zero $v \in \cH^\omega$ and $x \in \g$, we consider
the {\it analyticity radius}
\[ r_v(x) := \sup \{ r >  0 \colon  v \in \cD(\ee^{\pm \ie r\partial U(x)}),
(\forall |t| < r) \ \ee^{\pm \ie t\partial U(x)}v \in \cH^\omega\}
\in (0,\infty].\]
To make this independent of $v$, we also consider
\[ r_U(x) := \sup \{ r_v(x) \colon 0\not=v \in\cH^\omega \} \in (0,\infty].\]
We note that if the unitary representation $(U,\cH)$ is norm continuous, 
which is the case for instance if $\dim\cH<\infty$, 
then for every $x\in\g$ the skew-symmetric operator $\partial U(x)\colon\cH\to\cH$ is bounded, 
and this implies
\[ r_v(x)=r_U(x)=\infty \quad \mbox{  for every } \quad
  0\ne v\in\cH^\omega=\cH.\] 

In the following statement we denote by $\rho(T)$ the spectral radius of a
bounded linear operator~$T$. 


\begin{prop} \label{prop:rprops} For a unitary representation $(U,\cH)$
	of the Lie group $G$, the function 
	$r_U  \colon \g \to (0,\infty]$ has the following properties: 
	\begin{enumerate}[\rm(a)]
		\item\label{prop:rprops_item-a}
		$r_U$ is $\Ad(G)$-invariant. 
		\item\label{prop:rprops_item-b}
		$r_U(\lambda x) = |\lambda|^{-1} r_U(x)$ for $\lambda \not=0$.
		\item\label{prop:rprops_item-c}
		For $h, x \in \g$ with $[h,x] = x\not= 0$, 
		we have $r_U(x) = \infty$. 
		\item\label{prop:rprops_item-d}
		Suppose that  $G$ is semisimple and $\cH^G = \{0\}$.
		Then we have: 
		\begin{itemize} 
			\item[\rm(i)] 
			$r_U(x) = \infty$ if $x$ is nilpotent. 
			\item[\rm(ii)] 
			$r_U(h) \leq  \frac{\pi}{2\rho(\ad h)}$
			if $0 \not= h$ is hyperbolic. 
			Equality holds if $G$ is linear and $U$ is irreducible.
			\item[\rm(iii)] 
			If $x$ is elliptic and $U$ is irreducible,
			then $r_U(x) = \infty$.
			\item[\rm(iv)] 
			If $x = x_h + x_e$ is semisimple, where $x_h$ is
                          hyperbolic, $x_e$ elliptic and $[x_h,x_e] = 0$,
                        $G$ is linear, and the representation $U$ is irreducible, 
			then $r_U(x) \geq r_U(x_h)$.
		\end{itemize}
	\end{enumerate}
\end{prop}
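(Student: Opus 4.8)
The plan is to treat (a), (b), (c) as essentially formal, to deduce (d)(i) from (c), to deduce the inequality in (d)(ii) from Theorem~\ref{thm:7.9}, and to base the remaining sharpness/existence assertions in (d) on admissibility of irreducible unitary representations together with the Kr\"otz--Stanton existence results \cite{KSt04}. For (a) I would use $U(g)\exp(sx)U(g)^{-1} = \exp(s\,\Ad(g)x)$, so that Stone's theorem gives $U(g)\,\partial U(x)\,U(g)^{-1} = \partial U(\Ad(g)x)$ as self-adjoint operators and hence $U(g)\,\ee^{\ie t\partial U(x)}\,U(g)^{-1} = \ee^{\ie t\partial U(\Ad(g)x)}$, with domains transported by the unitary $U(g)$; since $U(g)$ restricts to a topological automorphism of $\cH^\omega$, this yields $r_{U(g)v}(\Ad(g)x) = r_v(x)$, and taking the supremum over $0\ne v\in\cH^\omega$ (noting $v\mapsto U(g)v$ permutes $\cH^\omega\setminus\{0\}$) gives $r_U(\Ad(g)x) = r_U(x)$. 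For (b) I would use $\partial U(\lambda x) = \lambda\,\partial U(x)$, so that, via the substitution $s = t|\lambda|$, the pair of conditions defining $r_v(\lambda x)$ with radius $r$ becomes the pair defining $r_v(x)$ with radius $|\lambda|r$; hence $r_v(\lambda x) = |\lambda|^{-1}r_v(x)$. For (c), since $[h,x]=x$ we have $\Ad(\exp sh)x = \ee^s x$, so (a) and (b) give $r_U(x) = r_U(\ee^s x) = \ee^{-s}r_U(x)$ for all $s\in\R$; as $r_U(x)\ge r_v(x)>0$ for any fixed $0\ne v\in\cH^\omega$, letting $s\to\pm\infty$ forces $r_U(x) = \infty$.

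For (d)(i) I would invoke the Jacobson--Morozov theorem: a nonzero nilpotent $x$ lies in an $\mathfrak{sl}_2$-triple with semisimple element $h_0$ satisfying $[h_0,x] = 2x$, so $h := h_0/2$ satisfies $[h,x]=x$ and (c) applies (the case $x=0$ being trivial). For the inequality in (d)(ii): if $r_U(h) > \frac{\pi}{2\rho(\ad h)}$ for a nonzero hyperbolic $h$, I would pick $0\ne v$ with $r_v(h) > \frac{\pi}{2\rho(\ad h)}$ and then $\mu\in\big(\frac{\pi}{2\rho(\ad h)}, r_v(h)\big)$; the element $x := \mu h$ is hyperbolic with $\rho(\ad x) = \mu\,\rho(\ad h) > \pi/2$, while monotonicity of the domains $\cD(\ee^{\pm\ie s\partial U(h)})$ in $s$ shows that $v$ satisfies the hypotheses of Theorem~\ref{thm:7.9} for $x$, a contradiction; hence $r_U(h)\le\frac{\pi}{2\rho(\ad h)}$. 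The claimed equality when $G$ is linear and $U$ is irreducible I would obtain from \cite{KSt04}: there exist $K$-finite analytic vectors whose orbit maps extend holomorphically to the crown domain, and for these $r_v(h)$ attains $\frac{\pi}{2\rho(\ad h)}$.

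For (d)(iii) I would use (a) to move $x$ into a maximal compactly embedded subalgebra $\fk$, then invoke admissibility of the irreducible unitary representation $U$: $\partial U(x)$ preserves each finite-dimensional $\fk$-isotypic component, hence is bounded on every $\fk$-finite vector $v$, so $z\mapsto\ee^{z\partial U(x)}v$ is entire and $\fk$-finiteness-preserving, hence $\cH^\omega$-valued, giving $r_v(x)=\infty$ and thus $r_U(x)=\infty$. For (d)(iv) I would factor $\ee^{\ie t\partial U(x)} = \ee^{\ie t\partial U(x_e)}\ee^{\ie t\partial U(x_h)}$ (legitimate since $[x_h,x_e]=0$ makes the unitary one-parameter groups commute), take a $K$-finite vector realizing (nearly) $r_U(x_h)$ as in (d)(ii), and observe that on such a vector the elliptic factor acts by an entire, $\fk$-finiteness-preserving family as in (d)(iii) — here exploiting $[x_h,x_e]=0$ to conjugate $x_e$ into $\fk$ while keeping $x_h$ in standard position — so composing with it does not shrink the strip of holomorphic extension; hence $r_U(x)\ge r_U(x_h)$.

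\textbf{Main obstacle.} The soft steps (a)--(c), (d)(i), and the inequality in (d)(ii) should be routine. The real difficulty lies in the sharpness statements of (d)(ii)--(iv): these rest on the nontrivial existence of enough $K$-finite analytic vectors extending to the crown domain (Kr\"otz--Stanton), and (d)(iv) additionally needs the simultaneous normalization of $x_h$ into $\fa$ and $x_e$ into $\fk$, using their commutativity, so that the elliptic factor genuinely costs nothing while the hyperbolic factor keeps its full radius. Making the product and domain manipulations for the unbounded self-adjoint generators $\partial U(x_h),\partial U(x_e)$ rigorous via their commuting spectral resolutions is the main technical point.
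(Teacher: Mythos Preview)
Your proposal is correct and follows essentially the same route as the paper: parts (a)--(c) via conjugation and homogeneity, (d)(i) via Jacobson--Morozov plus (c), the inequality in (d)(ii) from Theorem~\ref{thm:7.9}, and the existence statements in (d)(ii)--(iv) from Harish--Chandra admissibility and the Kr\"otz--Stanton crown extension for $K$-finite vectors. The only cosmetic difference is that the paper invokes Theorem~\ref{thm:7.9} directly rather than via the rescaling contradiction, and for (d)(iv) writes the factorization in the order $\ee^{t\ie\partial U(x_h)}\ee^{t\ie\partial U(x_e)}$ (elliptic applied first to preserve $K$-finiteness, then hyperbolic via Kr\"otz--Stanton); your simultaneous normalization of $x_h\in\fa$ and $x_e\in\fk$ through a $\theta$-stable Cartan subalgebra is exactly the point the paper leaves implicit.
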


\begin{proof} 
	\eqref{prop:rprops_item-a} 
	For $g \in G$ and $v \in \cH^\omega$, the relation
	\[ U(g) \ee^{\ie t \partial U(x)} =  \ee^{\ie t \partial U(\Ad(g)x)} U(g)
	\quad \mbox{  implies  } \quad
	r_{U(g)v}(x)(\Ad(g)^{-1}x) = r_v(x),\]
	which proves 
	\eqref{prop:rprops_item-a}. 
	
	\nin 
	\eqref{prop:rprops_item-b}
	By definition, we have $r_U(-x) = r_U(x)$, so that we may assume that
	$\lambda > 0$. In this case the assertion follows directly from the definition.
	
	\nin 
	\eqref{prop:rprops_item-c}
	The relation $[h,x] = x$ implies $\ee^{t \ad h}x = \ee^t x$, hence
	$r_U(x) = r_U(\ee^tx)$ for $t \in \R$ by (a). As $r_U(x) > 0$, we must have
	$r_U(x) = \infty$.
	
	\nin 
	\eqref{prop:rprops_item-d}(i)
	The Jacobson--Morozov Theorem
	(\cite[Ch.~8, \S 11, no. 2, Prop. 2]{Bo90})  provides an element
	$h \in \g$ with $[h,x] = x$, so that 
	(i) 
	follows from 
	\eqref{prop:rprops_item-c}
	
	\nin (ii) 
	Theorem~\ref{thm:7.9} implies that
	$r_U(h) \leq \frac{\pi}{2\rho(\ad h)}$.
	
	We now assume that $G$ is linear and that $U$ is irreducible.
	We may further assume by 
	\eqref{prop:rprops_item-b} 
	that $\rho(\ad h) = 1$.  
	Let $\g = \fk \oplus \fp$ be a Cartan decomposition and 
	$K = \exp \fk \subeq G$ the corresponding subgroup. 
	Then it follows from  the Kr\"otz--Stanton Extension Theorem 
	(\cite[Thm.~3.1]{KSt04}) that, for every $K$-finite vector
	$v$, the $G$-orbit map extends to an
	analytic domain in $G_\C$ that contains
	$\exp(i\Omega_\fp)$, where
	\[ \Omega_\fp := \{ x \in \fp \colon \rho(\ad x) < \pi/2 \}. \]
	As $th \in \Omega_\fp$ for $|t| < \pi/2$,
	this shows that $r_U(h) \geq \frac{\pi}{2} = \frac{\pi}{2 \rho(\ad h)}$.
	
	\nin (iii) 
	As $x$ is conjugate to an element of $\fk$,
	this follows from Harish--Chandra's Theorem on the
	existence of nonzero $K$-finite vectors (\cite[Thm. 6]{HC53}), 
	since for every $K$-finite vector $v$ we have $r_v(x)=\infty$ 
	by the remark on norm-continuous representations that we made just above the statement. Note that this does
	not require $G$ to be linear nor $K$ to be compact.
	
	\nin (iv) 
	For $x= x_h + x_e$, we can use the same argument as in 
	  (ii).
	For any $K$-finite vector, it implies that
	$\ee^{t \ie \partial U(x)}v = \ee^{t \ie \partial U(x_h)}\ee^{t \ie \partial U(x_e)}v$ is
	defined and an analytic vector if $|t| \rho(\ad x_h) < \frac{\pi}{2}$. 
\end{proof}

\begin{ex} We consider $\g = \aff(\R)$ with the basis
	$h,y$ satisfying $[h,y] = y$
	and a unitary representation $(U,\cH)$
	with $\ker(\partial U(y)) = \{0\}$.
	From Proposition~\ref{prop:rprops} we obtain $r_U(y) = \infty$ and
	from  Theorem~\ref{thm:ax+b-gen} that 
	$r_U(h) \leq \frac{\pi}{2}$.
	As in the proof of Theorem~\ref{thm:ax+b-gen}, 
	$U$ is a direct sum of representations equivalent to
	the representations $U_\pm$ on $L^2(\R)$, defined by \eqref{eq:affonl2b}, 
	hence it follows from Example~\ref{ex:7.6} that
	$r_U(h) = \pi/2$. Now $\Ad$-invariance of $r_U$ and
	Proposition~\ref{prop:rprops}\eqref{prop:rprops_item-b} 
	permit us to calculate
	it on all of $\g$, which results in:
	\begin{equation*}
		r_U(t h + s y) =
		\begin{cases}
			\infty & \text{ for } t = 0 \\
			\frac{\pi}{2t} & \text{ for } |t| \not= 0.  
		\end{cases}
	\end{equation*}
\end{ex}

\begin{theorem} \label{thm:7.12}
	Let $(U,\cH)$ be a unitary representation of a
	Lie group $G$, $h \in \g$ a non-central Euler element,
	i.e., $(\ad h)^3 = \ad h$,  and
	\begin{equation}
		\label{eq:reg-con}
		\bigcap \{ \ker(\partial U(x)) \colon x \in\g_1(h) \cup \g_{-1}(h)\}  = \{0\}.
	\end{equation}
	Further, let $J$ be a conjugation on $\cH$ with
	\[ J U(\exp x) J = U(\exp \ee^{\pi \ie \ad h}x) \quad \mbox{ for } \quad x \in \g.\] 
	Then $(\cH^\omega)_{\rm KMS} = \{0\}$
        with respect to the $\R$-action defined by
          $U_h(t) = U^{-\infty}(\exp th)$.   
\end{theorem}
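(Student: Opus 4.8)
The plan is to argue by contradiction. Suppose $0\ne v\in(\cH^\omega)_{\rm KMS}$; I will show that $\partial U(x)v=0$ for every $x\in\g_1(h)\cup\g_{-1}(h)$, which by \eqref{eq:reg-con} forces $v=0$. The first step is to manufacture a \emph{wide} analytic extension of the $U_h$-orbit of $v$ with values in $\cH^\omega$. Since $v\in\cH^\omega$, the orbit map $t\mapsto U_h(t)v$ is real analytic and, because $\cH^\omega$ is invariant under $U(\exp\ie th)$ for $|t|$ small (see \cite[\S~2.1]{FNO23}, \cite{GKS11}), it extends analytically across $\R$ to a map $\cS_{\pm\delta}\to\cH^\omega$ for some $\delta\in(0,\pi/2)$. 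By definition of $(\cH^\omega)_{\rm KMS}$ it also extends to an element of $\cO^w_\partial(\oline{\cS}_{0,\pi},\cH^\omega)$, and by the Riemann--Schwarz-type principle \cite[no.~42, Lemme]{Ch90} the two extensions glue to an $\hat F\in\cO^w_\partial(\oline{\cS}_{-\delta,\pi},\cH^\omega)$. Thus the $U_h$-orbit of $v$ extends analytically, with values in $\cH^\omega$, to a strip of width $\pi+\delta>\pi$ (the KMS boundary condition at $\Im z=\pi$ is not needed).

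Next I re-center this strip and invoke Theorem~\ref{thm:ax+b-gen}. Fix $x\in\g_1(h)$, so $[h,x]=x$ and $\R h+\R x$ is a subalgebra isomorphic to $\aff(\R)$; the connected subgroup $G_x\le G$ integrating it is isomorphic to $\Aff(\R)_e$, so $U$ restricts to a unitary representation of $\Aff(\R)_e$. For $c\in(\pi/2-\delta,\pi/2)$ set $w:=\hat F(\ie c)\in\cH^\omega$; by Lemma~\ref{holomext_fact} the $U_h$-orbit of $w$ has the $\cH^\omega$-valued analytic extension $z\mapsto\hat F(z+\ie c)$ on $\oline{\cS}_{-\delta-c,\,\pi-c}$, whose symmetric sub-strip about $\R$ has half-width $\min(\delta+c,\pi-c)>\pi/2$. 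Using \eqref{eq:pw} and Lemma~\ref{holomext_fact} this gives $w\in\cD(\ee^{\pm\ie r\partial U(h)})$ and $\ee^{\ie s\partial U(h)}w\in\cH^\omega$ for $|s|<r$, for any $r\in(\pi/2,\min(\delta+c,\pi-c))$. The only hypothesis of Theorem~\ref{thm:ax+b-gen} that can fail for the $\Aff(\R)_e\cong G_x$-representation is $\ker\partial U(x)=\{0\}$; but $\cH_0:=\ker\partial U(x)$ is $G_x$-invariant (obviously under $U(\exp\R x)$, and under $U_h$ since $U_h(t)\partial U(x)U_h(-t)=\ee^t\partial U(x)$), so $\cH_1:=\cH_0^\perp$ is $G_x$-invariant with $\ker(\partial U(x)|_{\cH_1})=\{0\}$, and the orthogonal projection $P_1$ commutes with $U|_{G_x}$ and with every $\ee^{\ie t\partial U(h)}$. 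Applying $P_1$ to the data for $w$ and invoking Theorem~\ref{thm:ax+b-gen} on $\cH_1$ yields $P_1w=0$. Since $P_1\hat F$ extends the $U_h$-orbit of $P_1v$ inside $\cH_1$ and $(P_1\hat F)(\ie c)=P_1w=0$ for every $c$ in the interval $(\pi/2-\delta,\pi/2)$, the identity theorem gives $P_1\hat F\equiv0$, hence $P_1v=(P_1\hat F)(0)=0$, i.e.\ $\partial U(x)v=0$. Running the same argument with $h$ replaced by $-h$ (where the $U_{-h}$-orbit of $v$ extends inside $\cH^\omega$ to $\oline{\cS}_{-\pi,\delta}$, again of width $>\pi$) gives $\partial U(x)v=0$ for $x\in\g_{-1}(h)$ as well. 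Therefore $v\in\bigcap\{\ker\partial U(x):x\in\g_1(h)\cup\g_{-1}(h)\}=\{0\}$ by \eqref{eq:reg-con}, a contradiction, so $(\cH^\omega)_{\rm KMS}=\{0\}$.

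The main obstacle is the mismatch of widths: the KMS condition supplies only a \emph{one-sided} analytic extension of the $h$-flow, of \emph{width $\pi$}, whereas Theorem~\ref{thm:ax+b-gen} requires a \emph{symmetric} strip of width \emph{strictly} greater than $\pi$. The way around it is to gain the extra width $\delta$ from analyticity of $v$ itself — this is exactly where it matters that the extension $\hat F$ is $\cH^\omega$-valued and not merely $\cH$-valued — and then to re-center the strip by passing from $v$ to $w=\hat F(\ie c)$. The secondary difficulty, that \eqref{eq:reg-con} controls only the intersection of the spaces $\ker\partial U(x)$ and not each one separately, is handled by peeling off the $G_x$-invariant subspace $\ker\partial U(x)$ before applying Theorem~\ref{thm:ax+b-gen} and then letting $x$ range over $\g_1(h)\cup\g_{-1}(h)$.
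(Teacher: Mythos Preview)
Your proof is correct and follows the same overall strategy as the paper's: re-center the $U_h$-orbit inside the strip so that the symmetric half-width exceeds $\pi/2$, then feed this into Theorem~\ref{thm:ax+b-gen} for each $\aff(\R)$-subalgebra generated by $h$ and some $x\in\g_{\pm 1}(h)$, project off $\ker\partial U(x)$ to meet the nondegeneracy hypothesis, and conclude via~\eqref{eq:reg-con}.

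The execution differs in one interesting respect. The paper centers exactly at $w=U_h^v(\pi\ie/2)$ and gains the extra width on \emph{both} sides by using that the two endpoints $U_h^v(0)=v$ and $U_h^v(\pi\ie)=Jv$ are analytic vectors; the KMS boundary condition is used precisely to identify the top endpoint as $Jv\in\cH^\omega$. You instead gain width only on the bottom, using analyticity of $v$ alone to glue an extension on $\cS_{-\delta,\pi}$, and then re-center slightly below $\pi\ie/2$. As you observe, this makes the KMS boundary value irrelevant: your argument actually shows that any $v\in\cH^\omega$ whose $\cH^\omega$-valued $U_h$-orbit map extends to $\oline{\cS}_{0,\pi}$ (without any condition at $\Im z=\pi$) must vanish under~\eqref{eq:reg-con}. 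Your route also makes explicit the passage from $P_1w=0$ back to $P_1v=0$ via the identity theorem with $c$ varying over an interval; the paper leaves this step implicit (it follows more directly from injectivity of $U_{h,\pi\ie/2}$, i.e.\ Proposition~\ref{holomext_prop}\eqref{holomext_prop_item1}). Two minor cleanups: your $\hat F$ need only live in $\cO^w(\cS_{-\delta,\pi},\cH^\omega)$ (you do not have continuity at $\Im z=-\delta$ without shrinking $\delta$ first), and the separate treatment of $\g_{-1}(h)$ is unnecessary since the symmetric strip you obtain for $w$ already serves for $-h$ as well.
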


\begin{proof} Let $v \in (\cH^{\omega})_{\rm KMS}$. Then
	the orbit map $U^v_h \colon \R \to \cH^\omega$ extends to a continuous
	map $\oline\cS_{\pi} \to  \cH^\omega$.
	Let
	\[ w := U^v_h(\pi \ie/2)  \]
	and note that the relation
	$Jv = U^v_h(\pi \ie)$ implies that $Jw = w$. Now
	\[ U^w_h(\ie t) = U^v_h((t + \pi/2) \ie) \in \cH^\omega
	\quad \mbox{ for }\quad |t| \leq \pi/2. \]
	As the endpoints $v$ and $Jv$ of this curve are analytic vectors,
	it can be extended to a curve
	\[ U_h^w \colon [-r,r] \ie \to \cH^\omega \]
	for some $r > \pi/2$.
	In view of Theorem~\ref{thm:ax+b-gen}, applied to the
	$2$-dimensional subalgebras generated by some $y \in \g_1(h) \cup \g_{-1}(h)$
	and $h$, this is only possible if
	$v \in \ker(\partial U(y))$ for all these elements.
	Now~\eqref{eq:reg-con} yields~$v = 0$. 
\end{proof}

\begin{prob} In the preceding proof we made essential use of the fact that 
	the vectors   $\ee^{\ie t \partial U(h)}v$ were $G$-analytic for $|t| \leq \pi/2$.
	In view of the identity $\sV = \cH_{\rm KMS}$
        (see \eqref{eq:v-hkms}), it
	would be a stronger result if we could actually show that 
	\[ \sV \cap \cH^\omega = \{0\}, \quad \mbox{ or even that} \quad
	\sV \cap \cH^\omega = (\cH^\omega)_{\rm KMS}. \]
\end{prob}

\appendix

\section{Weak versus strong holomorphy} 
\label{AppA}

This appendix includes a few results that show that the weak holomorphy property involved in the description of the linear subspace $\cD(U_z)\subseteq\cY$ in Definition~\ref{holomext_def} is  equivalent to the stronger holomorphy property 
that requires the existence of the complex derivatives in may cases of interest. 
For instance, in the setting of Section~\ref{Sect5}, 
this is the case for the Fr\'echet space of smooth vectors  $\cY=\cH^\infty$ or for its antidual space of distribution vectors $\cY=\cH^{-\infty}_c$, endowed with the topology of uniform convergence on compact sets.

\begin{rem}  \label{rem:a.1}
  Let
  $\cX$ be a Hausdorff locally convex space over $\C$ with its topological dual space $\cX'$,
	$\Omega\subseteq\C$ an open subset, and
	$f\colon\Omega\to\cX$ a function. 
	If $\cX$ is sequentially complete, then the following conditions are equivalent: 
	\begin{enumerate}[{\rm(a)}]
		\item The function $f$ is \emph{weakly holomorphic}, i.e., 
		for every $\eta\in\cX'$ the scalar
		function $\eta\circ f\colon \Omega\to\C$ is holomorphic. 
		\item The function $f$ is \emph{holomorphic}, i.e., the limit $f'(z_0):=\lim_{z\to z_0}\frac{f(z)-f(z_0)}{z-z_0}$ exists in $\cX$ for every $z_0\in\Omega$. 
	\end{enumerate}
	See \cite[Th. 3.2]{BS71}, \cite[Th. 2.1.3]{He89}, and \cite[Prop. 2.1.6]{GN}.
\end{rem}

\begin{rem} \label{rem:a.2}
	If $\cY$ is either a Fr\'echet space or an LF space, then $\cY^\sharp_c$ is a complete space, in particular sequentially complete,
	see \cite[Th. 32.2 and its Cor. 4]{Tr67}.
\end{rem}

\begin{rem} \label{rem:a.3}
	If $\cY$ is a barreled space, then 
	$\cY^\sharp$ is quasi-complete 
	(by \cite[Th. 34.2 and its Cor. 2]{Tr67}) 
	and in particular sequentially complete, 
	while $\cY^\sharp_c$ is sequentially complete  
	by \cite[Th. 33.1 and its Cor.]{Tr67}. 
\end{rem}

\section{Continuity of  the antidual actions} 
\label{AppB}

In this appendix we record some continuity properties of group representations that apply in particular to the representation $(U^{-\infty},\cH^{-\infty})$ of the Lie group $G$ from Hypothesis~\ref{concr_hyp}. 

Unless otherwise specified, $G$ is a locally compact group, $\cY$ is a Hausdorff locally convex space over $\C$, and $\pi\colon G\to\GL(\cY)$, $g\mapsto \pi(g)$, is a group morphism.

\begin{prop}
	\label{App_cont1}
	The group representation $\pi$ is continuous,
	in the sense that its corresponding action map 
	\[ \pi^\wedge\colon G\times\cY\to\cY, \quad
	\pi^\wedge(g,y):=\pi(g)y,\]
      is continuous, if and only if  the following two conditions are satisfied:
\begin{enumerate}[{\rm(i)}]
\item The representation $\pi$ is \emph{locally equicontinuous}, 
		i.e., for every compact subset $K\subseteq G$, the set of operators $\pi(K)\subseteq\GL(\cY)$ is equicontinuous, 
		or, equivalently, there exists a neighborhood~$W$ of $\1\in G$ for which the set of operators $\pi(W)\subseteq\GL(\cY)$ is equicontinuous, 
\item The representation $\pi$ is \emph{orbit continuous}, i.e., for every $y\in\cY$,  its corresponding orbit mapping $\pi^y\colon G\to\cY$, $ g\mapsto\pi(g)y$, is continuous at $\1\in G$, or, equivalently, $\pi^y\in C(G,\cY)$. 
\end{enumerate}
\end{prop}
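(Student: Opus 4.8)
\textbf{Plan of proof for Proposition~\ref{App_cont1}.}
The statement is an ``abstract nonsense'' type characterization of continuity of a group representation on a locally convex space, and the plan is to prove both implications directly from the definitions, using only the group structure and basic properties of equicontinuous families of operators. The forward direction is easy: if $\pi^\wedge$ is jointly continuous, then (ii) is immediate by restricting to $\{g\}\times\cY$ or to $G\times\{y\}$, and (i) follows because joint continuity at each point $(\1,y)$, together with compactness of $K$, yields a neighborhood of $\1$ on which the orbit maps vary uniformly over $K$ --- more precisely one shows $\pi(K)$ is equicontinuous by a standard covering argument. The real content is the converse.

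For the converse, assume (i) and (ii); I want joint continuity of $\pi^\wedge$ at an arbitrary point $(g_0,y_0)\in G\times\cY$. The plan is the classical decomposition
\[
\pi(g)y-\pi(g_0)y_0 = \pi(g)(y-y_0) + \bigl(\pi(g)-\pi(g_0)\bigr)y_0
= \pi(g)(y-y_0) + \pi(g_0)\bigl(\pi(g_0^{-1}g)-\1\bigr)y_0.
\]
First I would fix a continuous seminorm $p$ on $\cY$. Using local equicontinuity (i), choose a compact neighborhood $K$ of $g_0$; then $\pi(K)$ is equicontinuous, so there is a continuous seminorm $q$ with $p(\pi(g)v)\le q(v)$ for all $g\in K$, $v\in\cY$. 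This controls the first term: $p\bigl(\pi(g)(y-y_0)\bigr)\le q(y-y_0)$, which is small once $y$ is close to $y_0$ in the $q$-seminorm, uniformly for $g\in K$. For the second term, I apply the continuous operator $\pi(g_0)$ (continuity of each $\pi(g_0)$ is part of the hypothesis $\pi(g)\in\GL(\cY)$) to reduce to estimating $p'\bigl((\pi(g_0^{-1}g)-\1)y_0\bigr)$ for a suitable continuous seminorm $p'$; since $g\mapsto g_0^{-1}g$ sends $g\to g_0$ to $\1$, orbit continuity (ii) of the single vector $y_0$ makes this small when $g$ is near $g_0$. Combining the two estimates and intersecting the corresponding neighborhoods of $(g_0,y_0)$ gives joint continuity at $(g_0,y_0)$; since $(g_0,y_0)$ was arbitrary, $\pi^\wedge$ is continuous.

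It remains to justify the two ``or, equivalently'' clauses. For (i), the equivalence between equicontinuity of $\pi(K)$ for all compact $K$ and equicontinuity of $\pi(W)$ for a single neighborhood $W$ of $\1$ uses that any compact $K$ is covered by finitely many translates $g_iW$, together with the fact that a finite union of equicontinuous sets is equicontinuous and that $\pi(g_i)$ is continuous --- so $\pi(g_iW)=\pi(g_i)\pi(W)$ is equicontinuous, and one also uses $K\subseteq\bigcup g_iW$ with the group property $\pi(g_iw)=\pi(g_i)\pi(w)$. For (ii), the equivalence between continuity of $\pi^y$ at $\1$ and continuity everywhere on $G$ follows from $\pi^y(gh)=\pi(g)\pi^y(h)$ and continuity of the fixed operator $\pi(g)$: continuity at $\1$ propagates to every point by left translation.

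\textbf{Main obstacle.} The only genuinely delicate point is extracting, in the converse direction, a \emph{uniform} control of the first term over the whole neighborhood $K$ of $g_0$ --- i.e., realizing that local equicontinuity is precisely what upgrades pointwise smallness of $y\mapsto\pi(g)(y-y_0)$ to smallness uniform in $g$. Everything else is bookkeeping with seminorms, the algebraic identity above, and the covering argument for compact sets; no completeness or barreledness assumption on $\cY$ is needed, which is why the proposition is stated at this level of generality.
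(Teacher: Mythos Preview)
Your proof is correct and follows the standard route (the decomposition $\pi(g)y-\pi(g_0)y_0=\pi(g)(y-y_0)+\pi(g_0)\bigl(\pi(g_0^{-1}g)-\1\bigr)y_0$, local equicontinuity to control the first term uniformly in $g$, orbit continuity for the second). The paper itself does not prove this proposition at all: its proof consists solely of the references ``See \cite[Ch.~8, \S~4.1]{Br68}, \cite[Ch.~3]{Moo68}, or \cite[Lemma~1]{Aa70}'', so your argument supplies what the paper delegates to the literature, and it is essentially the argument one finds in those sources.

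One minor remark: in the forward direction your sketch for (i) is a bit imprecise. The clean way is to use joint continuity of $\pi^\wedge$ at the points $(g,0)$: given a $0$-neighborhood $U\subseteq\cY$ and $g\in K$, there exist an open $W_g\ni g$ and a $0$-neighborhood $V_g$ with $\pi(W_g)V_g\subseteq U$; extract a finite subcover $W_{g_1},\dots,W_{g_n}$ of $K$ and set $V=\bigcap_i V_{g_i}$. Also note that your choice of a \emph{compact} neighborhood $K$ of $g_0$ in the converse direction tacitly uses that $G$ is locally compact, which is exactly the standing hypothesis in the paper's Appendix~B.
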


\begin{proof} 		See  \cite[Ch. 8, \S 4.1]{Br68}, \cite[Ch.~3]{Moo68},
or \cite[Lemma 1]{Aa70}.
\end{proof}

\begin{ex}
	\label{App_cont2}
	If $(U,\cY)$ is a one-parameter group with exponential growth, then 
	the corresponding group representation $U\colon \R\to\GL(\cY)$ is orbit continuous by 
	the first condition in Definition~\ref{def:expgro} and is locally equicontinuous as a by-product of the proof of Lemma~\ref{lem:comp-cont}. 
	Therefore the group representation $U\colon \R\to\GL(\cY)$ is continuous by Proposition~\ref{App_cont1}. 
\end{ex}

\begin{ex}
	\label{App_cont3}
	If $(U,\cH)$ is a unitary representation of the Lie group $G$ as in  
	Section~\ref{Sect5}, then the representation
	$(U^\infty,\cH^\infty)$ is locally equicontinuous, 
	as follows easily from the estimate \eqref{homog_gen},
	see also \cite[Eq.~(6)]{Ne10}.
	
	On the other hand, we have $U^\infty(\cdot)y\in C^\infty(G,\cH^\infty)$ by \cite[Prop. 1.2]{Po72}, hence the representation $U^\infty$ is  orbit continuous. Proposition~\ref{App_cont1} 
	then shows that the action map
	\[ G\times\cH^\infty\to\cH^\infty, \quad (g,y)\mapsto U^\infty(g)y,\]
	is continuous. This map is actually smooth by \cite[Th.~4.4]{Ne10}.
\end{ex}

For the following result we adapt the method of proof of \cite[Ch. 8, \S 4.3, Prop. 1]{Br68}. 

\begin{prop}
	\label{App_cont4}
	If the representation $\pi\colon G\to\GL(\cY)$
	of the locally compact group $G$ 
	is continuous in the sense that it defines a continuous
	action on $\cY$,  and we define 
	\[\pi^\sharp\colon G\to\GL(\cY^\sharp),\quad g\mapsto\pi(g^{-1})^\sharp, \]
	then the following assertions hold: 
	\begin{enumerate}[{\rm(i)}]
		\item\label{App_cont4_item1} 
		The group representation $(\pi^\sharp,\cY^\sharp_c)$ is continuous. 
		\item\label{App_cont4_item2} 
		The group representation $(\pi^\sharp,\cY^\sharp_b)$ is locally equicontinuous.
	\end{enumerate}
\end{prop}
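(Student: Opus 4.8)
The plan is to treat (i) via the continuity criterion of Proposition~\ref{App_cont1} — that is, to verify that $(\pi^\sharp,\cY^\sharp_c)$ is both locally equicontinuous and orbit continuous — and to obtain (ii) by a direct equicontinuity estimate. The space $\cY^\sharp_c$ is a Hausdorff locally convex space (already the seminorms $p_{\{y\}}(\eta):=|\langle y,\eta\rangle|$, $y\in\cY$, separate points), so Proposition~\ref{App_cont1} applies. I would first record the routine fact that $\pi^\sharp$ is a well-defined group morphism $G\to\GL(\cY^\sharp_c)$, and likewise into $\GL(\cY^\sharp_b)$: each $\pi(g^{-1})\in\GL(\cY)$ is a topological automorphism, whence so is its transpose on $\cY^\sharp_c$ and on $\cY^\sharp_b$ (a topological automorphism carries compact, resp.\ bounded, sets to compact, resp.\ bounded, sets, in both directions), and $(\pi(g_2^{-1})\pi(g_1^{-1}))^\sharp=\pi(g_1^{-1})^\sharp\pi(g_2^{-1})^\sharp$ gives the morphism property.

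The common engine of both parts is a ``sweeping'' identity for the transpose. For $A\subseteq\cY$ write $p_A(\eta):=\sup_{a\in A}|\langle a,\eta\rangle|$; the topology of $\cY^\sharp_c$ is generated by the $p_A$ with $A$ compact and that of $\cY^\sharp_b$ by the $p_A$ with $A$ bounded. From $\langle a,\pi^\sharp(g)\eta\rangle=\langle\pi(g^{-1})a,\eta\rangle$ one gets $p_A(\pi^\sharp(g)\eta)=p_{\pi(g^{-1})A}(\eta)$, hence for a compact $L\subseteq G$
\[ p_A\bigl(\pi^\sharp(g)\eta\bigr)\le p_{\,\pi(L^{-1})A}(\eta)\qquad\text{for all }g\in L,\ \eta\in\cY^\sharp, \]
where $\pi(L^{-1})A:=\{\pi(g^{-1})a\colon g\in L,\ a\in A\}$. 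So local equicontinuity of $\pi^\sharp$ on $\cY^\sharp_c$ (resp.\ on $\cY^\sharp_b$) follows as soon as $\pi(L^{-1})A$ is compact whenever $A$ is compact (resp.\ bounded whenever $A$ is bounded). The first statement is immediate from the hypothesis: the action map $G\times\cY\to\cY$ is continuous, so $(g,a)\mapsto\pi(g^{-1})a$ is continuous and carries the compact set $L\times A$ onto a compact set. For the second, continuity of the action forces $\pi$ to be locally equicontinuous (Proposition~\ref{App_cont1} again), so $\pi(L^{-1})$ is an equicontinuous family of operators, and such a family sends a bounded set $A$ into a bounded set: given a $0$-neighbourhood $U$, pick a $0$-neighbourhood $W$ with $\pi(g^{-1})W\subseteq U$ for all $g\in L$ and $\lambda>0$ with $A\subseteq\lambda W$, so $\pi(L^{-1})A\subseteq\lambda U$. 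This settles the local-equicontinuity halves of (i) and of (ii).

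It remains to prove orbit continuity of $\pi^\sharp$ on $\cY^\sharp_c$. Fix $\eta\in\cY^\sharp$, $g_0\in G$, a compact neighbourhood $L$ of $g_0$, and a compact set $A\subseteq\cY$. The function
\[ \Psi\colon L\times A\to\C,\qquad \Psi(g,a):=\langle\pi(g^{-1})a,\eta\rangle-\langle\pi(g_0^{-1})a,\eta\rangle \]
is continuous — joint continuity of the $G$-action composed with the continuous functional $\eta$ — and vanishes on $\{g_0\}\times A$; being continuous on a compact space it is uniformly continuous, so $g\mapsto\sup_{a\in A}|\Psi(g,a)|$ is continuous at $g_0$, where it is $0$. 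Hence for each $\varepsilon>0$ there is a neighbourhood $N$ of $g_0$ with $p_A\bigl(\pi^\sharp(g)\eta-\pi^\sharp(g_0)\eta\bigr)=\sup_{a\in A}|\Psi(g,a)|<\varepsilon$ for $g\in N$. As $A$ was an arbitrary compact subset of $\cY$, the orbit map $g\mapsto\pi^\sharp(g)\eta$ is continuous into $\cY^\sharp_c$, and Proposition~\ref{App_cont1} yields~(i). I do not foresee a genuine obstacle: the only points needing care are to invoke \emph{joint} (rather than merely separate) continuity of the $G$-action exactly where compactness of $\pi(L^{-1})A$ and uniform continuity of $\Psi$ are used, and, for~(ii), the step from equicontinuity of $\pi(L^{-1})$ to boundedness of $\pi(L^{-1})A$. (Note that (ii) cannot in general be upgraded to full continuity of $(\pi^\sharp,\cY^\sharp_b)$, which is why only local equicontinuity is asserted there.)
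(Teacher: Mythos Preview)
Your proof is correct and follows essentially the same approach as the paper: both establish local equicontinuity of $\pi^\sharp$ on $\cY^\sharp_c$ and $\cY^\sharp_b$ via the ``sweeping'' identity (the paper phrases it with polars $B^\circ$ and $B_1=\pi(K^{-1})B$, you with the seminorms $p_A$ and $p_{\pi(L^{-1})A}$, which is purely cosmetic), and then invoke Proposition~\ref{App_cont1}. The only minor difference is in the orbit-continuity step: the paper observes that the family $\{\pi^\sharp(g)\eta:g\in W\}$ is equicontinuous in $\cY^\sharp$ and that $\pi^\sharp(g)\eta\to\eta$ pointwise, then cites Remark~\ref{lintop}\eqref{lintop_item2} to upgrade to compact convergence, whereas you give the underlying tube-lemma argument directly on $L\times A$; these are the same idea, and your explicit justification that $\pi(L^{-1})A$ is bounded (via equicontinuity of $\pi(L^{-1})$) is in fact a bit more detailed than the paper's one-line appeal to continuity of $\pi^\wedge$.
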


\begin{proof}
	We first prove that both group representations $(\pi^\sharp,\cY^\sharp_c)$ 
	and  $(\pi^\sharp,\cY^\sharp_b)$ are locally equicontinuous. 
	To this end, we fix an arbitrary compact subset $K\subseteq G$ 
	and we use the fact that the topology of $\cY^\sharp_c$ (respectively $\cY^\sharp_b$) 
	has a base consisting of polars of compact (respectively, bounded) sets, 
	that is, sets of the form 
	\[B^\circ:=\{\xi\in\cY^\sharp :\sup\vert \langle B,\xi\rangle\vert\le 1\}\]
	for a compact (respectively bounded) subset $B\subseteq\cY$. 
	Therefore, in order to show that the set of operators $\pi^\sharp(K)\subseteq\GL(\cY^\sharp)$ is equicontinuous on $\cY^\sharp_c$ (respectively, $\cY^\sharp_b$) we must prove that, for an arbitrary compact
	(respectively bounded) subset $B\subseteq\cY$, there exists another compact (respectively bounded) subset $B_1\subseteq\cY$ with $\pi^\sharp(K)B_1^\circ\subseteq B^\circ$. 
	In fact, for 
	\[B_1:=\pi^\wedge(K^{-1}\times B)=\pi(K^{-1})B\subseteq\cY\]
	the hypothesis that $\pi^\wedge$ is continuous ensures that $B_1$ is compact (respectively bounded). 
	Moreover, for arbitrary $\xi\in\cY^\sharp$, we have 
	\begin{align*}
		\xi\in B_1^\circ
		& \iff \sup \vert \langle \pi(K^{-1})B,\xi\rangle\vert\le 1 \\
	& \iff (\forall k\in K)\ \sup \vert \langle B,\pi(k^{-1})^\sharp\xi\rangle\vert\le 1 \\
	& \iff \pi^\sharp(K)\xi\subseteq B^\circ.
\end{align*}
	In particular, $\pi^\sharp(K)B_1^\circ\subseteq B^\circ$, as needed. 
		Now \eqref{App_cont4_item2} is proved. 

                To complete the proof of \eqref{App_cont4_item1}, it remains 
	(by Proposition~\ref{App_cont1})
	to show that the  group representation $(\pi^\sharp,\cY^\sharp_c)$ is orbit continuous.  
	To this end, we consider an arbitrary compact subset $H\subseteq\cY$ and fix an arbitrary compact neighborhood $W$ of $\1\in G$. 
	Since the set of operators $\pi^\sharp(W)\subseteq\GL(\cY^\sharp_c)$ is equicontinuous 
	and $\lim_{g\to\1}\pi^\sharp(g)=\id_{\cY^\sharp}$ pointwise, 
	it follows by Remark~\ref{lintop}\eqref{lintop_item2} that 
	$\lim_{W\ni g\to\1}\pi^\sharp(g)=\id_{\cY^\sharp}$ uniformly on the compact subset $H\subseteq\cY$. 
	This implies $\lim_{W\ni g\to\1}\pi^\sharp(g)\xi=\xi$ uniformly on $H$ for arbitrary $\xi\in\cY^\sharp$. 
	Thus $\lim_{g\to\1}\pi^\sharp(g)\xi=\xi$ in $\cY^\sharp_c$. 
\end{proof}

\begin{ex}
	\label{App_cont5}
	If $G=(\R,+)$, $\cY=L^1(\R)$ with its usual Banach space topology, and $\pi\colon G\to\GL(\cY)$ is the regular representation, then for every $\xi\in L^\infty(\R)\setminus C(\R,\C)$ its orbit mapping $\pi^\sharp(\cdot)\xi$ fails to be continuous. 
	This shows that the  group representation $(\pi^\sharp,\cY^\sharp_b)$ in  Proposition~\ref{App_cont4}\eqref{App_cont4_item2} may not be continuous. 
\end{ex}

\begin{ex}
	\label{App_cont6}
	If $(U,\cY)$ is a one-parameter group with exponential growth, then 
	it follows by Example~\ref{App_cont2} along with Proposition~\ref{App_cont4}\eqref{App_cont4_item1} that 
	the corresponding antidual group representation $U^\sharp\colon \R\to\GL(\cY^\sharp_c)$ is continuous. 
\end{ex}

\begin{ex}
	\label{App_cont7}
	In  Example~\ref{App_cont3},  
	for every $g\in G$, the linear operator $U^{-\infty}(g)\colon \cH^{-\infty}\to\cH^{-\infty}$ 
	is continuous if $\cH^{-\infty}$ is endowed with the topology $\sigma/c/b$ of uniform convergence on the finite/compact/bounded subsets of~$\cH^\infty$. 
	It then follows by Proposition~\ref{App_cont4} that the representation 
\[ U^{-\infty}\colon G\to\GL(\cH^{-\infty}_c) \]  is continuous.
      \end{ex}

      That this observation extends to the $G$ action
        on the strong dual is more surprising.
      
      \begin{prop} 
The representation on the strong dual 
	\[ U^{-\infty}\colon G\to\GL(\cH^{-\infty}_b) \] 
	defines a continuous action. 
      \end{prop}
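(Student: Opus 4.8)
The plan is to reduce the continuity of the $G$-action on $\cH^{-\infty}_b$ to the already-established continuity on $\cH^{-\infty}_c$ (Example~\ref{App_cont7}) together with a local equicontinuity statement, and then invoke Proposition~\ref{App_cont1}. By that proposition, it suffices to check two things: that $U^{-\infty}$ is locally equicontinuous on $\cH^{-\infty}_b$, and that it is orbit continuous on $\cH^{-\infty}_b$. The first is precisely Proposition~\ref{App_cont4}\eqref{App_cont4_item2} applied to $\pi = U^\infty$ (whose action on the Fr\'echet space $\cH^\infty$ is continuous, even smooth, by Example~\ref{App_cont3}). So the real content is orbit continuity in the strong topology: for each fixed $\eta \in \cH^{-\infty}$, the orbit map $g \mapsto U^{-\infty}(g)\eta$ is continuous from $G$ into $\cH^{-\infty}_b$, equivalently continuous at $\1 \in G$.

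First I would fix $\eta \in \cH^{-\infty}$ and a bounded subset $B \subseteq \cH^\infty$, and try to show $\sup_{\xi \in B} |\langle \xi, (U^{-\infty}(g) - \id)\eta\rangle| \to 0$ as $g \to \1$. Writing this out, $\langle \xi, U^{-\infty}(g)\eta\rangle = \langle U^\infty(g^{-1})\xi, \eta\rangle$, so the quantity to control is $\sup_{\xi \in B} |\langle (U^\infty(g^{-1}) - \id)\xi, \eta\rangle|$. The key observation is that a bounded subset of the Fr\'echet space $\cH^\infty$ is \emph{relatively compact}: $\cH^\infty$ is a Fr\'echet space that, being the space of smooth vectors of a unitary representation, is a Fr\'echet--Montel space (this is classical; smooth-vector spaces of continuous Banach representations are Montel under mild hypotheses, and for unitary representations of Lie groups this is well-documented — one can cite the nuclearity/Montel property of $\cH^\infty$, e.g. via the Sobolev-type description of its topology). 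Granting that $\oline{B}$ is compact in $\cH^\infty$, the map $G \times \oline{B} \to \cH^\infty$, $(g,\xi) \mapsto U^\infty(g)\xi$ is continuous (Example~\ref{App_cont3}), hence uniformly continuous on $W \times \oline{B}$ for a compact neighborhood $W$ of $\1$; in particular $U^\infty(g^{-1})\xi \to \xi$ uniformly for $\xi \in \oline B$ as $g \to \1$, in the topology of $\cH^\infty$. Since $\eta$ is a continuous functional on $\cH^\infty$, this yields $\sup_{\xi \in B}|\langle (U^\infty(g^{-1}) - \id)\xi, \eta\rangle| \to 0$, which is exactly the orbit continuity statement on $\cH^{-\infty}_b$.

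Having both conditions, Proposition~\ref{App_cont1} — applied with $\pi := U^{-\infty}$ and the locally convex space $\cY := \cH^{-\infty}_b$ — gives continuity of the action map $G \times \cH^{-\infty}_b \to \cH^{-\infty}_b$. I would also record the elementary point that each $U^{-\infty}(g)$ is continuous on $\cH^{-\infty}_b$, which is already contained in Proposition~\ref{App_cont4}\eqref{App_cont4_item2} (equicontinuity of $\pi^\sharp(\{g\})$), so that $U^{-\infty}(g) \in \GL(\cH^{-\infty}_b)$ and the statement makes sense as written.

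The main obstacle is the Montel (equivalently: relative compactness of bounded sets) property of $\cH^\infty$; everything else is a bookkeeping reduction to Proposition~\ref{App_cont1} and the already-proved Proposition~\ref{App_cont4}. If one prefers not to invoke Montelness, an alternative is to argue that a bounded set $B \subseteq \cH^\infty$ is contained, for each multi-index seminorm $p_I$, in a ball, and to use an approximate-identity/mollification argument (as in Proposition~\ref{agrowth}) to reduce $\eta$ to a smooth vector, but this is messier; the clean route is the Montel property, which for $\cH^\infty$ of a unitary Lie group representation is standard (the topology is that of an intersection of Hilbert spaces $\cH^n$ with compact inclusions $\cH^{n+1} \hookrightarrow \cH^n$ in the relevant elliptic setting, or one cites nuclearity directly). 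I would cite this as a known fact about $\cH^\infty$ and proceed.
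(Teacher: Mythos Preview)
Your reduction to Proposition~\ref{App_cont1} is sound, and local equicontinuity on $\cH^{-\infty}_b$ is indeed exactly Proposition~\ref{App_cont4}\eqref{App_cont4_item2}. The gap is in your argument for orbit continuity: the claim that $\cH^\infty$ is a Montel space is \emph{false} in general. Take $G=\R$ acting by translation on $\cH=L^2(\R)$; then $\cH^\infty=\bigcap_k H^k(\R)$ with the seminorms $p_k(f)=\Vert f^{(k)}\Vert_{L^2}$. For any nonzero Schwartz function $\phi$, the set $\{\phi(\cdot-n):n\in\Z\}$ is bounded in every seminorm (translation invariance) but has no convergent subsequence even in $L^2$, since $\phi(\cdot-n)\rightharpoonup 0$ weakly while $\Vert\phi(\cdot-n)\Vert_{L^2}=\Vert\phi\Vert_{L^2}\not\to 0$. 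So bounded sets in $\cH^\infty$ need not be relatively compact, and you cannot simply import the result for $\cH^{-\infty}_c$. The parenthetical caveat you inserted (``in the relevant elliptic setting'') is precisely where the argument breaks: Rellich-type compact embeddings require bounded domains or discrete spectrum, neither of which is assumed here.

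What is actually needed---and what the paper proves directly---is that $U^\infty(g)\to\id$ uniformly on every bounded subset $B\subseteq\cH^\infty$ as $g\to\1$. This is shown by hand, without any Montel property: for $A=\de U(x_{i_1})\cdots\de U(x_{i_k})$ one writes
\[
\Vert A(U(\exp y)\xi-\xi)\Vert\le\Vert U(\exp(-y))AU(\exp y)\xi-A\xi\Vert+\Vert A\xi-U(\exp(-y))A\xi\Vert,
\]
handles the first term via the $\Ad$-formula \eqref{equal_gen} (the coefficients $a_{ij}(y)$ are continuous with $a_{ij}(0)=\delta_{ij}$, and $AB$ is again bounded in $\cH^\infty$), and bounds the second by the elementary estimate $\Vert U(\exp y)\zeta-\zeta\Vert\le\Vert\de U(y)\zeta\Vert\le\sum_i|y_i|\,\Vert\de U(x_i)\zeta\Vert$ coming from $U(\exp y)\zeta-\zeta=\int_0^1 U(\exp ty)\,\de U(y)\zeta\,\de t$. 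Both bounds involve only higher-order seminorms of $\xi$, which are uniformly controlled on~$B$. Your ``messier'' alternative via mollification is not needed; the direct estimate is short and avoids the false Montel hypothesis entirely.
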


      \begin{proof}
	Arguing as
	in the last paragraph of the proof of Proposition~\ref{App_cont4},
	this follows if $\lim_{g\to\1}U^\infty(g)=\id$ 
	holds uniformly on every bounded subset 
	$B \subeq \cH^\infty$. We now show that this is the case.
	Using the notation from Section~\ref{Sect5}, we
	have to show that, for every any $j_1, \ldots, j_k \in \N_0$
	and
        \[ A := \dd U(x_{i_1}) \cdots \dd U(x_{i_k}),\]  we have
	\[  \lim_{y\to 0} \sup_{\xi \in B} \Vert A (U(\exp y) \xi - \xi)\Vert = 0.\]
	With the calculation \eqref{equal_gen} in Section~\ref{Sect5}, we obtain 
	\allowdisplaybreaks
	\begin{align*}
		   \Vert A (U(\exp y) & \xi - \xi)\Vert \\
		=& \Vert U(\exp(-y)) A U(\exp y) \xi - U(\exp(-y)) A\xi \Vert \\
		\leq & \Vert U(\exp(-y)) A U(\exp y) \xi - A\xi\Vert + \Vert A\xi - U(\exp(-y)) A\xi \Vert \\
		= & \Big\Vert A\xi - \sum_{j_1,\dots,j_k=1}^m 
		a_{i_1 j_1}(y)\cdots a_{i_k j_k}(y)
		\de U(x_{j_1})\cdots\de U(x_{j_k})\xi \Big\Vert \\
		&+ \Vert A\xi - U(\exp -y) A\xi \Vert.  
	\end{align*}
	Since the functions $a_{ij}$ are continuous with $a_{ij}(0) = \delta_{ij}$
	and $A B$ is a bounded subset of $\cH^\infty$, the first summand
	converges uniformly to $0$ on $B$ for $y\to 0$. 
	It thus remains
	to show that 
	\begin{equation}
		\label{eq:ts}
		\lim_{y\to 0} \sup_{\xi \in B} \Vert U(\exp y) \xi - \xi\Vert = 0.
	\end{equation}
	From 
	\[  U(\exp y) \xi - \xi = \int_0^1 U(\exp ty) \dd U(y) \xi\, \de t \]
	we derive that
	\[  \Vert U(\exp y) \xi - \xi\Vert  \leq \Vert\dd U(y) \xi \Vert.\]
	Writing $y = \sum_i y_i x_i$ for the expansion of $y$ in the basis
	$x_1,\ldots, x_m$, this leads to 
	\[  \Vert U(\exp y) \xi - \xi\Vert  \leq \sum_{i = 1}^m \vert y_i\vert \cdot
	\Vert\dd U(x_i) \xi \Vert,\]
	from which \eqref{eq:ts} follows.     
        
      \end{proof}


\begin{thebibliography}{9999999}

\bibitem[Aa70]
{Aa70}
J. F. Aarnes, 
{\it Continuity of group representations, with applications to $C^*$-algebras}. 
J.~Funct. Anal. {\bf 5} (1970), 14--36.


\bibitem[Au99]
{Au99}
L. Aussenhofer, 
{\it Contributions to the duality theory of Abelian
topological groups and to the theory of nuclear groups}, 
Dissertationes Math. (Rozprawy Mat.) {\bf 384} (1999), 113 pp. 

\bibitem[Ba91]
{Ba91}
W. Banaszczyk,   
{\it Additive subgroups of topological vector spaces},  
Lecture Notes Math., 1466, Springer-Verlag, Berlin, 1991.


\bibitem[BN23]
{BN23} 
D. Belti\c t\u a and K.-H. Neeb, 
{\it Local nets on solvable causal symmetric spaces},
in preparation.

\bibitem[BK14]
{BK14}
J. Bernstein and  B. Kr\"otz,  
{\it Smooth Fr\'echet globalizations of Harish-Chandra modules}, 
Israel J. Math. {\bf 199:1} (2014), 45--111.

\bibitem[BS71]
{BS71}
J. Bochnak and J. Siciak, 
{\it Analytic functions in topological vector spaces}, 
Studia Math. {\bf 39} (1971), 77--112.


\bibitem[Bo90]
{Bo90} 
N. Bourbaki, 
{\it Groupes et alg\`ebres de Lie, Chaps.\ 4-8},  
Masson, Paris, 1990.

\bibitem[Bo07]
{Bo07}
N. Bourbaki, 
{\it Integration, Ch. 1--4}, 
Springer, 2007.


\bibitem[BR87]
{BR87} 
O. Bratteli and D.~W.~Robinson, 
{\it Operator algebras and quantum statistical mechanics I}, 
2nd ed.,
Texts and Monographs in Physics, Springer-Verlag, 1987.

\bibitem[Br68]
{Br68} 
F. Bruhat, 
{\it Lectures on Lie groups and representations of locally compact groups}, 
Tata Institute of Fundamental Research Lectures on Mathematics {\bf 14}, 
Tata Institute of Fundamental Research, Bombay, 1968.


\bibitem[BGL02]
{BGL02} 
R. Brunetti, D. Guido, and R. Longo, 
{\it Modular localization and Wigner particles}, 
Rev. Math. Phys. {\bf  14} (2002), 759--785.

\bibitem[Ch90]
{Ch90}
B. Chabat, 
{\it Introduction \`a l'analyse complexe, Tome 1},  
Mir, Moscow, 1990.

\bibitem[CZ76]
{CZ76}
I. Cior\u anescu and L.~Zsid\'o, 
\textit{Analytic generators for one-parameter groups},  
Tohoku Math. J. (2) {\bf 28:3} (1976), 327--362. 

\bibitem[Co71]{Co71}
F. Combes, 
{\it Poids associ\'e \`a une alg\`ebre hilbertienne \`a gauche}, 
Compositio Math. {\bf 23} (1971), 49--77.


\bibitem[FN\'O23]
{FNO23} 
J. Frahm, K.-H.~Neeb, and G.\, \'Olafsson, 
{\it Nets of standard subspaces on non-compactly causal   symmetric spaces},
arxiv:2303.10065. 

\bibitem[GN47]
{GN47} 
I. Gelfand and M.~Naimark, 
{\it Unitary representations of the group of linear transformations of the
	straight line}, 
Dolk. Akad. Nauk. SSSR {\bf 55} (1947), 567--570.

\bibitem[GKS11]
{GKS11} 
H. Gimplerlein, B. Kr\"otz, and H. Schlichtkrull,
{\it Analytic representation theory of Lie groups: general theory
	and analytic globalization of Harish--Chandra modules},
Compos. Math. {\bf 147:5} (2011), 1581--1607;
corrigendum ibid. {\bf 153:1} (2017), 214--217;
arXiv:1002.4345v2. 

\bibitem[GN]
{GN} 
H. Gl\"ockner and K.-H. Neeb, 
{\it Infinite dimensional Lie groups}, 
book in preparation. 

\bibitem[Go69]
{Go69}
R. W. Goodman, 
{\it Analytic and entire vectors for representations of Lie groups}, 
Trans. Amer. Math. Soc. \textbf{143} (1969), 55--76. 

\bibitem[HC53]
{HC53} 
Harish-Chandra, 
{\it Representations of semisimple Lie groups on a {Banch space.I}},
Trans. Amer. Math. Soc. {\bf 75} (1953), 185--243.

\bibitem[He89]
{He89} 
M. Herv\'e, 
{\it Analyticity in infinite-dimensional spaces}, 
de Gruyter, Berlin, 1989.

\bibitem[HP57]
{HP57}
E. Hille and R. S. Phillips, 
{\it Functional analysis and semi-groups}, 
rev. ed. 
American Math. Soc. Coll.  Pub. {\bf 31}, 
American Math. Soc., Providence, R.I., 1957. 

\bibitem[Kn02]
{Kn02}
A. W. Knapp, 
{\it Lie groups beyond an introduction}, 
second edition, 
Progress in Mathematics, {\bf 140}. Birkh\"auser Boston, Inc., Boston, MA, 2002.  

\bibitem[K\"o69]
{Ko69}
G. K\"othe,  
{\it Topological vector spaces. I},
Grundlehren der mathematischen Wissenschaften {\bf 159}, Springer-Verlag New York, Inc., New York, 1969. 


\bibitem[KSt04]
{KSt04} 
B. Kr\"otz and R. J. Stanton, 
{\it Holomorphic extensions of representations. I. Automorphic functions},
Ann. of Math. (2) {\bf 159} (2004),  641--724. 

\bibitem[KN96]
{KN96} 
B. Kr\"otz and K.-H. Neeb, 
{\it On hyperbolic cones and mixed symmetric spaces}, 
J.~Lie Theory {\bf 6:1} (1996), 69--146. 

\bibitem[Lo08]{Lo08}
R.~Longo, {\it Real Hilbert subspaces, modular theory, 
	SL(2, R) and CFT} 
in {\it Von Neumann algebras in Sibiu}, 
eds. K. Dykema and F. R\u adulescu,   
Theta Ser. Adv. Math. {\bf 10}, 
Theta, Bucharest, 2008, pp. 33--91. 

\bibitem[Ma57]
{Ma57}
F. I. Mautner, 
{\it Geodesic flows on symmetric Riemann spaces}, 
Ann. of Math. (2) {\bf 65} (1957), 416--431. 


\bibitem[Mo80]
{Mo80} 
C. C. Moore, 
{\it The Mautner phenomenon for general unitary representations}, 
Pac. J. Math. {\bf 86:1} (1980), 155--169. 

\bibitem[Moo68]
{Moo68}
R. T. Moore, 
{\it Measurable, continuous and smooth Vectors for
semi-groups and group representations},  
Memoirs of the American Math.~ Soc. {\bf 78},
American Math.  Soc., Providence, RI, 1968. 


\bibitem[MN21]
{MN21} 
V. Morinelli and K.-H. Neeb, 
{\it Covariant homogeneous nets of standard subspaces}, 
Comm. Math. Phys. {\bf 386} (2021), 305--358.  
arXiv:2010.07128. 

\bibitem[MN23]
{MN23} 
V. Morinelli and K.-H. Neeb, 
{\it From local nets to Euler elements}, 
in preparation. 

\bibitem[MN\'O23]
{MNO23} 
V. Morinelli, K.-H. Neeb, and G. \'Olafsson, 
{\it From Euler elements and $3$-gradings to
	non-compactly causal symmetric spaces},
J.~Lie theory {\bf 33} (2023), 377--432. 
arXiv:2207.14034. 


\bibitem[Ne99]
{Ne99} 
K.-H. Neeb, 
{\it Holomorphy and convexity in Lie theory}, 
Expositions in Math. {\bf 28}, de Gruyter Verlag, Berlin, 1999. 

\bibitem[Ne10]
{Ne10} 
K.-H. Neeb, 
{\it On differentiable vectors for representations of infinite dimensional Lie groups},  
J. Funct. Anal. {\bf 259:11} (2010), 2814--2855. 

\bibitem[N\'O17]
{NO17} 
K.-H. Neeb and G.\, \'Olafsson,  
{\it Antiunitary representations and modular theory}, 
in {\it 50th Sophus Lie Seminar}, 
eds. K. Grabowska, 
J. Grabowski, A. Fialowski, and K.-H. Neeb,  
Banach Center Publications {\bf 113} (2017), 291--362. 
arXiv:1704.01336. 

\bibitem[N\'O21]
{NO21} 
K.-H. Neeb and G.\, \'Olafsson, 
{\it Nets of standard subspaces on Lie groups}, 
 Adv. Math. {\bf 384} (2021), Paper No. 107715, 69 pp.
arXiv:2006.09832. 

\bibitem[N\'O\O21]
{NOO21} 
K.-H. Neeb, G.\, \'Olafsson, and B. \O{}rsted, 
{\it Standard subspaces of Hilbert spaces of holomorphic 
	functions on tube domains}, 
Comm. Math. Phys. {\bf 386} (2021), 1437--1487. 
arXiv:2007.14797.

\bibitem[N\'O23]{NO23}
  K.-H. Neeb and G. \'Olafsson,
  {\it  Algebraic quantum field theory and causal symmetric spaces},
 in  {\it Geometric methods in physics XXXIX.
  WGMP 2022}, eds. P. Kielanowski, A. Dobrogowska, G.A. Goldin, T. Goli\'nski, Birkh\"auser, Basel, 2023, pp. 207--231.


\bibitem[PT73]
{PT73}
G. K. Pedersen and M. Takesaki, 
{\it The Radon--Nikodym theorem for von Neumann algebras}, 
Acta Math. {\bf 130} (1973), 53--87. 

\bibitem[Po72]{Po72}
N. S. Poulsen, 
{\it On $C^\infty$-vectors and intertwining bilinear forms for representations of Lie groups}, 
J. Funct. Anal. \textbf{9} (1972), 87--120. 


\bibitem[Ru87]
{Ru87}
W. Rudin, 
{\it Real and complex analysis},
third edition,  McGraw-Hill Book Co., New York, 1987.  

\bibitem[Sch71]
{Sch71}
H. H. Schaefer, 
{\it Topological vector spaces}, 
Graduate Texts in Math. {\bf 3},  Springer-Verlag, New York-Berlin, 1971. 

\bibitem[Si15]
{Si15} 
B. Simon, 
{\it Basic complex analysis.
A comprehensive course in analysis, Part 2A},
American Mathematical Society, Providence, RI, 2015. 

\bibitem[Ta03]
{Ta03} 
M. Takesaki, 
{\it Theory of operator algebras. II}, 
Encyclopedia of Mathematical Sciences {\bf 125},
Operator Algebras and Non-commutative Geometry {\bf 6},
Springer-Verlag, Berlin, 2003. 

\bibitem[Tr67]
{Tr67}
F. Tr\`eves, 
{\it Topological vector spaces, distributions and kernels}, 
Academic Press, New York-London, 1967. 

\bibitem[Zs77]
{Zs77}
L. Zsid\'o, 
{\it Spectral and ergodic properties of the analytic generators}, 
J. Approximation Theory {\bf 20:1} (1977), 77--138. 






\end{thebibliography}
\end{document}